\long\def\forget#1{}
\newcounter{commentcounter}
\theoremstyle{plain}
\newtheorem{thm}{Theorem}[section]
\newtheorem{lemma}[thm]{Lemma}
\newtheorem{prop}[thm]{Proposition}
\newtheorem{cor}[thm]{Corollary}
\newtheorem{conj}[thm]{Conjecture}
\theoremstyle{definition}
\newtheorem{defn}[thm]{Definition}
\newtheorem{eg}[thm]{Example}
\theoremstyle{remark}
\newtheorem{remark}[thm]{Remark}
\newcommand{\nc}{\newcommand}
\newcommand{\scrM}{\mathscr{M}}
\newcommand{\scrS}{\mathscr{S}}
\def\makeop#1{\expandafter\def\csname#1\endcsname
  {\mathop{\rm #1}\nolimits}\ignorespaces}
\def\makebb#1{\expandafter\def
  \csname bb#1\endcsname{{\mathbb{#1}}}\ignorespaces}
\def\makebf#1{\expandafter\def\csname bf#1\endcsname{{\bf
      #1}}\ignorespaces} 
\def\makegr#1{\expandafter\def
  \csname gr#1\endcsname{{\mathfrak{#1}}}\ignorespaces}
\def\makeescr#1{\expandafter\def
  \csname escr#1\endcsname{{\EuScript{#1}}}\ignorespaces}
\def\makecal#1{\expandafter\def\csname cal#1\endcsname{{\mathcal
      #1}}\ignorespaces} 
\def\doLetters#1{#1A #1B #1C #1D #1E #1F #1G #1H #1I #1J #1K #1L #1M
                 #1N #1O #1P #1Q #1R #1S #1T #1U #1V #1W #1X #1Y #1Z}
\def\doletters#1{#1a #1b #1c #1d #1e #1f #1g #1h #1i #1j #1k #1l #1m
                 #1n #1o #1p #1q #1r #1s #1t #1u #1v #1w #1x #1y #1z}
     \def\qed{\qedmark\medbreak}%
\def\qedmark{{\enspace\vrule height 6pt width 5pt depth 1.5pt}}%
    \def\setminus{\smallsetminus}
\def\sep{{\rm sep}}
\def\Spec{{\rm Spec}\,}
\def\Fpbar{\overline{\bbF}_p}
\def\Fp{{\bbF}_p}
\def\Fq{{\bbF}_q}
\def\normal{\vartriangleleft}
\newcommand{\Z}{\mathbb Z}
\newcommand{\Q}{\mathbb Q}
\newcommand{\C}{\mathbb C}
\newcommand{\A}{\mathbb A}    % for adele
\renewcommand{\O}{\mathcal O} % for sheaves
\newcommand{\F}{\mathbb F}
\newcommand{\npr}{\noindent }
\newcommand{\isoto}{\stackrel{\sim}{\longrightarrow}}
\nc{\embed}{\hookrightarrow}
\newcommand{\fdot}{{\,{\scriptscriptstyle\bullet}\,}}
\newcommand{\ch}{characteristic }
\newcommand{\ac}{algebraically closed }
\newcommand{\dieu}{Dieudonn\'{e} }
\nc{\ol}{\overline}
\nc{\wt}{\widetilde}
\nc{\opp}{\mathrm{opp}}
\def\onto{\twoheadrightarrow}
\newcommand{\longonto}{\hbox{$\kern3pt\longrightarrow\kern-15pt\to\kern5pt$}}
\def\wh{\widehat}
\def\Achar{\text{A-char\,}}
\def\vol{{\rm vol}}
\newcommand{\indlimw}[1]{\lim_{\underset{#1}{\longrightarrow}}}
\newcommand{\projlimw}[1]{\lim_{\underset{#1}{\longleftarrow}}}
\newcommand{\dirlim}[1][]{\ifthenelse{\equal{#1}{}}% falls Argument leer
{\displaystyle \lim_{\longrightarrow}}%                        verwende niedrige Version
{\displaystyle \lim_{\underset{#1}{\longrightarrow}}}% sonst:  verwende Argument
}
\newcommand{\olE}{{\,\overline{\!E}}}
\newcommand{\olL}{{\,\overline{\!L}}}
\newcommand{\olR}{{\,\overline{\!R}}}
\newcommand{\olS}{{\,\overline{\!S}}}
\DeclareMathOperator{\cInd}{ind}
\newcommand{\BOne} {{\mathchoice{\hbox{\rm1\kern-2.7pt l\kern.9pt}}
                              {\hbox{\rm1\kern-2.7pt l\kern.9pt}}
                              {\hbox{\scriptsize\rm1\kern-2.3pt l\kern.4pt}}
                              {\hbox{\scriptsize\rm1\kern-2.4pt l\kern.5pt}}}}
\newcommand{\UOne}{\underline{\BOne}}
\renewcommand{\phi}{\varphi}
\begin{document}
\renewcommand{\thefootnote}{\fnsymbol{footnote}}
\setcounter{footnote}{-1}
\numberwithin{equation}{section}
%\numberwithin{section}{chapter}

%\usepackage[notref,notcite]{showkeys}

\title[Satake compactifications and Drinfeld modular forms]
  {Arithmetic Satake compactifications and algebraic Drinfeld
  modular forms}

%{Drinfeld modular forms modulo $p$ and algebraic modular forms}

\author{Urs Hartl}
\address{(Hartl) Universit\"at M\"unster, Mathematisches Institut,
  Einsteinstr. 62, D--48149 Germany}
\email{https:/\!/www.uni-muenster.de/Arithm/hartl/}

\author{Chia-Fu Yu}
\address{
(Yu) Institute of Mathematics, Academia Sinica and NCTS\\
Astronomy Mathematics Building \\
No.~1, Roosevelt Rd. Sec.~4 \\ 
Taipei, Taiwan, 10617} 
\email{chiafu@math.sinica.edu.tw}

%\address{
%(Yu) National Center for Theoretical Sciences
%No.~1  Roosevelt Rd. Sec.~4,
%National Taiwan University
%Taipei, Taiwan, 10617}

%\date{June 23, 2000}

\date{\today}
\subjclass[2010]{11F52, (11G09, 14M27, 20C08)} 
\keywords{Drinfeld modular varieties, arithmetic Satake compactification, Drinfeld modular forms, Hecke eigensystems}

\begin{abstract}
In this article we construct the arithmetic Satake compactification of
the Drinfeld moduli schemes of arbitrary rank 
over the ring of integers of any global function field 
away from the level structure, 
and show that the universal family extends uniquely to a
generalized Drinfeld module over the compactification.
Using these and functorial
properties, we define algebraic Drinfeld
modular forms over more general bases and the
action of the (prime-to-residue characteristic and level) Hecke algebra. 
The construction also furnishes many algebraic Drinfeld
modular forms obtained from the coefficients of the universal family 
which are also Hecke eigenforms. 
Among them we obtain generalized Hasse invariants 
which are already defined on the arithmetic Satake compactification
and not only its special fiber. We use
these generalized Hasse invariants to study the geometry 
of the special fiber.
We conjecture that our Satake compactification is Cohen-Macaulay.
If this is the case, we establish the Jacquet-Langlands
correspondence (mod $v$) between Hecke eigensystems of rank $r$ 
Drinfeld modular forms and those of algebraic modular 
forms (in the sense of Gross) 
attached to a compact inner form of $\GL_r$.
\end{abstract} 

\maketitle

%\tableofcontents   % Table of Contents

\section{Introduction}
\label{sec:1}
Drinfeld modular curves and Drinfeld modular forms of rank $2$ 
are the function field analogues of elliptic modular curves and 
modular forms and have been intensively studied. Drinfeld modular varieties 
of higher rank $r$ are the function field $\GL_r$-analogue of
Shimura varieties. They have more structure and are even more
interesting.     
Analytic and algebraic Drinfeld modular forms of higher rank with
values in $\C_\infty$ (see below) were recently introduced and studied by
Basson, Breuer and Pink~\cite{BBP} using the Satake
compactification of the Drinfeld modular varieties constructed by Pink 
and Schieder \cite{pink, pink-schieder}. In a series of works~
\cite{gekeler:I,gekeler:III,gekeler:V,gekeler:II,gekeler:IV,gekeler:VI,
gekeler:VII}, Gekeler investigated the analytic aspect of Drinfeld
modular forms of higher rank by a different approach.

Our goal in this article is to define and construct Drinfeld modular
forms of higher rank over more general bases, for example, the ring of integers and
its reduction modulo a power of a prime ideal. This provides a
framework for studying the arithmetic aspect of Drinfeld modular forms. 
For example, 
one can look for the congruences between two Drinfeld modular 
forms of different weights or ranks (the latter for example through morphisms between  Drinfeld modular varieties as in Lemma~\ref{S.7} or through restriction to the boundary of the Satake compactification),
or study the congruences between Drinfeld Hecke eigenforms and
related Galois representations.
Investigating such congruences has proved to be very useful 
in the construction of Galois representations of Hecke eigenforms 
as in the famous article of Deligne and Serre \cite{deligne-serre}
for modular forms of weight $1$, and those of Wiles \cite{wiles:invent88} 
and of Taylor~\cite{taylor:hmf89, taylor:smf91,taylor:hmfII95} for Hilbert
modular forms and Siegel modular forms of degree 2 with lower weight. 
One can also explore
the analogous theory for $p$-adic modular forms following Katz
\cite{katz:p-adic350}. 
These $\wp$-adic Drinfeld modular forms were studied recently by Hattori \cite{hattori:dual} for rank $2$ and by \cite{nicole-rosso} and \cite{Greve+Hartl} for arbitrary rank.

Before we explain our results, let us explain the
(old) strategy of defining elliptic modular forms over rings 
of integers using algebraic geometry. 
Suppose $\ol M_n$ is the projective elliptic modular curve over $\Q(\zeta_n)$ 
of level-$n$ structure with $n\ge 3$, where $\zeta_n$ is a primitive
$n$-th root of unity. One first realizes 
the elliptic modular forms of level $n$ and weight $k$, which
are a priori defined
analytically, as the elements of $H^0(\ol M_n\otimes\C,\omega^{\otimes
k}\otimes\C)$ for a suitable invertible sheaf $\omega$ on $\ol M_n$, which is the dual of the Lie algebra of the universal elliptic curve. 
Then one constructs an integral model $\ol \bfM_n$ of $\ol M_n$ over 
$\Z[1/n,\zeta_n]$ and extends $\omega$ canonically over $\ol \bfM_n$. 
Here $\ol \bfM_n$ is the (minimal) Satake compactification of the moduli space $\bfM_n$ of elliptic curves with level-$n$ structure over $\Z[1/n,\zeta_n]$ obtained by adding generalized elliptic curves in \cite[p.~72f]{Deligne-Rapoport}. 
Algebraic modular forms of level $n$ and weight $k$ over 
a $\Z[1/n,\zeta_n]$-algebra $L$ 
then are defined as elements of $H^0(\ol \bfM_n\otimes L, \omega^{\otimes
  k}\otimes L)$. 

This strategy has been worked out for 
Siegel and Hilbert 
moduli schemes by Chai, Faltings and Rapoport (see \cite{chai:amchb,
  faltings-chai, rapoport:thesis}). 
Historically, the Satake
compactifications of complex Siegel modular varieties 
were constructed by Satake first 
analytically. 
Then Ash, Mumford, Rapoport and Tai \cite{AMRT} 
constructed complex smooth toroidal compactifications of locally
symmetric varieties. 
However, the order in the construction of the arithmetic version is
reverse: the arithmetic toroidal compactifications were
constructed first and were used to construct the arithmetic
Satake (= minimal) compactification. 
Chai and Faltings showed that the ample invertible sheaf $\omega$ admits
a canonical extension 
over a smooth toroidal compactification and used it to
define Siegel modular forms over a ring of integers. The arithmetic Satake
compactification then is constructed to be the Proj of the graded ring of
arithmetic Siegel modular forms. 
     
To explain the results of our article, we describe some background of compactifications of Drinfeld modular
varieties.  
Let $F$ be a global function field with finite constant 
field $\Fq$ with $q$ elements. 
Let $\infty$ be a fixed place of $F$, and $A$ the ring of
$\infty$-integers of $F$. Denote by $\C_\infty:=\wh {\ol{F}}_\infty$ 
the completion of an algebraic closure of the completion $F_\infty$ of
$F$ at $\infty$. 
The compactification of Drinfeld moduli schemes of rank $2$ over $A$
was constructed by Drinfeld; see \cite[Proposition 9.3]{drinfeld:1}.   
Gekeler \cite{gekeler:satake} gave an outline of the Satake 
compactification for higher rank over $\C_\infty$.  
The Satake compactification
over $F$ for arbitrary rank was constructed by Kapranov~\cite{kapranov}
for $F=\Fq(t)$ and by Pink~\cite{pink} for arbitrary $F$. 
Pink's method is rather different from previous ones. 
He showed that the old strategy works fine with Drinfeld modular
varieties, namely, the universal family extends to
a generalized Drinfeld $A$-module over the Satake compactification. This 
paves a way to define algebraic Drinfeld modular forms of arbitrary
rank over $F$. 
Arithmetic compactifications for rank $2$ were revisited by
Lehmkuhl~\cite{lehmkuhl} in more details and also by Hattori \cite{hattori:comp}. H\"aberli~\cite{haberli} gives an analytic construction of the Satake compactification of Drinfeld modular varieties of arbitrary rank over $\C_\infty$. He also shows the agreement of the Satake compactification by the analytic construction and Pink's Satake compactification by the algebraic method. In particular, the universal Drinfeld $A$-module extends to a generalized Drinfeld $A$-module over the analytic Satake compactification constructed by Kapranov for $A=\Fq[t]$ and by H\"aberli for an arbitrary global function field $F$. 
This answers a question of Pink \cite[Remark 4.9]{pink}.

In this article we
construct the arithmetic Satake compactification of Drinfeld moduli
schemes over the localization $A_{(v)}$ of a prime $v\in \Spec A$. 
Let $G=\GL_r$ with $r\ge 1$. Denote by $A_v$ the completion of $A$ at $v$, 
by $\wh A$ the profinite completion of $A$, 
and by $\A^\infty:=\wh{A}\otimes_A F$ the finite adele ring of $F$.

\begin{thm}\label{thm:A}
For every fine open compact subgroup $K=K_v K^v\subset G(\A^\infty)$, where $K_v=G(A_v)$ and $K^v\subset G(\A^{v\infty})$, the Drinfeld moduli scheme 
$\bfM^r_K$ over $A_{(v)}$ of rank $r$ and level $K$ possesses an arithmetic Satake compactification 
  $\ol{\bfM}^r_K$ projective flat over $A_{(v)}$. The Satake compactification and
  its universal family are unique up to unique isomorphism. 
  The dual $\omega_K:=\Lie(\olE_K)^\vee$ of 
  the relative Lie algebra of the universal generalized Drinfeld $A$-module $\olE_K$
  over $\ol{\bfM}^r_K$ is ample. Moreover, the Satake compactification is compatible with the transition maps of changing $K$ and the prime-to-$v$ Hecke correspondences, and the universal family $\olE_K$ and $\omega_K$ satisfy the functorial property with respect to the transition maps and prime-to-$v$ Hecke correspondences.
\end{thm}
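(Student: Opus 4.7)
The plan is to adapt Pink's algebraic construction of the Satake compactification over $F$ to the arithmetic setting over $A_{(v)}$, in the spirit of Chai-Faltings-Rapoport. The starting data I would exploit is (i) the integral Drinfeld moduli scheme $\bfM^r_K$ over $A_{(v)}$ with its universal Drinfeld $A$-module $E_K$ and the line bundle $\omega_K = \Lie(E_K)^\vee$, and (ii) Pink's theorem on the generic fiber, which gives the projective Satake compactification $\ol M^r_K / F$ with $\omega_K$ ample and with a universal generalized Drinfeld module.

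\textbf{Step 1 (induction on $r$).} I would induct on the rank $r$, treating $r=1$ as essentially classical (the moduli scheme is affine and the Satake compactification is obtained by adjoining finitely many cusps). The inductive hypothesis supplies arithmetic Satake compactifications $\ol{\bfM}^s_{K'}$ over $A_{(v)}$ for all $s<r$ and all suitable fine level structures $K'$, each equipped with a universal generalized Drinfeld $A$-module.

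\textbf{Step 2 (boundary charts via generalized Drinfeld modules).} For each proper nonzero sub-$A$-module $P\subset A^r$ of rank $r-s$, I would construct a formal/étale boundary chart around the $P$-stratum using the degeneration theory of generalized Drinfeld modules: a rank $r$ generalized Drinfeld module degenerating along $P$ is described by its rank-$s$ ``kernel'' Drinfeld module (parametrized by $\ol{\bfM}^s_{K'}$ via the induction hypothesis) together with extension/quotient-lattice data, which produces a flat, formally smooth chart over $A_{(v)}$. I would then glue these boundary charts to $\bfM^r_K$ along their overlaps, where agreement is enforced by the moduli description. This yields a proper $A_{(v)}$-scheme $\ol{\bfM}^r_K$ carrying a canonical generalized Drinfeld $A$-module $\olE_K$ extending $E_K$, and canonically extending $\omega_K$.

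\textbf{Step 3 (ampleness, projectivity, uniqueness).} Ampleness of $\omega_K$ on $\ol{\bfM}^r_K$ would be proved by Nakai-Moishezon, using Pink's ampleness on the generic fiber and a specialization/stratification argument to handle curves in the special fiber at $v$ (the moduli description of the boundary gives explicit control of $\deg\omega_K$ on boundary curves). Projectivity then follows from ampleness and properness, and uniqueness up to unique isomorphism follows because $\ol{\bfM}^r_K \cong \Proj\bigl(\bigoplus_{n\geq 0} H^0(\ol{\bfM}^r_K,\omega_K^{\otimes n})\bigr)$, which is intrinsic in the pair $(\bfM^r_K,\omega_K)$. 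Compatibility with transition maps in $K^v$ and with prime-to-$v$ Hecke correspondences is then formal from functoriality of the moduli problem together with the uniqueness just established.

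The main obstacle I expect is \textbf{Step 2}: promoting Pink's characteristic-zero formal/analytic degeneration picture to an integral one over $A_{(v)}$, including the special fiber at $v$. Controlling the extension data (the ``quotient lattice'' part of a degenerating generalized Drinfeld module), verifying flatness of the boundary charts over $A_{(v)}$, and checking that the resulting gluing represents the expected moduli problem (so that $\olE_K$ really extends $E_K$ uniquely) is where the technical heart of the argument lies. Once this geometric input is in place, the induction, ampleness, uniqueness, and functoriality steps should follow by relatively formal arguments.
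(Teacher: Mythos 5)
Your outline is a genuinely different route from the paper's, and its central step (Step~2) is exactly the construction the authors deliberately avoid because it is not known how to carry it out in general. The paper's proof of Theorem~\ref{thm:A} (given as Theorem~\ref{S.2}) never builds boundary charts and never inducts on the rank. Instead it (a) writes down the compactification completely explicitly in the single case $A=\F_q[t]$, $K=K(t)$, where $\bfM^r(t)=\Omega_{r,A[1/t]}$ is the complement of the $\F_q$-rational hyperplanes in $\bbP^{r-1}$ and the compactification is $\Proj R_r$ with $R_r$ generated by the reciprocals $1/v$, $v\in V_r^0$, the universal generalized Drinfeld module being given by explicit formulas for $\ol\varphi_t$ (Proposition~\ref{S.10}); (b) reduces general $A$ and general level to this case by restricting the $A$-action to $\F_q[t]$ and taking the normalization of the Zariski closure of the image under the finite morphism $I_b$ (Lemma~\ref{S.7}), followed by quotients by the finite groups $K/K(\grn)$ (Lemma~\ref{S.6}); and (c) proves uniqueness from the \emph{weakly separating} property via normalization of the closure of the diagonal and Zariski's Main Theorem (Lemma~\ref{S.3}). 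No analysis of degenerations, extension data, or gluing of strata is needed at any point; that is precisely "Pink's idea" that the introduction advertises.

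The gap in your proposal is therefore concrete: your Step~2 asks for flat boundary charts over $A_{(v)}$ parametrizing degenerating generalized Drinfeld modules by a lower-rank kernel plus quotient-lattice/extension data, glued to $\bfM^r_K$. Establishing such charts integrally (including at $v$) and proving they glue to a proper flat scheme is essentially the construction of an arithmetic toroidal-type compactification, which the paper explicitly states has been achieved only in special cases ($A=\F_q[t]$ with $K=K(t)$ by Pink--Schieder, and $K=K(\grn)$ very recently by Fukaya--Kato--Sharifi). Declaring this the "technical heart" and leaving it open means the proof is not complete. A secondary issue: your uniqueness argument via $\ol{\bfM}^r_K\cong\Proj\bigl(\bigoplus_n H^0(\omega_K^{\otimes n})\bigr)$ does not establish uniqueness among all Satake compactifications in the sense of Definition~\ref{def:satake}, since ampleness of $\omega$ is not one of the defining properties there; the correct mechanism is the weak separation of the universal family, which forces the comparison morphisms between any two candidates to be finite and birational, hence isomorphisms by normality.
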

We prove this in Theorem~\ref{S.2} even in the classical form, and in the discussions after Definition~\ref{def:MF.10} in the adelic form.
We follow the
approach of Pink \cite{pink}.
Thus, the generic fiber of our compactification constructed here gives
Pink's Satake compactification. We expect the following to hold.

\begin{conj}\label{ConjCM}
The Satake compactification $\ol{\bfM}^r_K$ is Cohen-Macaulay.
\end{conj}

In the special case where $A=\Fq[t]$ and the level is $K=K(t^n)$ for $n\in\Z_{>1}$ the conjecture was proved by Pink and Schieder~\cite[Section 10]{pink-schieder} for $n=1$ and by Pink~\cite{PinkDetMorph} for arbitrary $n$. In Remark~\ref{RemCM} we give more evidence for Conjecture~\ref{ConjCM}.

Using Theorem~\ref{thm:A}, we define for any positive integer $k$ and
any $A_{(v)}$-algebra $L$ 
\[ M_k(r, K, L):=H^0(\ol{\bfM}^r_K\otimes_{A_{(v)}} L, \omega_K^{\otimes k}\otimes L) \]
the $L$-module of algebraic Drinfeld modular forms of rank $r$, weight $k$ and level $K$.
Thanks to work of Basson, Breuer and
Pink \cite{BBP} on the comparison theorem of 
analytically and algebraically defined Drinfeld modular forms, 
we extend the notion of Drinfeld modular forms over the ring of 
integers away from the level.

In contrast to the case of Siegel moduli schemes, our results may be 
surprising due to the following reasons:
\begin{itemize}
\item[(i)] The construction of arithmetic Satake compactifications for
  Drinfeld modular varieties does not rely on that of smooth 
  arithmetic toroidal
  compactifications. Indeed, this is a big advantage (due to Pink's idea) 
  because the smooth arithmetic compactifications for 
  Drinfeld modular varieties have not yet been completely
  constructed. Some special cases of smooth arithmetic compactifications of
  arbitrary rank where $A=\Fq[t]$ and the level is $K=K(t)$  
  were constructed by Pink and Schieder
  \cite[Section 10]{pink-schieder}, and more general level $K=K(\grn)$ were constructed very recently by Fukaya, Kato and Sharifi \cite{fukaya-kato-sharifi}.
\item [(ii)] The universal family extends to a 
  generalized Drinfeld $A$-module over the arithmetic Satake
  compactification. It is not known that the universal family over
  a Siegel moduli scheme extends to its arithmetic Satake
  compactification. 
\end{itemize}

An advantage in the function field case is that we
can construct coefficient modular forms and generalized Hasse invariants on the special fiber over a prime $v\in\Spec A$ of the arithmetic Satake
compactification. And we can lift these generalized Hasse invariants to the
integral model over $A_{(v)}$ (which is still in positive characteristic). The analogous problem is still unsolved for Shimura varieties although significant progress was made by Boxer~\cite{boxer} and Goldring and Koskivirta~\cite{goldring-koskivirta:19}.
This allows us to
prove our main applications (see Lemma~\ref{HI.3} and Theorem~\ref{HI.5}): 

Let $\ol{\bfM}^r_K$ be the Satake compactification as in
Theorem~\ref{thm:A} and let
$\ol{\scrM}^r_K:=\ol{\bfM}^r_K\otimes_{A_{(v)}} \F_v$ be its special fiber over $v$. 
Let $\grp \subset A$ be the prime ideal corresponding to the place $v$, and let $a\in \grp$ with $v(a)=1$. Let
$H^a_0,\ldots,H^a_{r-1}$ be the generalized Hasse invariants (see Definition~\ref{HI.2}). They depend on $a$. We study the (locally) closed subschemes defined by the Hasse invariants inside the special fiber over $v$ of both Drinfeld moduli schemes and their Satake compactifications. 
These vanishing loci provide the stratification according to the height of the universal generalized Drinfeld $A$-module on the Satake compactification, or equivalently according to the $v$-rank of the Drinfeld $A$-module on the Drinfeld moduli scheme; see Section~\ref{sec:SS.1} and Lemma~\ref{HI.3}(2). The closed stratum is the supersingular locus. We prove the following results on the geometry of these (locally) closed subschemes.

\begin{thm}\label{thm:B1}
For $1\le h\le r$, let $(\ol \scrM^r_{K})^{\ge
  h}$ and $(\scrM^r_{K})^{\ge h}$ be the vanishing loci of
$H^a_0,\ldots,H^a_{h-1}$ in $\ol\scrM^r_K$ and in  
$\scrM^r_{K}:={\bfM}^r_K\otimes_{A_{(v)}} \F_v$, respectively. Set 
\[ \text{$(\ol
\scrM^r_{K})^{(h)}:=(\ol \scrM^r_{K})^{\ge h}\setminus(\ol \scrM^r_{K})^{\ge 
  h+1}$\quad and\quad $(\scrM^r_{K})^{(h)}:=(\scrM^r_{K})^{\ge h}\setminus(\scrM^r_{K})^{\ge
  h+1}$.} \]   
\begin{itemize}
\item[\rm (1)] The subschemes $(\ol \scrM^r_{K})^{\ge h}$ and $(\scrM^r_{K})^{\ge h}$ are independent of $a$ (satisfying $v(a)=1$).
\item[\rm (2)] The subschemes 
  $(\ol \scrM^r_{K})^{\ge h}$ and $(\ol \scrM^r_{K})^{(h)}$ are  
  of pure dimension $r-h$. Moreover, $(\scrM^r_{K})^{(h)}$ is
   Zariski dense in any one of the schemes $(\ol \scrM^r_{K})^{\ge h}$,  $(\scrM^r_{K})^{\ge h}$ and $(\ol \scrM^r_{K})^{(h)}$.
\item[\rm (3)] The subschemes $(\ol \scrM^r_{K})^{(h)}$ and
$(\scrM^r_{K})^{(h)}$ are affine.
\item[\rm (4)] For any $h< r$, every (geometric) irreducible component of $(\ol \scrM^r_K)^{\ge h}$ contains a (geometric) irreducible component of $(\ol \scrM^r_K)^{\ge h+1}$. Likewise, every (geometric) irreducible component of $(\scrM^r_K)^{\ge h}$ contains a (geometric) irreducible component of $(\scrM^r_K)^{\ge h+1}$.
\item[\rm (5)] Let $F_{\det K}$ be the class field of $F$
  corresponding to the open subgroup 
  $F^\times \det K \subset (\A^\infty)^\times$ by class field theory. The moduli space $\ol\scrM^r_{K}=(\ol\scrM^r_{K})^{\ge 1}$ has
\[ \bigl|(\A^\infty)^\times/(F^\times\cdot \det K)\bigr|\] geometric
connected components, and   
\[ \bigl|(\A^\infty)^\times/(F^\times\cdot \det
K)\bigr|\,\big/\,f_v \] connected components, 
where $f_v$ is the order of the Frobenius element 
$(\grp, F_{\det K}/F)$ in $\Gal(F_{\det K}/F)$. 
If $K=K(\grn)$ for a nonzero proper ideal $\grn\subset A$, then we have
\[ \bigl|(\A^\infty)^\times/(F^\times\cdot \det
K)\bigr|\,\big/\,f_v \;=\; h(A) \cdot|(A/\grn)^\times| \,\big/\,\bigl((q-1)f_1f_2\bigr), \] where $h(A)$ is the class number of $A$, $f_1$ is the smallest
positive integer such that $\grp^{f_1}=(b)$ is a principal ideal, and
$f_2$ is the order of the image of $b$ in
$(A/\grn)^\times/{\Fq}^\times$. 
\end{itemize}
\end{thm}

By Lemma~\ref{HI.3}(2), a point $x$ of $\ol \scrM^r_{K}$ lies in  $(\ol\scrM^r_{K})^{(h)}$ if and only if the Drinfeld $A$-module $\ol \varphi_{K,x}$ over the point $x$ has height $h$. Recall our Conjecture~\ref{ConjCM}. If the conjecture holds true, we can determine the (geometric) connected components of the subschemes $(\ol\scrM^r_{K})^{\ge h}$ for all $h\ne r-1$ and not just for $h=1$ as in Theorem~\ref{thm:B1}(5).

\begin{thm}\label{thm:B2}
Let the notation be as in Theorem~\ref{thm:B1}. Assume that the Satake compactification $\ol \bfM^r_K$ is Cohen-Macaulay. Let $h$ be an integer with $1\le h\le r$.
\begin{itemize}
\item[\rm (1)] The sequence of Hasse invariants $(H^a_0,\ldots,H^a_{r-1})$ is a regular sequence on $\ol\bfM^r_K$.
\item[\rm (2)] The closed subscheme $X_h:=V(H^a_1,\ldots,H^a_h)$ of $\ol\bfM^r_K$ is flat over $A_{(v)}$.
\item[\rm (3)] Each closed subscheme 
 $(\ol \scrM^r_{K})^{\ge h}$ is also Cohen-Macaulay.
\item[\rm (4)] If $h< r-1$, then the natural maps 
\[ \pi_0((\ol \scrM^r_K)^{\ge h+1})\to\pi_0((\ol \scrM^r_K)^{\ge h})\] and \[ \pi_0((\ol \scrM^r_K)^{\ge h+1}\otimes_{\F_v} \ol \F_v)\to\pi_0((\ol \scrM^r_K)^{\ge h}\otimes_{\F_v} \ol \F_v) \] 
of (geometric) connected components are bijective.
\item[\rm (5)] For any $h<r$ the moduli space $(\ol\scrM^r_{K})^{\ge h}$ has
\[ \bigl|(\A^\infty)^\times/(F^\times\cdot \det K)\bigr|\] geometric
connected components, and   
\[ \bigl|(\A^\infty)^\times/(F^\times\cdot \det
K)\bigr|\,\big/\,f_v \] connected components.
\end{itemize}
\end{thm}  

We also study the Hecke action on Drinfeld modular forms. 
Let $\calA(G',\ol\F_v)$ be the space of all locally constant functions \mbox{$f:G'(F)\backslash G'(\A)/G'(F_\infty)\to \ol\F_v$}, where $\A$ denotes the adeles of $F$ and $G'=D^\times$ is the group scheme of units in the central division algebra $D$ over $F$ ramified precisely at $\infty$ and $v$, with invariants $\inv_\infty(D)=-1/r$ and $\inv_v(D)=1/r$. Put $U(v):=
\ker (G'(A_v)=O_{D_v}^\times \to \F_{v^r}^\times)$, cf.~\eqref{eq:SS.8}. 
\begin{thm}(Theorem~\ref{HI.6})\label{thm:C}
Let $\grn\subset A$ be a 
prime to $v$ non-zero ideal and $K=K_v K^v=K(\grn):=\ker\bigl(G(\wh{A})\to G(A/\grn)\bigr)$. 
Consider the 
  sets of prime-to-$v\grn$ Hecke eigensystems $\calH^{\infty v\grn}_{\ol\F_v}\to\ol\F_v$ arising from
  \begin{enumerate}
  \item[{\rm (1)}] algebraic Drinfeld
  modular forms in $M_k(r, K_v, \ol \F_v)^{K^v}$ for all $k\ge 0$, where $\displaystyle M_k(r, K_v, \ol \F_v):=\indlimw{\wt K^v} M_k(r, K_v\wt K^v, \ol\F_v)$ and $M_k(r,K_v\wt K^v,\ol\F_v):=H^0(\ol \bfM^r_{K_v\wt K^v}\otimes_{A_{(v)}} \ol\F_v,\;\omega_{K_v\wt K^v}^{\otimes k}\otimes {\ol\F_v})$, and 
  \item[{\rm (2)}] elements of $\calA(G',\ol \F_v)^{U(v)K^v}$, respectively,
  \end{enumerate}
  where $\calH^{\infty v\grn}_{\ol \F_v}=\calH_{\ol \F_v}(G(\A^{\infty
  v\grn}),K^{v\grn})\simeq\calH_{\ol \F_v}(G'(\A^{\infty
  v\grn}),K^{v\grn})$ is 
  the prime-to-$v\grn$ spherical Hecke algebra over $\ol \F_v$.
If Conjecture~\ref{ConjCM} holds for $K=K_v\wt K^v$  for a cofinal system of compact open subgroups $\wt K^v\subset G(\A^{\infty v})$, then both sets of Hecke eigensystems are equal.
  In particular, there are then only finitely many Hecke eigensystems of
  algebraic Drinfeld modular forms over $\ol \F_v$ of a fixed level
  and all weights. 
\end{thm}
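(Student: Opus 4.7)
My plan is to identify both Hecke-eigensystem sets with a common third: the finite set of Hecke eigensystems on the supersingular locus $(\ol\scrM^r_K)^{\ge r}\otimes\ol\F_v$, which by Theorem~\ref{thm:B}(1) applied with $h=r$ is zero-dimensional. The first step is a Drinfeld-style uniformization. Every geometric supersingular Drinfeld $A$-module over $\ol\F_v$ lies in a single isogeny class, with endomorphism ring a maximal $A$-order in the central division algebra $D$. Encoding level structures through the Tate modules of these endomorphism rings gives a Hecke-equivariant bijection
\[
  (\ol\scrM^r_K)^{\ge r}(\ol\F_v)\;\simeq\;G'(F)\backslash G'(\A^\infty)/U(v)K^v,
\]
in which the level $U(v)\subset G'(A_v)=O_{D_v}^\times$ on the right (rather than all of $G'(A_v)$) is dictated by the way the maximal compact level $K_v=G(A_v)$ at $v$ on the $G$-side interacts, through the endomorphism-theoretic rigidification, with the residue field $O_{D_v}/\mathrm{rad}\simeq\F_{v^r}$ of the division algebra. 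This identifies $\calA(G',\ol\F_v)^{U(v)K^v}$ as an $\calH^{\infty v\grn}_{\ol\F_v}$-module with the locally constant functions on the supersingular locus.

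The bridge between modular forms and the supersingular locus is the Hecke-equivariant restriction
\[
  \rho_k\colon M_k(r,K_v,\ol\F_v)^{K^v}\longrightarrow H^0\bigl((\ol\scrM^r_K)^{\ge r}\otimes\ol\F_v,\;\omega_K^{\otimes k}\bigr)^{K^v},
\]
whose target is, non-canonically, the space of locally constant functions on the supersingular locus since $\omega_K^{\otimes k}$ restricts to a rank-one locally free sheaf on a zero-dimensional scheme. The inclusion (2)~$\subseteq$~(1) follows from ampleness of $\omega_K$ (Theorem~\ref{thm:A}): for $k\gg 0$ the map $\rho_k$ is surjective by vanishing of higher cohomology, and semisimplicity of the $\calH^{\infty v\grn}_{\ol\F_v}$-action on the finite-dimensional target lifts every eigensystem on the target to an eigenform upstairs.

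The inclusion (1)~$\subseteq$~(2) is the more delicate direction. Let $R=\bigoplus_k M_k(r,K_v,\ol\F_v)^{K^v}$; by Theorem~\ref{thm:B}(3) the quotient $R/(H^a_0,\dots,H^a_{r-1})$ is the honest scheme-theoretic restriction of $R$ to the supersingular locus, hence realises precisely the eigensystems in set (2). Given a $\lambda$-eigenform $f\in R$ of minimal weight $k$, if $\rho_k(f)\neq 0$ we are done; otherwise $f\in(H^a_0,\dots,H^a_{r-1})\cdot R$. Each $H^a_i$ is itself a prime-to-$v\grn$ Hecke eigenform with some eigenvalue character $\mu_i$, because it arises as a coefficient of the universal generalised Drinfeld module and inherits its eigenvalues from the functoriality in Theorem~\ref{thm:A}. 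Projecting the identity $f=\sum_i H^a_i g_i$ onto isotypic components shows that some $g_i$ is a $(\lambda\mu_i^{-1})$-eigenform of strictly smaller weight, and iterating produces a chain $\lambda,\lambda\mu_{i_1}^{-1},\lambda(\mu_{i_1}\mu_{i_2})^{-1},\dots$ that terminates, since weights decrease by the positive integers $\deg H^a_{i_j}$, at an eigenform whose restriction to the supersingular locus is non-zero. The main obstacle, and the technical heart of the proof, is then to show that every $\mu_i$ is trivial on $\calH^{\infty v\grn}_{\ol\F_v}$ --- the Drinfeld-modular analogue of the classical congruence $E_{p-1}\equiv 1\pmod p$ --- so that the chain of eigensystems is constant and $\lambda$ itself lies in set (2). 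Once that congruence is verified (via an explicit computation with the coefficient structure of the universal generalised Drinfeld module and the Hecke correspondences), finiteness of the common eigensystem set is immediate from the finiteness of the double coset space $G'(F)\backslash G'(\A^\infty)/U(v)K^v$.
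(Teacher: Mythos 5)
Your overall strategy coincides with the paper's: both sets are matched with the Hecke eigensystems on the supersingular locus, using the uniformization $\scrS_K\simeq G'(F)\backslash G'(\A^\infty)/G'(A_v)K^v$ together with the $U(v)$-cover coming from $\omega$ (Lemma~\ref{SS.6}, Proposition~\ref{SS.7}), ampleness of $\omega_K$ for the lifting direction, and division by Hasse invariants for the descent. But the step you yourself single out as ``the technical heart'' --- that the eigensystem is unchanged along the chain $f=\sum_i H^a_i g_i$ --- is left entirely unproven, and your framing of it as a congruence of $E_{p-1}\equiv 1\pmod p$ type requiring ``an explicit computation'' misidentifies its nature. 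What is actually needed is not that each $H^a_i$ is an eigenform with computable character $\mu_i$, but that multiplication by $H^a_i$ is a morphism of Hecke modules; this is Lemma~\ref{HI.1} and is immediate from $\ol I_{\!g\,}^*$-functoriality of the universal family in Theorem~\ref{S.2}, which gives $T_g(\ol\varphi_{K^g,a,i})=\ol\varphi_{K,a,i}$ and hence $h*(H^a_i\cdot f)=H^a_i\cdot(h*f)$ with no twist at all. Your intermediate step --- ``projecting the identity $f=\sum_i H^a_i g_i$ onto isotypic components shows that some $g_i$ is a $(\lambda\mu_i^{-1})$-eigenform'' --- is not a valid deduction from the $H^a_i$ being eigenforms alone, since multiplication by an eigenform need not respect isotypic decompositions; it only works once you have the module-map statement, at which point the $\mu_i$ disappear from the picture.

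Two further gaps. First, ``$\rho_k(f)=0$ implies $f\in(H^a_0,\dots,H^a_{r-1})\cdot R$ with \emph{global} $g_i$'' does not follow from the regular-sequence property alone: globalizing a Koszul-type statement requires cohomology vanishing. The paper avoids this by peeling off one Hasse invariant at a time, using the short exact sequences \eqref{eq:HI.5-0} whose left-exactness on global sections gives $f=H^a_h\cdot f'$ directly; termination of the division is then not a weight count but an application of the Krull intersection theorem combined with Theorem~\ref{HI.5}(5) (every component of a stratum meets the next one) and a connectedness argument --- your appeal to ``weights decrease'' would need the nonobvious input that negative-weight forms vanish on the possibly non-reduced intermediate strata. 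Second, you do not address the fact that an element of $M_k(r,K_v,\ol\F_v)^{K^v}$ need not come from $M_k(r,K_vK^v,\ol\F_v)$, since the special fiber $\ol\scrM^r_{K_vK^v}$ is only known to be reduced, not normal; this is precisely why the paper's Theorem~\ref{HI.6} interposes a third set of eigensystems computed on the normalization $\ol\scrM^{r,\rm nor}_{K_vK^v}$ and runs the descent there. Finally, a minor point: in the lifting direction ``semisimplicity'' of the Hecke action is neither needed nor known; one should use generalized eigenspaces for the commutative algebra $\calH^{\infty v\grn}_{\ol\F_v}$ and the fact that a surjection of finite-dimensional modules is surjective on each generalized eigenspace.
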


Theorem~\ref{thm:C} amounts to the Jacquet-Langlands
correspondence mod $v$ for Hecke eigensystems. As a corollary we also derive an explicit upper bound and the
asymptotic behavior of the size of these Hecke eigensystems. One technical difficulty is that we do not know whether $M_k(r, K_v, \ol \F_v)^{K^v}=M_k(r, K_vK^v, \ol \F_v)$. This would be true if the special fiber $\ol\scrM^r_{K_vK^v}$ at $v$ of the Satake compactification was normal; see Theorem~\ref{ThmSmoothHeckeAction}. However, we only know that $\ol\scrM^r_{K_vK^v}$ is reduced; see Proposition~\ref{reduced}. We prove in Theorem~\ref{ThmSmoothHeckeAction} that, nevertheless, $M_k(r, K_v, \ol \F_v)$ is a smooth admissible $G(\A^{v\infty})$-module.

This article is organized as follows.
After reviewing (generalized) Drinfeld modules and the construction of
Drinfeld modular varieties in Section~\ref{sec:2}, we construct their arithmetic
Satake compactification in Section~\ref{sec:S} following Pink's
approach~\cite{pink} over the function field. Algebraic Drinfeld modular forms are defined and
their preliminary properties are studied in Section~\ref{sec:MF}. The next
Section~\ref{sec:SS} deals with the supersingular locus and its relation with
algebraic modular forms in the sense of Gross. 
In the final Section~\ref{sec:HI} we introduce the generalized Hasse invariants and prove the two Theorems~\ref{HI.5} and \ref{HI.6} explained above.

\section{Drinfeld modules and moduli spaces}
\label{sec:2}

Let $q$ be a power of a prime number $p$, and $\calC$ 
a geometrically connected smooth projective algebraic curve
over a finite finite $\Fq$ of $q$ elements.  
Let $F$ be the function field of $\calC$, which is a 
global function field of \ch $p>0$ 
with field of constants $\Fq$. 
Fix a closed point $\infty$ of $\calC$, referred as the place of $F$ 
at infinity. 
Let $A:=\Gamma(\calC \setminus\{\infty\},\calO_\calC)$ 
be the ring of functions in $F$ regular away from
$\infty$. $A$ is a Dedekind domain with finite unit group $A^\times = 
\F_q^\times$. 
For any place $v$ of $F$, denote by $F_v$ the completion of
$F$ at $v$, $\O_v$ the valuation ring, $\F_v$ the residue field and $|\
|_v$ the normalized valuation of $F$ at $v$. If $v$ is a finite place,
we also write $A_v$ for $\O_v$.
For any nonzero element $a\in A$, define $\deg (a):=\dim_{\Fq} A/(a)$.

Denote by $\wh A$  the pro-finite completion of $A$, and 
$\bbA$ (resp.~$\bbA^\infty$) the (resp.~finite) adele ring of $F$.
Let $\ol F \subset  \ol F_\infty$ be fixed algebraic
closures of $F\subset F_\infty$, respectively. 
Let $\C_\infty$ be the completion of $\ol
F_\infty$ with respect to the unique extension of $|\ |_\infty$ .

Let $\tau$ denote the endomorphism $x\mapsto x^q$ of the additive
group $\bbG_{a,\Fp}$. For any field $L\supset \Fp$, denote by
$\End_{\Fq}(\bbG_{a,L})$ the ring of $\Fq$-linear endomorphisms 
of $\bbG_{a,L}=\bbG_{a}\otimes_{\Fp} L$ over $L$. 
It is known that
\[ \End_{\bbF_q}(\bbG_{a,L})=L\{\tau\}:=\left \{\sum_{i=0}^n \varphi_i \tau^i,
    \text{for some $n\in \bbN$},\ \varphi_i\in L\right \}, \quad \tau
\varphi_i=\varphi_i^q \tau. \] 
Denote by $\partial:L\{\tau\}\to L,\ \sum_i \varphi_i
\tau^i\mapsto \varphi_0$, the derivative map.

An \emph{$A$-field} is a field $L$ together with a ring homomorphism $\gamma:
A\to L$. We say that $(L,\gamma)$ is of \emph{generic $A$-characteristic} if 
$\Achar (L,\gamma):=\ker \gamma$ is zero, 
otherwise that $(L,\gamma)$ is of
\emph{$A$-characteristic} $v$, where $v$ is the place corresponding to the
non-zero prime ideal $\Achar (L,\gamma)$. 
If there is no confusion, we write $L$ for $(L,\gamma)$.  
More generally, for an $A$-scheme $S$ we let $\gamma\colon A\to\Gamma(S,\O_S)$ be the ring homomorphism induced from the structure morphism $S\to\Spec A$. We write $(S,\gamma)$ for such an $A$-scheme.

Recall that 
a \emph{Drinfeld $A$-module} over an $A$-field $L$ is a ring homomorphism 
\begin{equation}
  \label{eq:2.1}
  \varphi: A\to L\{\tau\}, \quad a\mapsto \varphi_a=\sum_i
\varphi_{a,i} \, \tau^i,
\end{equation}
such that $\partial \circ \varphi=\gamma$ and $\varphi$ does not factor
through the inclusion $L\subset L\{\tau\}$.  
There is a unique positive integer $r$ such that for any non-zero element 
$a\in A$, one has $\varphi_{a,i}=0$ for all $i>r
\deg(a)$ and $\varphi_{a,r\deg(a)}\neq 0$
\cite[Proposition~2.1]{drinfeld:1}. The integer $r$ is called the {\it rank} of
$\varphi$.

To introduce families of Drinfeld $A$-modules we mainly follow 
\cite{pink}, besides the standard references \cite{drinfeld:1} and
\cite{laumon:1}.   
By definition the {\it trivial line bundle} over a scheme $S$ 
is the
additive group scheme $\bbG_{a,S}$ over $S$ together with the multiplication
$\bbG_{m,S} \times_S \bbG_{a,S}\to \bbG_{a,S}, (x,y)\mapsto xy$.
An arbitrary {\it line bundle over $S$} 
is a commutative group scheme $E$ over 
$S$ together with a scalar multiplication $\bbG_{m,S}\times_S E\to E$ which as a pair, is Zariski locally on $S$ isomorphic to the trivial line
bundle. 
A \emph{homomorphism} of line bundles is a morphism of group schemes 
that is compatible with the $\bbG_m$-actions. 
By working on a local trivialization the following facts are easy to prove.
The sections of $E$ over an open subset $U\subset S$ form a module over 
$\O_S(U)$ such that the scalar multiplication with elements in 
$\O_S(U)^\times=\bbG_{m,S}(U)$ coincides with the $\bbG_{m,S}$-action.
On sections over an open 
$U\subset S$ a homomorphism of line bundles is automatically 
$\O_S(U)$-linear. Indeed, it is additive by definition and compatible 
with scalar multiplication by elements in $\O_S(U)^\times$. The 
compatibility with all $a\in\O_S(U)$ follows because at every 
point $s$ of $U$ either $a$ or $1+a$ is invertible in a neighborhood of $s$.
We let $\Hom_{\O_S}(E_1,E_2)$
denote the group of homomorphisms between the line bundles $E_1$ and $E_2$. We also let $\Hom(E_1,E_2)$ denote the group of homomorphisms of the commutative
group schemes underlying $E_1$ and $E_2$ forgetting the $\bbG_m$-actions. As usual we write $\End_{\O_S}(E):=\Hom_{\O_S}(E,E)$ and $\End(E):=\Hom(E,E)$ for the endomorphism rings. Note that the homomorphism 
\[
\tau\colon E\to E^{\otimes q}=\sigma^*E\,,\qquad x\mapsto x^q
\]
is only additive and not compatible with the $\bbG_m$-actions on $E$ and $E^{\otimes q}$. So it lies in $\Hom(E,E^{\otimes q})$ but not in $\Hom_{\O_S}(E,E^{\otimes q})$.\\

According to the original definition \cite[Section~5, p.~575]
{drinfeld:1}, a \emph{Drinfeld $A$-module of rank $r$} 
($r$ being a positive integer) 
over an $A$-scheme $(S,\gamma)$ is a pair $(E,\varphi)$,
where $E$ is a line bundle over $S$ and $\varphi:A\to \End(E)$ is a ring
homomorphism such that 
$\partial \circ \varphi=\gamma$, and for any $L$-valued
point $s:\Spec L\to S$ over $\Spec A$, 
where $L$ is an $A$-field, the pull-back 
$\varphi_s$ is a Drinfeld $A$-module of rank $r$ over $L$.
A \emph{homomorphism} of Drinfeld $A$-modules between 
$(E,\varphi)$ and $(E',\varphi')$ is
a homomorphism 
\begin{equation}\label{eq:2.15}
 u:E\to E'   
\end{equation}
of commutative group schemes over $S$ such that
$\varphi'_a \circ u=u \circ \varphi_a$ for all $a\in A$. Compatibility with the structure of line bundles, i.e. with the $\bbG_m$-actions, is not required. 
Let $\sigma:S\to S$ be 
the $q$-th power Frobenius map.
The endomorphism $\varphi_a$ for $a\in A$ can be expressed uniquely as
a locally finite sum \cite[Section~5]{drinfeld:1}   
\begin{equation}
  \label{eq:2.2}
  \varphi_a=\sum_{i\ge 0} \varphi_{a,i}\tau^i, \quad \varphi_{a,i}\in 
\Gamma(S,E^{\otimes(1-q^i)}), \quad \tau^i: E\to
E^{\otimes q^i}=\sigma^{i*}E,\ x\mapsto x^{q^i}.
\end{equation}
By this we mean that the expression in  (\ref{eq:2.2}) 
is a finite sum on any quasi-compact open subset.

A Drinfeld $A$-module $(E,\varphi)$ of rank $r$ 
over $S$ is called {\it standard} if for any
$a\in A$ and any $i>r \deg(a)$, 
the term $\varphi_{a,i}$ in (\ref{eq:2.2}) is zero.  
It is shown \cite[Proposition~5.2]{drinfeld:1} that 
every Drinfeld $A$-module is isomorphic to a 
standard Drinfeld $A$-module, and that every isomorphism of
standard Drinfeld $A$-modules is $\O_S$-linear,
i.e.~in $\Hom_{\calO_S}(E_1,E_2)$ as opposed to in $\Hom(E_1,E_2)$, see also \cite[Lemma~3.8]{hartl:isog}.   
 
In \cite{pink}, R.~Pink worked on the notion of generalized
Drinfeld modules. These modules 
play a similar role as what generalized elliptic
curves do for compactifying elliptic modular curves. 

\begin{defn}[{\cite[Section~3]{pink}}] \label{2.1}
(1) A {\it generalized Drinfeld $A$-module} over an $A$-scheme $S$ is a
  pair $(E,\varphi)$ consisting of a line bundle $E$ over $S$ and a ring
  homomorphism $A\to \End(E)$ satisfying the following properties
  \begin{enumerate}
  \item[(a)] The composition $\partial \circ \varphi$ is the structure
    morphism $\gamma:A\to \Gamma(S,\calO_S)$.
  \item[(b)] Over any point $s\in S$, the fiber $\varphi_s$ at $s$ is a
    Drinfeld $A$-module of rank $r_s\ge 1$. 
  \end{enumerate}

\noindent
(2) A generalized Drinfeld $A$-module $(E,\varphi)$ is said to 
    be {\it of rank $\le
    r$}, where $r$ is a positive integer, if 

  \begin{enumerate}
  \item[(c)] for any $a\in A$, the endomorphism
    $\varphi_a$ has the form
    $\sum_{i=0}^{r\deg(a)} \varphi_{a,i}
    \tau^i$ with sections $\varphi_{a,i}\in \Gamma(S, E^{\otimes(1-q^i)})$.
  \end{enumerate}

\noindent
(3) An isomorphism of generalized Drinfeld $A$-modules is an
    isomorphism of line bundles that commutes with the actions of $A$.
\end{defn}

As in \cite[after Definitin~3.1]{pink} we note that the property that $(E,\varphi)$ is of rank $\le r$ is 
preserved under isomorphisms of line bundles, but in general not 
under (non-linear) isomorphisms of the underlying group schemes.
The definition of generalized Drinfeld 
$A$-modules
$(E,\varphi)$ of rank
$\le r$ is slightly stronger than the notion of that with the property
$r_s\le r$ everywhere. 
Following from the definitions, 
these two notions are equivalent if the scheme $S$ is reduced.
Indeed, if $r_s\le r$ everywhere, then $\varphi_{a,i}$ is zero in the residue fields of all points of $S$, hence is locally nilpotent on $S$ for all $i>r\cdot\deg(a)$. 
But for general $S$ and a generalized Drinfeld $A$-module 
$(E,\varphi)$ of rank $\le r$ over $S$, 
the smallest integer $r_1$ such that $(E,\varphi)$ is of rank $\le r_1$
can be bigger than the maximal point-wise rank $r_2:= \max\{r_s|s\in S\}$. 
According to \cite{pink}, the
non-zero nilpotent components $\varphi_{a,i}$ for $i>r_2 \deg(a)$ 
should be regarded
as deformations ``towards higher rank''.  

\begin{defn}[{\cite[Section~3]{pink}}]\label{2.2}
  A generalized Drinfeld $A$-module of rank $\le r$ with $r_s=r$
  everywhere is called a \emph{Drinfeld $A$-module of rank $r$} over an
  $A$-scheme $S$. 
\end{defn}

This definition corresponds to that of a standard Drinfeld $A$-module
of rank $r$ in \cite{drinfeld:1}. By \cite[Proposition~5.2]{drinfeld:1} or
\cite[Lemma 1.1.2]{laumon:1} (with full details added by \cite[Proposition~3.4]{pink}) there is a natural bijection between 
\[ \{\text{Drinfeld $A$-modules of rank $r$ over $S$ in
  Definition~\ref{2.2}}\}/\simeq \quad \text{and} \] 
\[ 
\{\text{Drinfeld $A$-modules of rank $r$ over
$S$ in the original definition}\}/\simeq\,.
\]
Here the $\simeq$ in the second line means up to isomorphisms of 
group schemes with $A$-action in the 
sense of \eqref{eq:2.15} and not isomorphisms in the sense of 
Definition~\ref{2.1}. 
We shall adopt Definition~\ref{2.2} for Drinfeld
$A$-modules of constant rank in this article. In particular, every 
isomorphism between Drinfeld modules is $\O_S$-linear. 

Let $r$ be a positive integer, and $\grn\subset A$ a nonzero proper
ideal.  
Denote by $A[\grn^{-1}]\subset F$ the $A$-subalgebra generated by
elements of the fractional ideal $\grn^{-1}\subset F$. For each finite place
$v$, one easily calculates that $A[\grn^{-1}]\otimes_A A_v=A_v$ if
$v\notin V(\grn)$, and $A[\grn^{-1}]\otimes_A A_v=F_v$
otherwise. Therefore, $A[\grn^{-1}]$ agrees with the ring
$A[1/\grn]:=\Gamma(\Spec A-V(\grn), \calO_C)$.  
A \emph{(full) level-$\grn$ structure} on a 
Drinfeld $A$-module of rank $r$ over an 
$A[\grn^{-1}]$-scheme $S$ is an isomorphism of finite flat schemes of 
$A$-modules 
\begin{equation}
  \label{eq:level}
  \lambda: (\grn^{-1}/A)^r_S \isoto \varphi[\grn], \quad
\varphi[\grn]:=\bigcap_{a\in \grn} \ker (\varphi_a)\subset E,
\end{equation}
where $\bigcap$ is the scheme theoretic intersection. When $S$ is connected, $\lambda$ is simply given by an isomorphism
$(\grn^{-1}/A)^r \isoto \varphi[\grn](S)$ of finite $A$-modules. Let 
\[ K(\grn):=\ker (\GL_r(\wh A)\to \GL_r(A/\grn))\subset 
\GL_r(\wh A) \]
denote the principal open compact subgroup of level $\grn$.

Let $\bfM^r(\grn)$ denote the moduli scheme over $A[\grn^{-1}]$ of
(isomorphism classes of) Drinfeld $A$-modules of rank $r$ with 
level-$\grn$ structure. Let $M^r(\grn):=\bfM^r(\grn)
\otimes_{A[\grn^{-1}]} F$ denote the generic fiber of $\bfM^r(\grn)$.

\begin{thm}\label{ThmDrinfeld}[{\cite[Section 5]{drinfeld:1}}]
If $\grn\subset A$ a nonzero proper ideal, then the moduli 
scheme $\bfM^r(\grn)$ is an affine smooth scheme of
finite type over $\Spec A[\grn^{-1}]$ of relative dimension $r-1$.
\end{thm}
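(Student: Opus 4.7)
The plan is to verify four properties in sequence: representability, affineness (with finite type), smoothness, and relative dimension $r-1$. The cornerstone is that $\grn\neq A$ rigidifies the moduli problem: any automorphism of a Drinfeld $A$-module of rank $r$ is a scalar in $A^\times=\F_q^\times$, and such a scalar acts non-trivially on $(\grn^{-1}/A)^r$, so no non-trivial automorphism preserves the level structure. Hence the moduli functor is a sheaf of sets (no automorphism groupoid) on the Zariski site of $A[\grn^{-1}]$-schemes.

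The decisive technical step is that the level structure canonically trivializes the line bundle $E$. Indeed, pick any non-zero $\alpha\in(\grn^{-1}/A)^r$ and consider the section $\lambda(\alpha)\colon S\to\varphi[\grn]\subset E$. Over $A[\grn^{-1}]$ the group scheme $\varphi[\grn]$ is finite \'etale, so its zero section is clopen; since $\lambda$ is an isomorphism of $A$-module schemes, $\lambda(\alpha)$ factors through the complement of the zero section, i.e.~is a nowhere-vanishing section of $E$, yielding a trivialization $E\cong\O_S$. After this rigidification a Drinfeld $A$-module is encoded by the coefficients $\varphi_{a_j,i}\in\O_S$, where $a_1,\ldots,a_s$ is a finite $\F_q$-algebra generating set of $A$ and $0\le i\le r\deg a_j$, subject to (i) the polynomial relations coming from the $\F_q$-algebra relations in $A$, (ii) $\partial\varphi=\gamma$, (iii) invertibility of the leading coefficients $\varphi_{a_j,r\deg a_j}$, and (iv) the scheme-theoretic identifications on $\varphi[\grn]=\bigcap_{a\in\grn}\ker\varphi_a$ imposed by the remaining components of $\lambda$. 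This presents $\bfM^r(\grn)$ as a locally closed subscheme of a finite-dimensional affine space over $A[\grn^{-1}]$ on the open locus where the leading coefficients are units, yielding representability, affineness, and finite type in one stroke.

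For smoothness I would apply the infinitesimal lifting criterion: given a square-zero extension $L'\twoheadrightarrow L$ of $A[\grn^{-1}]$-algebras with ideal $I$ and $(E,\varphi,\lambda)/L$, I would lift $E$ canonically via $\lambda(\alpha)$, lift each $\varphi_{a_j,i}$ arbitrarily to $L'$, and then correct these lifts so that the polynomial relations hold exactly. Any discrepancy in a relation lies in $I$, and since $I^2=0$ the system of correction equations is linear; the invertibility of the leading coefficients makes it solvable degree-by-degree. The level structure itself lifts uniquely because $\varphi[\grn]$ is finite \'etale over $L'$, which is precisely why $\grn$ must be inverted. Hence $\bfM^r(\grn)$ is formally smooth, hence smooth.

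For the relative dimension, the warm-up $A=\F_q[t]$ is already decisive: $\varphi_t=\gamma(t)+\sum_{i=1}^{r}\varphi_{t,i}\tau^i$ contributes $r$ free parameters $\varphi_{t,1},\ldots,\varphi_{t,r}$ with $\varphi_{t,r}\in\Gm$, while the level-$(t)$ structure normalized so that $\lambda(\alpha)=1\in\O_S$ imposes the single equation $\varphi_t(1)=0$, i.e.~$\gamma(t)+\sum_i\varphi_{t,i}=0$, cutting the count to $r-1$. For general $A$ and $\grn$ one proceeds analogously on a chosen generator, the extra algebra relations among the $\varphi_{a_j}$ eliminating the redundant coefficients. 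The main obstacle I anticipate is checking that the local affine descriptions corresponding to different choices of the rigidifying element $\alpha$ (and of the generators $a_j$) really glue into one scheme: here the absence of automorphisms together with the \'etaleness of $\varphi[\grn]$ make the transition data canonical, so the gluing is formal but must be spelled out carefully.
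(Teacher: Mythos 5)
The paper gives no proof of this theorem: it is quoted from Drinfeld, and the only place the relevant construction is carried out is the special case $A=\F_q[t]$, $\grn=(t)$ in Lemma~\ref{S.9} and Proposition~\ref{S.10}. Measured against that, the first half of your argument is sound and is exactly the right mechanism: since $\varphi[\grn]$ is finite \'etale over an $A[\grn^{-1}]$-base, $\lambda(\alpha)$ for a fixed nonzero $\alpha\in(\grn^{-1}/A)^r$ is a nowhere-vanishing section of $E$, so $E$ is trivialized \emph{globally} for every object over every base --- there is no gluing over different choices of $\alpha$ to worry about, since one choice works uniformly. Combined with rigidity this does give representability by an affine scheme of finite type (the conditions on the coefficients and on the finitely many sections $\lambda(\beta)$ are closed except for the simultaneous invertibility of finitely many elements, which cuts out a basic affine open; surjectivity of $\lambda$ onto $\varphi[\grn]$ is automatic by comparing degrees). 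Two small corrections: the automorphism group of a Drinfeld module over a field is $\F_{q^m}^\times$ with $m\mid r$, not always $\F_q^\times$ (see Lemma~\ref{2.4}), though rigidity still holds because a scalar fixing the nonzero section $\lambda(\alpha)$ is $1$; and your dimension count for $A=\F_q[t]$ omits the parameters $\lambda(e_2),\dots,\lambda(e_r)$ and their defining equations --- they cancel, but the clean statement is that of Lemma~\ref{S.9}: $\lambda$ determines $\varphi$ uniquely, so the moduli space is the open subscheme $\Omega_r\subset\bbP^{r-1}$, visibly smooth of relative dimension $r-1$.

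The genuine gap is in smoothness and the dimension count for general $A$. Your lifting argument reduces to the assertion that, after lifting the coefficients $\varphi_{a_j,i}$ arbitrarily over a square-zero extension, the correction equations coming from the relations among the generators of $A$ (including the commutation relations $\varphi_{a_i}\varphi_{a_j}=\varphi_{a_j}\varphi_{a_i}$ in the twisted polynomial ring) are always solvable ``because the leading coefficients are invertible''. That is precisely the content of the theorem, not a formal consequence: a closed subscheme of affine space on which some coordinates are inverted has no reason to be smooth, and nothing in your sketch identifies the obstruction or shows it vanishes. The same applies to ``the extra algebra relations eliminating the redundant coefficients'' in the dimension count. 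To close the proof you need one of the two standard arguments: either Drinfeld's deformation-theoretic computation, which shows that the obstruction group vanishes and the tangent space has dimension $r-1$ using that $A$ is a smooth one-dimensional $\F_q$-algebra; or an explicit reduction to $A=\F_q[t]$ (where there are no relations and Lemma~\ref{S.9} settles everything) --- but the restriction morphism $I_b$ of \eqref{eq:S.1} along which one would transfer the result is finite and not \'etale, so deducing smoothness for general $A$ from the $\F_q[t]$ case also requires a genuine argument rather than base change. As written, the proposal establishes ``affine of finite type'' but not ``smooth of relative dimension $r-1$''.
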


The finite group
$\GL_r(A/\grn)$ acts on level-$\grn$ structures, and hence gives a
right action on $\bfM^r(\grn)$ by $(E,\varphi, \lambda)\mapsto
(E,\varphi,\lambda g)$, for $g\in \GL_r(A/\grn)$. 
By inflation, $\GL_r(\wh A)$ acts on $\bfM^r(\grn)$ on the right.

For any element $g\in K(1):=\GL_r(\wh A)$, denote by 
\[ J_{\!g\,}: \bfM^r(\grn)\to \bfM^r(\grn), \quad (E,\varphi, \lambda)\mapsto
(E,\varphi,\lambda g), \]
the isomorphism translating the level structures. Observe that 
if $(E,\varphi,
\lambda)$ is the fiber of the universal family
$(\wt E,\wt \varphi, \wt \lambda)\to \bfM^r(\grn)$ at a point $x$,
then the fiber of $(\wt E,\wt \varphi, \wt \lambda)$ at its image
$J_{\!g\,}(x)$ is $(E,\varphi, \lambda g)$. Consider the family $(\wt E,\wt
\varphi, \wt \lambda g)$ over $\bfM^r(\grn)$. By the universal
property of fine moduli schemes, there is a unique morphism $\alpha_g:
\bfM^r(\grn)\to \bfM^r(\grn)$ such that $\beta_g:(\wt E,\wt
\varphi, \wt \lambda g)\simeq \alpha_g^*(\wt E,\wt
\varphi, \wt \lambda)$ over $\bfM^r(\grn)$. The above observation
and modular interpretation say that $\alpha_g=J_{\!g\,}$. 
With level structures ignored,
the composition of $\beta_g$ with the base change isomorphism
  $I_{\!g\,}:(\wt E,\wt \varphi)\isoto J_{\!g\,}^*(\wt E,\wt \varphi)$ gives a
  commutative diagram: 
\begin{equation}
    \label{eq:action}
\begin{CD}
  (\wt E,\wt \varphi) @>I_{\!g\,}>> (\wt E,\wt \varphi)\\ 
   @VVV @VVV \\
   \bfM^r(\grn) @>J_{\!g\,}>> \bfM^r(\grn).
\end{CD}    
\end{equation}
One easily checks that for $g_1,g_2\in K(1)$, the relations $J_{\!g_1
  g_2\,}=J_{\!g_2\,} \circ J_{\!g_1\,}$ and $I_{\!g_1
  g_2\,}=I_{\!g_2\,} \circ I_{\!g_1\,}$ hold on the moduli scheme
  $\bfM^r(\grn)$ and the universal family $(\wt E,\wt \varphi)$,
  respectively.
Therefore, there is a right action of $K(1)$ on the scheme (geometric
  line bundle) $\wt E$ which is equivariant for the action $J_{\!g\,}$ on $\bfM^r(\grn)$. 
 
Let $\tilde\grn\subset \grn$ be two nonzero proper ideals of $A$. Then $\grn^{-1}/A\subset \tilde\grn^{-1}/A$ and $\grn^{-1}/A$ is the $\grn$-torsion in $\tilde\grn^{-1}/A$.
Likewise $\varphi[\grn]$ equals the $\grn$-torsion in $\varphi[\tilde\grn]$. If $\wt\lambda\colon(\tilde\grn^{-1}/A)^r_S\isoto\varphi[\tilde\grn]$ is a level-$\tilde\grn$ structure on $\varphi$, then its restriction to $\grn$-torsion defines an isomorphism $\lambda\colon(\grn^{-1}/A)^r_S\isoto\varphi[\grn]$. 
Since $\GL_r(A/\tilde\grn)\to \GL_r(A/\grn)$ is
surjective with kernel isomorphic to $K(\grn)/K(\tilde\grn)$, the map  
\begin{equation}
  \label{eq:2.4}
  \Isom_S\bigl((\tilde\grn^{-1}/A)^r_S, \varphi[\tilde\grn]\bigr) \to \Isom_S\bigl((\grn^{-1}/A)^r_S,
  \varphi[\grn]\bigr), \quad \wt\lambda\mapsto \lambda:=\wt\lambda|_{(\grn^{-1}/A)^r_S}
\end{equation}
is surjective with fibers being $\bigl(K(\grn)/K(\tilde\grn)\bigr)_S$-orbits, if the source is
non-empty. Thus, the natural map 
\begin{equation}
  \label{eq:2.5}
  \pi_{\grn,\tilde\grn}:\bfM^r(\tilde\grn)\to \bfM^r(\grn), \quad (E,\varphi,
  \wt\lambda)\mapsto (E,\varphi, \lambda)
\end{equation}
induces an isomorphism 
\begin{equation}
  \label{eq:2.6}
  \bfM^r(\tilde\grn)/(K(\grn)/K(\tilde\grn)) \to
  \bfM^r(\grn)[\tilde\grn^{-1}]:=\bfM^r(\grn)\otimes_{A[\grn^{-1}]} 
  A[\tilde\grn^{-1}]. 
\end{equation}
Note that $\bfM^r(\tilde\grn)$ is the finite \'etale Galois 
cover of $\bfM^r(\grn)[\tilde\grn^{-1}]$
parameterizing the level-$\tilde\grn$ structures on the universal family
$( E,  \varphi, \lambda)$ on $\bfM^r(\grn)$
extending $\lambda$. Using this description, 
the pull-back of $( E, \varphi)$ on $\bfM^r(\grn)$
by $\pi_{\grn,\tilde\grn}$ is the universal family on $\bfM^r(\tilde\grn)$. 

Let $K\subset \GL_r(\wh A)=:K(1)$ be an open compact subgroup. 
Choose any proper nonzero ideal $\grn$ with $K(\grn)\subset K$. Define
the moduli scheme of level-$K$ structure by
\begin{equation}
  \label{eq:2.7}
  \bfM^r_{K}[\grn^{-1}]:=\bfM^r(\grn)/(K/K(\grn)), 
\end{equation}
(The name ``moduli scheme'' will be justified in Proposition~\ref{2.5} below.) It is an affine scheme, and is smooth over $A[\grn^{-1}]$ if the action of $K/K(\grn)$ on $\bfM^r(\grn)$ is free by \cite[Exp.~V,~Proposition~1.8]{SGA1}, \cite[Exp.~V, Theorem~7.1]{SGA3} and \cite[IV$_4$, Proposition~17.7.7]{EGA}.
By \eqref{eq:2.6} this does not depend on the choice of $\grn$.
Let $\grn_K$ denote the largest ideal satisfying the
property $K(\grn_K)\subset K$.
One can extend $\bfM^r_K[\grn^{-1}]$ uniquely to a moduli scheme
which is faithfully flat over $A[\grn_K^{-1}]$.
When $K\neq K(1)$, one can simply take $\grn:=\grn_K$. 
When $K=K(1)$, the scheme $\bfM^r_{K(1)}$ is obtained by gluing 
of affine schemes $\bfM^r_{K(1)}[\grn_1^{-1}]$ and 
$\bfM^r_{K(1)}[\grn_2^{-1}]$ along the open subset 
$\bfM^r_{K(1)}[(\grn_1 \grn_2)^{-1}]$, where $\grn_1$ and $\grn_2$ are
any two coprime proper ideals.

\begin{defn}
  Let $K\subset \GL_r(\wh A)=:K(1)$ be an open compact subgroup,
  and  $\grn_K$ denote the largest ideal satisfying the property
  $K(\grn_K)\subset K$. Let $\bfM^r_K$ denote the unique moduli scheme
  over 
  $A[\grn_K^{-1}]$ such that $\bfM^r_K\otimes_{A[\grn_K^{-1}]}
  A[\grn^{-1}]=\bfM^r_{K}[\grn^{-1}]$ for any ideal $\grn \subset
  \grn_K$. With this notation,  
  one has $\bfM^r(\grn)=\bfM^r_{K(\grn)}$ for any proper nonzero ideal
  $\grn\subset A$. Also let $M^r_K:=\bfM^r_K
\otimes_{A} F$ denote the generic fiber of $\bfM^r_K$.
\end{defn}

\begin{prop}\label{Prop2.4} 
{\rm (1)} The moduli scheme $\bfM^r_K$ is affine over $A[\grn_K^{-1}]$. Moreover, if $K\neq K(1)$ and the action of $K/K(\grn_K)$ on $\bfM^r(\grn_K)$ is free, then $\bfM^r_K$ is smooth over $A[\grn_K^{-1}]$.

{\rm (2)} There is a bijection between the set $\pi_0(M^r_K\otimes_F \ol
      F)$ of geometrically connected components and the ray class
      group $(\A^\infty)^\times/F^\times \det(K)$, where
      $\det(K)\subset \wh A^\times$ is the image under the determinant
      map. 
      If $K=K(\grn)$ then $\det(K)=(1+\grn \wh A)^\times$
      and $M^r_K$
      has $|(\A^\infty)^\times/F^\times (1+\grn \wh
      A)^\times|=h(A)\cdot|(A/\grn)^\times|/(q-1)$ 
      geometrically connected components, where $h(A)$ is the class
      number of $A$.
      The set of
      orbits of the $\GL_r(\wh A)$-action on $\pi_0(M^r_K\otimes_F \ol
      F)$ is in bijection with the
      ideal 
      class group $\Cl(A)=(\A^\infty)^\times/F^\times \wh
      A^\times$. In particular, the action of $\GL_r(\wh A)$ on
      $\pi_0(M^r_K\otimes_F \ol F)$ is transitive if and only if $A$
      is a principal ideal domain. 

{\rm (3)} The moduli schemes $\bfM^r_K$ and $M^r_K$ are connected as schemes. 
\end{prop}

Note that $\grn\mapsto|(A/\grn)^\times|$ is Euler's totient function for $A$.
 
\begin{proof}
(1) was already proved above.

(2) Choose a proper ideal $\grn\subset A$ with $K(\grn)\subset K$. Using the modular
interpretation of $M^n(\grn)$ and the analytic theory, there is a natural 
isomorphism of rigid analytic spaces
\[ M^r(\grn)(\C_\infty)\simeq \GL_r(F)\backslash \Omega^r (\C_\infty) 
\times
\GL_r(\bbA^\infty)/K(\grn), \]  
where $\Omega^r$ is the Drinfeld period domain of rank $r$ over
$\C_\infty$, see \cite[Theorem~5.6]{Deligne-Husemoeller}. It induces an isomorphism
\[ M^r_K(\C_\infty)\simeq \GL_r(F)\backslash \Omega^r (\C_\infty) 
\times
\GL_r(\bbA^\infty)/K\,. \]  
Recall that $\Omega^r(\C_\infty)$ 
is the complement of the union of
all $F_\infty$-rational hyperplanes in $\bbP^{r-1}(\C_\infty)$. 
Since $\Omega^r$ is connected as a rigid analytic space, one has a
bijection 
\[ \pi_0(M^r_K \otimes_F \C_\infty) \simeq \GL_r(F)\backslash
\GL_r(\bbA^\infty)/K\,. \]
Through the determinant map, strong approximation for $\SL_r$ yields 
a bijection
\[ \pi_0(M^r_K \otimes_F \ol F)=\pi_0(M^r_K \otimes_F \C_\infty)
\simeq (\A^{\infty})^\times/F^\times \det
(K)\,. \] 
Note that $\det K(\grn)=(1+\grn \wh A)^\times$.
The equality 
$|(\A^\infty)^\times/F^\times (1+\grn \wh
      A)^\times|=h(A)\cdot|(A/\grn)^\times|/(q-1)$ follows immediately from the exact sequences
\begin{equation}\label{Eq1Pic_n} 1 \longrightarrow \frac{F^\times \wh
      A^\times}{F^\times (1+\grn \wh
      A)^\times} \longrightarrow \frac{(\A^\infty)^\times}{F^\times (1+\grn \wh
      A)^\times} \longrightarrow \frac{(\A^\infty)^\times}{F^\times \wh
      A^\times} \longrightarrow 1 \quad {and}
    \end{equation}
    \begin{equation}\label{Eq2Pic_n}
    1 \longrightarrow A^\times=\Fq^\times \longrightarrow \wh A^\times/(1+\grn \wh A)^\times=(A/\grn)^\times \longrightarrow F^\times \wh
      A^\times/F^\times (1+\grn \wh
      A)^\times  \longrightarrow 1 
      \end{equation}
      and from $h(A)=|(\bbA^\infty)^\times/F^\times \wh A^\times|$.
The action of $\GL_r(\wh A)$ on $M^r_K(\C_\infty)$ is 
simply the right translation. Through the determinant map, this action
factors through the right translation of 
$\wh A^\times$ on $(\A^\infty)^\times/F^\times \det(K(\grn))$. 
Then the set of orbits is isomorphic to $\Cl(A)$.   

(3) It suffices to show the connectedness of $M^r(\grn)$ as this surjects
onto $M^r_K$ and $M^r_K\subset \bfM^r_K$ is open and dense. Indeed,
$\bfM^r_K$ is smooth over $A[\grn_K^{-1}]$, and so all its irreducible
components meet $M^r_K$ by \cite[Proposition~III.9.7]{hartshorne}. 
    The connectedness of $M^r(\grn)$ is mentioned in
    \cite[p.~335]{pink}, but this does not seem to be stated explicitly 
    in standard
    references \cite{drinfeld:1,laumon:1}. So we give a proof
    for the reader's convenience. It suffices to show that the
    $\Gal(\ol F/F)$-action on $\pi_0(M^r(\grn)\otimes_F \ol F)$ is transitive. 
    Using the Weil-pairing map $w_\grn: M^r(\grn)\to M^1(\grn)$ 
    constructed by van der Heiden
    \cite[Theorem 4.1]{vdHeiden}, one
    obtains a surjective morphism
    \begin{equation}
      \label{eq:pi0}
      \pi_0(M^r(\grn)\otimes_F \ol F)\to \pi_0(M^1(\grn) \otimes_F \ol
    F)=M^1(\grn)({\ol F})\simeq (\A^\infty)^\times/F^\times (1+\grn
    \wh A)^\times.
    \end{equation}
As shown in (2), $\pi_0(M^r(\grn)\otimes_F \ol F)$ and $\pi_0(M^1(\grn)\otimes_F \ol
    F)$ have the same cardinality, and hence the map \eqref{eq:pi0} 
   is bijective.   
By Drinfeld's description of Drinfeld modules of rank one 
\cite[Section 8]{drinfeld:1}, the group $\Gal(\ol F/F)$ acts 
transitively on the set $M^1(\grn)(\ol F)$ and hence
    on the set $\pi_0(M^r(\grn)\otimes_F \ol F)$. This completes the proof.
    \qed 
\end{proof}

A level-$K$ 
structure on a Drinfeld $A$-module $(E,\varphi)$ 
over an $A[\grn^{-1}]$-scheme $S$ with $K(\grn)\subset K$ 
is a $K$-orbit $\lambda K$ of 
level-$\grn$ structures that is defined over $S$. 
By (\ref{eq:2.4}), this does not depend on
the choice of $\grn$, provided that the support of $\grn$ is
unchanged. Define the contravariant functor 
\[ F^r_K: (\text{$A[\grn_K^{-1}]$-sch}) \to ({\rm Set}) \]
as follows. For $K\neq K(1)$, $F^r_K(S)$ is the set of isomorphism
classes of Drinfeld $A$-modules of rank $r$ with level-$K$ structure over
$S$. For $K=K(1)$, $F^r_{K(1)}(S)$ is the set of isomorphism
classes of Drinfeld $A$-modules of rank $r$ over $S$.

\begin{defn}
  An open compact subgroup $K\subset K(1)$ is said to be {\it fine} if
  there is a prime ideal $\grp\subset A$ such that the image of $K$ 
  in $\GL_r(A/\grp)$ is unipotent.
\end{defn}

\begin{lemma}\label{2.4}
  If $K$ is fine, then the automorphism group of any Drinfeld
  $A$-module of rank $r$ with level-$K$ structure 
  $(E,\varphi, \lambda K)$ over $S$ is trivial.   
\end{lemma}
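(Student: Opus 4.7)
Let $u$ be an automorphism of $(E,\varphi,\lambda K)$. My plan is to prove $u=\id_E$ in two stages: first at every geometric point $\bar s\to S$, then globally via a locally-constant argument. Following the convention adopted after Definition~\ref{2.2}, $u$ is $\O_S$-linear, so corresponds to a global section $\mu\in\O_S(S)^\times$ acting on $E$ by scalar multiplication. The commutation $\mu\cdot\varphi_a=\varphi_a\cdot\mu$ expands via $\varphi_a=\sum_i\varphi_{a,i}\tau^i$ into $\varphi_{a,i}(\mu^{q^i}-\mu)=0$ for every $a\in A$ and $i$. For each non-zero $a$, since $(E,\varphi)$ has pointwise rank $r$, the leading coefficient $\varphi_{a,r\deg a}$ is a nowhere-vanishing global section of $E^{\otimes(1-q^{r\deg a})}$, hence a trivialization, forcing $\mu^{q^{r\deg a}}=\mu$. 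Intersecting over all non-zero $a$ and using $\gcd\{\deg a:a\neq 0\}=\deg\infty$, one obtains $\mu^{q^{r\deg\infty}}=\mu$; hence $\mu$ defines a morphism from $S$ to the finite \'etale $\F_q$-scheme $\Spec\F_q[X]/(X^{q^{r\deg\infty}}-X)$, and so is locally constant on $S$.

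Next I would show $u_{\bar s}=1$ at every geometric point $\bar s\to S$. Let $\grp\subset A$ be the prime provided by the fineness of $K$. The unipotence of the image of $K$ in $\GL_r(A/\grp)$ forces $\grn_K\subset\grp$: otherwise $\grn_K\cdot A_\grp=A_\grp$ so $K(\grn_K)_\grp=\GL_r(A_\grp)$, which by $K\supset K(\grn_K)$ would give $K_\grp=\GL_r(A_\grp)$ with non-unipotent image. Therefore $S$ is an $A[\grp^{-1}]$-scheme and $\varphi[\grp]$ is finite \'etale over $S$. The condition $u\circ\lambda\in\lambda K$ translates, after restriction to $\grp$-torsion, into the statement that the action of $u_{\bar s}$ on $\varphi_{\bar s}[\grp]$, transported to $(\grp^{-1}/A)^r$ via $\lambda_{\bar s}|_\grp$, is a unipotent matrix in $\GL_r(A/\grp)$; in particular it has $p$-power order, where $p$ denotes the characteristic of $\F_q$.

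By Drinfeld's theorem the endomorphism algebra $D:=\End(\varphi_{\bar s})\otimes_A F$ is a division algebra over $F$. Since $F$ has characteristic $p$, a $p$-power-order element $x\in D^\times$ satisfies $(x-1)^{p^k}=0$ and thus equals $1$ (the only nilpotent element of a division algebra is $0$); so the finite group $\Aut(\varphi_{\bar s})\subset D^\times$ has order coprime to $p$, and so does $u_{\bar s}$. Its image in $\Aut(\varphi_{\bar s}[\grp])$ then has order both a power of $p$ and coprime to $p$, hence is trivial, placing $u_{\bar s}$ in the kernel of $\Aut(\varphi_{\bar s})\to\Aut(\varphi_{\bar s}[\grp])$. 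The $\grp$-adic Tate module gives a faithful inclusion $\End(\varphi_{\bar s})\hookrightarrow\End_{A_\grp}(T_\grp\varphi_{\bar s})\cong M_r(A_\grp)$ under which this kernel lies inside the pro-$p$ principal congruence subgroup $1+\grp\cdot M_r(A_\grp)$; its torsion elements have $p$-power order, so combined with order coprime to $p$ we conclude $u_{\bar s}=1$.

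Combining the two steps, $\mu$ is locally constant on $S$ with value $1$ at every geometric point, hence $\mu=1$ globally and $u=\id_E$. The main obstacle I anticipate is the reduction at supersingular geometric points, where $\Aut(\varphi_{\bar s})$ may properly contain $\F_q^\times$; combining the pro-$p$ Tate-module argument with the division-algebra structure of $D$ is precisely what rules out non-trivial automorphisms there.
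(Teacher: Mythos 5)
Your proof is correct, but it takes a genuinely different route from the paper's. The paper's proof first reduces to $S=\Spec k$ with $k$ algebraically closed (without elaborating why this suffices), then invokes the structure result $\Aut(\varphi)\cong\F_{q^m}^\times$ for some $m\mid r$ (from Gekeler/Wei--Yu, via the fact that $D=\End^0(\varphi)$ is totally ramified at $\infty$). Because $\F_{q^m}$ is a field, the map $\F_{q^m}\to\End(\varphi[\grp])\cong\Mat_r(A/\grp)$ is automatically injective on units, so the image of $g$ in $\GL_r(A/\grp)$ is simultaneously semi-simple (of order prime to $p$) and unipotent (by fineness), hence trivial by Jordan decomposition. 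You instead avoid $\Aut(\varphi)\cong\F_{q^m}^\times$: you use only that $D$ is a division algebra (so $\Aut(\varphi_{\bar s})$ has no nontrivial $p$-power torsion, by the nilpotence argument $(x-1)^{p^k}=x^{p^k}-1$), derive $p$-power order of the image in $\Aut(\varphi_{\bar s}[\grp])$ from unipotence, and then kill the kernel of $\Aut(\varphi_{\bar s})\to\Aut(\varphi_{\bar s}[\grp])$ by embedding it via the Tate module into the pro-$p$ group $1+\grp M_r(A_\grp)$. This is a heavier tool at the last step (Tate-module faithfulness rather than the one-line observation that a nontrivial $\F_q$-algebra map out of the field $\F_{q^m}$ is injective), but it needs less input about the fine structure of $\Aut(\varphi)$. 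Your locally-constant argument at the start, using $\mu^{q^{r\deg a}}=\mu$ and the gcd of degrees, also makes explicit the reduction to geometric points that the paper dispenses with in a single sentence; this is a small but real gap-filling contribution, since on a non-reduced $S$ triviality at all geometric points would not by itself force $\mu=1$.
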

\begin{proof}
  One can assume that $S=\Spec k$ with an \ac field $k$. As $K$ is
  fine, the image of $K$ in $\GL_r(A/\grp)$ is unipotent for some prime
  $\grp$. Let $g\in \Aut(\varphi)$ be an automorphism fixing
  $\lambda K$. Observe that $g$ fixes $\lambda K'$ for any subgroup 
  $K'\supset K$. 
  Thus, after replacing $K$ by $K(\grp)K$, one can assume that
  $K(\grp)\subset K$. 
  It is known that 
  the automorphic group $\Aut(\varphi)$ is isomorphic to 
  $\F_{q^m}^\times$ for
  some integer $m|r$. This follows from the fact that the endomorphism
  algebra $D=\End^0(\varphi)$ of $\varphi$ 
  is totally ramified at $\infty$, i.e. $D_\infty$ is a central division
  $F_\infty$-algebra. See an argument in \cite[p.322]{gekeler:mass} or 
  \cite[Lemma 2.5]{wei-yu:classno}.
  Thus, we have an $\Fq$-algebra homomorphism
\begin{equation}\label{EqEndInjective} \F_{q^m} \to \End(\varphi) \to \End(\varphi[\grp])\simeq
  \Mat_r(A/\grp). 
  \end{equation}
  Since $\F_{q^m}$ is a field, the map \eqref{EqEndInjective}, and hence also $\F_{q^m}^\times=\Aut(\varphi) 
  \to \GL_r(A/\grp)$ are injective. Now $g \lambda K=\lambda K$ implies
  that $g\in \F_{q^m}^\times \cap \lambda K \lambda^{-1}$.
  Therefore, the image of $g$ in $\GL_r(A/\grp)$ is both semi-simple
  and unipotent and is trivial by the Jordan decomposition. This shows
  that $\Aut(E,\varphi,\lambda K)=1$. \qed   
\end{proof}

\begin{prop}\label{2.5}
  {\rm (1)} For any open compact subgroup $K$, $\bfM^r_K$ is the
      coarse moduli scheme for the 
      functor $F^r_K$. That is, there is a natural transformation
      $\tau: F^r_K\to h_{\bfM^r_K}:=\Hom(\fdot,\bfM^r_K)$, and $\tau$ is
      universal among such natural transformations and induces a bijection of the sets of $k$-points for any algebraically closed field $k\supset \Fp$.
       
  {\rm (2)} If $K$ is fine, then $\bfM^r_K$ represents the functor
      $F^r_K$ and is smooth over $A[\grn_K^{-1}]$ of relative dimension $r-1$. If $K'\normal K$ is normal and $K$ is fine, then the natural
      morphism 
\[ \pi_{K,K'}: \bfM^r_{K'} \to \bfM^r_K [\grn_{K'}^{-1}] \]
is finite \'etale Galois with group $K/K'$. The pull-back of the universal
family $(\wt E,\wt \varphi)$ on $\bfM^r_K$ by $\pi_{K,K'}$ is the universal
family on $\bfM^r_{K'}$.  
\end{prop}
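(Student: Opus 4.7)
The strategy is to bootstrap from the known fact that $\bfM^r(\grn)=\bfM^r_{K(\grn)}$ is a fine moduli scheme, using the quotient description of (2.7). Fix a proper nonzero ideal $\grn\subset\grn_K$, set $\Gamma := K/K(\grn)$, and recall that the Hecke translations $J_g$ of the preceding discussion define a right $\Gamma$-action on $\bfM^r(\grn)$ with $\bfM^r(\grn)/\Gamma = \bfM^r_K[\grn^{-1}]$; by the diagram (\ref{eq:action}), this action lifts to a $\Gamma$-action on the universal triple $(\wt E,\wt\varphi,\wt\lambda)$ which fixes the level-$K$ class $\wt\lambda K$.

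For (1), given $(E,\varphi,\lambda K)\in F^r_K(S)$, the étale sheaf of level-$\grn$ lifts of $\lambda K$ is a $\Gamma$-torsor over $S$, so étale-locally a lift $\lambda$ yields a map to $\bfM^r(\grn)$; any two such maps differ by a unique element of $\Gamma$, so they patch to a well-defined morphism $S\to\bfM^r(\grn)/\Gamma=\bfM^r_K[\grn^{-1}]$. Varying $\grn$ one obtains $\tau\colon F^r_K\to h_{\bfM^r_K}$. Universality is then automatic: any natural transformation $\eta\colon F^r_K\to h_Z$ pulls back along the forgetful map $F^r_{K(\grn)}\to F^r_K$ to a morphism $\bfM^r(\grn)\to Z$ which is $\Gamma$-equivariant (because $\lambda g K=\lambda K$ for $g\in K$), hence descends uniquely through the categorical quotient $\bfM^r_K[\grn^{-1}]$ and glues over varying $\grn$ to a morphism $\bfM^r_K\to Z$.

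For the representability and smoothness in (2), the decisive input is Lemma~\ref{2.4}: when $K$ is fine, the $\Gamma$-action on $\bfM^r(\grn)$ is \emph{free}. Indeed, if $g\in\Gamma$ stabilizes a geometric point $(E,\varphi,\lambda)$ of $\bfM^r(\grn)$, there exists $u\in\Aut(E,\varphi)$ with $u\circ\lambda=\lambda\cdot g$; since $g\in K$, this $u$ preserves the level-$K$ structure $\lambda K$, hence $u=\id$ by Lemma~\ref{2.4}, forcing $g=1$. Therefore $\bfM^r(\grn)\to\bfM^r_K[\grn^{-1}]$ is a finite étale $\Gamma$-torsor; smoothness of relative dimension $r-1$ descends from $\bfM^r(\grn)$, and the $\Gamma$-invariant triple $(\wt E,\wt\varphi,\wt\lambda K)$ descends by étale descent to a universal family over $\bfM^r_K[\grn^{-1}]$, yielding representability. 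The statement globally on $\bfM^r_K$ then follows by gluing over $\grn$.

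For the Galois étale claim, if $K'\normal K$ and $K$ is fine, choose $\grn\subset\grn_{K'}$ with $K(\grn)\subset K'\subset K$. Then $\bfM^r_{K'}=\bfM^r(\grn)/(K'/K(\grn))$ and $\bfM^r_K[\grn_{K'}^{-1}]=\bfM^r(\grn)/(K/K(\grn))$, so $\pi_{K,K'}$ is the quotient morphism by the free action of the finite group $K/K'$, hence finite Galois étale with group $K/K'$. The pull-back statement for the universal family is immediate, since both descended families arise from the same universal family on $\bfM^r(\grn)$ along compatible torsor maps. The main obstacle throughout is establishing the freeness of the $\Gamma$-action on $\bfM^r(\grn)$ in the fine case, for which Lemma~\ref{2.4} is precisely what is required.
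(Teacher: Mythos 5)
Your proposal is correct and follows essentially the same route as the paper: both parts reduce to the fine moduli scheme $\bfM^r(\grn)$, use the $K/K(\grn)$-torsor of level-$\grn$ lifts to construct and characterize the coarse moduli map, and invoke Lemma~\ref{2.4} to get freeness of the $K/K(\grn)$-action so that the universal family descends along the resulting finite \'etale quotient. Your explicit derivation of freeness from triviality of automorphism groups is a detail the paper leaves implicit, but the argument is the same.
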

\begin{proof}
  (1) This follows from the construction of $\bfM^r_K$ and the fine
      moduli scheme $\bfM^r_{K(\grn)}=\bfM^r(\grn)$ for a suitable $K(\grn)\subset K$. Indeed, suppose we
      have an object $(E,\varphi,\lambda K)$ over $S$. Then one finds a
      finite Galois  $K/K(\grn)$-cover $S_\grn\to
      S[\grn^{-1}]$ 
      and a family $(E,\varphi,\lambda)$ with level-$\grn$
      structure over $S_\grn$. 
      By the universal property, 
      there is a unique morphism
      $f: S_\grn\to \bfM^r_{K(\grn)}$ such that $(E,\varphi,\lambda)$
      is isomorphic to the pull-back of the universal family. 
       The composition
      $\pi_{K,K(\grn)}\circ f: S_\grn \to \bfM^r_K[\grn^{-1}]$ is 
      $K/K(\grn)$-invariant,
      and hence it induces a unique morphism 
      $f': S[\grn^{-1}]=S_\grn/(K/K(\grn)) \to \bfM^r_K[\grn^{-1}]$. 
      It is straightforward to check this transformation satisfies the
      universal property. 
  
(2) It follows from Lemma~\ref{2.4} that the right action of
      $K/K(\grn)$
      on $\bfM^r(\grn)$ is free. Thus, $\bfM^r_K$ is smooth over $A[\grn_K]$ by Proposition~\ref{Prop2.4} (1), and the universal family $(\wt
      E,\wt \varphi, \wt \lambda)$ on $\bfM^r(\grn)$ descends uniquely
      to a family $(E_K, \varphi_K)$ on $\bfM^r_K$. One can show that the
      $K$-orbit $\wt \lambda K$ is defined over $\bfM^r_K$. Thus, 
      one obtains
      a family $(E_K, \varphi_K,\wt \lambda K)$ in
      $F^r_K(\bfM^r_K)$. Using the same argument as in (1), we show
      that for any object $(E,\varphi, \lambda K)\in F^r_K(S)$, there is
      a unique map $f:S\to \bfM^r_K$ such that $(E,\varphi,\lambda
      K)\simeq f^*(E_K, \varphi_K,\wt \lambda K)$. This shows that
      $\bfM^r_K$ represents $F^r_K$. The remaining assertions
      follow from the same reason as in (\ref{eq:2.6}).\qed
\end{proof}

\begin{defn}[cf.~{\cite[Section~3]{pink}}]\label{2.6}
  A generalized Drinfeld $A$-module $(E,\varphi)$ over $S$ is called
  {\it weakly separating}, if for any Drinfeld $A$-module
  $(E',\varphi')$ over any
  $A$-field $L$, at most finitely many fibers of $(E,\varphi)$ over
  $L$-valued points of $S$ are isomorphic to $(E',\varphi')$.
\end{defn}

Note that our ``test'' objects $(E',\varphi')$ can be in finite
characteristic, in contrast to \cite{pink}.  
 
\begin{prop}\label{2.7}
  Let $(E,\varphi)$ be a weakly separating generalized 
  Drinfeld $A$-module over
  an $A$-scheme $S$ of finite type. Then for any positive integer $r$, 
  there is a unique closed subscheme $S_{\le r}$ of $S$ such that 
  any morphism $f:T\to S$ with the property 
  that  $f^*(E,\varphi)$ is of rank $\le
  r$ over $T$ factors through the inclusion 
  $S_{\le r}\to S$. Moreover, $S_{\le r}$ has
  relative dimension $\le r-1$ over $\Spec A$.      
\end{prop}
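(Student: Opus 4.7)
My plan is to realize $S_{\le r}$ as the simultaneous vanishing locus of the ``excess'' coefficients of $\varphi$. For each $a\in A$ and each integer $i>r\deg(a)$, the coefficient $\varphi_{a,i}$ is a global section of the line bundle $E^{\otimes(1-q^i)}$, whose vanishing locus is a well-defined closed subscheme of $S$ independent of local trivializations. Let $I\subset\O_S$ be the ideal sheaf locally generated by all such sections as $(a,i)$ vary, and set $S_{\le r}:=V(I)$. Since $S$ is of finite type over the Dedekind ring $A$, it is Noetherian and $I$ is finitely generated, so $S_{\le r}$ is cut out by finitely many conditions. A morphism $f\colon T\to S$ satisfies $f^*I=0$ exactly when $f^*\varphi_{a,i}=0$ for all admissible pairs $(a,i)$, i.e.\ precisely when $f^*(E,\varphi)$ is of rank $\le r$; uniqueness of $S_{\le r}$ is then immediate from this universal property via Yoneda.

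For the dimension bound I would stratify $S_{\le r}$ set-theoretically as $S_{\le r}=\bigsqcup_{s=1}^{r}\Sigma_s$, where $\Sigma_s$ is the locally closed subscheme of points $t$ with $r_t=s$. Indeed ``rank $\le s$'' cuts out the closed subscheme $S_{\le s}$, while ``rank $\ge s$'' is open, so $\Sigma_s=S_{\le s}\setminus S_{\le s-1}$ (with $S_{\le 0}=\emptyset$) is locally closed in $S$. Equipping each $\Sigma_s$ with its reduced induced structure and invoking the reconciliation between the two notions of rank on reduced bases stated right after Definition~\ref{2.1}, the pullback $(E,\varphi)|_{\Sigma_s^{\mathrm{red}}}$ becomes a genuine Drinfeld $A$-module of constant rank $s$ in the sense of Definition~\ref{2.2}.

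By the coarse moduli property of Proposition~\ref{2.5}(1) this family determines a classifying morphism $\alpha_s\colon\Sigma_s^{\mathrm{red}}\to\bfM^s_{K(1)}$ of $A$-schemes. The weakly separating hypothesis forces $\alpha_s$ to have finite geometric fibers: for any algebraically closed $A$-field $L$, an $L$-point of $\bfM^s_{K(1)}$ corresponds to an isomorphism class $[\varphi']$ of rank-$s$ Drinfeld modules over $L$, and its preimage in $\Sigma_s^{\mathrm{red}}(L)$ is $\{t\in\Sigma_s(L):(E,\varphi)_t\cong\varphi'\}$, which is finite by assumption. Comparing fiber dimensions over $\Spec A$, for each $v\in\Spec A$
\[
  \dim(\Sigma_s)_v\,=\,\dim(\Sigma_s^{\mathrm{red}})_v\,\le\,\dim(\bfM^s_{K(1)})_v\,=\,s-1\,\le\,r-1,
\]
and since $S_{\le r}=\bigcup_s\Sigma_s$ set-theoretically, the relative dimension of $S_{\le r}$ over $\Spec A$ is at most $r-1$.

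The main obstacle is controlling $\alpha_s$ cleanly, because $K(1)$ is not fine and $\bfM^s_{K(1)}$ is only a coarse moduli space, so $L$-points match isomorphism classes only after passing to an algebraic closure. If the direct coarse argument becomes awkward, I would work level by level: for each nonzero proper ideal $\grn\subset A$ base-change to $\Spec A[\grn^{-1}]$, form the finite \'etale $\GL_s(A/\grn)$-cover $\tilde\Sigma_s\to\Sigma_s^{\mathrm{red}}[\grn^{-1}]$ parametrizing level-$\grn$ structures on $(E,\varphi)$, and obtain a morphism into the fine moduli scheme $\bfM^s(\grn)$ whose finite fibers follow at once from weak separation; letting $\grn$ vary so as to avoid each given prime of $\Spec A$ then establishes the bound uniformly over $\Spec A$.
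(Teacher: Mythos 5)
Your proposal is correct and follows essentially the same route as the paper: $S_{\le r}$ is defined as the vanishing locus of the excess coefficients $\varphi_{a,i}$ (the paper uses only $\Fq$-algebra generators of $A$, but the resulting subscheme and universal property are the same), and the dimension bound is obtained by stratifying by pointwise rank, adding level-$\grn$ structures to get a finite \'etale cover mapping to the fine moduli scheme $\bfM^{r'}(\grn)$, and invoking weak separation for quasi-finiteness. Your fallback argument is precisely the paper's proof, and your first variant via the coarse space $\bfM^s_{K(1)}$ (with finiteness of geometric fibers) also goes through and has the mild advantage of not requiring the reduction to $A[\grn^{-1}]$-schemes.
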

\begin{proof}
  The proof is the same as \cite[Proposition~3.10]{pink} and is included
  merely for the reader's convenience. 
  The first statement is local on $S$. Thus, we may assume that
  $E=\bbG_{a,S}$ and  
  $S=\Spec R$ is affine. Suppose $A$ is generated by $a_1, \dots, a_s$
  as an $\Fq$-algebra. Let $S_{\le r}$ be the closed subscheme defined
  by the ideal generated by the elements
  $\varphi_{a_j,i}$ for $j=1,\dots, s$ and $i> r \deg(a_j)$. Then it
  is easy to verify that $S_{\le r}$ satisfies the universal property
  in the proposition.
  The second statement is local for $\Spec A$. Thus, 
  we may further assume that $S$ is an 
  $A[\grn^{-1}]$-scheme for a nonzero proper ideal $\grn\subset A$.  
  For any integer $1\le r'\le r$, let $S_{r'}:=S_{\le r'} \setminus S_{\le r'-1}$
  and $S_{1}:=S_{\le 1}$. It suffices to show that each $S_{r'}$ has
  relative dimension $\le r'-1$. 

  Over $S_{r'}$ the universal generalized Drinfeld module $(E,\varphi)$ is actually a (genuine) Drinfeld module of rank $r'$. Thus the $\grn$-torsion of $(E,\varphi)$ is a finite \'etale group scheme over $S_{r'}$. Adding level-$\grn$ structures
  to $(E,\varphi)$ over $S_{r'}$, that is trivializing this $\grn$-torsion, one obtains a finite \'etale cover $\wt
  S_{r'}$ of $S_{r'}$. This yields a morphism $f:\wt S_{r'}\to
  \bfM^{r'}(\grn)$ by the universal property of fine moduli schemes. 
  As $\wt S_{r'}$ is finite over $S_{r'}$, it suffices to show that
  $\wt S_{r'}$ has relative dimension $\le r'-1$, which is $\le r-1$.  
  By the property that $(E,\varphi)$ is weakly separating, the
  morphism $f$ is quasi-finite. Therefore, $\wt S_{r'}$ has relative 
  dimension $\le r'-1$ over $\Spec A$ and the proposition is 
  proved. \qed 
\end{proof}

Note that in general the locally closed stratum 
$S_r:=S_{\le r} \setminus S_{\le r-1}$ may not be dense 
in $S_{\le r}$ even if $S_r$ is
  non-empty. For example suppose we have a family $S$ with both $S_r$ and
  $S_{\le r-1}$ non-empty. Define a new family $T:=S_{\le r-1} \coprod
  S_{\le r}$ as the topologically disjoint union of $S_{\le r-1}$ and $S_{\le r}$. Then $T_r$ is not dense in $T$.  

We may view generalized Drinfeld modules of higher rank as the function
field analogue of semi-abelian varieties. 
The following is the analogous result for the semistable
reduction theorem for abelian varieties due to Grothendieck, Deligne
and Mumford \cite{DM}.  

\begin{prop}
  Let $R$ be a 
  discrete valuation ring with fraction field $L$, $\gamma:A\to R$ a ring homomorphism, 
  and let $(E,\varphi)$ be a
  Drinfeld $A$-module of rank $r$ over $L$. Then there is a finite
  tamely ramified extension $L'/L$, a generalized Drinfeld
  $A$-module of rank $\le r$ over $R'$, and an isomorphism $\alpha:
  (E,\varphi)\otimes_L L'\simeq (E',\varphi')\otimes_{R'} L'$, where
  $R'$ is the integral closure of $R$ in $L'$.   
\end{prop}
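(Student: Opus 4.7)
The plan is the classical rescaling argument. Since $R$ is a DVR every line bundle on $\Spec R$ is trivial, so after trivialization we may assume $E=\bbG_{a,L}$ and $\varphi\colon A\to L\{\tau\}$, $a\mapsto\varphi_a=\sum_i\varphi_{a,i}\tau^i$ with $\varphi_{a,0}=\gamma(a)\in R$. For any $u$ in an extension $L'$ of $L$, the conjugate
\[
\psi_a\;:=\;u\varphi_a u^{-1}\;=\;\sum_i u^{1-q^i}\varphi_{a,i}\tau^i
\]
defines a ring homomorphism $\psi\colon A\to L'\{\tau\}$ with $\partial\circ\psi=\gamma$, and multiplication by $u$ is an isomorphism $(E,\varphi)\otimes_L L'\isoto(E,\psi)$ in the sense of \eqref{eq:2.15}. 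The goal is to pick $u$ so that every coefficient $\psi_{a,i}$ lies in the integral closure $R'$ of $R$ in $L'$, and so that at least one $\psi_{a,i}$ with $i\ge 1$ is a unit; then $\psi$ extends to a generalized Drinfeld module of rank $\le r$ over $R'$ with positive rank on both fibres.

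Fix $\F_q$-algebra generators $a_1,\dots,a_s$ of $A$ and let $w$ denote the valuation on $L$ extended to any algebraic extension. Set
\[
\mu\;:=\;\min\Bigl\{\,w(\varphi_{a_j,i})/(q^i-1)\;:\;1\le j\le s,\ 1\le i\le r\deg(a_j),\ \varphi_{a_j,i}\neq 0\,\Bigr\}\,\in\,\Q.
\]
This minimum ranges over a finite set and is therefore attained; since each $q^i-1$ is coprime to $p$, so is the denominator $N$ of $\mu$. Take $L'/L$ a totally tamely ramified extension of index $N$, so that $\mu\in w(L'^\times)=(1/N)\Z$, and pick $u\in L'^\times$ with $w(u)=\mu$. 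By construction, $w(\psi_{a_j,i})=w(\varphi_{a_j,i})-(q^i-1)\mu\ge 0$ for all $j$ and all $i\ge 1$, with equality attained at some pair $(j_0,i_0)$.

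Because $\psi$ is a ring homomorphism and $R'\{\tau\}$ is a subring of $L'\{\tau\}$, integrality on the finitely many generators $a_j$ forces $\psi(A)\subset R'\{\tau\}$. Setting $E':=\bbG_{a,R'}$, one obtains a ring homomorphism $\psi\colon A\to\End(E')$ with $\partial\circ\psi=\gamma$ and $\deg_\tau\psi_a\le r\deg(a)$ for every $a$, which is the rank-$\le r$ condition of Definition~\ref{2.1}. The generic fibre is isomorphic to $(E,\varphi)\otimes_L L'$ and hence a Drinfeld module of rank $r$; the reduction $\bar\psi$ modulo the maximal ideal of $R'$ has the non-constant unit coefficient $\bar\psi_{a_{j_0},i_0}$ with $i_0\ge 1$ and hence is a standard Drinfeld module of some positive rank $r_s\le r$. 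Therefore $(E',\psi)$ is a generalized Drinfeld module of rank $\le r$ over $R'$ and $\alpha$ is given by multiplication by $u$. The main difficulty is to produce a \emph{single} $u$ that integralizes the coefficients of $\varphi_a$ for every $a\in A$ simultaneously; this is handled by restricting to the finitely many $\F_q$-algebra generators of $A$ and using that the ring-homomorphism property propagates integrality from these generators to all of $A$. Tameness of $L'/L$ is automatic since $\gcd(q^i-1,p)=1$.
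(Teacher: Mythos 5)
Your proof is correct, and the core rescaling step coincides with the paper's treatment of the special case $A=\F_q[t]$: there too one sets $\nu=\min_i v(a_i)/(q^i-1)$, adjoins $\pi^{1/(q^{i_0}-1)}$ for a uniformizer $\pi$ (tameness coming exactly from $\gcd(q^{i_0}-1,p)=1$), and conjugates by an element of valuation $-\nu$ so that all coefficients become integral and the $i_0$-th becomes a unit. Where you genuinely diverge is the passage to general $A$. The paper restricts $\varphi$ to $A_0=\F_q[t]$ for some $t\in A-\F_q$, applies the one-generator case to the resulting rank-$nr$ Drinfeld $A_0$-module, and then extends each remaining $\varphi_a$, $a\in A$, over $R'$ by invoking the extension theorem for homomorphisms of generalized Drinfeld modules \cite[Prop.~3.7]{pink}. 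You instead take the minimum $\mu$ over \emph{all} $\F_q$-algebra generators $a_1,\dots,a_s$ of $A$ simultaneously and observe that integrality of $\psi_{a_1},\dots,\psi_{a_s}$ propagates to all of $\psi(A)$ because $R'\{\tau\}$ is a subring of $L'\{\tau\}$ containing $\gamma(\F_q)$. Your route is more elementary (no extension lemma, only finite generation of $A$ and the ring-homomorphism property), while the paper's reduction keeps the minimization over the coefficients of a single element and reuses a lemma it needs elsewhere anyway; both correctly produce a unit coefficient $\bar\psi_{a_{j_0},i_0}$ with $i_0\ge1$ in the special fibre, which is what guarantees $r_s\ge1$ there. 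Two small points you should make explicit: take $L'=L(\pi^{1/N})$, which is Eisenstein over $R$, so that $R'=R[\pi^{1/N}]$ is again a discrete valuation ring and the extension of $w$ to $L'$ is unique (your phrase ``extended to any algebraic extension'' glosses over the possible non-uniqueness for a non-Henselian $R$); and note that the set over which $\mu$ is minimized is non-empty, since otherwise $\varphi$ would factor through $L\subset L\{\tau\}$.
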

\begin{proof}
  This is proved by Drinfeld in a terse style in his stable reduction theorem \cite[Proposition~7.1]{drinfeld:1} 
  when $R$ is complete. We
  provide more details for the reader's convenience.  
  Let $\pi$ be a uniformizer of $R$ and let $v$ be the valuation on
  $L$ with $v(\pi)=1$.
  We first prove the case where $A=\Fq[t]$. Suppose
  $\varphi_t=a_0+a_1\tau+\dots +a_{r} \tau^r$, $a_i\in L$ and 
  $a_r\neq 0$. Over a field extension $L'/L$, $\varphi_t$ is
  isomorphic to $\varphi^c_{t}=a'_0+\dots+ a'_r \tau^r$ with
  $a'_i=c^{q^i-1} a_i$ ($0\le i \le r$) 
  for some $0\neq c\in L'$. The isomorphism is given by
  $c\circ\varphi_t^c=\varphi_t\circ c$. Let 
\[ \nu:=\min_{1\le i\le r}
  \left \{\frac{v(a_i)}{(q^i-1)}\right \}. \] 
Then there is an integer $0<i_0\le r$ such that 
  $v(a_{i_0})= (q^{i_0}-1)
  \nu$ and $v(a_{i})\ge (q^{i}-1)\nu$ for all $0\le i\le r$. 
  
  Now take 
  $L'=L(\pi')$, $\pi'=\pi^{1/(q^{i_0}-1)}$ and
  $c:=\pi'^{-v(a_{i_0})}$. Clearly,
  $v(c)=-v(a_{i_0})/(q^{i_0}-1)=-\nu$. Thus, one has 
\[ v(a_{i}c^{q^i-1})=v(a_i)-(q^i-1)\nu\ge 0,\quad 0\le i\le r,\quad
  \text{and} \quad v(a_{{i_0}}c^{q^{i_0}-1})= 0. \] 
  The morphism $\varphi'_t:=\varphi^c_{t}$ defines a generalized Drinfeld
  $A$-module $(E',\varphi')$ of rank $\le r$ over $R'$, which has the
  desired property.  
 
  Now $F$ is arbitrary. Choose $t\in A\setminus\Fq$ and put
  $A_0:=\Fq[t]$ with fraction field $F_0$. 
  Then the restriction of $\varphi$ to $A_0$ gives rise to a Drinfeld
  $A_0$-module $(E_0,\varphi_0)$ over $L$ of rank $r_0=n r$ , where
  $n=[F:F_0]$. We have shown that there is a tamely ramified extension
  $L'/L$, a generalized Drinfeld $A_0$-module $(E'_0,\varphi'_0)$ of 
  rank $\le r_0$ over $R'$, and an isomorphism 
  $\alpha:(E,\varphi)\otimes_L L'\simeq (E',\varphi')$ of Drinfeld
  $A_0$-modules. Since $A$ commutes with $A_0$, each element
  $\varphi_{a}$ for $a\in A$, can be viewed as an element
  $\varphi'_{a}$ in
  $\Hom((E_0',\varphi'_0)|_{L'},(E_0',\varphi'_0)|_{L'})$. By
  \cite[Proposition~3.7]{pink}, 
  this homomorphism extends uniquely to a homomorphism
  $\varphi'_a$ over $R'$, i.e. $\varphi'_a\in R'\{\tau\}$. Thus, we
  have a generalized Drinfeld $A$-module $(E',\varphi')$ of rank $\le
  r$ over $R'$ satisfying the desired property. \qed        
\end{proof}

\section{The arithmetic Satake compactification}
\label{sec:S}

We keep the notation in the previous section. 
In this section we construct the arithmetic Satake compactification of
the Drinfeld moduli scheme $\bfM^r(\grn)$ over $A[\grn^{-1}]$. 
The Satake compactification of the generic fiber
$M^r(\grn)=\bfM^r(\grn)\otimes_{A[\grn^{-1}]} F$ has been constructed
by Kapranov \cite{kapranov} for $F=\Fq(t)$ and 
$A=\Fq[t]$ using the analytic construction.
Pink \cite{pink} gave a different
construction for the Satake compactification of the generic 
fiber $M^r_{K}:=\bfM^r_{K}\otimes_{A[\grn_K^{-1}]} F$ 
for any global function field $F$ and any fine subgroup 
$K\subset \GL_r(\wh A)$ using generalized Drinfeld modules. 
The arithmetic construction for $\bfM^r_K$ over $A[\grn_K^{-1}]$ presented here follows directly 
along the line of Pink's construction. 

\begin{defn}\label{def:satake}
  For any fine open compact subgroup $K$, a dominant open immersion 
  $\bfM^r_K \embed \ol{\bfM}^r_K$ over $A[\grn^{-1}]$, where
  $\grn=\grn_K$,  
  with the properties
\begin{enumerate}
\item[(a)] $\ol{\bfM}^r_K$ is a normal integral scheme which is
  proper flat over $\Spec A[\grn^{-1}]$, and 
\item[(b)] the universal family over $\bfM^r_K$ extends to a weakly 
 separating generalized Drinfeld $A$-module $(\olE, \ol \varphi)$ 
 over $\ol{\bfM}^r_K$. 
\end{enumerate}
is called an (\emph{arithmetic}) \emph{Satake-Pink}, or \emph{Satake compactification} of
$\bfM^r_K$. By abuse 
of terminology we call $(\olE,\ol \varphi)$ the universal family on
$\ol{\bfM}^r_K$. 
\end{defn}

Here our word ``arithmetic'' refers to the fact that our compactification is proper flat over the ``arithmetic'' base scheme $\Spec A[\grn^{-1}]$, while Pink considered the ``algebraic'' case over the fraction field $F$ of $A[\grn^{-1}]$. As far as we know, it was expected that the 
universal Drinfeld module extends 
to a generalized Drinfeld module over a certain
compactification of $\bfM_K^r$. For rank $r=2$, 
where $\bfM^r_K$ is a relative curve, this was proved by Drinfeld 
\cite[Section 9]{drinfeld:1}. 
However, for arbitrary ranks the precise description (as
Definition~\ref{def:satake}) first appeared in  
the work of Pink \cite{pink}. As shown in
loc.~cit., Pink's formulation and approach give a rather simple way for
compactifying Drinfeld modular varieties. One advantage is that 
one does not need to construct the boundary components and analyze how
to glue them, which was done for $A=\Fq[t]$ by Kapranov
\cite{kapranov} using the analytic construction. 
In \cite[Remark 4.9]{pink}, Pink suggested
to compare Kapranov's compactification and the Satake-Pink
compactification. Indeed, H\"aberli \cite{haberli} verifed that 
the universal family over the generic fiber $M^r(\grn)$ 
extends to a weakly separating generalized Drinfeld
module over Kapranov's compactification, and hence both compactifications coincide.

\begin{thm}\label{S.2}
(1)  For every fine open compact subgroup $K$, the moduli scheme $\bfM^r_K$
  possesses a projective arithmetic Satake compactification 
  $\ol{\bfM}^r_K$. The Satake compactification and
  its universal family are unique up to unique isomorphism. 
  The dual $\omega_K:=\Lie(\olE_K)^\vee$ of 
  the relative Lie algebra of the universal family $\olE_K$
  over $\ol{\bfM}^r_K$ is ample.

(2) If $\wt K\subset K$ are fine open compact subgroups, then ``forgetting the level'' induces a finite surjective and open morphism 
\[
\pi_{\wt K,K}\colon\ol\bfM^r_{\wt K}\longrightarrow\ol\bfM^r_K\otimes_{A[\grn_K^{-1}]}A[\grn_{\wt K}^{-1}]
\]
over $\Spec A[\grn_{\wt K}^{-1}]$ which satisfies $\pi_{\wt K,K}^*(\olE_K,\ol\varphi_K)=(\olE_{\wt K},\ol\varphi_{\wt K})$ and $\pi_{\wt K,K}^*(\omega_K)=\omega_{\wt K}$.
Moreover, if $\wt K\lhd K$ is normal, then $\pi_{\wt K,K}$ identifies $\ol\bfM^r_K\otimes_{A[\grn_K^{-1}]}A[\grn_{\wt K}^{-1}]$ with the quotient $(\ol\bfM^r_{\wt K})^{(K/\wt K)}$ of $\ol\bfM^r_{\wt K}$ by the action of the finite group $K/\wt K$.
\end{thm}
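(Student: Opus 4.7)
The plan is to adapt Pink's construction from \cite{pink} over the generic fiber to the integral base $A[\grn^{-1}]$. The first reduction is from a general fine open compact $K$ to the principal congruence subgroup $K(\grn)$ with $\grn=\grn_K$: by Proposition~\ref{2.5}(2) the finite group $K/K(\grn)$ acts freely on $\bfM^r(\grn)$, so once $\ol\bfM^r(\grn)$ together with an equivariant extension of its universal family has been constructed, the projective normal integral quotient $\ol\bfM^r(\grn)/(K/K(\grn))$ with the descended family will be the Satake compactification at level $K$. I therefore focus on the principal level throughout.

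The construction at principal level proceeds in two steps. First, I produce a projective, normal, integral, flat model $X$ over $A[\grn^{-1}]$ containing $\bfM^r(\grn)$ as a dense open subscheme; concretely, take $X$ to be the normalization of the closure of $\bfM^r(\grn)$ in a projective embedding defined by sufficiently many coefficient sections $\varphi_{a,i}\in\Gamma(\bfM^r(\grn),\omega_K^{\otimes(q^i-1)})$ of the universal Drinfeld module, realizing $X$ as $\Proj$ of a graded subring of $\bigoplus_{k\ge 0}\Gamma(\bfM^r(\grn),\omega_K^{\otimes k})$ and simultaneously producing ampleness of $\omega_K$ on $X$. Second, I extend the universal Drinfeld module to a generalized Drinfeld $A$-module on $X$: at each codimension-one point of the boundary $X\setminus\bfM^r(\grn)$ the semistable reduction proposition at the end of Section~\ref{sec:2} supplies an extension of rank $\le r$ after a tamely ramified base change, and Pink's extension lemma \cite[Prop.~3.7]{pink} glues these local extensions uniquely over the normal scheme $X$; weak separation is then forced by Proposition~\ref{2.7} combined with the fine moduli property of $\bfM^r(\grn)$. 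Uniqueness of the Satake compactification is then standard: given two candidates $X_1, X_2$, the normalization $Y$ of the closure of the diagonal $\bfM^r(\grn)\hookrightarrow X_1\times_{A[\grn^{-1}]} X_2$ carries via \cite[Prop.~3.7]{pink} a canonical isomorphism of the two pulled-back families, and weak separation of both $X_i$-families forces the projections $p_i\colon Y\to X_i$ to be quasi-finite (any positive-dimensional fiber of $p_1$ would, via the common family on $Y$, produce infinitely many $L$-points of $X_2$ with pairwise isomorphic fibers), so both $p_i$ are proper quasi-finite birational morphisms between normal integral schemes, hence isomorphisms by Zariski's Main Theorem.

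For part (2), let $\wt K\lhd K$ be a normal inclusion of fine subgroups. Uniqueness promotes the $K/\wt K$-action on $\bfM^r_{\wt K}$ to an action on $\ol\bfM^r_{\wt K}$ under which the extended family is equivariant; descending to the projective normal quotient produces a Satake compactification of $\bfM^r_K$ over $A[\grn_{\wt K}^{-1}]$, which by uniqueness is canonically identified with $\ol\bfM^r_K\otimes_{A[\grn_K^{-1}]} A[\grn_{\wt K}^{-1}]$. The map $\pi_{\wt K,K}$ is the resulting quotient map, which is finite, surjective and open, and the universal family compatibility is immediate. For non-normal inclusions $\wt K\subset K$ one factors through any fine $K''\lhd K$ with $K''\subset\wt K$ and composes the two quotient steps.

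The main technical obstacle will be the globalization of the extension step: the semistable reduction proposition is formulated over a discrete valuation ring, so producing a single coherent extension of the universal family across all of $X$ — and checking that the result is weakly separating and remains of global rank $\le r$ — demands careful gluing of the local tamely ramified extensions via \cite[Prop.~3.7]{pink}, together with faithfully-flat descent down the ramified covers. Intertwined with this is the verification that the coefficient forms $\varphi_{a,i}$ generate a graded subring whose $\Proj$ is simultaneously flat and proper over $A[\grn^{-1}]$, which is what delivers ampleness of $\omega_K$ integrally rather than only on the generic fiber supplied by Pink.
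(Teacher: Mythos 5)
Your reduction to the principal level $K(\grn)$ via quotients, your uniqueness argument via the normalization of the diagonal closure and Zariski's Main Theorem, and your handling of part (2) (extending the $K/K(\grn)$-action, taking quotients, factoring the non-normal case through a normal $K''\lhd K$) all match the paper's Lemmas~\ref{S.3}, \ref{S.4}, and \ref{S.6}. The problem is the existence step at principal level, which both takes a genuinely different route from the paper and contains concrete gaps.

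First, the projective embedding you propose cannot work as described. The coefficient sections $\varphi_{a,i}$ are invariant under the Hecke translations $J_{\!g\,}$ for $g\in K(1)=\GL_r(\wh A)$ (this is precisely the content of Lemma~\ref{HI.1}(1) and \eqref{eq:IH.coeff}), so the morphism they define on $\bfM^r(\grn)$ factors through the quotient $\bfM^r(\grn)\to\bfM^r_{K(1)}$ and cannot separate two points differing only by the level structure. No number of coefficient sections will produce an embedding of $\bfM^r(\grn)$, and $\Proj$ of the coefficient ring is a model of $\ol\bfM^r_{K(1)}$, not $\ol\bfM^r(\grn)$. The paper's explicit construction (Proposition~\ref{S.10}, for $A=\F_q[t]$, $K=K(t)$) instead uses the sections $1/\lambda(v)$ coming from the level structure; these generate the ring $R_r$ whose $\Proj$ is $Q_r$, and they do separate level-$t$ structures. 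Second, the paper never attempts a direct construction over a general $A$: it first builds the compactification explicitly for $A_0=\F_q[t]$ and $K(t)$ (Lemma~\ref{S.8}/Proposition~\ref{S.10}), and then transports it to general $A$ and principal level via the finite ``restriction of scalars'' morphism $I_b\colon\bfM^r_{K(\grn)}\to\bfM^{r[F:F_0]}_{A_0,K(t)}$ of \eqref{eq:S.1} and Lemma~\ref{S.7}. That lemma produces $\ol\bfM^r_{K(\grn)}$ as the normalization of (a closed subscheme of) the already-constructed compactification, pulls back the already-extended universal family, and extends the $A$-action using \cite[Prop.~3.7]{pink}; weak separation is inherited because the morphism is finite. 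Your plan of extending the universal family pointwise at codimension-one boundary points via the semistable reduction proposition and then gluing after tamely ramified base change is far more delicate (descent of the extension down the ramified cover, compatibility across different boundary components, extension over codimension $\ge 2$) and is nowhere close to being justified by the two sentences you give. Third, your assertion that ``weak separation is then forced by Proposition~\ref{2.7} combined with the fine moduli property'' has the logic backwards: Proposition~\ref{2.7} takes weak separation as a hypothesis, not a conclusion. In the paper, weak separation is verified by an explicit argument in the base case (Proposition~\ref{S.10}, citing \cite[Prop.~7.2]{pink}) and is then preserved under the finite morphisms of Lemmas~\ref{S.6} and \ref{S.7}. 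Without the reduction to the explicit base case you have no independent proof of weak separation.
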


We begin the proof of the theorem with the following lemma. 
 
\begin{lemma}\label{S.3}
  If a Satake compactification $\ol{\bfM}^r_K$ and the universal
  family $(\olE,\ol \varphi)$ exist, then they are unique up to
  unique isomorphism. 
\end{lemma}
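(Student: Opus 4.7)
Suppose $\ol{\bfM}^r_K$ and $(\ol{\bfM}^r_K)'$ are two Satake compactifications of $\bfM^r_K$ with universal families $(\olE,\ol\varphi)$ and $(\olE',\ol\varphi')$. The plan is to construct a canonical isomorphism $\ol{\bfM}^r_K\isoto(\ol{\bfM}^r_K)'$ extending the identity on $\bfM^r_K$, after which compatibility with the universal families will follow from a final application of the extension principle for homomorphisms of generalized Drinfeld modules. Uniqueness of such an identification is automatic, since two $A[\grn_K^{-1}]$-morphisms of separated schemes that agree on the dense open $\bfM^r_K$ coincide. Concretely, I would form the scheme-theoretic closure $Z$ of the diagonal image of $\bfM^r_K$ in $\ol{\bfM}^r_K\times_{A[\grn_K^{-1}]}(\ol{\bfM}^r_K)'$, together with its two projections $p\colon Z\to\ol{\bfM}^r_K$ and $p'\colon Z\to(\ol{\bfM}^r_K)'$. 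Both are proper, since the factors are proper over the Dedekind base, and both are isomorphisms above $\bfM^r_K$, hence birational. Because the targets are normal by Definition~\ref{def:satake}(a), Zariski's Main Theorem reduces the problem to showing that $p$ and $p'$ have finite fibres: they will then be finite birational onto a normal scheme, hence isomorphisms, and $p'\circ p^{-1}$ will be the desired map.

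To bound fibres I would transfer both universal families to the normalization $\wt Z\to Z$. The pullbacks $p^*(\olE,\ol\varphi)$ and $p'^*(\olE',\ol\varphi')$ are generalized Drinfeld $A$-modules on $\wt Z$ whose restrictions to the schematically dense open preimage of $\bfM^r_K$ coincide canonically with the universal Drinfeld $A$-module. By the unique-extension principle for homomorphisms of generalized Drinfeld $A$-modules across a schematically dense open of a normal noetherian scheme \cite[Prop.~3.7]{pink}, the canonical isomorphism on the open extends uniquely to a homomorphism on all of $\wt Z$; applying the same to the inverse on the open and invoking uniqueness of extensions, the extended homomorphism is in fact an isomorphism $p^*(\olE,\ol\varphi)\isoto p'^*(\olE',\ol\varphi')$ on $\wt Z$.

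Now fix a point $x\in\ol{\bfM}^r_K$ and an algebraic closure $L$ of $k(x)$. Every $L$-valued point of $p^{-1}(x)$ sits inside $\{x\}\times_{A[\grn_K^{-1}]}(\ol{\bfM}^r_K)'$ and therefore injects via $p'$ into $(\ol{\bfM}^r_K)'(L)$, landing at a point $y$ for which $(\olE'_y,\ol\varphi'_y)\cong(\olE_x,\ol\varphi_x)_L$. The right-hand side is a genuine Drinfeld $A$-module by Definition~\ref{2.1}(b), so the weakly separating property of $(\olE',\ol\varphi')$ (Definition~\ref{2.6}) forces the set of such $y$, and hence $p^{-1}(x)(L)$, to be finite. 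Since $p$ is proper, its fibre $p^{-1}(x)$ is a $0$-dimensional proper $k(x)$-scheme, hence finite. The same reasoning applies to $p'$. Under the resulting isomorphism $\ol{\bfM}^r_K\cong(\ol{\bfM}^r_K)'$, the two universal families agree canonically on the dense open $\bfM^r_K$, and a further appeal to \cite[Prop.~3.7]{pink} on the normal noetherian scheme $\ol{\bfM}^r_K$ produces the unique isomorphism identifying them globally.

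The main technical obstacle I foresee lies in verifying that \cite[Prop.~3.7]{pink} genuinely applies here: one must check that $\wt Z$ and both compactifications are normal noetherian schemes in which $\bfM^r_K$ stays schematically dense, and that Pink's extension result, originally formulated over a function-field base, carries over to the Dedekind base $A[\grn_K^{-1}]$. Once these functoriality points are pinned down, the two ingredients above, namely weak separation and the extension of homomorphisms across schematically dense opens, combine to give the lemma.
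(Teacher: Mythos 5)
Your proposal follows the same strategy as the paper's proof of Lemma~\ref{S.3}: form the (normalized) Zariski closure of the diagonal in $\ol\bfM^r_K\times_{A[\grn_K^{-1}]}(\ol\bfM^r_K)'$, extend the canonical identification of the pulled-back universal families across the normal scheme using Pink's extension result \cite[Prop.~3.7]{pink}, use the weakly-separating property together with injectivity of the second projection on each fibre to bound the fibres, and conclude by Zariski's Main Theorem. The only cosmetic difference is that the paper passes to the normalization $\wt\bfM$ once at the outset, while you keep the unnormalized closure $Z$ for the ZMT step and invoke its normalization $\wt Z$ only when applying Pink's extension (implicitly using that $\wt Z\to Z$ is surjective to transfer the fibrewise isomorphism back to points of $Z$); both versions are correct.
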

\begin{proof}
  The proof is the same as that of \cite[Lemma 4.3]{pink}. We include
  it for the sake of completeness.
  Abbreviate $\ol \bfM:=\ol{\bfM}^r_K $, and let $\ol{\bfM'}$ be
  another Satake compactification of $\bfM:=\bfM^r_K$ with
  universal family $(\ol{E'}, \ol{\varphi'})$. Let $\wt \bfM$ be the
  normalization of the Zariski closure $\bfM^{\rm zar}$ of 
  the diagonal embedding
  $\bfM\embed \ol \bfM \times_A \ol{\bfM'}$. Then we have two
  projections 
\[ 
\begin{CD}
  \ol \bfM @<{\pi}<< \wt \bfM @>{\pi'}>> \ol{\bfM'}
\end{CD} \]
  which are proper and are the identity when restricted to $\bfM$. The
  morphism $(\pi,\pi'): \wt \bfM\to \ol \bfM\times_A \ol{\bfM'}$ is finite,
  because $\bfM^{\rm zar}$ is excellent, see 
  \cite[IV$_2$, Scholie~7.8.3 (ii), (v)]{EGA}. One obtains two 
  generalized Drinfeld modules $\pi^*(\olE,\ol \varphi)$ and 
  $\pi'^*(\ol{E'},\ol \varphi')$ over $\wt \bfM$. By
  \cite[Proposition~3.7 and 3.8]{pink}, the identity on the universal family
  over $\bfM\subset \wt \bfM$
  extends to an isomorphism $\pi^*(\olE,\ol \varphi)\isoto 
  \pi'^*(\ol{E'},\ol \varphi')$ over $\wt \bfM$. 
  For any geometric point $x\in \ol
  \bfM(\olL)$ where $\olL$ is an algebraically closed field, we restrict the map $\pi':\wt \bfM(\olL) \to \ol{\bfM'}(\olL)$ to the subset $\pi^{-1}(x)\subset \wt \bfM(\olL)$. One
  obtains a finite-to-one map
  \begin{equation}
    \label{eq:finite}
\begin{CD}
\pi':\pi^{-1}(x) @>{(\pi,\pi')}>> \{x\}\times \pi'(\pi^{-1}(x)) @>{\rm
    pr_2}>{\sim}> \pi'(\pi^{-1}(x)).   
\end{CD}    
  \end{equation}
  Since the fibers $(\ol{E'}, \ol{ \varphi'})$ over points 
  $y \in \pi'(\pi^{-1}(x))$ are isomorphic to $(\olE_x,\ol \varphi_x)$
  and $(\ol{E'}, \ol{ \varphi'})$ is weakly separating, 
  the set $\pi'(\pi^{-1}(x))$ is finite. By \eqref{eq:finite},
  $\pi^{-1}(x)$ is finite and hence the morphism $\pi$ is finite. 
As $\pi$ is birational and both $\wt \bfM$ and $\ol
  \bfM$ are normal, the morphism $\pi$ is an isomorphism by Zariski's
  Main Theorem \cite[\S\,III.9, I. Original form, p.~209]{Mumford}. Similarly,
  one proves that $\pi'$ is also an isomorphism. Thus one obtains 
  unique isomorphisms $\xi:=\pi'\circ \pi^{-1}: \ol 
  \bfM \isoto \ol{\bfM'}$
  and $(\olE, \ol \varphi)\isoto \xi^*(\ol{E'}, \ol{\varphi'})$. \qed    
\end{proof}

\begin{prop}\label{S.4}
  Let $f:X\to S=\Spec R$ be a quasi-projective scheme over 
  a Noetherian ring $R$, 
  together with a right action by a finite group $G$ over $R$. 
  Then

  (1) The quotient scheme $Y=X/G$ exists and the canonical
   morphism $\pi: X \to Y$ is finite surjective and open.

  (2) The quotient scheme $Y$ is quasi-projective over $R$. It is 
  projective over $R$ if and only if $X$ is so.

  (3) If the group $G$ acts freely in the sense that the
  morphism $X\times G \to X\times_S X, (x,g)\mapsto (x,g.x)$ 
  is a monomorphism, then $\pi:X\to Y$ is faithfully flat.

  (4) Assume that $Y$ is regular. Then $X$ is Cohen-Macaulay if and only if $\pi:X\to Y$ is flat. 
\end{prop}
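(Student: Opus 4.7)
The plan is to follow the classical yoga for finite group quotients. For (1), the starting point is the standard fact that every finite set of points in a quasi-projective scheme over an affine base lies in a common affine open (EGA II, 4.5.4). Applying this to each $G$-orbit and intersecting the finitely many $G$-translates of an affine neighborhood produces a cover of $X$ by $G$-stable affine opens $\Spec B$. On each one the quotient is $\Spec B^G$: since $R$ is Noetherian and $B$ is finitely generated over $R$, classical invariant theory shows that $B^G$ is a finitely generated $R$-algebra and that $B$ is finite over $B^G$. Gluing yields $Y=X/G$ together with a finite surjective $\pi\colon X\to Y$ (integrality provides surjectivity on $\Spec$). Openness of $\pi$ follows from the observation that for any open $U\subset X$ the saturation $\pi^{-1}(\pi(U))=\bigcup_{g\in G}gU$ is open and $Y$ carries the quotient topology.

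For (2), I would produce an ample line bundle on $Y$ by descent. Starting from an ample $L_0$ on $X$, the norm $L:=\bigotimes_{g\in G}g^*L_0$ carries a canonical $G$-linearization, and after replacing $L$ by a sufficiently high tensor power one may assume that the stabilizer of every geometric point of $X$ acts trivially on the fiber of $L$. Kempf's descent lemma then produces an invertible sheaf $M$ on $Y$ with $\pi^*M$ isomorphic to some power of $L$, and ampleness descends along the finite surjective morphism $\pi$. The equivalence of projectivity for $X$ and $Y$ is then immediate from finiteness and surjectivity of $\pi$ combined with quasi-projectivity of both. For (3), when $G$ acts freely on a $G$-stable affine $\Spec B$, the standard Galois criterion, that the canonical map $B\otimes_{B^G}B\to\prod_{g\in G}B$, $b\otimes b'\mapsto\bigl(b\cdot g(b')\bigr)_{g\in G}$, is an isomorphism, shows that $B^G\to B$ is faithfully flat, and gluing yields $\pi$ faithfully flat.

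Finally, for (4) I would invoke Hironaka/Matsumura's \emph{miracle flatness} criterion: a finite surjective morphism $\pi\colon X\to Y$ from a Cohen-Macaulay scheme to a regular one is automatically flat, since the equidimensional-fibers requirement is vacuous for a finite morphism. Conversely, if $\pi$ is flat then, since $Y$ is regular and the fibers of $\pi$ are Artinian (hence Cohen-Macaulay of dimension zero), the standard ``fiberwise Cohen-Macaulay plus Cohen-Macaulay base implies Cohen-Macaulay source'' criterion for flat morphisms gives that $X$ is Cohen-Macaulay. The main obstacle is really the construction of the quotient in (1) and the descent of an ample line bundle in (2); once these are in place, assertions (3) and (4) reduce to standard applications of the Galois criterion and miracle flatness respectively.
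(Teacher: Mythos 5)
Your overall strategy matches the paper in substance for (1), (3) and (4): the paper simply cites SGA~3, Exp.~V, Thm.~7.1 for the existence of the quotient, its finiteness, surjectivity and openness, and for flatness under a free action, whereas you reprove the affine-local construction ($G$-stable affine covers via the ``finite sets of points lie in an affine open'' lemma, gluing $\Spec B^G$, Noether finiteness of invariants) and invoke the Galois criterion $B\otimes_{B^G}B\isoto\prod_{g\in G}B$ --- which is essentially the content of the cited theorem. For (4) the paper likewise just cites miracle flatness (Eisenbud, Thm.~18.16 and Cor.~18.17).

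There is, however, a genuine gap in your forward implication in (4). The assertion that ``a finite surjective morphism from a Cohen--Macaulay scheme to a regular one is automatically flat, since the equidimensional-fibers requirement is vacuous for a finite morphism'' is false as a general statement: take $X=\Spec k\sqcup\bbA^1_k\to Y=\bbA^1_k$ with the first component mapping to the origin. Miracle flatness requires $\dim \calO_{X,x}=\dim \calO_{Y,\pi(x)}+\dim \calO_{X_{\pi(x)},x}$; for a finite morphism the last term vanishes, but the equality $\dim \calO_{X,x}=\dim \calO_{Y,\pi(x)}$ is \emph{not} automatic and fails in the example. It does hold in the situation at hand because $\pi$ is the quotient by a finite group: for $A=B^G\subset B$ the going-down theorem holds (the primes of $B$ over a given prime of $A$ form a single $G$-orbit), whence $\mathrm{ht}(\grp)=\mathrm{ht}(\grp\cap A)$. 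You must supply this (or an equivalent equidimensionality argument) before miracle flatness applies.

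Part (2) is where you genuinely diverge. The paper embeds $X$ $G$-equivariantly and linearly into $\bbP^{N-1}$ via sections of a power of the norm $\bigotimes_g g^*\calL$, writes the closure as $\Proj T$, identifies $\ol X/G$ with $\Proj T^G$, and uses finite generation of $T^G$ together with a Veronese trick to conclude projectivity, with $Y$ open inside. You instead descend a power of the norm bundle to $Y$ via Kempf's descent lemma (after killing the stabilizer characters by a tensor power, which indeed works since these characters have order dividing $\#G$) and then descend ampleness along the finite surjective $\pi$. This route is workable, but Kempf's lemma is normally stated for reductive groups over a field; for a finite constant group over a Noetherian base you would need to verify directly that $(\pi_*\calL^{\otimes N})^G$ is invertible with pullback $\calL^{\otimes N}$, and descent of ampleness along finite surjective morphisms is itself a result the paper treats with some care (cf.\ the verification of condition (II~bis) of EGA~II, 6.6.1 in Lemma~\ref{S.6}(3)). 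The paper's $\Proj$ argument is more elementary and self-contained; yours, once the descent steps are justified, has the advantage of producing an explicit ample sheaf on $Y$ pulling back to a prescribed power of the chosen ample sheaf on $X$.
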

\begin{proof}
(1) and (3) By \cite[Exp.~V, Theorem~7.1]{SGA3} the quotient scheme exists, is of finite type over $R$ and the morphism $\pi\colon X\to Y$ is surjective, open, proper and quasi-finite, hence finite. Moreover, $\pi$ is flat if $G$ acts freely.

(2) 
Since $f$ is quasi-projective, we can choose an $f$-very ample
  invertible sheaf $\calL$ on $X$. Replacing
  $\calL$ by $\otimes_{g\in G} g^*\calL$, one may assume that $\calL$ is
  $G$-equivariant and still $f$-ample by \cite[II, Corollaire~4.6.10]{EGA}. Some power $\calL^{\otimes n}$ with $n>0$ will be $f$-very ample by \cite[II, Proposition~4.6.11]{EGA}. Then $H^0(X,\calL^{\otimes n})$ gives rise to a $G$-equivariant embedding $X\embed \bbP^{N-1}$ where $G$ acts on $\bbP^{N-1}$ linearly. The closure $\ol X$ of $X$ is equal to $\Proj T$ of a graded ring $T=\oplus_{n\ge 0} T_n$ that is generated by $T_1$ over $T_0$. $T_0$ is an $R$-algebra which is finite as an $R$-module, and therefore it is Noetherian. By the construction of $\ol X/G$ and $X/G$, we know that $\ol Y:=\ol X/G$ is isomorphic to $\Proj T^G$ and $Y$ is an open subscheme of  $\ol Y$. So it suffices to show that $\ol Y$ is projective. Since $T$ is of finite type over a Noetherian ring $T_0$ and $G$ is finite, the invariant subring $T^G$ is again of finite type over $T_0$; see \cite[Theorem 1.2]{hashimoto:geom_quot2004}. 
Suppose that $T^G$ is generated by elements of degree $\ge d$ for some $d$. Then the graded $T':=\oplus_{n\ge 0} T^G_{dn}$ is generated by elements in $T_{d}$, which are of degree 1 in the new graded ring. By \cite[II, Proposition~2.4.7]{EGA}, the induced morphism $\Proj T' \to \Proj T^G$ is an isomorphism. This shows that $\ol Y$ is projective over $T_0$ (and hence over $R$).  

(4) This follows from \cite[Theorem~18.16 and Corrollary~18.17]{eisenbud}. 
\qed
\end{proof}

\begin{remark}\label{S.5}
  (1) The finite surjective 
  morphism $X\to Y$ in Proposition~\ref{S.4} may not be flat in general.
  Singularities of $X$ and $Y$ play a crucial role. 
  Here is a counter-example where
  $X$ is regular and $Y$ is normal. 

  Take $X=\Spec B$ with $B=\C[x_1, x_2]$ and let 
  the finite group $G=\{\pm 1\}$ act on $B$ by
  $-1:(x_1,x_2)\mapsto (-x_1,-x_2)$. Then we have 
  $C:=B^G=\C[x_1^2,x_1 x_2, x_2^2]$ and $Y=\Spec C$. 
  We show that $B$ is not $C$-flat.
  To see this, consider the maximal ideal $\grm=(x_1^2,x_1 x_2, x_2^2)$ of
  $C$ and put $L:=\Frac(C)$. One computes $\dim_{\C} B\otimes_C C/\grm
  =\dim_\C B/\grm B=3$ and $\dim_{L}B\otimes_C L=2$. Thus, $B$ can not be $C$-flat.
  
  (2) We thank David Rydh for explaining the proof of 
  Proposition~\ref{S.4}(2) to us and pointing out the reference \cite[IV Proposition~1.5, p.~180]{knutson}.  
  Rydh proves a more general result for algebraic spaces with finite flat groupoids; 
  see \cite[Proposition~4.7 (B')]{rydh:jag2013} for more details. Proposition~\ref{S.4}(2) also follows from \cite[Theorem~2.9]{AltmanKleiman}, because a strongly quasi-projective scheme over an affine base is quasi-projective.
\end{remark}

\begin{lemma}\label{S.6}
  Suppose that $K$ is fine and $K(\grn)\subset K$. Suppose 
  that  $\bfM^r_{K(\grn)}$
  has a projective Satake compactification $\ol \bfM^r_{K(\grn)}$ with
  universal family $(\olE,\ol \varphi)$ for which the dual
  $\omega_{K(\grn)}$  
  of the relative Lie algebra of $\olE$ is ample. 
Then

{\rm (1)} The action of $K/K(\grn)$ on $\bfM^r_{K(\grn)}$ and $(E,
  \varphi)$ 
  as in \eqref{eq:action} extends uniquely to an
  action on  $\ol \bfM^r_{K(\grn)}$ and $(\olE,\ol \varphi)$,
  respectively. 

{\rm (2)} The quotient 
  $\ol \bfM^r_{K(\grn)}/(K/K(\grn))$
  furnishes the projective Satake compactification
  $\ol{\bfM}^r_K[\grn^{-1}]$ of $\bfM^r_K[\grn^{-1}]$ with universal
  family $(\olE_K, \ol \varphi_K)$, where $\olE_K:=\olE/(K/K(\grn))$ is the
  quotient and $\ol \varphi_K$ is the
  Drinfeld $A$-module structure on $\olE_K$ 
  descended from $\ol \varphi$.

{\rm (3)} The dual $\omega_K:=\Lie(\olE_K)^\vee$ of 
  the relative Lie algebra 
  of $\olE_K$ over $\ol{\bfM}^r_K[\grn^{-1}]$ is ample. 
\end{lemma}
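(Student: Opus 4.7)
My plan for Lemma~S.6 proceeds in three steps that mirror the three parts of the statement.

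For part~(1), I would extend each $J_g$ with $g\in K/K(\grn)$ to $\ol\bfM^r_{K(\grn)}$ by invoking the uniqueness established in Lemma~S.3. Precisely, write $\iota\colon\bfM^r_{K(\grn)}\embed\ol\bfM^r_{K(\grn)}$ for the given open immersion; then $\iota\circ J_g\colon\bfM^r_{K(\grn)}\embed\ol\bfM^r_{K(\grn)}$ together with $(\olE,\ol\varphi)$ is again a Satake compactification (all axioms in Definition~\ref{def:satake} being preserved, since the weakly separating property depends only on the generalized Drinfeld module and not on the open immersion). Lemma~S.3 then furnishes a unique automorphism $\bar J_g\colon\ol\bfM^r_{K(\grn)}\isoto\ol\bfM^r_{K(\grn)}$ with $\bar J_g\circ\iota=\iota\circ J_g$, together with a unique isomorphism $\bar I_g\colon(\olE,\ol\varphi)\isoto\bar J_g^*(\olE,\ol\varphi)$ restricting to $I_g$ on the universal family (cf.~\eqref{eq:action}). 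Compatibility $\bar J_{g_1 g_2}=\bar J_{g_2}\circ\bar J_{g_1}$ and similarly for $\bar I_g$ follows because both sides agree on the dense open $\bfM^r_{K(\grn)}$ and the extension is unique; this gives the required right action on the scheme and on the universal family.

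For part~(2), I would form the quotient $\ol\bfM^r_K[\grn^{-1}]:=\ol\bfM^r_{K(\grn)}/(K/K(\grn))$ using Proposition~S.4, which gives a projective $A[\grn^{-1}]$-scheme together with a finite surjective open quotient morphism $\pi$. Then I verify the axioms of Definition~\ref{def:satake}: normality and integrality are preserved under finite quotients of normal integral schemes (the finite group permutes generic points of a single irreducible component trivially); flatness over the Dedekind ring $A[\grn^{-1}]$ follows because the quotient is integral with dominant structure map, hence torsion-free. The open immersion $\bfM^r_K[\grn^{-1}]\embed\ol\bfM^r_K[\grn^{-1}]$ is the descent of $\iota$ and is dense because $\pi$ is open. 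Taking $\olE_K:=\olE/(K/K(\grn))$ and descending the Drinfeld $A$-module structure (the action from (1) commutes with both the $\Gm$-action and the $A$-action, by uniqueness on the open part, extended to the boundary) produces a generalized Drinfeld $A$-module $(\olE_K,\ol\varphi_K)$, and by Proposition~2.5 it agrees on the open part with the universal family on $\bfM^r_K[\grn^{-1}]$. Weak separation descends: the fiber of $(\olE_K,\ol\varphi_K)$ at a point $y$ agrees with the fiber of $(\olE,\ol\varphi)$ at any preimage of $y$, so if infinitely many $L$-fibers of $(\olE_K,\ol\varphi_K)$ were isomorphic to a given Drinfeld module, the same would hold for $(\olE,\ol\varphi)$ up to a factor of $|K/K(\grn)|$, contradicting the hypothesis.

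For part~(3), I would deduce ampleness of $\omega_K=\Lie(\olE_K)^\vee$ from that of $\omega_{K(\grn)}$ by descent along the finite surjective morphism $\pi$. Since $\olE_K$ is the quotient of $\olE$ by a group action compatible with the $\Gm$-structure, the natural map $\pi^*\olE_K\to\olE$ is an isomorphism of line bundles at least after replacing $\omega_K$ by a suitable power $\omega_K^{\otimes N}$ that kills the possible characters through which the non-trivial stabilizers at boundary points act on the fiber; then $\pi^*\omega_K^{\otimes N}=\omega_{K(\grn)}^{\otimes N}$, which is ample. Ampleness now descends along the finite surjective morphism $\pi$ by \cite[II, Proposition~6.6.1]{EGA}, yielding the ampleness of $\omega_K$. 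The main obstacle is precisely this last descent step: verifying that the $K/K(\grn)$-action on $\olE$ descends the line bundle structure, which may fail pointwise at boundary points with non-trivial stabilizers and is the reason one may need to pass to a high tensor power before applying descent and the EGA criterion.
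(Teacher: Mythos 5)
Your overall route --- extending the $K/K(\grn)$-action to $\ol\bfM^r_{K(\grn)}$ by the uniqueness statement of Lemma~\ref{S.3}, forming the quotient via Proposition~\ref{S.4}, and descending $\omega$ along the finite quotient map --- is the same as the paper's. Part~(1) and the scheme-theoretic assertions of part~(2) (projectivity, normality, integrality, flatness over $A[\grn^{-1}]$, density of the open part, weak separation) are handled correctly and essentially as in the paper.

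The genuine gap is the claim that $\olE_K:=\olE/(K/K(\grn))$ is again a \emph{line bundle}. You correctly identify the danger --- boundary points may have non-trivial stabilizers in $K/K(\grn)$ acting on the fibers of $\olE$ --- but you locate it in part~(3) and propose to cure it by replacing $\omega_K$ with a high tensor power $\omega_K^{\otimes N}$. That cannot work: if a stabilizer acted non-trivially on a fiber of $\olE$, the quotient $\olE/(K/K(\grn))$ would simply fail to be a line bundle, so $(\olE_K,\ol\varphi_K)$ would not be a generalized Drinfeld module and part~(2) itself --- not merely part~(3) --- would already collapse; no power of a non-existent $\omega_K$ repairs the underlying object. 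The correct resolution, which the paper invokes, is \cite[Lemma 4.4]{pink}: because $K$ is \emph{fine}, the stabilizers act trivially on the fibers of $\olE$, the quotient is again a line bundle, and $\pi^*\olE_K=\olE$ on the nose, whence $\pi^*\omega_K=\omega_{K(\grn)}$ with no twist. Separately, in part~(3) the descent of ampleness along the finite surjective $\pi$ via \cite{EGA}~II, 6.6.1/6.6.3 is not automatic: one must verify condition (II bis) there, namely that the coefficients of the characteristic polynomials of sections of $\pi_*\calO_{\ol\bfM^r_{K(\grn)}}$ lie in the structure sheaf of the quotient; the paper checks this using normality of the quotient and the fact that its local rings at codimension-one points are discrete valuation rings. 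Your write-up cites the EGA statement without verifying its hypothesis.
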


\begin{proof}
  (1)  Write $\bfM=\bfM^r_{K(\grn)}$ and $\ol\bfM=\ol\bfM^r_{K(\grn)}$. 
  For any $g\in K$, the action of $g$ gives the following commutative 
  diagrams:
  \begin{equation}
    \label{eq:IgJg}
    \begin{CD}
  ( E,\varphi) @>>> \bfM \\
  @V{I_{\!g\,}}VV @V{J_{\!g\,}}VV \\
  ( E,\varphi) @>>> \bfM 
\end{CD}\qquad \text{and}\qquad
\begin{CD}
  (\olE,\ol \varphi) @>>> \ol \bfM \\
   @V{?\,\ol I_{\!g\,}}VV  @V{?\,\ol J_{\!g\,}} VV     \\
  (\olE,\ol \varphi) @>>> \ol \bfM. 
\end{CD}
\end{equation}
Let $\wt \bfM$ be the normalization of the Zariski closure of the 
graph of $J_{\!g\,}$ in $\ol \bfM\times_{A[\grn^{-1}]} \ol \bfM$. 
It is equipped with two projections $\pi:\wt \bfM\to \ol \bfM$ and 
$\pi':\wt \bfM \to \ol \bfM$. The argument of Lemma~\ref{S.3} shows
that $\pi$ and $\pi'$ are isomorphisms and the isomorphism $I_{\!g\,}$ extends
to an isomorphism 
$\pi^*(\olE, \ol \varphi)\isoto \pi'^* (\olE, \ol \varphi)$. Since
$\pi$ and $\pi'$ are isomorphisms, the morphisms  $J_{\!g\,}$ and $I_{\!g\,}$ 
extend to morphisms $\ol J_{\!g\,}$ and $\ol I_{\!g\,}$ on $ \ol \bfM$ and $(\olE,\ol \varphi)$, respectively. 

(2) By Proposition~\ref{S.4}, the quotients $\ol\bfM/(K/K(\grn))$ and 
    $\olE/(K/K(\grn))$ exist as schemes and the quotient 
    $\bfM^r_K[\grn^{-1}]$ of $\bfM$ is open in $\ol\bfM/(K/K(\grn))$.
    Moreover, $\ol\bfM/(K/K(\grn))$ is projective over $A[\grn^{-1}]$.
    We next show that $\ol \bfM/(K/K(\grn))$ is a normal integral
    scheme proper flat over $A[\grn^{-1}]$. Note that 
    if $R$ is a normal domain with quotient field $Q$ with
    an action by a finite group $G$, then $R^G$ is again a normal
    domain. To see this, suppose that $x\in Q^G$ is integral over
    $R^G$. Then $x\in R \cap Q^G=R^G$. The normality of $\ol
    \bfM/(K/K(\grn))$ follows. 
    To show the flatness of $\ol \bfM/(K/K(\grn))$, one
    must show that the generic point of $\ol \bfM/(K/K(\grn))$ dominates
    $\Spec A[\grn^{-1}]$; see \cite[Proposition~III.9.7]{hartshorne}. This
    follows from that the generic point of $\ol \bfM\otimes_{A[\grn^{-1}]}F$ maps to that of
    $\ol \bfM/(K/K(\grn))$ and it dominates $\Spec 
    A[\grn^{-1}]$.  

    As $K$ is fine, it is proved in \cite[Lemma
  4.4]{pink} that $\olE_K:=\olE/(K/K(\grn))$ is again a line bundle over $\ol
    \bfM$. Since $\ol \varphi$ is invariant under the action of $K$,
    it descends to a Drinfeld module structure $\ol \varphi_K$ on $\olE_K$. 
    Since $(\olE,\ol\varphi)$ is weakly separating, also 
    $(\olE_K,\ol\varphi_K)$ trivially is weakly separating.

(3) Observe that $\omega_{K(\grn)}=g^*\omega_K$ under the finite
surjective quotient morphism $g:X':=\ol \bfM^r_{K(\grn)}
\longrightarrow X:=\ol\bfM^r_{K(\grn)}/(K/K(\grn))$, because 
$g^*\olE_K = \olE$. Since the base $Y:=\Spec A[\grn^{-1}]$ is affine,
$\omega_K$ is ample on $X$, if and only if it is ample relative to
$Y$, by \cite[II, Corollary~4.6.6]{EGA}. Since $\omega_{K(\grn)}$ is ample
on $\ol \bfM^r_{K(\grn)}$ we will now apply \cite[II, Corollary~6.6.3]{EGA}
to conclude that $\omega_K$ is ample on $X$. We may apply loc.\ cit.\
because condition (II bis) in \cite[II, Proposition~6.6.1]{EGA} is satisfied
for $(X,\calO_X)$ and $g_*\calO_{X'}$. Namely, let $\eta\in X$ and
$qf(X):=\calO_{X,\eta}$ be the generic point and the function field of
$X$ and similarly for $X'$. Then condition (II bis), stated on
page~126 of loc.\ cit., requires that for any affine open subscheme
$U\subset X$ and any section $f\in (g_*\calO_{X'})(U)$ the
characteristic polynomial $T^n -
\sigma_1(f)T^{n-1}+\ldots+(-1)^n\sigma_n(f)\in qf(X)[T]$ of the
multiplication by $f$ on the $qf(X)$-vector space
$g_*\calO_{X'}\otimes_{\calO_X}qf(X)=qf(X')$ has all its coefficients
$\sigma_i(f)$ in $\calO_X(U)$. Since $X$ is normal, $\calO_X(U)$
equals the intersection in $qf(X)$ of the local rings $\calO_{X,x}$
for all points $x\in X$ of codimension one. All these local rings are
discrete valuation rings. Since
$g_*\calO_{X'}\otimes_{\calO_X}\calO_{X,x}$ equals the normalization
of $\calO_{X,x}$ in $qf(X')$, it is a free $\calO_{X,x}$-module and
this implies that all the coefficients $\sigma_i(f)$ lie in
$\calO_{X,x}$. So condition (II bis) is indeed satisfied. \qed
\end{proof}

Let $F'$ be a finite field extension of $F$ with only one place $\infty'$
over $\infty$, and let $A'$ be the integral closure of $A$ in $F'$. 
Then $A'$ consists of all elements in $F'$ regular away from
$\infty'$, and $A'$ is a projective finite $A$-module of rank $[F':F]$. 
Let $r$ and $r'$ be positive integers with $r=r'[F':F]$. 
Let $\grn\subset A$ be a non-zero proper ideal, and put $\grn':=\grn A'$. 
Note that $\grn'{}^{-1}/A'=A'\otimes_{A} (\grn^{-1}/A)$ which is non-canonically isomorphic to $(\grn^{-1}/A)^{\oplus[F':F]}$ as a module over the principal ring $A/\grn$. Thus, we can fix an isomorphism 
$(\grn'{}^{-1}/A')^{r'}\simeq (\grn^{-1}/A)^r$ of $A$-modules.  
Then we have a natural finite morphism 
\begin{equation} \label{eq:S.1}
  I_b: \bfM^{r'}_{A'}(\grn') \to \bfM^{r}_{A}(\grn)
\end{equation}
sending each $(E', \varphi',\lambda')$ to $(E', \varphi'|_A,
\lambda')$, which fits into the commutative diagram
\begin{equation} 
  \begin{CD}
    \bfM^{r'}_{A'}(\grn') @>{I_b}>> \bfM^{r}_{A}(\grn) \\
    @VVV @VVV \\
   \Spec A'[\grn'^{-1}] @>>> \Spec A[\grn^{-1}].
  \end{CD}
\end{equation}
The morphism $I_b$ is finite, because it is proper by \cite[Proposition~3.2]{Breuer} and affine by \cite[Theorem~2.2]{HartlHendler}. It is not surjective when $F'\ne F$ by reason of dimensions.

\begin{lemma}\label{S.7}
  Suppose that the Satake compactification $\ol\bfM$ of $\bfM:=\bfM^r_A(\grn)$
  over $A[\grn^{-1}]$
  exists. Then the Satake compactification $\ol{\bfM'}$ of 
  $\bfM':=\bfM^{r'}_{A'}(\grn')$ 
  over $A[\grn^{-1}]$ exists and the morphism $I_b$ extends uniquely 
  to a finite morphism $\ol I_b: \ol{\bfM'} \to \ol
  {\bfM}$. Moreover, if the dual $\omega_A$ of the Lie algebra of the universal Drinfeld module over $\ol\bfM$ is ample, then also the dual $\omega_{A'}$ of the Lie algebra of the universal Drinfeld module over $\ol{\bfM'}$ is ample.
\end{lemma}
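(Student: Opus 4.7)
The plan is to construct $\ol{\bfM'}$ via normalization inside $\ol{\bfM}$, to transport the universal family by pullback, and to extend the additional $A'$-action by Pink's extension principle. Concretely, I would let $Z\subset\ol{\bfM}$ be the Zariski closure of the image (with the reduced scheme structure) of the composition $\bfM'\xrightarrow{I_b}\bfM\hookrightarrow\ol{\bfM}$, and define $\ol{\bfM'}$ to be the normalization of $Z$ in the function field $K(\bfM')$. Because $I_b$ is finite, the image $Z\cap\bfM$ is closed in $\bfM$; and because $\bfM'$ is normal (smooth over $A'[\grn'^{-1}]$ by Proposition~\ref{2.5}(2)) and finite surjective over $Z\cap\bfM$, it equals the normalization of $Z\cap\bfM$ in $K(\bfM')$. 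Thus $\bfM'$ is an open dense subscheme of $\ol{\bfM'}$. Since $\ol{\bfM}$, being of finite type over the excellent Dedekind ring $A[\grn^{-1}]$, is itself excellent, the induced morphism $\ol I_b\colon\ol{\bfM'}\to\ol{\bfM}$ is finite, and it is uniquely determined as a finite extension of $I_b$.

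Next I would verify the axioms of Definition~\ref{def:satake}. Integrality and normality of $\ol{\bfM'}$ are automatic from the construction. Projectivity over $A[\grn^{-1}]$ — and hence over $A'[\grn'^{-1}]$, since $\Spec A'[\grn'^{-1}]\to\Spec A[\grn^{-1}]$ is finite — follows because $\ol I_b$ is finite and $\ol{\bfM}$ is projective by Theorem~\ref{S.2}. Flatness over the Dedekind scheme $\Spec A'[\grn'^{-1}]$ reduces to dominance of the structure morphism, which holds because $\ol{\bfM'}$ and $\bfM'$ share their generic point and $\bfM'$ is flat over $A'[\grn'^{-1}]$. For the universal family I would pull back $(\olE,\ol\varphi)$ along $\ol I_b$ to a generalized Drinfeld $A$-module on $\ol{\bfM'}$; over the dense open $\bfM'$ this coincides with the $A$-module underlying the universal Drinfeld $A'$-module $(E',\varphi')$. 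For each $a'\in A'$ the endomorphism $\varphi'_{a'}$ is then defined on $\ol I_b^*\olE|_{\bfM'}$, and by \cite[Prop.~3.7]{pink} on extensions of homomorphisms of generalized Drinfeld modules over normal integral bases, it extends uniquely to an endomorphism of $\ol I_b^*\olE$ over all of $\ol{\bfM'}$. Assembling these yields the generalized Drinfeld $A'$-module structure $\ol\varphi'$, and $\partial\circ\ol\varphi'\colon A'\to\Gamma(\ol{\bfM'},\calO_{\ol{\bfM'}})$ supplies the $A'[\grn'^{-1}]$-scheme structure of $\ol{\bfM'}$. For the weakly separating property, given a Drinfeld $A'$-module $(E'_0,\varphi'_0)$ over an $A'$-field $L$, the restriction $(E'_0,\varphi'_0|_A)$ is a Drinfeld $A$-module, so weak separation of $(\olE,\ol\varphi)$ bounds the $L$-points of $\ol{\bfM}$ whose fibers are $A$-isomorphic to it; finiteness of $\ol I_b$ then bounds the $L$-points of $\ol{\bfM'}$ whose fibers are $A'$-isomorphic to $(E'_0,\varphi'_0)$.

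Finally, since $\ol I_b^*\olE=\olE'$ as line bundles, $\omega_{A'}=\ol I_b^*\omega_A$, and the pullback of an ample line bundle by a finite morphism is ample by \cite[II, Prop.~5.1.12]{EGA}. Uniqueness of the pair $(\ol{\bfM'},\ol I_b)$ then follows from Lemma~\ref{S.3} applied to $\bfM'$. The main obstacle is the extension of the $A'$-action from the open dense $\bfM'$ to all of $\ol{\bfM'}$; this is precisely what Pink's Prop.~3.7 supplies, and without it there would be no canonical way to promote the pulled-back generalized Drinfeld $A$-module on $\ol{\bfM'}$ to a generalized Drinfeld $A'$-module.
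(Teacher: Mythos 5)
Your proposal is correct and follows essentially the same route as the paper: normalize the Zariski closure of $I_b(\bfM')$ in $\ol\bfM$ inside the function field of $\bfM'$, use excellence to get finiteness of $\ol I_b$, extend the $A'$-action by Pink's Prop.~3.7, deduce weak separation from finiteness of $\ol I_b$, and obtain ampleness of $\omega_{A'}=\ol I_b^*\omega_A$ from the finite pullback. The extra details you supply (why $\bfM'$ is open dense in $\ol{\bfM'}$, why weak separation for $A'$-test objects follows from that for $A$-test objects) are accurate elaborations of steps the paper leaves implicit.
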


\begin{proof}
  The statement of Lemma~\ref{S.7} for the generic fiber 
  is proved in \cite[Lemma
  4.5]{pink}, and the proof also works in the present situation. We
  sketch the construction for the reader's convenience. 
  Let $I_b(\bfM')^{\rm zar}$ be the Zariski closure of $I_b(\bfM')$ in
  $\ol \bfM$, and 
  let $\ol{\bfM'}$ be the
  normalization of $I_b(\bfM')^{\rm zar}$ in the
  function field of $\bfM'$. 
  $\ol{\bfM'}$ is a normal integral
  scheme flat over $A'[\grn'^{-1}]$, using \cite[Proposition~III.9.7]{hartshorne} again. 
  Since $I_b(\bfM')^{\rm zar}$ is excellent we have a natural finite morphism 
  $\ol I_b: \ol{\bfM'} \to \ol \bfM$ extending $I_b$ by
  \cite[IV$_2$, Scholie~7.8.3 (ii), (v)]{EGA}.
  The pull-back of the universal family on $\ol \bfM$ gives a
  generalized Drinfeld $A$-module $(\ol{E'},\wt \varphi)$ over $\ol
  {\bfM'}$ where
  the $A$-action $\wt \varphi$ extends to the $A'$-action $\varphi'$ 
  on the open subscheme $\bfM'$. We can view $\varphi'_{a'}$, for
  $a'\in A'$, as an endomorphism of $(\ol{E'},\wt \varphi)$ over
  $\bfM'$, which extends to an endomorphism $\ol \varphi'_{a'}$ 
  over $\ol {\bfM'}$ by \cite[Proposition~3.7]{pink}. 
  Since the morphism $\ol {\bfM'}\to \ol {\bfM}$ is finite, 
  it follows that $(\ol {E'}, \ol {\varphi}')$ is a weakly separating 
  family on $\ol {\bfM'}$. That $\omega_{A'}$ is ample on $\ol{\bfM'}$ follows from the equality $\omega_{A'}=(\ol I_b)^*\omega_A$ by \cite[II, Proposition~5.1.12 and Corollary~4.6.6]{EGA}. \qed
\end{proof}

\begin{lemma}\label{S.8}
  If $A=\Fq[t]$, then a projective Satake compactification $\ol{\bfM}^r_{K(t)}$ for $\bfM^r_{K(t)}$ exists for any $r\ge 1$ and the dual $\omega$ of the Lie algebra of the universal Drinfeld module over $\ol{\bfM}^r_{K(t)}$ is ample.
\end{lemma}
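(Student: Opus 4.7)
My plan is to take the Satake compactification to be $\bbP^{r-1}_{A[t^{-1}]}$, equipped with a universal generalized Drinfeld module that extends the one on $\bfM^r_{K(t)}$.

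The starting point is the identification of $\bfM^r_{K(t)}$ with the open subscheme $U:=\bbP^{r-1}_{A[t^{-1}]}\setminus\bigcup_{[\vec\alpha]\in\bbP^{r-1}(\F_q)}H_{\vec\alpha}$, where $H_{\vec\alpha}$ is the hyperplane cut out by $\alpha_1\lambda_1+\cdots+\alpha_r\lambda_r=0$. This follows from the observation that a level-$t$ structure on a rank-$r$ Drinfeld $\F_q[t]$-module is a tuple of $r$ sections $\lambda_1,\ldots,\lambda_r$ of the underlying line bundle $E$ that are $\F_q$-linearly independent at every point; the isomorphism class of $(E,\varphi,(\lambda_i))$ is recorded by the projective ratio $[\lambda_1:\cdots:\lambda_r]$, and the complement of the $\F_q$-rational hyperplanes is exactly the locus where the reconstruction yields a genuine rank-$r$ Drinfeld module, i.e.\ where the leading coefficient $g_r=t/\prod_{0\ne\vec\alpha\in\F_q^r}(\alpha_1\lambda_1+\cdots+\alpha_r\lambda_r)$ is invertible.

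I will then take $\ol\bfM^r_{K(t)}:=\bbP^{r-1}_{A[t^{-1}]}$, which is manifestly normal, integral, projective, and flat over $A[t^{-1}]$. The underlying line bundle must be extended in a nontrivial way so that $\omega$ becomes ample. Put $N:=(q^r-1)/(q-1)$ and $P:=\prod_{[\vec\alpha]\in\bbP^{r-1}(\F_q)}(\alpha_1\lambda_1+\cdots+\alpha_r\lambda_r)\in H^0(\bbP^{r-1},\O(N))$, so that the discriminant $\Delta:=\prod_{0\ne\vec\alpha\in\F_q^r}(\alpha_1\lambda_1+\cdots+\alpha_r\lambda_r)$ equals $\pm P^{q-1}$. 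Take $\olE$ to be the line bundle over $\bbP^{r-1}_{A[t^{-1}]}$ with $\Lie(\olE)=\O(1-N)$ (so that $\omega=\O(N-1)$). On $U$ the section $P$ trivialises $\O(N)$, giving an isomorphism of $\olE|_U$ with the standard universal line bundle (whose Lie algebra is $\O(1)|_U$); transporting $\ol\varphi_t$ through this isomorphism the coefficients rescale as $\ol g_i=g_i\cdot P^{q^i-1}$, which a direct calculation shows are homogeneous polynomials in $\lambda_1,\ldots,\lambda_r$ of degree $(N-1)(q^i-1)$, hence are regular global sections of $\omega^{q^i-1}=\O((N-1)(q^i-1))$ on all of $\bbP^{r-1}_{A[t^{-1}]}$; in particular $\ol g_r=\pm\, t\, P^{q^r-q}$.

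With this set-up the axioms of Definition~\ref{def:satake} can be verified directly. Restriction to $U$ recovers the universal family (the isomorphism to the standard one being effected by $P$), so the embedding $\bfM^r_{K(t)}\hookrightarrow\ol\bfM^r_{K(t)}$ is dominant open. Analysing the vanishing of each $\ol g_i$ along the intersections of hyperplanes shows that at a geometric point lying on exactly those $H_{\vec\alpha}$ with $\vec\alpha$ in a subspace $W\subset\F_q^r$ of codimension $r'$, the leading surviving coefficient is $\ol g_{r'}$, so the fiber is a Drinfeld module of rank $r'\ge 1$; thus $(\olE,\ol\varphi)$ is a generalized Drinfeld module of rank $\le r$. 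Weak separation on the interior follows from the finiteness of $\GL_r(\F_q)$-orbits on level structures, and on the boundary by induction on $r$, since each boundary stratum is again a projective space on which the restricted family parametrises lower-rank Drinfeld modules with partial level structure. Finally $\omega=\O(N-1)$ is ample on $\bbP^{r-1}_{A[t^{-1}]}$ whenever $r\ge 2$, as $N-1=(q^r-q)/(q-1)\ge q>0$; the case $r=1$ is trivial because $\ol\bfM^1_{K(t)}=\Spec A[t^{-1}]$ is affine.

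The main obstacle is the verification that the rescaled coefficients $\ol g_i$ extend as genuinely regular polynomials across every boundary stratum, together with the determination of the fibrewise rank on each stratum. This amounts to a combinatorial analysis of the linearised polynomial $L(x)=\prod_{w\in\varphi[t]}(x-w)$, whose intermediate coefficients are the relevant symmetric functions of the roots, and of the vanishing orders of $P$ and of those coefficients along intersections of $\F_q$-hyperplanes in $\bbP^{r-1}$; this is essentially the content of Kapranov's and Pink's analysis of the boundary of the Drinfeld period domain, and is also what secures the weak-separation step via the induction on $r$.
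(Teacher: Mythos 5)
Your identification of $\bfM^r_{K(t)}$ with the complement $\Omega_r$ of the $\Fq$-rational hyperplanes in $\bbP^{r-1}$ is correct and is also the paper's starting point (Lemma~\ref{S.9}), but the compactification you then choose is the wrong scheme for $r\ge 3$, and the family you place on it is not a generalized Drinfeld module there. The coefficient $g_i=\varphi_{t,i}$ is $t$ times the elementary symmetric function of degree $q^i-1$ in the reciprocals $1/v$, $v\in V^0_r$, i.e.\ a sum over $(q^i-1)$-element subsets $S\subset V^0_r$ of $t\big/\prod_{v\in S}v$ (see \eqref{eq:S.6}). After your twist the corresponding term of $\ol g_i=g_iP^{q^i-1}$ is $tP^{q^i-1}\big/\prod_{v\in S}v$, in which each projective class $[w]$ occurs with multiplicity $q^i-1-\#(S\cap\Fq^\times w)\ge q^i-q$. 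Hence for every $i\ge 2$ the section $\ol g_i$ is divisible by $P^{\,q^i-q}$ and vanishes identically on the whole boundary $V(P)$ --- exactly as your own formula $\ol g_r=\pm\,tP^{q^r-q}$ shows for the top coefficient. For $i=1$ a term survives at a point $x$ only if $S$ contains all $q-1$ scalar multiples of every linear form vanishing at $x$; since $\#S=q-1$, this forces $x$ to lie on at most one projective hyperplane. Consequently: on the open stratum of a single hyperplane the only surviving coefficient is $\ol g_1$, so the fibre has rank $1$ instead of the rank $r-1$ you assert, and this $(r-2)$-dimensional stratum carries a single geometric isomorphism class of rank-one Drinfeld modules, which already destroys weak separation when $r\ge 3$; worse, at every point of a codimension-$\ge 2$ intersection of $\Fq$-rational hyperplanes (nonempty as soon as $r\ge 3$) all of $\ol g_1,\dots,\ol g_r$ vanish, so the fibre is not a Drinfeld module of any rank. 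Your claim that ``the leading surviving coefficient is $\ol g_{r'}$'' is therefore false for every $r'\ge 2$, and one checks similarly that no other twist $\O(m)$ of the line bundle can repair this. Only for $r\le 2$ does your construction succeed.

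The underlying point is that $\bbP^{r-1}$ is not the Satake compactification when $r\ge 3$: by the uniqueness of Lemma~\ref{S.3} it would have to agree with the scheme the paper actually uses in Proposition~\ref{S.10}, namely $Q_{r}=\Proj R_r$, where $R_r\subset\Frac(\Fq[x_0,\dots,x_{r-1}])$ is generated by the \emph{reciprocals} $1/v$, each placed in degree one (Pink--Schieder). On $Q_r$ the $1/v$ are honest global sections of $\O_{Q_r}(1)$ with no common zero, each $\varphi_{t,i}$ is a polynomial of degree $q^i-1$ in them and so extends without any rescaling, and the simultaneous vanishing of $\varphi_{t,1},\dots,\varphi_{t,r}$ at a point would force all $1/v$ to vanish there, which is impossible. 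The schemes $Q_r$ and $\bbP^{r-1}$ agree on $\Omega_r$ (and coincide for $r\le 2$), but for $r\ge 3$ they are different birational models --- $Q_r$ is in general singular --- and passing from $\bbP^{r-1}$ to $Q_r$ is precisely the step your argument is missing.
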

\begin{proof}
  See Proposition~\ref{S.10} below for a  more detailed statement and the
  proof. \qed 
\end{proof}

\npr {\bf Proof of Theorem~\ref{S.2}.} 
(1) Choose $\grn:=\grn_K^m=t A$ for some $m\in \bbN$. Put
$A_0:=\Fq[t]\subset A$ 
and $F_0:=\Frac(A_0)$. The moduli scheme $\bfM^r_{K(\grn)}$ is defined
over $A[\grn_K^{-1}]$ and one has a morphism $I_b: \bfM^r_{K(\grn)}\to
\bfM^{r[F:F_0]}_{A_0, K(t)}$ as in (\ref{eq:S.1}). 
By Lemma~\ref{S.8}, the moduli scheme 
$\bfM^{r[F:F_0]}_{A_0, K(t)}$ admits a projective 
Satake compactification over $A_0[1/t]$. 
It follows from Lemma~\ref{S.7} that 
the moduli scheme $\bfM^r_{K(\grn)}$ admits a projective 
Satake compactification over $A[\grn_K^{-1}]$. 
As $K(\grn)\subset K$, it follows from
Lemma~\ref{S.6} that the moduli scheme $\bfM^r_K$ admits a projective 
Satake compactification over $A[\grn_K^{-1}]$. This proves part (1) of
Theorem~\ref{S.2}. 

Part (2) follows from Lemma~\ref{S.6}. Namely, let $\grn:=\grn_{\wt K}$, so that $K(\grn)\subset\wt K$, and let $\ol{\bfM}^r_{K(\grn)}$ be the Satake compactification of $\bfM^r_{K(\grn)}$. By Lemma~\ref{S.6} the quotients $\ol{\bfM}^r_{\wt K}:=\ol{\bfM}^r_{K(\grn)}\big/\bigl(\wt K/K(\grn)\bigr)$ and $\ol{\bfM}^r_{K}\otimes_{A[\grn_K^{-1}]}A[\grn_{\wt K}^{-1}]:=\ol{\bfM}^r_{K(\grn)}\big/\bigl(K/K(\grn)\bigr)$ are the Satake compactifications of $\bfM^r_{\wt K}$ and $\bfM^r_K\otimes_{A[\grn_K^{-1}]}A[\grn_{\wt K}^{-1}]$, respectively. The generalized Drinfeld $A$-module $(\olE_{K(\grn)},\ol\phi_{K(\grn)})$ descends to generalized Drinfeld $A$-modules $(\olE_{\wt K},\ol\phi_{\wt K})$ and $(\olE_K,\ol\phi_K)$ on $\ol{\bfM}^r_{\wt K}$ and $\ol{\bfM}^r_{K}\otimes_{A[\grn_K^{-1}]}A[\grn_{\wt K}^{-1}]$, respectively. The forgetful quotient morphisms $\pi_{K(\grn),\wt K}$ and $\pi_{K(\grn),K}$ are finite surjective and open by Proposition~\ref{S.4}(1). Therefore, also $\pi_{\wt K,K}$ is finite surjective an open and satisfies $\pi_{\wt K,K}^*(\olE_K,\ol\varphi_K)=(\olE_{\wt K},\ol\varphi_{\wt K})$ and $\pi_{\wt K,K}^*(\omega_K)=\omega_{\wt K}$.
\qed

\begin{prop}\label{reduced}
Suppose that $K$ is fine. At every place $v\nmid \grn_K$ the fiber $\ol{\bfM}^r_K \otimes_{A[\grn_K^{-1}]} \F_v$ is
  geometrically reduced. 
\end{prop}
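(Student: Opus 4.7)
The plan is to apply Serre's criterion for reducedness: a Noetherian scheme is reduced if and only if it satisfies $R_0$ (regular in codimension zero) and $S_1$ (no embedded associated primes). Since $v\nmid\grn_K$ corresponds to a closed point of $\Spec A[\grn_K^{-1}]$ and $\F_v$ is a finite, hence perfect, field, reducedness of $\ol{\bfM}^r_K\otimes_{A[\grn_K^{-1}]}\F_v$ over $\F_v$ automatically upgrades to geometric reducedness, so it suffices to prove plain reducedness.

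For $R_0$, I would show that every generic point of $\ol{\bfM}^r_K\otimes\F_v$ lies in the open subscheme $\bfM^r_K\otimes\F_v$, which is smooth over $\F_v$ by Proposition~\ref{2.5}(2) and hence regular. The universal family $(\olE,\ol\varphi)$ on $\ol{\bfM}^r_K$ is a weakly separating generalized Drinfeld $A$-module of rank $\le r$ whose restriction to $\bfM^r_K$ has rank exactly $r$, so the boundary $\ol{\bfM}^r_K\setminus\bfM^r_K$ is precisely the closed subscheme $(\ol{\bfM}^r_K)_{\le r-1}$ of Proposition~\ref{2.7}, whose relative dimension over $\Spec A$ is bounded by $r-2$. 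On the other hand, $\ol{\bfM}^r_K$ is integral and flat of relative dimension $r-1$ over the Dedekind base $\Spec A[\grn_K^{-1}]$, so by the standard dimension formula for flat local homomorphisms $\dim\calO_{\ol{\bfM}^r_K,x}=\dim\calO_{\ol{\bfM}^r_K\otimes\F_v,x}+1$, every irreducible component of $\ol{\bfM}^r_K\otimes\F_v$ has dimension $r-1$. The dimension gap forces the generic points of the fiber into the smooth open part $\bfM^r_K\otimes\F_v$, which gives $R_0$.

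For $S_1$, I would exploit that $\ol{\bfM}^r_K$ is normal and therefore Serre $S_2$. Since it is flat over $A[\grn_K^{-1}]$, a uniformizer $\varpi$ of $\calO_{A[\grn_K^{-1}],v}$ is a regular element on $\calO_{\ol{\bfM}^r_K,x}$ for every $x$ in the special fiber, so both dimension and depth drop by exactly one modulo $\varpi$. For a non-generic point $x$ of the fiber, $\dim\calO_{\ol{\bfM}^r_K,x}\ge 2$, so $S_2$ yields $\depth\calO_{\ol{\bfM}^r_K,x}\ge 2$, and therefore $\depth\calO_{\ol{\bfM}^r_K\otimes\F_v,x}\ge 1$. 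This is exactly $S_1$, and Serre's criterion then gives reducedness.

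The main obstacle I expect is confirming that the boundary is really cut out by $(\ol{\bfM}^r_K)_{\le r-1}$ in the scheme-theoretic sense required for the tight dimension bound of Proposition~\ref{2.7}; this must be tracked through the quotient and pullback constructions in Lemmas~\ref{S.6}--\ref{S.8} to see that the universal family is genuinely of rank $\le r$ in the coefficient-wise sense of Definition~\ref{2.1}(2) and not merely pointwise. Once this is in hand, the passage to $R_0$, the depth computation for $S_1$ and the appeal to Serre's criterion are all formal consequences of flatness and normality.
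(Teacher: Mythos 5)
Your proposal is correct and follows essentially the same route as the paper: reduce to plain reducedness via perfectness of $\F_v$, verify Serre's criterion, obtain $(R_0)$ by combining the equidimensionality of the special fiber (from flatness over the Dedekind base) with the bound $\dim(\ol\scrM^r_K-\scrM^r_K)\le r-2$ from Proposition~\ref{2.7} to push the generic points into the smooth locus, and obtain $(S_1)$ from normality ($S_2$) of $\ol{\bfM}^r_K$ together with the fact that a uniformizer at $v$ is a non-zero-divisor. The subtlety you flag about the boundary lying in the rank-$\le r-1$ locus is exactly the point where the paper, too, simply invokes Proposition~\ref{2.7}.
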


\begin{proof}
  Write $\scrM^r_K=\bfM^r_K \otimes_{A[\grn_K^{-1}]} \F_v$ and $\ol \scrM^r_K=\ol{\bfM}^r_K \otimes_{A[\grn_K^{-1}]} \F_v$. Since the residue field $\F_v$ is perfect, it suffices to show that $\ol\scrM^r_K$ is reduced. Using Serre's criterion \cite[IV$_2$, Proposition~5.8.5]{EGA} we prove this by showing that every point $x\in\ol\scrM^r_K$ has an open affine neighborhood $\Spec B\subset\ol\scrM^r_K$ for which the ring $B$ satisfies conditions $(R_0)$ and $(S_1)$:

$(R_0)$ For every minimal prime $\grp\subset B$, the local ring $B_\grp$ is regular.

$(S_1)$ Every prime ideal $\grp\subset B$ with ${\rm depth}\,B_\grp=0$ has codimension $0$.

Let $\Spec\bfB$ be an open affine neighborhood of $x$ in $\ol{\bfM}^r_K$. Since $\bfM^r_K$ is dense in $\ol{\bfM}^r_K$ the dimension of the integral domain $\bfB$ is $r$ by Proposition~\ref{2.5}(2). Also the dimension of $\bfB_{A_{(v)}}:=\bfB\otimes_{A[\grn_K^{-1}]} A_{(v)}$ is $r$ where $A_{(v)}$ is the localization of $A$ at the place $v$. Let $a\in A$ be an element with $v(a)=1$ and let $B:=\bfB\otimes_{A[\grn_K^{-1}]} \F_v=\bfB_{A_{(v)}}/\gamma(a)\bfB_{A_{(v)}}$. Then $\Spec B$ is an affine open neighborhood of $x$ in $\ol\scrM^r_K$. Let $\grp\subset B$ be a minimal prime ideal and let $\grP\subset \bfB$ be the preimage of $\grp$ in $\bfB$. Then $\grP\bfB_{A_{(v)}}$ is the preimage of $\grp$ in $\bfB_{A_{(v)}}$ and is minimal among primes containing $\gamma(a)$. So $\bfB_\grP$, which equals the localization of $\bfB_{A_{(v)}}$ at $\grP\bfB_{A_{(v)}}$, has dimension $\le 1$ by Krull's principal ideal theorem \cite[Theorem~10.2]{eisenbud}. On the other hand $\dim B=r-1$, because $\gamma(a)$ is not a unit and not a zero-divisor in $\bfB_{A_{(v)}}$ due to the flatness of $\ol{\bfM}^r_K$ over $A[\grn_K^{-1}]$. Therefore, $\dim \bfB/\grP=\dim B/\grp \le\dim B=r-1$. Since $\bfB$ is a finitely generated $\Fq$-algebra we have $r=\dim\bfB=\dim \bfB_\grP+\dim\bfB/\grP$ by \cite[Corollary~13.4]{eisenbud}, and hence $\dim B/\grp=r-1$. Since $\dim(\ol\scrM^r_K \setminus \scrM^r_K)\le r-2$ by Proposition~\ref{2.7}, the point $\grp\in\Spec B\subset\ol\scrM^r_K$ must lie in $\scrM^r_K$. And since $\scrM^r_K$ is smooth over $\F_v$ the local ring $B_\grp$ is regular, proving $(R_0)$.

To prove $(S_1)$ let $\grp\subset B$ be a prime ideal with ${\rm depth}\,B_\grp=0$ and let $\grP\subset \bfB$ be the preimage of $\grp$ in $\bfB$. Since $\gamma(a)$ is a non-zero-divisor in $\bfB_\grP$ and $B_\grp=\bfB_\grP/(\gamma(a))$, we have ${\rm depth}\, \bfB_\grP=1$ by \cite[0$_{\rm IV}$, Proposition~16.4.6]{EGA}. Since $\bfB$ is normal we conclude $\dim \bfB_\grP=1$, and hence $\dim B_\grp=0$ as desired. This proves $(S_1)$ and the proposition.
\qed
\end{proof}

In the remainder of this section we let $A=\Fq[t]$,
$F=\Fq(t)$ and $\grn=(t)$. Let $r\ge 1$ be a positive integer, and 
write $\bfM=\bfM^r(t)$. The aim is to
construct the Satake compactification of $\bfM$ over $A[1/t]$ and
prove Lemma~\ref{S.8}. It turns
out that methods and proofs in the construction for the Satake
compactification $\ol M$ of the generic fibre $M:=\bfM \otimes_{A[1/t]}
F$ already suffice for our purpose. 

Set $V_r:=\Fq^r$ and identify it with the $\Fq$-vector space
$(t^{-1}A/A)^r$. Put $V^0_r:=V_r \setminus \{0\}$. 
Let $(E, \varphi,\lambda)$ be a Drinfeld $A$-module
of rank $r$ with level-$t$ structure 
over an $A[1/t]$-scheme $S$. Then the level structure
$\lambda$ induces an $\Fq$-linear map $\lambda:V_r\to E(S)$ which is
\emph{fiber-wise injective}, i.e. for any point $s\in S$ the induced map
$V_r\to E(s)$ is injective. In particular, for any $v\in V^0_r$, the
section $\lambda(v)$ is nowhere zero.  

\begin{lemma}\label{S.9}
  For any line bundle $E$ over an $A[1/t]$-scheme $S$ and any 
  fiber-wise injective
  $\Fq$-linear map $\lambda:V_r\to E(S)$, there exists a unique
  homomorphism $\varphi: A\to \End(E)$ turning $(E,\varphi, \lambda)$
  into a Drinfeld $A$-module of rank $r$ with level-$t$ structure over
  $S$. 
\end{lemma}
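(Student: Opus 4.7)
The plan is to construct $\varphi_t$ explicitly by the formula whose roots are prescribed by $\lambda$ and then to extend $\varphi$ to all of $A=\Fq[t]$ by polynomial substitution. The existence and uniqueness will both come down to the observation that, since $\gamma(t)\in\Gamma(S,\O_S)^\times$, any $\varphi$ making $(E,\varphi,\lambda)$ into a rank $r$ Drinfeld $A$-module with level-$t$ structure forces $\varphi_t$ to be an $\Fq$-linear polynomial of degree $q^r$ in $\tau$ whose scheme-theoretic kernel $\varphi[t]$ is the subscheme cut out by $\lambda(V_r)$ and whose derivative at the origin equals $\gamma(t)$. These three conditions together pin down $\varphi_t$ uniquely.

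First I would work locally, choosing a trivialization $E\cong\Ga_S$ over an affine open $\Spec R\subset S$, so that each $\lambda(v)$ becomes an element of $R$ and fiber-wise injectivity says $\lambda(v-w)=\lambda(v)-\lambda(w)$ is a unit in $R$ whenever $v\neq w$. I then define
\[
\varphi_t(x)\;:=\;c_t\cdot\prod_{v\in V_r}\bigl(x-\lambda(v)\bigr)\qquad\text{with}\qquad c_t\;:=\;\gamma(t)\cdot(-1)^{q^r-1}\Bigm/\prod_{v\in V_r^0}\lambda(v)\,,
\]
which lies in $R$ because the denominator is a unit. The product of the $\lambda(v)$ for $v\ne 0$ is naturally a section of $E^{\otimes(q^r-1)}$, so $c_t$ is intrinsically a section of $E^{\otimes(1-q^r)}$; this shows that the formula is independent of the trivialization and therefore glues to a global morphism $\varphi_t\in\End(E)$ whose coefficients $\varphi_{t,i}\in\Gamma(S,E^{\otimes(1-q^i)})$ are the ones demanded by \eqref{eq:2.2}.

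Next I would verify that $\varphi_t$ is actually $\Fq$-linear, i.e.\ a polynomial in $\tau$. This is the standard fact that for an $\Fq$-linear map $\lambda\colon V_r\to R$ whose differences $\lambda(v)-\lambda(w)$ are non-zero-divisors, the polynomial $\prod_{v\in V_r}(x-\lambda(v))$ is an $\Fq$-linearized polynomial in $x$ — only powers $x^{q^i}$ for $0\le i\le r$ appear. A clean way to see it is to note that for each $c\in\Fq^\times$ the two polynomials $\prod_v(x-c\lambda(v))$ and $c\cdot\prod_v(c^{-1}x-\lambda(v))$ coincide after reindexing the product, and similarly for additivity; since $V_r\to R$ is $\Fq$-linear and the $\lambda(v)$ are distinct, one deduces the required identity. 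Once this is known, the constant term is zero and the coefficient of $\tau^0$ equals $c_t\cdot(-1)^{q^r-1}\prod_{v\in V_r^0}\lambda(v)=\gamma(t)$, so $\partial\varphi_t=\gamma(t)$; the coefficient of $\tau^r$ is $c_t$, which is nowhere zero, so $\varphi_t$ has rank $r$ on each fiber.

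Finally, I define $\varphi_a:=a(\varphi_t)\in\End(E)$ for every $a=a(t)\in A=\Fq[t]$; this is a ring homomorphism from $A$ into $\End(E)$ because $\End(E)$ contains the subring generated over $\Fq$ by $\varphi_t$, and $\partial\circ\varphi=\gamma$ is automatic from $\partial\varphi_t=\gamma(t)$. By construction $\ker(\varphi_t)$ is the subscheme $\bigcap_{v\in V_r}\{\lambda(v)\}$, which identifies with $(t^{-1}A/A)^r_S$ via $\lambda$, so $\lambda$ is indeed a level-$t$ structure. Conversely, any other $\varphi'$ meeting the requirements has $\varphi'_t$ of the form $c'_t\prod_{v\in V_r}(x-\lambda(v))$ by the kernel condition combined with the rank constraint (so degree exactly $q^r$ in $\tau$), and the normalization $\partial\varphi'_t=\gamma(t)$ forces $c'_t=c_t$, giving uniqueness of $\varphi_t$ and hence of $\varphi$. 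The main subtlety is the $\Fq$-linearity step above, since one must argue with sections of non-trivial line bundles rather than inside a field; but fiber-wise injectivity provides precisely the non-vanishing needed to reduce to the classical identity for linearized polynomials.
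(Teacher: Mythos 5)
Your proposal is correct and follows essentially the same route as the paper: trivialize $E$ locally, form $f(X)=\prod_{v\in V_r}(X-\lambda(v))$, invoke the standard fact (the paper cites \cite[Cor.~1.2.2]{goss}) that this is an $\Fq$-linear polynomial of degree $q^r$, normalize by the unit $\prod_{v\in V^0_r}\lambda(v)$ so that $\partial\varphi_t=\gamma(t)$, and deduce uniqueness from the kernel condition together with that normalization. The only cosmetic difference is that you carry the sign $(-1)^{q^r-1}$ explicitly, which the paper observes is always $+1$.
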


\begin{proof}
  The assertion is local on $S$. Thus, we may assume that $S=\Spec R$
  is connected and $E=\bbG_{a,S}=\Spec R[X]$. For any 
  $v\in V^0_r:=V_r \setminus \{0\}$, one has $\lambda(v)\in R^\times$, as it is
  non-zero everywhere. Put $f(X):=\prod_{v\in V_r}
  (X-\lambda(v))$. By \cite[Cor. 1.2.2]{goss}, $f\in \End(\bbG_{a,S})$ 
  is an $\Fq$-linear endomorphism of degree $q^r$ in $X$ over
  $R$. Thus, $\ker f\subset S$ is a finite constant
  group over $S$ of order $q^r$, which is the union of the image of the
  sections $\lambda(v)$. 
  Note that $(-1)^{q^r-1}=+1$ if $q$ is odd, and also if $q$ is even 
  when $-1=+1$. Therefore,
  \begin{equation}
    \label{eq:S.3}
    \begin{split}
    \varphi_t:&=\gamma(t)  \prod_{v\in V^0_r} \lambda(v)^{-1}
  f(X)=\gamma(t) X \prod_{v\in V^0_r} (1-\lambda(v)^{-1} X) \\
  &=\gamma(t)\tau^0 + \dots + \gamma(t) \prod_{v}
  \lambda(v)^{-1} \tau^r, \quad (\tau^i=X^{q^i})
    \end{split}
  \end{equation}
 defines a Drinfeld $A$-module of rank $r$ over $S$. 
 Note that $\lambda:V_r\isoto (\ker f)(S)=\varphi[t](S)\subset R$ is
 an $\Fq$-linear isomorphism, which is also $A$-linear as $t$
 annihilates both sides. Thus,  
 $\lambda$ is a level-$t$ structure on $(\bbG_{a,S},\varphi)$.
 Note that $\varphi_t$ in
 (\ref{eq:S.3}) is the
 unique polynomial such that the coefficient of $X$ is $\gamma(t)$ and $(\ker
 \varphi_t)(S)=\lambda(V_r)$. Therefore, the homomorphism 
 $\varphi$ is uniquely determined
 by $\lambda$. \qed      
\end{proof}

Write $\bbP^{r-1}:=\bbP^{r-1}_{\Fq}=\Proj S_r$, where
$S_r:=\Fq[x_0,\dots, x_{r-1}]$ is the graded polynomial ring over $\Fq$ 
with degree one on each $x_i$. 
We identify $S_r$ with the symmetric
algebra $\Sym_{\Fq} V_r$ of $V_r=\Fq^r$ 
by sending $x_0,\dots, x_{r-1}$ to 
the standard basis of $V_r$.
As is well known \cite[Proposition~II.7.12]{hartshorne}, 
$\bbP^{r-1}$ represents the
functor that associates to any $\Fq$-scheme $T$ the set of isomorphism
classes of $(E,e_0,e_1,\dots, e_{r-1})$ consisting of a line bundle
$E$ over $T$ and sections $e_0,\dots, e_{r-1}\in E(T)$ that generate
$E$. Given such a tuple $(E,e_0, \dots, e_{r-1})$, one associates 
an $\Fq$-linear map $\lambda:V_r\to E(T)$, sending $x_i$ to $e_i$,
which induces a surjective map $\calO_T^r\to E$. 
Clearly, the datum $(e_0, \dots, e_{r-1})$ is determined by $\lambda$.  
The universal family on $\bbP^{r-1}$ is $(\calO_{\bfP^{r-1}}(1), x_0,\dots,
x_{r-1})$, or equivalently by $(\calO_{\bfP^{r-1}}(1), \lambda_{\bbP^{r-1}})$, where $\lambda_{\bbP^{r-1}}: V_r\to
\calO_{\bbP^{r-1}}(1)(\bbP^{r-1})$ is the identity map.

Let $\Omega_r$ be the open subscheme of $\bbP^{r-1}$ obtained by
removing all $\Fq$-rational hyperplanes. By definition, $\Omega_r$ is
the largest open subset $U$ such that $v$ is nowhere zero on $U$ 
for any $v\in V_r^0$, or equivalently, the restriction on $U$ of  
$\lambda_{\bbP^{r-1}}: V_r \to
\calO_{\bbP^{r-1}}(1)(U)$ is fiber-wise injective. Thus, $\Omega_r$
represents the functor $\calF$ which associates to each 
$\Fq$-scheme $T$ the
pairs $(E,\lambda)$ in $\bbP^{r-1}(T)$ with the property that
 the $\Fq$-linear map 
$\lambda:V_r\to E(T)$ is fiber-wise injective. 
On the other hand,
by Lemma~\ref{S.9}, the functor $\calF$ restricted
to the category of $A[1/t]$-schemes is the same as the representable
functor associated to $\bfM$. Thus, one obtains 
$\bfM=\Omega_{r,A[1/t]}:=\Omega_r\otimes_{\Fq} A[1/t]$.

Let $E=E_{\Omega_{r},A[1/t]}$ be the
line bundle over $\Omega_{r,A[1/t]}$ corresponding to the invertible sheaf
$\calO_{\bbP^{r-1}}(1)\otimes_{\Fq} A[1/t]$ on $\Omega_{r,A[1/t]}$. 
The map $\lambda_{\bbP^{r-1}}$ induces a fiber-wise injective
$\Fq$-linear map $\lambda:V_r\to E(\Omega_{r,A[1/t]})$.
Let $\varphi:A\to \End(E)$ be the homomorphism defined by 
\begin{equation}
  \label{eq:S.5}
  \varphi_t=\gamma(t)X \prod_{v\in V^0_r} \left(1-\frac{X}{\lambda(v)}\right)=
  \gamma(t) \tau^0+\sum_{i=1}^r \varphi_{t,i} \tau^i,  
\end{equation}
where $\tau^i=X^{q^i}$ and 
\begin{equation}
  \label{eq:S.6}
  \varphi_{t,i}=\sum_{v_1,\dots, v_{q^i-1}\in V_r^0, v_i\neq v_j}
\frac{\gamma(t)}{\lambda(v_1) \cdots \lambda(v_{q^i-1})} \in \Gamma(\Omega_r,
E^{\otimes(1-q^{i})}),
\end{equation}
where we use again that $(-1)^{q^i-1}=+1$ if $q$ is odd, and also if $q$ is even 
  when $-1=+1$. From Lemma~\ref{S.9}, $(E,\varphi,\lambda)$ is
the universal family on $\bfM$.

Denote the quotient field of $S_r=\Fq[x_0,\dots,x_{r-1}]$ 
by $K_r$. Let $R_r$ be the
$\Fq$-subalgebra of $K_r$ generated by $1/v$ for all $v\in V^0_r$. 
Impose a graded structure on $R_r$ by assigning degree one to each
$1/v$, and define $Q_r:=\Proj R_r$. We change the graded structure on $S_r$ by now assigning degree $-1$ to each $x_i$ and to each $v\in V^0_r$. Let $RS_r$ be the graded 
$\Fq$-subalgebra of 
$K_r$ generated by $R_r$ and $S_r$, and $RS_{r,0}\subset RS_{r}$ be the $\Fq$-subalgebra consisting of homogeneous elements of degree zero.  
Then $\Omega_r=\Spec(RS_{r,0})$ is
the open subscheme in $Q_r$ by removing the hyperplane sections
defined by $1/x_i$ for $i=0,\dots, r-1$.

\begin{prop}\label{S.10}
  The scheme $Q_{r,A[1/t]}=Q_r\otimes_{\Fq} A[1/t]$ 
  is a projective Satake compactification
  of $\bfM$. It is Cohen-Macaulay. The dual $\omega$ of 
  the relative Lie algebra of the universal family
  on $Q_{r,A[1/t]}$ is $\calO_{Q_{r,A[1/r]}}(1)$, in particular,
  $\omega$ is very ample relative to $A[1/t]$.
\end{prop}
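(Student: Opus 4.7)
The plan is to follow Pink's construction of the Satake compactification of the generic fiber in \cite[\S 5]{pink}, adapted to the integral model over $A[1/t]$. The scheme $Q_{r,A[1/t]}$ and the extended universal family are essentially tautological consequences of the formulas \eqref{eq:S.5}--\eqref{eq:S.6}, which only involve the sections $1/\lambda(v)\in\Gamma(Q_r,\calO_{Q_r}(1))$ and the structural element $\gamma(t)$, both of which make global sense on all of $Q_{r,A[1/t]}$ by the very definition of $R_r$.

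First I would verify that $Q_{r,A[1/t]}$ is an integral, normal, projective, flat scheme over $\Spec A[1/t]$. Integrality is clear since $R_r\subset K_r$ is a domain. Projectivity over $\Fq$ holds because $R_r$ is generated in degree one over $R_{r,0}=\Fq$ by the finite set $\{1/v:v\in V_r^0\}$; flatness over $\Fq$ is automatic, and both properties pass to $A[1/t]$ by base change. For normality, one uses that $\Omega_r\subset Q_r$ is smooth and dense, while the closed complement $Q_r\setminus\Omega_r$ admits a boundary stratification in terms of smaller $Q_{r'}$'s with $r'<r$, enabling an inductive verification of Serre's conditions $(R_1)$ and $(S_2)$; normality descends to $Q_{r,A[1/t]}$ because $A[1/t]$ is a smooth $\Fq$-algebra.

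Next I would construct the extended universal family. Set $\olE := \calO_{Q_{r,A[1/t]}}(-1)$, which is a line bundle because $\calO_{Q_{r,A[1/t]}}(1)$ is invertible. On $\Omega_r$ the canonical identification $\olE|_{\Omega_r}\cong E$ comes from the pairing $\lambda(v)\cdot(1/\lambda(v))=1$, since $\lambda(v)$ is the degree-one generator of $\calO_{\bbP^{r-1}}(1)$ and $1/\lambda(v)$ is the degree-one generator of $\calO_{Q_r}(1)$. By \eqref{eq:S.6}, each coefficient $\varphi_{t,i}$ is a polynomial in $\gamma(t)$ and the $1/\lambda(v)$'s of total degree $q^i-1$, hence extends to a global section of $\calO_{Q_{r,A[1/t]}}(q^i-1)=\olE^{\otimes(1-q^i)}$. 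Setting $\ol\varphi_t := \sum_{i=0}^r\varphi_{t,i}\tau^i$ then gives the desired $A$-module structure on $\olE$.

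It remains to check that $(\olE,\ol\varphi)$ is a weakly separating generalized Drinfeld $A$-module of rank $\le r$, and that $\omega$ is ample. Conditions (a) and (c) of Definition~\ref{2.1} are built into \eqref{eq:S.5}: the constant term is $\gamma(t)\tau^0$, so $\partial\circ\ol\varphi=\gamma$, and the highest $\tau$-power is $r$. Condition (b), the fiber-wise nondegeneracy, and the weakly separating property both reduce to an inductive analysis of the boundary of $Q_r$: each boundary stratum is identified with a locus naturally induced from a smaller $Q_{r'}$ with $r'<r$, on which the restriction of $(\olE,\ol\varphi)$ is a Drinfeld module of rank $r'\ge 1$; this simultaneously yields the nondegeneracy at every fiber and the finiteness of points in each fixed isomorphism class. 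Finally, $\omega=\Lie(\olE)^\vee\cong\olE^\vee=\calO_{Q_{r,A[1/t]}}(1)$ is very ample relative to $A[1/t]$ by construction since $R_r$ is generated in degree one. The main obstacle is the inductive boundary analysis of $Q_r$ needed for normality, fiber nondegeneracy, and weak separation; but this is essentially Pink's geometric analysis over $\Fq$, which transfers to $Q_{r,A[1/t]}$ through flat base change.
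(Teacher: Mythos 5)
Your construction coincides with the paper's: the same line bundle $\olE=\calO_{Q_{r,A[1/t]}}(-1)$, the same extension of the coefficients $\varphi_{t,i}$ from \eqref{eq:S.6} to global sections of $\olE^{\otimes(1-q^i)}$, the same appeal to the proof of \cite[Prop.~7.2]{pink} for the weakly separating property, and the same identification $\omega=\Lie(\olE)^\vee=\calO_{Q_{r,A[1/t]}}(1)$. The only real divergence is that for condition (b) of Definition~\ref{2.1} the paper replaces your inductive boundary analysis by the direct observation that simultaneous vanishing of $\varphi_{t,1},\dots,\varphi_{t,r}$ at a point would force $\tfrac{1}{v}$ to vanish there for every $v\in V^0_r$, which is impossible on $\Proj R_r$.
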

\begin{proof}
$Q_r$ is Cohen-Macaulay by results of Pink and Schieder \cite[Theorem~1.11]{pink-schieder}. This implies that the morphisms $Q_r\to\Spec\Fq$ and $Q_{r,A[1/t]}\to\Spec\Fq$ are Cohen-Macaulay by \cite[IV$_2$, Proposition~6.8.3]{EGA}. Therefore, $Q_{r,A[1/t]}$ is Cohen-Macaulay.
Let $\olE$ be the line bundle whose sheaf of sections
is $\calO_{r,A[1/t]}(-1)$. 
Define the homomorphism $\ol \varphi:A\to \End(\olE)$ by setting
\begin{equation}
  \label{eq:S.7}
  \ol \varphi_t=\gamma(t) +\sum_{i=1}^r \ol \varphi_{t,i} \tau^i, \quad \ol
  \varphi_{t,i}=\varphi_{t,i} \in  \calO_{Q_{r,A[1/t]}}(q^i-1)=\olE^{\otimes(1-q^i)}(Q_{r,A[1/t]}), 
\end{equation}
where $\varphi_{t,i}$ is defined in (\ref{eq:S.6}). By \eqref{eq:S.6},
if $\varphi_{t,i}(x)=0$ at some point $x$ for every $i=1,\dots, r$  
then $\tfrac{1}{v}(x)=0$ for every $v\in V^0_r$, which is not possible. Thus,  
$(\olE, \ol \varphi)$ has rank $r\ge 1$ everywhere and 
it is a generalized Drinfeld $A$-module of
rank $\le r$ on $Q_{r,A[1/t]}$ which extends the universal family
$(E,\varphi)$ over $\bfM$. The proof of \cite[Proposition 7.2]{pink} shows
that $(\olE,\ol \varphi)$ is weakly separating. Thus, $Q_{r,A[1/t]}$
is a projective Satake compactification of $\bfM$. Note that the
relative Lie algebra $\Lie(\olE)$ is $\calO_{Q_{r,A[1/r]}}(-1)$ and its
dual $\omega:=\Lie(\olE)^\vee$ is $\calO_{Q_{r,A[1/r]}}(1)$,
particularly $\omega$ is very ample relative to $A[1/t]$. \qed
\end{proof}

\begin{remark}\label{S.11}
  Note that the generic fiber $\ol M^r_K:=\ol \bfM^r_K\otimes_{A[\grn_K^{-1}]} F$ satisfies the
  characterizing properties of the Satake compactification 
  \cite[Definition 4.1 (a) and (b)]{pink}. Thus, the generic fiber 
  $\ol M^r_K$ of $\ol \bfM^r_K$ is the
  Satake compactification of $M^r_K$ constructed by Pink. 
\end{remark}

\begin{remark}\label{RemCM}
In a forthcomming work Pink~\cite{PinkDetMorph} shows that for
$A=\Fq[t]$ and level $K=K(t^n)$ the Satake compactification
$\ol{\bfM}^r_K$ is Cohen-Macaulay. More generally, we give the
following small evidence for Conjecture~\ref{ConjCM}. 
Assume that the boundary $C:=\ol{\bfM}^r_K\setminus \bfM^r_K$ with the
reduced scheme structure is \emph{$F$-split}. This means that the
(injective) Frobenius homomorphism $\O_{C} \hookrightarrow ({\rm
  Frob}_{p,C}){}_*\O_{C}$ has an \emph{$F$-splitting}, that is a
section as $\O_{C}$-modules, where ${\rm Frob}_{p,C}\colon C \to C$ is
the absolute $p$-Frobenius of $C$ which is the identity on points and
the $p$-power map on the structure sheaf; see
\cite{MethaRamanathan,BrionKumar}. If $C$ is $F$-split, then
$\ol{\bfM}^r_K$ is Cohen-Macaulay by \cite[discussion before Lemma~2.7
on page~727]{enescu-hochster} and \cite[Theorem~A.3]{HMS14}, because
$\bfM^r_K$ is regular and $C$ is cut out locally by a
non-zero-divisor. 

Maybe by the following approach one can prove that $C$ is $F$-split,
at least for the cofinal system $K=K(\grn)$ of principal level
subgroups for which $\bfM^r_K$ is smooth, see
Theorem~\ref{ThmDrinfeld}. Assume that a smooth (e.g.~toroidal)
compactification $X:=\ol{\bfM}^{r,\rm sm}_K$ of $\bfM^r_K$ has been
constructed and let $\wt C:=\ol{\bfM}^{r,\rm sm}_K\setminus\bfM^r_K$
be the boundary with the reduced scheme structure. Assume further that
$(X,\wt C)$ are \emph{compatibly $F$-split}. This means that $X$ has
an $F$-splitting $s\colon({\rm Frob}_{p,X})_*\O_X\onto\O_X$ which
induces an $F$-splitting $s\colon({\rm Frob}_{p,\wt C})_*\O_{\wt
  C}\onto\O_{\wt C}$ of $\wt C$. Maybe such a compatible $F$-splitting
can be constructed from an explicit combinatorial description of the
boundary components of $\wt C\subset\ol{\bfM}^{r,\rm sm}_K$.
Then under the canonical proper, birational morphism $f\colon X=\ol{\bfM}^{r,\rm sm}_K \to \ol{\bfM}^r_K=:Y$ we have $f_*\O_X = \O_Y$, because $\ol{\bfM}^r_K$ is normal, and hence $(Y,C)$ are compatibly $F$-split, too, by \cite[Proposition~4]{MethaRamanathan}. In particular, $C$ is $F$-split. 
\end{remark}

\section{Drinfeld modular forms and Hecke operators}
\label{sec:MF}

\subsection{Drinfeld modular forms over $A[\grn^{-1}]$}
\label{sec:MF.1}
Let $A,F,\infty$ be as in previous sections, and let $G:=\GL_r$. 
For a fine subgroup $K$,
let $(\olE_K,\ol \varphi_K)$ be the universal family over the Satake
compactification $\ol \bfM^r_{K}$ over $A[\grn_K^{-1}]$. 
By Theorem~\ref{S.2},
\[ \omega_K:=\Lie(\olE_K)^\vee \]
is an ample invertible sheaf on $\ol \bfM^r_K$. We also write $\omega_K$
for its restriction on $\bfM_K^r$.  

\begin{defn}\label{MF.1}
  (1) For any integer $k\ge 0$, 
  fine open compact subgroup $K\subset K(1)=\GL_r(\wh A)$
  and $A[\grn_K^{-1}]$-algebra $L$, denote by
  \begin{equation}\label{eq:MF.1}
    M_k(r,K,L):=H^0(\ol \bfM^r_{K}\otimes_{A[\grn_K^{-1}]} L,\omega_K^{\otimes k}\otimes {L})
  \end{equation}
the $L$-module of \emph{algebraic Drinfeld modular forms of rank $r$, weight
$k$, level $K$ over $L$}. 
The terminology ``algebraic'' is meant to distinguish them from the
modular forms which are defined using the analytic theory for $L=\C_\infty$.

(2) Denote by
  \begin{equation}\label{eq:MF.2}
    M(r,K,L):=\bigoplus_{k\ge 0} M_k(r,K,L)
  \end{equation}
the \emph{graded ring of algebraic Drinfeld modular forms of rank $r$, level
$K$ over $L$}.  
\end{defn}

\begin{remark}\label{Remk<0}
Note that for $k<0$ the analogously defined $L$-module $M_k(r,K,L)$ is zero by the following well-known lemma (applied with $Y=\Spec L$) and Proposition~\ref{reduced}.
\end{remark}

\begin{lemma}\label{Lemma_DualOfAmple}
Let $\pi\colon X\to Y$ be a proper and flat morphism of noetherian schemes and let $\calL$ be an invertible sheaf on $X$ which is relatively ample over $Y$. Assume that for every $y\in Y$ the fiber $X_y$ is reduced and all irreducible components of $X_y$ have dimension at least one. Then $\pi_*\calL^{\otimes k}=(0)$ for every $k<0$.
\end{lemma}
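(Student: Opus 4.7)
The plan is to first reduce the sheaf statement $\pi_*\calL^{\otimes k}=0$ to the fiberwise vanishing $H^0(X_y,\calL_y^{\otimes k})=0$ for every $y\in Y$. Since $\pi$ is proper and $Y$ is noetherian, $\pi_*\calL^{\otimes k}$ is a coherent $\calO_Y$-module, so by Nakayama's lemma it vanishes if and only if each fiber $(\pi_*\calL^{\otimes k})\otimes_{\calO_Y}k(y)$ does. Because $\pi$ is flat, the line bundle $\calL^{\otimes k}$ is $Y$-flat, and I would then invoke Grothendieck's cohomology-and-base-change theorem (EGA III, 7.7.5): whenever the base change map
\[
\varphi^0(y)\colon(\pi_*\calL^{\otimes k})\otimes_{\calO_Y}k(y)\longrightarrow H^0(X_y,\calL_y^{\otimes k})
\]
is surjective, it is already an isomorphism. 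Granting the fiberwise vanishing $H^0(X_y,\calL_y^{\otimes k})=0$, $\varphi^0(y)$ is trivially surjective, hence bijective, so the fiber of $\pi_*\calL^{\otimes k}$ at $y$ is zero, as desired.

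Next I would fix $y\in Y$ and write $Z:=X_y$, $K:=k(y)$, $\calM:=\calL_y$, and $n:=-k>0$. Arguing by contradiction, suppose $s\in H^0(Z,\calM^{\otimes -n})$ is nonzero. Since $Z$ is reduced, on any affine open trivialization of $\calM^{\otimes -n}$ the section $s$ corresponds to a regular function, which cannot vanish at every generic point of $Z$ (because $\calO_Z$ embeds into the product of residue fields at the generic points). Hence there is a generic point $\eta$ of $Z$ with $s(\eta)\neq 0$ in $k(\eta)$. Let $Z_0:=\overline{\{\eta\}}$ with its reduced induced scheme structure; by hypothesis $\dim Z_0\geq 1$, and $Z_0$ is integral and projective over $K$. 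Write $\calM_0:=\calM|_{Z_0}$, which remains ample, and $s_0:=s|_{Z_0}\neq 0$.

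Finally I would derive the contradiction from ampleness. Choose $N\geq 1$ with $\calM_0^{\otimes N}$ globally generated and pick $t\in H^0(Z_0,\calM_0^{\otimes N})$ with $t(\eta)\neq 0$. Then $s_0^N\cdot t^n\in H^0(Z_0,\calO_{Z_0})$, which is a finite field extension $K'$ of $K$ because $Z_0$ is integral and proper over $K$. Its value at $\eta$ equals $s_0(\eta)^N\,t(\eta)^n\neq 0$, so $s_0^Nt^n\in (K')^\times$, and in particular $s_0^N$ is a nowhere-vanishing section of $\calM_0^{\otimes -nN}$. This forces $\calM_0^{\otimes nN}\cong\calO_{Z_0}$, whence $h^0(Z_0,\calM_0^{\otimes jnN})=[K':K]$ stays bounded as $j\to\infty$, contradicting the polynomial growth of degree $\dim Z_0\geq 1$ of $h^0(Z_0,\calM_0^{\otimes m})$ forced by ampleness. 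The delicate step will be the very first one: because $X$ itself is not assumed reduced, no direct global-sections argument is available on $X$, and the surjectivity half of EGA III, 7.7.5 is precisely what makes the passage to fibers legitimate; the remaining fiberwise argument is a standard application of ampleness on an integral projective scheme.
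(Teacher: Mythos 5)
Your proof is correct, and while the first step (reduction to the fibers) is essentially the paper's own reduction, your fiberwise argument takes a genuinely different route. Both proofs pass from $\pi_*\calL^{\otimes k}=(0)$ to the vanishing of $H^0(X_y,\calL^{\otimes k}\otimes k(y))$ via cohomology and base change (the paper does this explicitly with Mumford's complex $K^\bullet$ of locally free sheaves, spelling out why injectivity of $d^0\otimes k(y)$ at every $y$ gives injectivity of $d^0$; your appeal to EGA III, 7.7.5 together with Nakayama is an equivalent packaging). For the fiberwise statement the paper restricts the hypothetical nonzero section to the reduced, irreducible closure $\ol U$ of an open set where it is nonzero, views it as an effective Cartier divisor, chooses a proper curve $C\subset\ol U$ not contained in its support, and derives a contradiction on the normalization $\wt C$ from $\deg(f^*\calL)>0$ versus $\deg(f^*\calL^{\otimes k})\ge 0$. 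You instead localize at a generic point $\eta$ where the section is nonzero, multiply by a suitable section of a globally generated power of the ample bundle to produce a unit in $H^0(Z_0,\calO_{Z_0})$, conclude that a positive power of $\calM_0$ is trivial on the component $Z_0$, and contradict the unbounded growth of $h^0(Z_0,\calM_0^{\otimes m})$ forced by ampleness in dimension $\ge 1$ (equivalently, one could note that an ample trivial bundle forces $Z_0$ to be finite over $k(y)$). What the paper's route buys is that it only needs the elementary degree theory of line bundles on curves (Hartshorne IV.1.2 and IV.3.3); what your route buys is that it avoids choosing a curve and normalizing altogether, at the modest cost of invoking global generation of a power of an ample sheaf and asymptotic growth of sections. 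Both are complete; each step of yours (non-vanishing at a generic point of a reduced scheme, the tensor-product argument showing $s_0^N$ is nowhere vanishing, and the boundedness contradiction) checks out.
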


We include a proof, because we could not find a reference. Note that the condition that the fibers are reduced is crucial, as one sees from Example~\ref{Ex_DualOfAmple} below.

\begin{proof}
Fix a $k<0$. By the theory of cohomology and base change, it suffices to treat the case when $Y$ is the spectrum of a field. More precisely, by \cite[Chapter~II, \S\,5, Theorem on page~44]{MumfordAbVar} there is a complex of finite locally free sheaves $P^\bullet\colon 0\longrightarrow P^0\xrightarrow{\;d^0\,} P^1\xrightarrow{\;d^1\,}\ldots$ on $Y$ such that for every $Y$-scheme $Y'$
\[
(\pi\times\id_{Y'})_*(\calL^{\otimes k}\otimes_{\calO_Y}\calO_{Y'}) \simeq H^0(P^\bullet\otimes_{\calO_Y}\calO_{Y'}):=\ker(d^0\otimes_{\calO_Y}\calO_{Y'})\,.
\]
Locally on $Y$ we can choose bases of $P^0$ and $P^1$ and write $d^0$ as an $n_1\times n_0$-matrix, where $n_i$ is the rank of $P^i$. If at some point $y\in Y$ 
\[
(0)=H^0(X_y,\calL^{\otimes k}\otimes_{\calO_Y}\kappa(y))=(\pi\times\id_{y})_*(\calL^{\otimes k}\otimes_{\calO_Y}\kappa(y)) \simeq H^0(P^\bullet\otimes_{\calO_Y}\kappa(y)), 
\]
then $d^0\otimes \kappa(y)$ is injective. This means that $n_0\le n_1$ and there is an $n_0\times n_0$-minor in the matrix $d^0$ whose image in $d^0\otimes \kappa(y)$ has invertible determinant. Then the determinant of this minor is already invertible in $\calO_{Y,y}$ and so $d^0\otimes\calO_{Y,y}$ is injective. If this holds at every point $y\in Y$, then $d^0$ is injective and $\pi_*\calL^{\otimes k}=(0)$.

Note that $\calL^{\otimes k}\otimes_{\calO_Y}\kappa(y)$ on $X_y$ is relatively ample over $\kappa(y)$ by \cite[II, Proposititon~4.6.13 (iii)]{EGA}. So we may replace $Y$ by $\Spec\kappa(y)$ for a point $y\in Y$, and thus assume that $Y$ is the spectrum of a field. Then we must show that $H^0(X,\calL^{\otimes k})=(0)$. Assume that there is a non-zero global section $0\ne s\in H^0(X,\calL^{\otimes k})$. Let $U\subset X$ be an open subset with $0\ne s|_U$; use \cite[I$_{\rm new}$, Lemma~9.7.9.1]{EGA}. By shrinking $U$ we may assume that it is contained in exactly one irreducible component of $X$. Since $X$ is reduced, the scheme theoretic closure $\ol U$ of $U$ in $X$ is still reduced and also irreducible. It contains $U$ as an open subscheme. Let $i\colon \ol U\hookrightarrow X$ be the corresponding closed immersion. Then $0\ne i^*s\in H^0(\ol U,i^*\calL^{\otimes k})$ is a regular section and defines an effective Cartier divisor $D$ on $\ol U$ by \cite[Proposition~11.32]{GW}. Since $\dim\ol U\ge 1$ and the support \[
{\rm Supp}(D):=\{x\in\ol U\;|\; D_x\ne 1\}=\{x\in\ol U\;|\;\calO_{\ol U,x}\cdot (i^*s)_x\subsetneq (i^*\calL)^{\otimes k}_x\}
\]
of $D$ is strictly contained in $\ol U$ by \cite[Lemma~11.33]{GW}, we
may choose a proper curve $C\subset \ol U$ (that is, an irreducible
and reduced closed subscheme of dimension one) which is not contained
in ${\rm Supp}(D)$. Let $\wt C$ be the normalization of $C$ and let
$f\colon \wt C\to X$ be the induced map. Then $f$ is finite and
$f^*\calL$ is ample on $\wt C$ by \cite[II, Corollaire~4.6.6 and
Proposition~5.1.12]{EGA}. Therefore $\deg (f^*\calL)>0$ by
\cite[Corollary~IV.3.3]{hartshorne} (noting the degree remains the
same after a field extension base change) and $k\cdot\deg (f^*\calL)<0$,
because $k<0$. On the other hand, since $C\not\subset{\rm Supp}(D)$
and $i^*s$ generates $i^*\calL^{\otimes k}$ on $\ol U\setminus{\rm Supp}(D)$,
we have $0\ne f^*s\in H^0(\wt C,f^*\calL^{\otimes k})$. Therefore,
$k\cdot\deg (f^*\calL)=\deg (f^*\calL^{\otimes k})\ge0$ by
\cite[Lemma~IV.1.2]{hartshorne}. This is a contradiction and proves
the lemma. \qed 
\end{proof}

\begin{eg}\label{Ex_DualOfAmple}
We show that the condition on the reduced fibers in Lemma~\ref{Lemma_DualOfAmple} cannot be dropped. Let $k$ be a field and let $X=\Proj k[S,T,U]/(TU,U^2)=V(TU,U^2)\subset\Proj k[S,T,U]=\bbP^2_k$. Then $X$ is non-reduced at the point $V(T,U)$. The line bundle $\calO(1)$ is ample on $X$. But its dual $\calO(-1)$ has the non-zero global section $\tfrac{U}{S^2}\in H^0(X,\calO(-1))$, which vanishes outside $V(T,U)$.
\end{eg}

Since $\omega_K$ is ample and $\ol \bfM^r_K$ is proper over
$A[\grn_K^{-1}]$, by \cite[Tags 01CV and 01Q1]{stacks} 
there is a canonical isomorphism
\begin{equation}
  \label{eq:MF.3}
  \ol \bfM^r_K \isoto \Proj M(r,K,A[\grn_K^{-1}]).
\end{equation}
Applying the base change theorem for cohomology groups to 
$\Spec L\to \Spec A[\grn_K^{-1}]$, we obtain canonical maps
\begin{equation}
  \label{eq:MF.4}
  M_k(r,K,A[\grn_K^{-1}])\otimes L \to M_k(r,K,L),\quad k=0,1,\dots.
\end{equation}
and these are isomorphisms when $L$ is flat over $A[\grn_K^{-1}]$; see
\cite[Proposition~III.9.3]{hartshorne}. 
We will need the following well known 
 
\begin{prop}\label{MF.2}
  Let $Y$ be a Noetherian scheme, $f\colon X\to Y$ a projective morphism,
  $\calF$ a coherent $\calO_X$-module which is flat over $Y$. Assume
  that for some $i$ the cohomology in the fiber 
  \begin{equation}
    \label{eq:MF.45}
    H^i(X_y,\calF\otimes k(y))=0
  \end{equation}
for all points $y\in Y$. Then we have
\begin{equation}
  \label{eq:MF.46}
  (R^{i-1} f_* \calF)\otimes_{\calO_Y} B \simeq H^{i-1}(X\times_Y \Spec B,
  \calF\otimes B)
\end{equation}
for any $Y$-scheme $\Spec B$.  
\end{prop}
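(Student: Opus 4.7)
The plan is to deduce this from the classical Grothendieck cohomology-and-base-change theorem. First I would reduce to the case $Y=\Spec A$ affine, and invoke the structural result that since $f$ is projective and $\calF$ is coherent and $Y$-flat, there exists a bounded complex $K^\bullet = (K^0\to K^1\to\cdots)$ of finitely generated projective $A$-modules together with a functorial isomorphism
\[
H^j(X\times_Y\Spec B,\,\calF\otimes_A B)\;\simeq\;H^j(K^\bullet\otimes_A B)
\]
for every $A$-algebra $B$ and every $j$; see for instance \cite[III$_2$, Sections~6.10 and 7.7]{EGA}, \cite[Chapter~II, \S\,5]{MumfordAbVar} or \cite[Tag~0B91]{stacks}. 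Under this isomorphism the hypothesis \eqref{eq:MF.45} becomes $H^i(K^\bullet\otimes_A k(y))=0$ for every $y\in Y$.

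Next, consider the natural base-change map $\phi^i(y)\colon H^i(K^\bullet)\otimes_A k(y)\to H^i(K^\bullet\otimes_A k(y))$. Since its target is zero, $\phi^i(y)$ is trivially surjective; the first half of Grothendieck's theorem then says it is an isomorphism, so $H^i(K^\bullet)\otimes_A k(y)=0$ for every $y$. Because $H^i(K^\bullet)$ is finitely generated (being a subquotient of the finitely generated module $K^i$), Nakayama's lemma yields $H^i(K^\bullet)=0$, and in particular $R^if_*\calF$ is locally free of rank zero. The second half of Grothendieck's theorem then forces $\phi^{i-1}(y)$ to be surjective at every $y$ and, more importantly, implies that the formation of $R^{i-1}f_*\calF$ commutes with arbitrary base change, which is exactly the desired isomorphism \eqref{eq:MF.46}.

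The main obstacle is really only the invocation of the ``Grothendieck complex'' $K^\bullet$ and the verification that its cohomology computes the higher direct images universally in the base; once this is granted, the remainder is linear algebra together with Nakayama's lemma applied to the differentials of $K^\bullet$, essentially the same kind of rank argument already used in the proof of Lemma~\ref{Lemma_DualOfAmple}. Alternatively, the entire statement can be deduced by a direct citation of \cite[III$_2$, Th\'eor\`eme~7.7.5]{EGA} or \cite[Tag~0B91]{stacks}, without ever spelling out $K^\bullet$.
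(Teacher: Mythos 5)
Your proposal is correct and follows essentially the same route as the paper, whose proof consists of the single remark that the statement ``is proved in the same way as \cite[Theorem~III.12.11]{hartshorne} by combining \cite[Chapter~III, Propositions~12.4, 12.5, 12.7 and 12.10]{hartshorne}'' --- i.e.\ exactly the reduction to the Grothendieck complex $K^\bullet$ and the standard cohomology-and-base-change machinery that you spell out. The only cosmetic point is that the passage from ``$\phi^{i-1}(y)$ is an isomorphism for all $y$'' to base change along an \emph{arbitrary} $\Spec B\to Y$ needs the strong (exchange-property) form of the theorem rather than the literal residue-field statement of Hartshorne's Theorem~12.11, which is precisely what the cited Propositions~12.5, 12.7 and 12.10 (or your alternative references) supply.
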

\begin{proof}
  This is proved in the same way as \cite[Theorem~III.12.11]{hartshorne} by combining 
  \cite[Chapter~III, Propositions~12.4, 12.5, 12.7 and 12.10]{hartshorne}. \qed
\end{proof}

\begin{cor}\label{MF.3}
  There is a positive integer $k_0$ such that for all $k\ge k_0$, the
  canonical map in \eqref{eq:MF.4} is an isomorphism and $M_k(r,K,A[\grn_K^{-1}])$ is a finite projective $A[\grn_K^{-1}]$-module.
\end{cor}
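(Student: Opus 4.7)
The plan is to combine relative Serre vanishing for the ample line bundle $\omega_K$ with the cohomology-and-base-change machinery of Proposition~\ref{MF.2} and \cite[III, Theorem~12.11]{hartshorne}. Throughout, write $f\colon\ol\bfM^r_K\to Y:=\Spec A[\grn_K^{-1}]$ for the structure morphism, which is projective by Theorem~\ref{S.2}(1), and note that $\omega_K^{\otimes k}$ is flat over $Y$ for every $k$, because it is an invertible sheaf on $\ol\bfM^r_K$ and the latter is flat over $Y$ by Definition~\ref{def:satake}(a).

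First I would invoke relative Serre vanishing (a consequence of \cite[III, Theorem~8.8]{hartshorne} after replacing $\omega_K$ by a sufficiently divisible, hence very ample, power) to produce an integer $k_0$ such that
\[
R^if_*\omega_K^{\otimes k}\,=\,0 \quad\text{for every }i\ge 1\text{ and every }k\ge k_0.
\]

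Next I would bootstrap this to the fibre-wise cohomological vanishing $H^i(X_y,\omega_K^{\otimes k}|_{X_y})=0$ for every $y\in Y$, every $i\ge 1$ and every $k\ge k_0$, by descending induction on $i$. For $i>\dim X_y=r-1$ there is nothing to prove. In the inductive step, $R^{i+1}f_*\omega_K^{\otimes k}=0$ is trivially locally free, so the base-change map $\phi^i(y)\colon(R^if_*\omega_K^{\otimes k})\otimes k(y)\to H^i(X_y,\omega_K^{\otimes k}|_{X_y})$ is surjective by \cite[III, Theorem~12.11(b)]{hartshorne}, hence an isomorphism by part~(a); since the left-hand side vanishes, so does the right-hand side.

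With fibre-wise vanishing of $H^1$ in hand, Proposition~\ref{MF.2} applied at $i=1$ with $\calF=\omega_K^{\otimes k}$ yields the base-change isomorphism
\[
M_k(r,K,A[\grn_K^{-1}])\otimes_{A[\grn_K^{-1}]}L\;\stackrel{\sim}{\longrightarrow}\;M_k(r,K,L)
\]
for every $A[\grn_K^{-1}]$-algebra $L$, which is the first assertion of the corollary. A final application of \cite[III, Theorem~12.11(b)]{hartshorne} at $i=0$---using the trivial surjectivity of $\phi^{-1}(y)$ together with the surjectivity of $\phi^0(y)$ just obtained---shows that $f_*\omega_K^{\otimes k}$ is locally free of finite rank on $Y$, equivalently that $M_k(r,K,A[\grn_K^{-1}])$ is a finitely generated projective $A[\grn_K^{-1}]$-module. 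The only step requiring any real thought is the bootstrap of step two, since Proposition~\ref{MF.2} takes fibre-wise vanishing (rather than vanishing of direct images) as input; after that, everything is a mechanical chaining of standard base-change facts, and no serious obstacle is anticipated.
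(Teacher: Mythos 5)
Your proposal is correct and follows essentially the same route as the paper: relative Serre vanishing of $R^if_*\omega_K^{\otimes k}$ for $k\ge k_0$, descending induction via cohomology and base change \cite[Theorem~III.12.11]{hartshorne} to get fibre-wise vanishing of $H^1$, then Proposition~\ref{MF.2} for the base-change isomorphism and one more application of loc.\ cit.\ for local freeness of $f_*\omega_K^{\otimes k}$. The only cosmetic difference is citing \cite[Theorem~III.8.8]{hartshorne} instead of \cite[Proposition~III.5.3]{hartshorne} for the vanishing step.
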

\begin{proof}
Let $f:X:=\ol \bfM^r_K\to Y:=\Spec A[\grn_K^{-1}]$.
Since $\omega_K$ is ample, by \cite[Proposition~III.5.3]{hartshorne} there
exists an integer $k_0$ such that $R^if_*(\omega_K^{\otimes
  k})=0$ on $Y$ for all $i>0$ and $k\ge k_0$. 
We now prove that for each point $y\in
Y$ the natural map  
\[ \theta^i(y): R^if_*(\omega_K^{\otimes
  k})\otimes_{\calO_Y} k(y)\to H^i(X_y, \omega_K^{\otimes k}\otimes k(y)) \]
is surjective for all $i>0$. This is true for $i>\dim X+1$ as the
target is zero. Since
$R^if_*( \omega_K^{\otimes k})=0$ is locally free everywhere for all
$i>0$, by
\cite[Theorem~III.12.11]{hartshorne} 
and by induction on $i$ decreasingly, the map
$\theta^i(y)$ is surjective for $i=\dim X+1,\dots,1$. 
Using $R^1f_*( \omega_K^{\otimes k})=0$ and that $\theta^1(y)$ is
surjective again, we show $H^1(X_y,\omega_K^{\otimes k}\otimes k(y))=0$
for all $y\in Y$. Then \eqref{eq:MF.4} is an isomorphism by 
Proposition~\ref{MF.2}, and the projectivity of $M_k(r,K,A[\grn_K^{-1}])$ follows
from \cite[Theorem~III.12.11]{hartshorne}. \qed
\end{proof}

\begin{lemma}\label{MF.35}
  Let $\wt K\subset K\subset K(1)$ be two fine open compact
  subgroups and let $\pi:=\pi_{\wt K, K}:\ol\bfM^r_{\wt
  K}\to \ol \bfM^r_{K}\otimes_{A[\grn_{K}^{-1}]}A[\grn_{\wt K}^{-1}]$ be the finite cover from Theorem~\ref{S.2}(2).
\begin{enumerate}
\item Let $L$ be an $A[\grn_{\wt K}^{-1}]$-module. Then the map of quasi-coherent sheaves 
  \begin{equation}\label{EqLemma4.4}
\omega_K^{\otimes k}\otimes_{A[\grn_{K}^{-1}]}L \longrightarrow 
  \pi_{\wt K,K\,*}\bigl(\omega_{\wt K}^{\otimes k}\otimes_{A[\grn_{\wt K}^{-1}]}L\bigr)
 = \pi_{\wt K,K\,*}\pi_{\wt K,K}^*\bigl(\omega_K^{\otimes k}\otimes_{A[\grn_K^{-1}]}L\bigr)
\end{equation}
on $\ol{\bfM}^r_{K}\otimes_{A[\grn_{K}^{-1}]}A[\grn_{\wt K}^{-1}]$ is injective.
\item
For any $A[\grn_{\wt K}^{-1}]$-algebra $L$ the pullback map
  $\pi_{\wt K,K}^*: M_k(r,K ,L)\to M_k(r,\wt K,L)$ is
  injective. 
\item Moreover, if $\wt K\lhd K$ is normal and $L$ is a flat $A[\grn_{\wt K}^{-1}]$-algebra, then $\pi_{\wt K,K}$ induces an isomorphism of $L$-modules
\begin{equation}
  \label{eq:MF.65}
  M_k(r,K,L)\isoto M_k(r,\wt K,L)^{K/\wt K}.
\end{equation}
\end{enumerate}
\end{lemma}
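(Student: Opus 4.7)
The plan is to prove~(1) as the main geometric statement and to deduce (2) and (3) by formal arguments. For~(1), I will work locally on an affine open $\Spec B\subset\ol{\bfM}^r_K\otimes R$, where $R:=A[\grn_{\wt K}^{-1}]$, chosen so that $\omega_K$ is trivialized. Since $\pi:=\pi_{\wt K,K}$ is finite by Theorem~\ref{S.2}(2), hence affine, the preimage $\Spec C:=\pi^{-1}(\Spec B)$ is affine, and~\eqref{EqLemma4.4} locally takes the form of the $R$-module map $B\otimes_R L\to C\otimes_R L$. Since both $B$ and $C$ are $R$-flat by the definition of the Satake compactification, it suffices to show that $C/B$ is $R$-flat, for then the short exact sequence $0\to B\to C\to C/B\to 0$ remains exact after $-\otimes_R L$, giving the injectivity in~(1).

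The central step is thus the $R$-flatness of $C/B$. Since $R$ is a Dedekind domain (a localization of $A$), $R$-flatness is equivalent to $R$-torsion-freeness, and this reduces at each maximal ideal $\grp\subset R$ to the injectivity of $B/\grp B\hookrightarrow C/\grp C$; equivalently, for the place $v$ corresponding to $\grp$, I need injectivity on structure sheaves of the base-changed morphism
\[
\pi_v\colon \ol{\bfM}^r_{\wt K}\otimes\F_v\longrightarrow \ol{\bfM}^r_K\otimes\F_v.
\]
Being a base change of $\pi$, $\pi_v$ is finite and surjective, so its image is all of the target. As the target is reduced by Proposition~\ref{reduced}, any topologically dominant morphism into it is schematically dominant, which is precisely injectivity on structure sheaves. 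This reduction to Proposition~\ref{reduced} is the principal obstacle; the remainder of~(1) is formal.

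Parts~(2) and~(3) then follow by routine manipulations. For~(2), apply $H^0$ to the injection in~(1); since $\pi$ is finite, $\pi_*$ preserves $H^0$, and left-exactness of $H^0$ yields the injectivity of $\pi_{\wt K,K}^*\colon M_k(r,K,L)\to M_k(r,\wt K,L)$. For~(3), with $\wt K\lhd K$ and $L$ flat over $R$, Theorem~\ref{S.2}(2) gives $B=C^{K/\wt K}$ locally, and flatness of $L$ over $R$ allows invariants under the finite group $K/\wt K$ to commute with $-\otimes_R L$, producing the identification $(C\otimes_R L)^{K/\wt K}=C^{K/\wt K}\otimes_R L=B\otimes_R L$. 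Globalizing gives a sheaf-level isomorphism $\pi_*(\omega_{\wt K}^{\otimes k}\otimes_R L)^{K/\wt K}\isoto\omega_K^{\otimes k}\otimes_R L$ on $\ol{\bfM}^r_K\otimes R$; taking $H^0$, which commutes with forming $K/\wt K$-invariants since both operations are left-exact kernels, yields~\eqref{eq:MF.65}.
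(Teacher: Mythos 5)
Your proof is correct, and for part (1) it reorganizes the argument in a genuinely different (and arguably slicker) way than the paper, while resting on the same geometric input. The paper proves (1) by a d\'evissage on the module $L$: it reduces to finitely generated $L$, invokes the structure theorem over the Dedekind domain $A[\grn_{\wt K}^{-1}]$ to split $L$ into a projective part and cyclic torsion parts, handles the torsion parts by a further reduction to $L=\F_v$ (where Proposition~\ref{reduced} and the scheme-theoretic-image argument give injectivity), and handles the flat part via the equalizer sequence coming from the group-quotient description of $\ol\bfM^r_K$. You instead prove a single stronger statement, namely that the cokernel of $\calO_{\ol\bfM^r_K}\otimes A[\grn_{\wt K}^{-1}]\to\pi_*\calO_{\ol\bfM^r_{\wt K}}$ is flat over the Dedekind base, and deduce injectivity for \emph{every} $L$ at once from the vanishing of $\Tor_1$. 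The fiberwise flatness criterion (flat $=$ torsion-free over a Dedekind domain, valid for arbitrary modules, plus the snake-lemma identification of the $\grp$-torsion of $C/B$ with $\ker(B/\grp B\to C/\grp C)$, which uses the $R$-flatness of $C$) reduces you to exactly the same reducedness-plus-schematic-dominance argument the paper applies at $L=\F_v$; note that Proposition~\ref{reduced} indeed covers all maximal ideals of $A[\grn_{\wt K}^{-1}]$ since $\grn_K\mid\grn_{\wt K}$. What your route buys is the elimination of the case analysis and of the structure theory of finitely generated modules, and it confines the use of the equalizer sequence $B=C^{K/\wt K}$ to part (3), where your treatment (flat base change commutes with finite-group invariants, then take global sections) coincides with the paper's.
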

\begin{proof}
Statement (2) follows from (1) by taking global sections on $\ol{\bfM}^r_K\otimes_{A[\grn_{K}^{-1}]}A[\grn_{\wt K}^{-1}]$, because $M_k(r,K ,L)=H^0\bigl(\ol{\bfM}^r_K\otimes_{A[\grn_{K}^{-1}]}A[\grn_{\wt K}^{-1}],\omega_K^{\otimes k}\otimes_{A[\grn_{K}^{-1}]}L\bigr)$.

(1) Note that $\omega_K^{\otimes k}$ and $\omega_{\wt K}^{\otimes k}$ are flat over $A[\grn_{\wt K}^{-1}]$ and the direct image functor $\pi_{\wt K,K\,*}$ is exact by \cite[III$_1$, Corollaire~1.3.2]{EGA} as $\pi_{\wt K,K}$ is finite. Therefore, any exact sequence $0\to L'\to L\to L''\to 0$ of $A[\grn_{\wt K}^{-1}]$-modules induces a commutative diagram with exact rows
\[
\xymatrix @C=1.5pc {
0 \ar[r] & \omega_K^{\otimes k}\otimes_{A[\grn_{K}^{-1}]}L' \ar[r] \ar[d] & \omega_K^{\otimes k}\otimes_{A[\grn_{K}^{-1}]}L \ar[r] \ar[d] & \omega_K^{\otimes k}\otimes_{A[\grn_{K}^{-1}]}L'' \ar[r] \ar[d] & 0\; \\
0 \ar[r] & \pi_*\bigl(\omega_{\wt K}^{\otimes k}\otimes_{A[\grn_{\wt K}^{-1}]}L'\bigr) \ar[r] & \pi_*\bigl(\omega_{\wt K}^{\otimes k}\otimes_{A[\grn_{\wt K}^{-1}]}L\bigr) \ar[r] & \pi_*\bigl(\omega_{\wt K}^{\otimes k}\otimes_{A[\grn_{\wt K}^{-1}]}L''\bigr) \ar[r] & 0\,.
}
\]   
Assume that there is a section $w:=\sum_i w_i\otimes\ell_i\in H^0(U,\omega_K^{\otimes k}\otimes_{A[\grn_{K}^{-1}]}L)$ over an open affine subset $U\subset\ol{\bfM}^r_K\otimes_{A[\grn_{K}^{-1}]}A[\grn_{\wt K}^{-1}]$ in the kernel of \eqref{EqLemma4.4}, where $w_i\in H^0(U,\omega_K^{\otimes k})$ and $\ell_i\in L$ for all $i$. Let $L'$ be the $A[\grn_{\wt K}^{-1}]$-submodule of $L$ generated by the finitely many $\ell_i$. Then $w\in H^0(U,\omega_K^{\otimes k}\otimes_{A[\grn_{K}^{-1}]}L')$. Replacing $L$ by $L'$ we reduce to the case that $L$ is a finitely generated module over the Dedekind domain $A[\grn_{\wt K}^{-1}]$. By the structure theory of such modules we have $L=P\oplus\bigoplus_j A[\grn_{\wt K}^{-1}]/\grp_j^{n_j}$ for a finite projective $A[\grn_{\wt K}^{-1}]$-module $P$ and maximal ideals $\grp_i\subset A[\grn_{\wt K}^{-1}]$ and integers $n_j>0$. Using the commutative diagram above, we thus reduce to the cases where $L=A[\grn_{\wt K}^{-1}]/\grp^n$ or where $L=P$ is finite projective over $A[\grn_{\wt K}^{-1}]$. 

In the first case we choose a uniformizer $z$ of $\grp$ and consider the exact sequence $0 \to A[\grn_{\wt K}^{-1}]/\grp^{n-1}\xrightarrow{\; z\cdot\,} A[\grn_{\wt K}^{-1}]/\grp^n \longrightarrow A[\grn_{\wt K}^{-1}]/\grp \to 0$. Using the diagram again we reduce to the case where $L=A[\grn_{\wt K}^{-1}]/\grp=\F_v$ for the place $v$ of $A$ corresponding to $\grp$. For $L=\F_v$ the sheaf $\omega_K^{\otimes k}\otimes_{A[\grn_K^{-1}]}L$ is (the pushforward to $\ol{\bfM}^r_K\otimes_{A[\grn_{K}^{-1}]}A[\grn_{\wt K}^{-1}]$ of) an invertible sheaf on $\ol\scrM_K^r:=\ol\bfM^r_K \otimes_{A[\grn_K^{-1}]} \F_v$. We consider the kernel sheaf
\[
\calI := \ker\bigl(\calO_{\ol\scrM^r_K} \longrightarrow \pi_{\wt K,K\,*}\pi_{\wt K,K}^*\calO_{\ol\scrM^r_K}\bigr)\,.
\]
The vanishing locus of $\calI$ in $\ol\scrM^r_K$ is the scheme theoretic image of $\pi_{\wt K,K}\colon \ol\scrM^r_{\wt K}\to\ol\scrM^r_K$. Since the set theoretic image equals the entire space $\ol\scrM^r_K$, which is the vanishing locus of the zero ideal, the radicals coincide $\sqrt{\calI}=\sqrt{(0)}$. But the latter equals $(0)$, because $\ol\scrM^r_K$ is reduced by Proposition~\ref{reduced}. This shows that $\calI=(0)$. Tensoring with the flat $\calO_{\ol\scrM^r_K}$-module $\omega_K^{\otimes k}\otimes_{A[\grn_K^{-1}]}L$ proves the injectivity of \eqref{EqLemma4.4} for $L=\F_v$.

We treat the second case, where $L$ is a finite projective $A[\grn_{\wt K}^{-1}]$-module simultaneously to assertion (3). By the construction of $\ol\bfM_{\wt K}^r$ as $\ol\bfM_{K(\grn_{\wt K})}^r\big/\bigl(\wt K/K(\grn_{\wt K})\bigr)$ in Lemma~\ref{S.6}(2) we have an exact sequence of sheaves on $\ol\bfM_{\wt K}^r$
\[
0\longrightarrow \calO_{\ol\bfM_{\wt K}^r} \longrightarrow \pi_{K(\grn_{\wt K}),\wt K\,*}\calO_{\ol\bfM_{K(\grn_{\wt K})}^r} \overset{\textstyle\longrightarrow}{\longrightarrow} \prod_{\wt K/K(\grn_{\wt K})}\pi_{K(\grn_{\wt K}),\wt K\,*}\calO_{\ol\bfM_{K(\grn_{\wt K})}^r}\,,
\]
where the two maps on the right are the diagonal inclusion, and the action of $\wt K/K(\grn_{\wt K})$, respectively. Using the analogous reasoning for $K$ and replacing $\wt K/K(\grn_{\wt K})$ by $K/K(\grn_{\wt K})$ we get a similar description of $\calO_{\ol\bfM_K^r}\otimes_{A[\grn_{K}^{-1}]}A[\grn_{\wt K}^{-1}]$. Putting both together we obtain the exact sequence of sheaves on $\ol\bfM_{\wt K}^r\otimes_{A[\grn_{K}^{-1}]}A[\grn_{\wt K}^{-1}]$
\[
0\longrightarrow \calO_{\ol\bfM_K^r}\otimes_{A[\grn_{K}^{-1}]}A[\grn_{\wt K}^{-1}] \longrightarrow \pi_{\wt K,K\,*}\calO_{\ol\bfM_{\wt K}^r} \overset{\textstyle\longrightarrow}{\longrightarrow} \prod_{K/\wt K}\pi_{\wt K,K\,*}\calO_{\ol\bfM_{\wt K}^r}\,.
\]
If $L$ is flat over $A[\grn_{\wt K}^{-1}]$ we may tensor the latter sequence with the flat $\calO_{\ol\bfM_K^r}\otimes_{A[\grn_{K}^{-1}]}A[\grn_{\wt K}^{-1}]$-module $\omega_K^{\otimes k}\otimes_{A[\grn_{K}^{-1}]}L$ to obtain the exact sequence 
\[
0\longrightarrow \omega_K^{\otimes k}\otimes_{A[\grn_{K}^{-1}]}L \longrightarrow \pi_{\wt K,K\,*}\bigl(\omega_{\wt K}^{\otimes k}\otimes_{A[\grn_{\wt K}^{-1}]}L\bigr) \overset{\textstyle\longrightarrow}{\longrightarrow} \prod_{K/\wt K}\pi_{\wt K,K\,*}\bigl(\omega_{\wt K}^{\otimes k}\otimes_{A[\grn_{\wt K}^{-1}]}L\bigr)\,.
\]
This proves the injectivity of \eqref{EqLemma4.4} for flat $L$ and finishes the proof of (2). Here we use the projection formula $(\pi_{\wt K,K\,*}\O_{\ol\bfM_{\wt K}^r})\otimes_{\O_{\ol\bfM_K^r}}(\omega_K^{\otimes k}\otimes_{A[\grn_K^{-1}]}L) = \pi_{\wt K,K\,*}(\pi_{\wt K,K}^*\omega_K^{\otimes k}\otimes_{A[\grn_K^{-1}]}L)$ from \cite[I$_{\rm new}$, Corollaire~9.3.9]{EGA}

To prove (3) we use that $L$ is a flat $A[\grn_{\wt K}^{-1}]$-algebra. Taking global sections on $\ol\bfM_{\wt K}^r\otimes_{A[\grn_{K}^{-1}]}A[\grn_{\wt K}^{-1}]$, which is a left exact functor yields the exact sequence
\[
0\longrightarrow M_k(r,K,L) \longrightarrow M_k(r,\wt K,L)\overset{\textstyle\longrightarrow}{\longrightarrow} \prod_{K/\wt K} M_k(r,\wt K,L)\,,
\]
and hence the isomorphism~\eqref{eq:MF.65}.
 \qed   
\end{proof}

If the $A[\grn_{\wt K}^{-1}]$-algebra $L$ is not flat we do not know whether assertion (3) of the previous lemma still holds true. However, if the special fibers $\ol\scrM_K^r:=\ol\bfM^r_K \otimes_{A[\grn_K^{-1}]} \F_v$ and $\ol\scrM_{\wt K}^r:=\ol\bfM^r_{\wt K} \otimes_{A[\grn_{\wt K}^{-1}]} \F_v$ are \emph{normal} at all places $v\nmid\grn_{\wt K}$ then (3) can be proved along the same lines as (2) by using the following proposition.

\begin{remark}
Let $\ol\scrM_K^{r,\rm nor}$ be the normalization of $\ol\scrM_K^r:=\ol\bfM^r_K \otimes_{A[\grn_K^{-1}]} \F_v$. Since $\ol\scrM_K^{r,\rm nor}\to\ol\scrM_K^r$ is finite, the pullback of $\omega_K$ to $\ol\scrM_K^{r,\rm nor}$, which we again denote $\omega_K$, is ample by \cite[II, Corollaire~4.6.6 and Proposition~5.1.12]{EGA}.
\end{remark}

\begin{prop}\label{Prop4.5}
Let $\wt K\lhd K\subset K(1)$ be two fine open compact subgroups such that $\wt K$ is normal in $K$, and let $L$ be an $\F_v$-algebra for a place $v\nmid\grn_{\wt K}$. The morphism $\pi_{\wt K,K}\colon \ol\scrM_{\wt K}^r\to\ol\scrM_K^r$ induces a morphism $\pi_{\wt K,K}\colon \ol\scrM_{\wt K}^{r,\rm nor}\to\ol\scrM_K^{r,\rm nor}$ which is finite. The latter induces for any $k$ an isomorphism of $L$-modules
\begin{equation}
  \label{eq:MF.65b}
  H^0\bigl(\ol\scrM_K^{r,\rm nor},\omega_K^{\otimes k}\otimes_{A[\grn_K^{-1}]}L\bigr)\isoto H^0\bigl(\ol\scrM_{\wt K}^{r,\rm nor},\omega_{\wt K}^{\otimes k}\otimes_{A[\grn_{\wt K}^{-1}]}L\bigr)^{K/\wt K}
\end{equation}
for the natural action of the group $K/\wt K$.
Moreover, the natural map 
\[
M_k(r,K,L)=H^0\bigl(\ol\scrM_K^{r},\omega_K^{\otimes k}\otimes_{A[\grn_K^{-1}]}L\bigr)\longrightarrow H^0\bigl(\ol\scrM_K^{r,\rm nor},\omega_K^{\otimes k}\otimes_{A[\grn_K^{-1}]}L\bigr)
\]
is injective.
\end{prop}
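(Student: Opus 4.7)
The plan is to adapt the sheaf-theoretic proof of Lemma~\ref{MF.35}(3) to the non-flat coefficient setting by replacing the role of flatness over $A[\grn_{\wt K}^{-1}]$ with normality, while exploiting that any $\F_v$-algebra $L$ is automatically flat over the field $\F_v$. Throughout, I will use that $\ol\scrM_K^r$ and $\ol\scrM_{\wt K}^r$ are of finite type over $\F_v$, hence excellent and in particular Nagata, so their normalizations $\nu_K\colon\ol\scrM_K^{r,\rm nor}\to\ol\scrM_K^r$ and $\nu_{\wt K}\colon\ol\scrM_{\wt K}^{r,\rm nor}\to\ol\scrM_{\wt K}^r$ are finite surjective and birational.

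The first and central step is the structural identification
\[
\ol\scrM_{\wt K}^{r,\rm nor}\big/(K/\wt K) \;=\; \ol\scrM_K^{r,\rm nor}.
\]
By functoriality of normalization, the $K/\wt K$-action on $\ol\scrM_{\wt K}^r$ provided by Theorem~\ref{S.2}(2) lifts uniquely to $\ol\scrM_{\wt K}^{r,\rm nor}$. The quotient on the left exists by Proposition~\ref{S.4}, is finite and birational over $\ol\scrM_K^r$, and is normal because invariants of normal rings under a finite group action are again normal (using $\Frac(R)^G=\Frac(R^G)$ for a finite group $G$ acting on a domain $R$, applied componentwise after decomposing into irreducible components). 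By Zariski's main theorem it must therefore coincide with $\ol\scrM_K^{r,\rm nor}$.

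Given this, on $\ol\scrM_K^{r,\rm nor}$ one has the standard exact sequence expressing invariants under a finite quotient,
\[
0 \,\longrightarrow\, \O_{\ol\scrM_K^{r,\rm nor}} \,\longrightarrow\, \pi_{\wt K,K\,*}\O_{\ol\scrM_{\wt K}^{r,\rm nor}} \overset{\textstyle\longrightarrow}{\longrightarrow} \prod_{K/\wt K} \pi_{\wt K,K\,*}\O_{\ol\scrM_{\wt K}^{r,\rm nor}},
\]
where the two right-hand arrows are the diagonal embedding and the $K/\wt K$-action map. Since $L$ is free over the field $\F_v$, the sheaf $\omega_K^{\otimes k}\otimes_{\F_v}L$ is $\O_{\ol\scrM_K^{r,\rm nor}}$-flat, so tensoring preserves exactness. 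Combined with the projection formula and the identity $\pi_{\wt K,K}^*\omega_K=\omega_{\wt K}$ from Theorem~\ref{S.2}(2), the tensored sequence reads
\[
0 \,\longrightarrow\, \omega_K^{\otimes k}\otimes_{\F_v}L \,\longrightarrow\, \pi_{\wt K,K\,*}\bigl(\omega_{\wt K}^{\otimes k}\otimes_{\F_v}L\bigr) \overset{\textstyle\longrightarrow}{\longrightarrow} \prod_{K/\wt K}\pi_{\wt K,K\,*}\bigl(\omega_{\wt K}^{\otimes k}\otimes_{\F_v}L\bigr),
\]
and taking global sections on $\ol\scrM_K^{r,\rm nor}$ yields the isomorphism \eqref{eq:MF.65b}.

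For the second assertion, I would imitate the radical-ideal argument from the $L=\F_v$ case of Lemma~\ref{MF.35}(1). Since $\ol\scrM_K^r$ is reduced by Proposition~\ref{reduced} and the normalization $\nu_K$ is surjective, the kernel sheaf of $\O_{\ol\scrM_K^r}\to\nu_{K\,*}\O_{\ol\scrM_K^{r,\rm nor}}$ has vanishing locus equal to all of $\ol\scrM_K^r$, hence is contained in the nilradical, which is zero; so this map is injective. Tensoring this injection with the $\O_{\ol\scrM_K^r}$-flat sheaf $\omega_K^{\otimes k}\otimes_{\F_v}L$ and taking global sections gives the stated injectivity. The main obstacle in the whole plan is verifying that normalization commutes with the $K/\wt K$-quotient; once this identification is in hand, everything else follows formally from the invariants exact sequence, the projection formula, and the $\F_v$-flatness of $L$.
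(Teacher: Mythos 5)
Your proposal is correct, and for the crucial step it takes a genuinely different route from the paper. Both arguments reduce the isomorphism \eqref{eq:MF.65b} to the exactness of the equalizer sequence \eqref{EqSeqProp4.5} on $\ol\scrM_K^{r,\rm nor}$, then tensor with the flat module $\omega_K^{\otimes k}\otimes_{\F_v}L$ and take global sections, and your proof of the final injectivity statement (reducedness of $\ol\scrM_K^r$ forces the kernel sheaf into the nilradical) is exactly the paper's. The difference lies in how the middle exactness is obtained. The paper verifies it directly and locally: on an affine open $\ol U=\Spec R$ the sequence is exact over the dense smooth locus $U=\ol U\cap\scrM^r_K$, where the cover is finite \'etale Galois with group $K/\wt K$ by Proposition~\ref{2.5}(2); then normality gives $R=\bigcap_{{\rm ht}\,\grp=1}R_\grp$, and a section $f$ in the equalizer satisfies $v_\grq(f)\ge0$ at every height-one prime $\grq$ upstairs, hence $v_\grp(f)\ge0$ downstairs, so $f\in R$. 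You instead prove the stronger structural identification $\ol\scrM_{\wt K}^{r,\rm nor}/(K/\wt K)=\ol\scrM_K^{r,\rm nor}$, after which the equalizer sequence is exact by the very construction of the quotient in Proposition~\ref{S.4}. Your route buys a cleaner conceptual picture (the normalized special fibers form a tower of quotients, a statement of independent interest which the paper never records), at the cost of justifying that the $K/\wt K$-action lifts to the normalization, that invariants of normal rings under finite groups are normal, and that the quotient is finite and \emph{birational} over $\ol\scrM_K^r$ — for that last point you should make explicit that $\scrM_K^r$ is dense in $\ol\scrM_K^r$ (the boundary has dimension $\le r-2$ by Proposition~\ref{2.7}, while the special fiber is pure of dimension $r-1$) and that generic points map to generic points under the finite surjection $\ol\scrM^r_{\wt K}\to\ol\scrM^r_K$. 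Both proofs ultimately rest on the same normality input; it enters the paper's argument via Serre's intersection criterion and yours via the normality of invariant rings together with the uniqueness of the normalization.
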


\begin{proof}
The morphism $\pi_{\wt K,K}\colon \ol\scrM_{\wt K}^{r,\rm nor}\to\ol\scrM_K^{r,\rm nor}$ is obtained by the universal property of the normalization. It is finite, because $\pi_{\wt K,K}\colon \ol\scrM_{\wt K}^r\to\ol\scrM_K^r$ is finite by Theorem~\ref{S.2}(2).

Like in the previous lemma, the proof of the isomorphism \eqref{eq:MF.65b} follows by considering the sequence of coherent sheaves on $\ol\scrM_K^{r,\rm nor}$
\begin{equation}\label{EqSeqProp4.5}
0\longrightarrow \O_{\ol\scrM_K^{r,\rm nor}} \longrightarrow \pi_{\wt K,K\,*}\O_{\ol\scrM_{\wt K}^{r,\rm nor}} \overset{\textstyle\longrightarrow}{\longrightarrow} \prod_{K/\wt K}\pi_{\wt K,K\,*}\O_{\ol\scrM_{\wt K}^{r,\rm nor}}\,,
\end{equation}
tensoring it with the flat $\O_{\ol\scrM_K^{r,\rm nor}}$-module $\omega_K^{\otimes k}\otimes_{\F_v}L$ and taking global sections. It thus remains to prove that the sequence~\eqref{EqSeqProp4.5} is exact. 

Exactness on the left follows as in the previous lemma, because the scheme theoretic image of $\pi_{\wt K,K}\colon \ol\scrM_K^{r,\rm nor}\to\ol\scrM_{\wt K}^{r,\rm nor}$ is the entire space $\ol\scrM_K^{r,\rm nor}$ which is reduced.

To prove exactness in the middle let $\ol U=\Spec R\subset \ol\scrM_K^{r,\rm nor}$ be an affine open subset and let $U:=\ol U\cap \scrM_K^r$, where we observe that $\scrM_K^r:=\bfM^r_{\wt K} \otimes_{A[\grn_{\wt K}^{-1}]} \F_v$ is smooth over $\F_v$, hence normal, and hence an (affine) open subset of $\ol\scrM_K^{r,\rm nor}$. By Proposition~\ref{2.5}(2) the scheme $U\times_{\ol\scrM_K^{r,\rm nor}}\ol\scrM_{\wt K}^{r,\rm nor}$ is a finite \'etale Galois cover of $U$ with Galois group $K/\wt K$. Thus the restriction of the sequence \eqref{EqSeqProp4.5} to the dense open $U$ is exact. Let $f\in H^0(\ol U, \pi_{\wt K,K\,*}\O_{\ol\scrM_{\wt K}^{r,\rm nor}}) = H^0(\ol U\times_{\ol\scrM_K^{r,\rm nor}}\ol\scrM_{\wt K}^{r,\rm nor}, \O_{\ol\scrM_{\wt K}^{r,\rm nor}})$ lie in the equalizer of the two morphisms on the right. By the exactness of \eqref{EqSeqProp4.5} on $U$ the restriction $f|_U$ of $f$ to $U$ lies in $H^0(U, \O_{\ol U})$. Since $\ol U$ is normal the ring $R=H^0(\ol U,\O_{\ol U})$ equals the intersection $R_\grp$ of its local rings at height one primes $\grp\subset R$ by \cite[Theorem~11.5(ii)]{matsumura:crt89}. Thus it suffices to prove that $f\in R_\grp$ for all such $\grp$, or equivalently that $v_\grp(f)\ge0$ where $v_\grp$ is the valuation of the discrete valuation ring $R_\grp$. The scheme $\Spec R_\grp\times_{\ol\scrM_K^{r,\rm nor}}\ol\scrM_{\wt K}^{r,\rm nor}$ is finite over $R_\grp$. Let $\grq$ be a point in that scheme lying above $\grp$. The local ring at $\grq$ is a finite extesion of $R_\grp$, and hence also a discrete valuation ring with valuation $v_\grq$ extending $v_\grp$. Since $f\in H^0(\ol U\times_{\ol\scrM_K^{r,\rm nor}}\ol\scrM_{\wt K}^{r,\rm nor}, \O_{\ol\scrM_{\wt K}^{r,\rm nor}})$, we have $v_\grq(f)\ge0$, and hence $v_\grp(f)\ge0$. This proves that $f\in H^0(\ol U, \O_{\ol U})$, whence the exactness of \eqref{EqSeqProp4.5}.

The final statement follows as in the previous lemma, because the scheme theoretic image of $\ol\scrM_K^{r,\rm nor}\to\ol\scrM_K^r$ is the entire space $\ol\scrM_K^r$ which is reduced.
\qed
\end{proof}

\begin{remark}
The cokernel of 
\begin{equation} \label{EqCokerNormaliz}
M_k(r,K,L)=H^0\bigl(\ol\scrM_K^{r},\omega_K^{\otimes k}\otimes_{A[\grn_K^{-1}]}L\bigr)\longrightarrow H^0\bigl(\ol\scrM_K^{r,\rm nor},\omega_K^{\otimes k}\otimes_{A[\grn_K^{-1}]}L\bigr)
\end{equation}
can be described as follows. Let $f\colon\ol\scrM_K^{r,\rm nor}\to\ol\scrM_K^r$ and consider the cokernel sheaf $\mathcal F$ in the following exact sequence
\[
0\longrightarrow \O_{\ol\scrM_K^r} \longrightarrow f_*\O_{\ol\scrM_{K}^{r,\rm nor}} \longrightarrow \mathcal F \longrightarrow 0.
\]
$\mathcal F$ is supported on the locus, where $\ol\scrM_K^r$ is not normal. This locus is closed and contained in $\ol\scrM_K^r \setminus \scrM_K^r$. If Conjecture~\ref{ConjCM} holds, then it has codimension one and is a union of irreducible components of $\ol\scrM_K^r \setminus \scrM_K^r$. The cokernel of \eqref{EqCokerNormaliz} is then equal to the kernel of
\[
H^0\bigl(\ol\scrM_K^r,\omega_K^{\otimes k}\otimes\mathcal F\otimes_{A[\grn_K^{-1}]}L\bigr) \longrightarrow H^1\bigl(\ol\scrM_K^{r},\omega_K^{\otimes k}\otimes_{A[\grn_K^{-1}]}L\bigr).
\]
For $k\gg0$ the $H^1$-term vanishes and the cokernel equals $H^0\bigl(\ol\scrM_K^r,\omega_K^{\otimes k}\otimes\mathcal F\otimes_{A[\grn_K^{-1}]}L\bigr)$. To compute this further one would need an understanding of the singularities of $\ol\scrM_K^r$.
\end{remark}

For any arithmetic subgroup $\Gamma\subset G(F)=\GL_r(F)$, Basson, Breuer
and Pink \cite[Definition 6.1]{BBP} and Gekeler~\cite[Definition 1.11]{gekeler:I} have defined the space $M_k(\Gamma)$ of Drinfeld modular
forms of weight $k$ and level $\Gamma$ over $\C_\infty$. These are
$\C_\infty$-valued (rigid analytic) holomorphic functions 
on the Drinfeld period domain $\Omega^r$ that satisfy the usual conditions defined by
automorphy factors (i.e.~are weakly modular forms) and are required to 
be holomorphic ``at infinity''.  
Basson, Breuer and Pink proved the following comparison theorem.

\begin{thm}[{\cite[Theorem 10.9]{BBP}}]\label{MF.comp}
There is an isomorphism 
\begin{equation}
  \label{eq:decomp}
  \bfM^r_K\otimes_{A[\grn_K^{-1}]} \C_\infty
\simeq \coprod_{i=1}^h \Gamma_{g_i} \backslash
\Omega^r,\quad \Gamma_{g_i}:=G(F)\cap g_i K {g_i}^{-1},
\end{equation}
where $g_1,\dots, g_h$ are complete representatives for the double
coset space $G(F)\backslash 
G(\A^\infty)/K.$
When $K\subset K(1)$ is fine, 
there is a natural isomorphism of $\C_\infty$-vector spaces
\begin{equation}
  \label{eq:compare}
  M_k(r,K,\C_\infty)\isoto \bigoplus_{i=1}^h M_k(\Gamma_{g_i}).
\end{equation}
\end{thm}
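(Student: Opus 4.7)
The plan is to reduce both assertions to the generic fibre, apply rigid-analytic GAGA, and then invoke the analytic comparison theorem of Basson, Breuer and Pink together with H\"aberli's construction of the analytic Satake compactification.

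For \eqref{eq:decomp}, note that $\bfM^r_K\otimes_{A[\grn_K^{-1}]}\C_\infty=M^r_K\otimes_F\C_\infty$ since $\C_\infty$ is an $F$-algebra via the chosen embedding $\ol F\hookrightarrow\C_\infty$. The rigid analytic uniformization
\[
M^r_K(\C_\infty)\;\simeq\;G(F)\backslash\bigl(\Omega^r(\C_\infty)\times G(\A^\infty)/K\bigr)
\]
is classical, see \cite[Theorem~5.6]{Deligne-Husemoeller}; it arises from the interpretation of a point of $\Omega^r(\C_\infty)$ as the datum, up to scaling, of a projective $A$-lattice of rank $r$ in $\C_\infty$, which by the Drinfeld exponential is equivalent to a rank-$r$ Drinfeld $A$-module over $\C_\infty$. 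Choosing representatives $g_1,\dots,g_h$ for the finitely many (by strong approximation for $\SL_r$) double cosets $G(F)\backslash G(\A^\infty)/K$ and identifying the stabilizer of $g_iK$ in $G(F)$ with $\Gamma_{g_i}=G(F)\cap g_iKg_i^{-1}$ yields \eqref{eq:decomp}.

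For \eqref{eq:compare}, I would first apply rigid-analytic GAGA on the projective $\C_\infty$-scheme $\ol{\bfM}^r_K\otimes\C_\infty$. This identifies
\[
M_k(r,K,\C_\infty)\;=\;H^0\bigl(\ol{\bfM}^r_K\otimes\C_\infty,\,\omega_K^{\otimes k}\bigr)
\]
with the space of analytic sections of the analytification of $\omega_K^{\otimes k}$ over the rigid analytic space $(\ol{\bfM}^r_K\otimes\C_\infty)^{\rm an}$. By H\"aberli's construction \cite{haberli}, this analytification coincides with the analytic Satake compactification of $\coprod_{i=1}^{h}\Gamma_{g_i}\backslash\Omega^r$ from \eqref{eq:decomp}, compatibly with the universal generalized Drinfeld $A$-module and hence with the line bundle $\omega_K=\Lie(\olE_K)^\vee$. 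On each analytic open component $\Gamma_{g_i}\backslash\Omega^r$ the analytification $\omega_K^{\rm an}$ is the tautological invertible sheaf, whose $\Gamma_{g_i}$-equivariant sections over $\Omega^r$ are exactly the weakly modular holomorphic functions $f\colon\Omega^r\to\C_\infty$ of weight $k$ for $\Gamma_{g_i}$.

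The key step, which I expect to be the main obstacle in principle, is then to verify that such a weakly modular form of weight $k$ for $\Gamma_{g_i}$ extends to an analytic section of $(\omega_K^{\otimes k})^{\rm an}$ across the boundary of the analytic Satake compactification if and only if it is holomorphic at infinity in the sense of \cite[Definition~6.1]{BBP:1}, i.e.\ belongs to $M_k(\Gamma_{g_i})$. This is a Koecher-type principle requiring analysis of local charts at each boundary stratum and comparison of $\omega_K^{\rm an}$ there with the automorphy factors on the rational boundary components of $\Omega^r$. In our setting this is precisely the content of \cite[Theorem~10.9]{BBP:2}, which may therefore be invoked directly; summing the resulting component-wise isomorphism over $i=1,\dots,h$ yields \eqref{eq:compare}.
\qed
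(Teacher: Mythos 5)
The paper gives no proof of this statement at all: it is quoted verbatim as an external result, with \eqref{eq:decomp} being the classical uniformization (cited in the proof of Proposition~\ref{Prop2.4} from \cite[Theorem~5.6]{Deligne-Husemoeller}) and \eqref{eq:compare} being exactly \cite[Theorem 10.9]{BBP:2}. Your sketch is consistent with that treatment and correctly isolates where the real content lies (the Koecher-type extension across the Satake boundary), but be aware that your ``key step'' is to invoke \cite[Theorem 10.9]{BBP:2}, i.e.\ the very statement being proved, so the GAGA/H\"aberli scaffolding you build around it does not constitute an independent proof — it is, like the paper's, ultimately a citation.
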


\subsection{Drinfeld modular forms over $A_{(v)}$}
\label{sec:MF.2}
Let $v$ be a finite place of $F$. 
We say that an open compact subgroup $K\subset
G(\A^{\infty})$ is {\it fine} if it is conjugate to a fine subgroup 
of $K(1)$. Let $A_{(v)}$ denote the localization of $A$ at
the place $v$, and let $\wh A^{(v)}:=\prod_{v'\neq v} A_{v'}$. 
In this subsection
we shall define algebraic Drinfeld modular forms of rank $r$ over an
$A_{(v)}$-algebra $L$
and prime-to-$v$ Hecke operators on these modular forms. 
For this purpose we need to extend the action of $G(\wh A)$ on $\bfM^r_K$ from \eqref{eq:action} to $G(\A^{\infty v})$. The conceptual best way for this is to (re-)define the moduli schemes $\bfM^r_K$ over $A_{(v)}$
and to construct their Satake compactification 
for fine level subgroups $K\subset G(\A^{\infty})$ 
of the form $K=K_v K^v$, where 
$K_v=K_v(1)=G(A_v)$ is a fixed maximal open compact subgroup, 
and $K^v\subset G(\A^{\infty v})$ is an
open compact subgroup not necessarily contained in $G(\wh A^{(v)})$
which can vary. Correspondingly, we need to (re-)define $K$-level structures on Drinfeld $A$-modules for such subgroups $K\subset G(\A^{\infty v})$. 
For the remainder of this article unless stated otherwise $K_v$ and
$K^v$ are as above and $K=K_v K^v$.  

For a Drinfeld $A$-module $(E,\varphi)$ over an $A_{(v)}$-field $L$, 
the \emph{prime-to-$v$ Tate module} and \emph{Tate space} of $(E,\varphi)$
are defined as 
\[ T^{(v)}(\varphi):=\Hom_A(\A^{\infty v}/\widehat A^{(v)}, \varphi(L^\sep)), 
\quad V^{(v)}(\varphi):=\A^{\infty v}\otimes_{\wh A^{(v)}} 
T^{(v)}(\varphi), \]
where $L^\sep$ denotes a separable closure of $L$.

\begin{defn}
Let $S$ be a \emph{connected} locally 
Noetherian $A_{(v)}$-scheme, and let
$(E,\varphi)$ be a
Drinfeld $A$-module of rank $r$ over $S$. A \emph{level-$K^v$ structure} on
$(E,\varphi)$ is a $K^v$-orbit $\ol \eta=\eta K^v$ 
of $\A^{\infty v}$-linear isomorphisms 
\[  \eta: (\A^{\infty v})^r\isoto V^{(v)}(\varphi_{\bar s}) \]
which is $\pi_1(S,\bar s)$-invariant, where $\bar s$ is a geometric
point of $S$. Here $K^v\subset\GL_r(\A^{\infty v})$ acts on $(\A^{\infty v})^r$ 
and $\pi_1(S,\bar s)$ on $V^{(v)}(\varphi_{\bar s})$. 
If $K=K(\grn)\subset K(1)$ and $\eta$ maps 
$(\wh A^{(v)})^r$ onto $T^{(v)}(\varphi_{\bar s})$, then $\ol \eta$
is nothing but a level-$\grn$ structure on $(E,\varphi)$; see
\eqref{eq:level}. For general $S$, a level-$K^v$ structure on
$(E,\varphi)$ is a tuple $\ol \eta=(\ol \eta_{S_i})_{S_i\in \pi_0(S)}$, 
where each $\ol \eta_{S_i}$ is a level-$K^v$ structure on
$(E,\varphi)$ over $S_i$.  
\end{defn}

Recall that a morphism $\alpha: (E_1,\varphi_1)\to
(E_2,\varphi_2)$ of two Drinfeld $A$-modules over $S$ is called an
\emph{isogeny} if it is surjective with finite flat kernel. 
A morphism $\alpha$ is an isogeny if and only if $\alpha\neq 0$ above
every connected component of $S$; see \cite[Proposition
5.4]{hartl:isog}. For every isogeny $\alpha$, by 
\cite[Corollary 5.15]{hartl:isog} there is an element $a\in A$,
$a\neq 0$ and an isogeny $\beta: (E_2,\varphi_2)\to (E_1,\varphi_1)$
with $\alpha \beta=a\cdot {\rm id}_{E_2}$ and $\beta \alpha=a\cdot
{\rm id}_{E_1}$, and $\ker (\alpha)\subset \varphi_1[a]$. 
We say that an isogeny $\alpha: (E_1,\varphi_1)\to (E_2,\varphi_2)$ is
{\it prime-to-$v$} if there is an element $a\in A$ with $v\nmid a$
such that $\ker \alpha\subset \varphi_1[a]$. Equivalently, let $\gra$
be the kernel of the ring homomorphism $A\to \End(\ker(\alpha))$,
sending $a\mapsto \varphi_a$;  
then $\alpha$ is prime-to-$v$ if and only if $v\nmid \gra$.
Since $\ker\alpha\subset \varphi_1[a]$ for some
$a\neq 0$, we have $a\in \gra$ and $\gra\neq (0)$. 

\begin{defn}\label{def:MF.10} 
Let $\bfM^r_{K_v K^v}$ denote the moduli functor over $A_{(v)}$
classifying equivalence classes of Drinfeld $A$-modules
$(E,\varphi, \bar \eta)$ with level-$K^v$ structure. Here two objects
$(E_1,\varphi_1,\ol \eta_1)$ and $(E_2,\varphi_2,\ol \eta_2)$
over a base scheme $S$ are said to be {\it equivalent} if there is 
a prime-to-$v$ isogeny $\alpha:
(E_1,\varphi_1)\to (E_2,\varphi_2)$ over $S$ such that $\alpha_* \ol
\eta_1=\ol \eta_2$, where 
$\alpha_*: V^{(v)}(\varphi_{1,\bar s})\to V^{(v)}(\varphi_{2,\bar s})$ 
is induced from 
$\alpha:  \varphi_{1,\bar s}[\grn]\to \varphi_{2,\bar s}[\grn]$.
\end{defn}

We claim that when $K=K_vK^v=G(A_v)K^v\subset G(\A^\infty)$ is fine, 
the functor $\bfM^r_{K_v K^v}$ is representable by an
affine and smooth $A_{(v)}$-scheme, which we denote again 
by $\bfM^r_{K_v K^v}$, with a universal
family $(E_{K}, \varphi_{K}, \ol \eta_{K})$. To see this,    
for any element $g\in G(\A^{\infty v})$, the right translation by $g$
gives an isomorphism of functors:
\[ J_{\!g\,}: \bfM^r_{K_v K^v} \isoto \bfM^r_{(K_v K^v)^g}, \quad
(E,\varphi,\ol \eta)\mapsto (E,\varphi, \ol{\eta g}). \]
Here we write $K^g:=\Int(g^{-1}) K=g^{-1} K g$. In particular, if $g\in K^v$
then $K^g=K$ and $J_{\!g\,}$ is the identity on $\bfM^r_{K_v K^v}$.
One can choose $g\in\{1\}_v\times G(\A^{\infty v})$
such that $(K^v)^g\subset  G(\wh A^{(v)})$, and hence that $K^g\subset
K(1)$. 
Then the natural map 
$i_v:\bfM^r_{K^g}\otimes_{A[\grn_{K^g}^{-1}]} A_{(v)} \to \bfM^r_{(K_v K^v)^g}$,
sending $(E,\varphi,\lambda K^g) \mapsto (E,\varphi,\bar \eta)$, where
$\eta$ is a lifting of $\lambda$, is 
an isomorphism, and hence the representability of $\bfM^r_{(K_v K^v)^g}$,
and hence of $\bfM^r_{K_v K^v}$ via the isomorphism $J_{\!g\,}$, is obtained from Proposition~\ref{2.5}(2). 
We then transport the Satake
compactification $\ol \bfM^r_{K^g}$ of $\bfM^r_{K^g}$ and the universal 
family $(\olE_{K^g},\ol \varphi_{K^g})$ over $\ol\bfM^r_{K^g}$ to 
$\bfM^r_{K_v K^v}$. So we obtain the Satake compactification 
$\ol \bfM^r_{K_v K^v}$ of $\bfM^r_{K_v K^v}$ over $A_{(v)}$, and the 
universal family $(\olE_K,\ol\varphi_K)$ over $\ol \bfM^r_{K_v K^v}$.  
By
abuse of notation, we also write $\bfM^r_{K}$ for $\bfM^r_{K_v K^v}$ and  
$\ol\bfM^r_K$ for $\ol\bfM^r_{K_v K^v}$, understanding that they are schemes 
over $A_{(v)}$, not over $A[\grn_K^{-1}]$.

If $\wt K^v\subset K^v\subset G(\A^{\infty v})$ are fine open compact subgroups and $\wt K=G(A_v)\wt K^v$ and $K=G(A_v)K^v$, then ``forgetting the level'' induces by Theorem~\ref{S.2} a finite surjective and open morphism 
\[
\pi_{\wt K,K}\colon\ol\bfM^r_{\wt K}\longrightarrow\ol\bfM^r_K\otimes_{A[\grn_K^{-1}]}A[\grn_{\wt K}^{-1}]
\]
over $\Spec A[\grn_{\wt K}^{-1}]$ which satisfies $\pi_{\wt K,K}^*(\olE_K,\ol\varphi_K)=(\olE_{\wt K},\ol\varphi_{\wt K})$ and $\pi_{\wt K,K}^*(\omega_K)=\omega_{\wt K}$.

The construction of the Satake compactification in Section~\ref{sec:S}
also shows the following functorial property (see \eqref{eq:IgJg}): 
For any $g\in G(\A^{\infty v})$, the Hecke translation
$J_{\!g\,}:\bfM^r_{K}\to \bfM^r_{K^g}$, and the
the canonical isomorphism
$I_{\!g\,}:(E_K,\varphi_K)\to (E_{g^{-1}Kg},\varphi_{g^{-1}Kg})$ which lifts $J_{\!g\,}$
extend uniquely to isomorphisms $\ol J_{\!g\,}$ and $\ol I_{\!g\,}$, respectively, 
that fit into the following commutative diagram:
\begin{equation}
  \label{eq:MF.7}
  \begin{CD}
  (\olE_K, \ol \varphi_K) @>{\ol I_{\!g\,}}>> (\olE_{K^g}, \ol
  \varphi_{K^g}) \\ 
  @VVV @VVV \\
  \ol \bfM^r_{K} @>{\ol J_{\!g\,}}>>\ol \bfM^r_{K^g}.    
  \end{CD}
\end{equation}
In particular, if $g\in K^v$ then $K^g=K$ and $\ol J_{\!g\,}$ and $\ol I_{\!g\,}$ are the identity.
Similarly, write $\omega_K:=\Lie(\olE_K)^\vee$, which is 
an ample invertible sheaf on $\ol \bfM^r_{K}$ over $A_{(v)}$. 

\begin{defn}\label{MF.5}
  (1) For any integer $k\ge 0$, {\it fine} open compact subgroup 
  $K=K_vK^v \subset G(\A^{\infty})$ with $K_v=G(A_v)$
  and $A_{(v)}$-algebra $L$, denote by
  \begin{equation}\label{eq:MF.8}
    M_k(r,K,L):=H^0(\ol \bfM^r_{K}\otimes_{A_{(v)}} L,\;\omega_{K}^{\otimes k}\otimes {L})
  \end{equation}
  the $L$-module of \emph{algebraic Drinfeld  modular forms of rank $r$,
  weight $k$, level $K$ over $L$}. The definition of $M_k(r,K,L)$ in
\eqref{eq:MF.8} agrees with that in Definition~\ref{MF.1} (noting 
$\ol \bfM^r_{K}\otimes_{A[\grn_K^{-1}]}\otimes L=\ol \bfM^r_{K_v K^v}
\otimes_{A_{(v)}} L$).
Thus, there should be no danger of confusion. 

(2) For $\wt K^v\subset K^v\subset G(\A^{\infty v})$ fine open compact
  subgroups,  
  the pull-back under $\pi_{\wt K, K}:\ol\bfM^r_{K_v \wt
  K^v}\to \ol \bfM^r_{K_v K^v}$ yields a map
$\pi_{\wt K,K}^*: M_k(r,K_vK^v,L)\to M_k(r,K_v\wt K^v,L)$, which is
injective by Lemma~\ref{MF.35}.  Define  
\begin{equation}
  \label{eq:MF.9}
   M_k(r, K_v,L):=\indlimw{K^v} M_k(r, K_v K^v, L), \quad
  M(r,K_v,L):=\bigoplus_{k\ge 0} M_k(r,K_v,L).
\end{equation} 

(3) Let $k\ge 0$ and $K$ be as in (1). If $L$ is a flat $A_{(v)}$-algebra we set $\wt M_k(r,K,L):=M_k(r,K,L)$ and $\wt M_k(r,K_v,L):=M_k(r,K_v,L)$. On the other hand, if $L$ is an $\F_v$-algebra we consider the normalization $\ol\scrM_K^{r,\rm nor}$ of $\ol\scrM_K^r:=\ol\bfM^r_K \otimes_{A_{(v)}} \F_v$ and set 
  \begin{equation}\label{eq:MF.8b}
    \wt M_k(r,K,L):=H^0(\ol\scrM_K^{r,\rm nor}\otimes_{\F_v} L,\;\omega_{K}^{\otimes k}\otimes {L})\,.
  \end{equation}
By Proposition~\ref{Prop4.5} we have $M_k(r,K,L)\subset \wt M_k(r,K,L)$. Moreover, by Proposition~\ref{Prop4.5} the map
$\pi_{\wt K,K}^*:\wt M_k(r,K_vK^v,L)\to \wt M_k(r,K_v\wt K^v,L)$ is 
injective, and we define  
\begin{equation}
  \label{eq:wtMk}
  \wt M_k(r, K_v,L):=\indlimw{K^v}\wt M_k(r, K_v K^v, L)\,.
\end{equation} 
Then $M_k(r, K_v,L)\subset\wt M_k(r, K_v,L)$. We even have equality if $L$ is flat over $A_{(v)}$. As in Remark~\ref{Remk<0} we have $\wt M_k(r,K,L)=(0)$ for $k<0$ by Lemma~\ref{Lemma_DualOfAmple}.
\end{defn}

\begin{remark}
Take the projective limit of the tower of schemes $\ol \bfM^r_{K_vK^v}$
\begin{equation}
  \label{eq:MF.projlim}
  \ol \bfM^r_{K_v}:=\projlimw{K^v} \ol \bfM^r_{K_vK^v}.
\end{equation}
Note that $\ol \bfM^r_{K_v}$ is a scheme as the transition maps are
affine. It has a continuous right action of $G(\A^{\infty v})$. 
Then $M_k(r,K_v,L)=H^0(\ol \bfM^r_{K_v}\otimes_{A_{(v)}}L, \omega^{\otimes k}\otimes_{A_{(v)}}L)$.
Moreover, for any fine open compact subgroup $K=K_vK^v$, one has 
$\ol \bfM^r_{K_v}/K^v\simeq \ol \bfM^r_{K_vK^v}$ by Theorem~\ref{S.2}(2).
\end{remark}

We now describe the left action of $G(\A^{\infty v})$ on 
the $L$-modules $M_k(r,K_v,L)$ and $\wt M_k(r,K_v,L)$. 
For any element $g\in G(\A^{\infty v})$, the canonical isomorphisms in
(\ref{eq:MF.7}) gives a canonical isomorphism 
$\omega^{\otimes k}_{K}\simeq \ol J_{\!g\,}^* \omega^{\otimes
  k}_{K^g}$.
Using the adjoint 
isomorphism $\omega^{\otimes k}_{K^g}\isoto \ol J_{\!g*\,}  
\ol J_{\!g\,}^*\omega^{\otimes k}_{K^g}$, we get isomorphisms  
\begin{equation}
  \label{eq:MF.10}
  \begin{split}
   & H^0(\ol \bfM^r_{K^g}\otimes_{A_{(v)}} L,\,\omega_{K^g}^{\otimes
   k}\otimes L)   \isoto H^0(\ol \bfM^r_{K^g}\otimes_{A_{(v)}} L,\,\ol J_{\!g*\,} \ol J_{\!g\,}^*
   \omega_{K^g}^{\otimes k}\otimes L) \\[2mm]
   & \isoto H^0(\ol \bfM^r_{K}\otimes_{A_{(v)}} L,\, \ol J_{\!g\,}^* \omega_{K^g}^{\otimes k}\otimes L)
    \isoto H^0(\ol \bfM^r_{K}\otimes_{A_{(v)}} L,\, 
   \omega_{K}^{\otimes k}\otimes L), 
  \end{split}
\end{equation}
and an isomorphism $T_g:M_k(r,K^g,L)\isoto M_k(r,K,L)$. For
$g_1,g_2\in G(\A^{\infty v})$, one has $T_{g_1}\circ T_{g_2}=T_{g_1 g_2}$.
Taking the inductive limit, the group $G(\A^{\infty v})$ naturally
acts on $M_k(r,K_v,L)$. The group $G(\A^{\infty v})$ acts as
automorphisms of the graded rings on $M(r,K_v,L)$, that is, one has
\begin{equation}
  \label{eq:MF.11}
  T_g(f_1\cdot f_2)=T_g(f_1)\cdot T_g(f_2), \quad \forall\, f_1, f_2\in
  M(r,K_v,L). 
\end{equation}
Likewise, applying base change to $\F_v$ and normalization to the isomorphisms in
(\ref{eq:MF.7}) yields a commutative diagram 
\begin{equation*}
  \begin{CD}
  (\olE_K, \ol \varphi_K) @>{\ol I_{\!g\,}}>> (\olE_{K^g}, \ol
  \varphi_{K^g}) \\ 
  @VVV @VVV \\
  \ol\scrM_K^{r,\rm nor} @>{\ol J_{\!g\,}}>>\ol\scrM_{K^g}^{r,\rm nor}.    
  \end{CD}
\end{equation*}
in which the horizontal maps $\ol I_{\!g\,}$ and $\ol J_{\!g\,}$ are isomorphisms. Here we abuse notation and denote the pullback of $(\olE_K, \ol \varphi_K)$ from $\ol\bfM^r_K$ to $\ol\scrM_K^{r,\rm nor}$ again by $(\olE_K, \ol \varphi_K)$. If now $L$ is an $\F_v$-algebra, the same reasoning as above applied to $\ol\scrM_K^{r,\rm nor}$ produces an isomorphism $T_g:\wt M_k(r,K^g,L)\isoto \wt M_k(r,K,L)$ and an action of $G(\A^{\infty v})$ on $\wt M_k(r,K_v,L)$.

\begin{thm}\label{ThmSmoothHeckeAction}
(1) Let $L$ be a flat $A_{(v)}$-algebra or a noetherian $\F_v$-algebra. Then the actions of $G(\A^{\infty v})$ on $M_k(r,K_v,L)$ and $\wt M_k(r,K_v,L)$ are
\begin{itemize}
\item \emph{smooth} (in the sense that every element of 
$M_k(r,K_v,L)$ and $\wt M_k(r,K_v,L)$ has an open stabilizer) and 
\item \emph{admissible} (in the sense that for every open compact subgroup $K^v\subset G(\A^{\infty v})$ the fixed points $M_k(r,K_v,L)^{K^v}$ and $\wt M_k(r,K_v,L)^{K^v}$ form finitely generated $L$-modules).
\end{itemize}
In particular, for every open compact subgroup $K^v\subset G(\A^{\infty v})$ there is an open compact subgroup $\wt K^v\subset K^v$ such that $M_k(r,K_v,L)^{K^v}\subset M_k(r,K_v\wt K^v,L)$.

(2) If $L$ is a flat $A_{(v)}$-algebra then $M_k(r,K_vK^v,L)= M_k(r,K_v,L)^{K^v}$ for any  open compact subgroup $K^v\subset G(\A^{\infty v})$.

(3) If $L$ is an $\F_v$-algebra then $\wt M_k(r,K_vK^v,L)= \wt M_k(r,K_v,L)^{K^v}$ for any  open compact subgroup $K^v\subset G(\A^{\infty v})$.

\end{thm}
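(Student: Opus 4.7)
Smoothness follows directly from the construction: every $f\in M_k(r,K_v,L)$ (resp.\ $\wt M_k(r,K_v,L)$) comes from $M_k(r,K_vK^v,L)$ (resp.\ $\wt M_k(r,K_vK^v,L)$) for some open compact $K^v\subset G(\A^{\infty v})$, by definition of the inductive limits \eqref{eq:MF.9} and \eqref{eq:wtMk}. For $g\in K^v$ we have $K^g=K$ in \eqref{eq:MF.7}, so $\ol J_{\!g\,}$ and $\ol I_{\!g\,}$ are both the identity, whence $T_g$ acts trivially on $M_k(r,K_vK^v,L)$ and on $\wt M_k(r,K_vK^v,L)$. Thus the stabilizer of $f$ contains the open subgroup $K^v$. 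The same remark also gives the easy inclusions $M_k(r,K_vK^v,L)\subset M_k(r,K_v,L)^{K^v}$ in (2) and $\wt M_k(r,K_vK^v,L)\subset \wt M_k(r,K_v,L)^{K^v}$ in (3).

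For the reverse inclusion in (2), take $f\in M_k(r,K_v,L)^{K^v}$; by smoothness $f\in M_k(r,K_v\wt K^v,L)$ for some open compact $\wt K^v\subset K^v$. The index $[K^v:\wt K^v]$ is finite because $\wt K^v$ is open in the compact group $K^v$, so we may replace $\wt K^v$ by its normal core $\bigcap_{g\in K^v/\wt K^v}g\wt K^v g^{-1}$, which is again open compact and now normal in $K^v$. Then $f\in M_k(r,K_v\wt K^v,L)^{K^v/\wt K^v}$. The argument of Lemma~\ref{MF.35}(3), applied after base change from $A[\grn_{K_v\wt K^v}^{-1}]$ to $A_{(v)}$ (valid because $\grn_{K_v\wt K^v}$ is prime to $v$, so $A_{(v)}$ is flat over $A[\grn_{K_v\wt K^v}^{-1}]$, and $L$ is flat over $A_{(v)}$), yields
\[
M_k(r,K_vK^v,L)\isoto M_k(r,K_v\wt K^v,L)^{K^v/\wt K^v},
\]
which shows $f\in M_k(r,K_vK^v,L)$. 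Statement (3) is proved identically, with Proposition~\ref{Prop4.5} replacing Lemma~\ref{MF.35}(3); the point is that Proposition~\ref{Prop4.5} holds for arbitrary $\F_v$-algebras $L$, circumventing the lack of normality of $\ol\scrM_K^r$.

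Finally, admissibility and the ``in particular'' clause of (1) follow by combining (2) and (3) with coherent cohomology. By (2), resp.\ (3),
\[
M_k(r,K_v,L)^{K^v}=M_k(r,K_vK^v,L)\quad\text{and}\quad \wt M_k(r,K_v,L)^{K^v}=\wt M_k(r,K_vK^v,L),
\]
and these are finitely generated $L$-modules: for flat $L$ this is flat base change from $M_k(r,K_vK^v,A_{(v)})$, which is finitely generated over $A_{(v)}$ by properness of $\ol\bfM^r_{K_vK^v}$ over $A_{(v)}$ and coherence of $\omega_K^{\otimes k}$; for noetherian $L$ over $\F_v$ the module $\wt M_k(r,K_vK^v,L)$ consists of global sections of a coherent sheaf on the proper $L$-scheme $\ol\scrM_K^{r,\rm nor}\otimes_{\F_v}L$. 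Picking $L$-generators $f_1,\ldots,f_n$ with $f_i\in M_k(r,K_vK^v_i,L)$ and setting $\wt K^v:=\bigcap_i K^v_i$ yields $M_k(r,K_v,L)^{K^v}\subset M_k(r,K_v\wt K^v,L)$. The main technical hurdle is precisely the dichotomy flat vs.\ non-flat $L$ in (2) and (3): for non-flat $\F_v$-algebras the tensoring-with-a-flat-exact-sequence argument underlying Lemma~\ref{MF.35}(3) collapses, and the repair is exactly the passage to the normalization $\ol\scrM_K^{r,\rm nor}$ and the modified modules $\wt M_k$ afforded by Proposition~\ref{Prop4.5}.
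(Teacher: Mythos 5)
Your proposal is correct and follows essentially the same route as the paper: smoothness and the ``in particular'' clause from the inductive-limit construction, (2) and (3) from the invariance isomorphisms of Lemma~\ref{MF.35}(3) and Proposition~\ref{Prop4.5} (your normal-core reduction just makes explicit the cofinality step the paper leaves implicit), and admissibility from finiteness at finite level via flat base change resp.\ coherence on the proper scheme $\ol\scrM_K^{r,\rm nor}\otimes_{\F_v}L$. The one point you leave unaddressed is the finite generation of $M_k(r,K_v,L)^{K^v}$ (as opposed to $\wt M_k(r,K_v,L)^{K^v}$) when $L$ is a non-flat noetherian $\F_v$-algebra, where the equality of (2) is unavailable; this is closed in one line, exactly as in the paper, by the inclusion $M_k(r,K_v,L)^{K^v}\subset\wt M_k(r,K_v,L)^{K^v}=\wt M_k(r,K_vK^v,L)$ from Proposition~\ref{Prop4.5} together with noetherianity of $L$.
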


\begin{proof}
(2) and (3) follow from the isomorphisms \eqref{eq:MF.65} in Lemma~\ref{MF.35}(3) and \eqref{eq:MF.65b} in Proposition~\ref{Prop4.5}, respectively.

(1) To prove smoothness let $f\in M_k(r,K_v,L)$ or $f\in\wt M_k(r,K_v,L)$. Then there exists an open compact subgroup $K^v\subset G(\A^{\infty v})$ such that $f\in M(r,K_vK^v,L)$ or $f\in\wt M(r,K_vK^v,L)$, respectively, and then $K^v$ stabilizes $f$.

To prove admissibility, let $K^v\subset G(\A^{\infty v})$ be an open compact subgroup. If $L$ is a flat $A_{(v)}$-algebra then $\wt M_k(r,K_v,L)=M_k(r,K_v,L)$ and (2) implies that $M_k(r,K_v,L)^{K^v}=M_k(r,K_vK^v,L)$ is a finitely generated $L$-module. On the other hand, if $L$ is an $\F_v$-algebra then (3) implies that $\wt M_k(r,K_v,L)^{K^v}=\wt M_k(r,K_vK^v,L)$ is a finitely generated $L$-module. If $L$ is moreover noetherian, then $M_k(r,K_v,L)^{K^v}\subset\wt M_k(r,K_v,L)^{K^v}=\wt M_k(r,K_vK^v,L)$ is also finitely generated.

The last assertion of (1) follows from the finiteness of $M_k(r,K_v,L)^{K^v}$ as $L$-submodule of the inductive limit $M_k(r, K_v,L)=\lim\limits_{\longrightarrow} M_k(r, K_v\wt K^v, L)$. \qed
\end{proof}

\subsection{Systems of Hecke eigenvalues in $\ol \F_v$ }
\label{sec:MF.3}

Since most texts only consider Hecke algebras with values in $\Q$-algebras, we review, for the readers convenience, the theory of the Hecke algebras for $\GL_r$ with values in arbitrary commutative rings. From \cite{Herzig11,Henniart-Vigneras} we recall the following

\begin{defn}\label{DefHeckeAlg}
Set $\bbG:=G(\A^{\infty v})$, let $K^v\subset\bbG$ be an open compact subgroup, and let $R$ be any commutative ring. The \emph{Hecke algebra} $\calH_R(G(\A^{\infty v}),K^v)$ is the convolution $R$-algebra 
\begin{align*}
\calH_R(G(\A^{\infty v}),K^v) := \bigl\{\,h & \colon\bbG \longrightarrow R\text{ functions with compact support }\big| \\[1mm]
& h(k_1gk_2)=h(g)\text{ for all }g\in G(\A^{\infty v})\text{ and }k_1,k_2\in K^v\,\bigr\}\,.
\end{align*}
Here compact support means that $h$ is zero outside a finite union of cosets $K^vgK^v$. The multiplication is defined by \emph{convolution}
\[
(\tilde h*h)(g) \;:=\;\sum_{\tilde g\in \bbG/K^v} \tilde h(\tilde g)\cdot h(\tilde g^{-1}g)\;=\;\int_{\bbG}\tilde h(\tilde g)\cdot h(\tilde g^{-1}g)\;d\tilde g\,,
\]
where $d\tilde g$ is the left-invariant Haar measure on $\bbG$ with $\vol(K^v)=1$. The characteristic function ${\bf1}_{K^v}$ of $K^v$ is the unit element.

The Hecke algebra is isomorphic to the endomorphism algebra $\End_{R[\bbG]}\bigl(\cInd_{K^v}^\bbG(\UOne)\bigr)$ of the compact induction
\[
\cInd_{K^v}^\bbG(\UOne)\;:=\;\bigl\{\,f\colon\bbG\to R\text{ functions with compact support }\big|\ f(kg)=f(g) \text{ for all }k\in K^v, g\in\bbG\,\bigr\}
\]
of the trivial $K^v$-representation $\UOne=R$. Here $\cInd_{K^v}^\bbG(\UOne)$ is an $R[\bbG]$-module by right translation, that is, $\tilde g\in\bbG$ maps $f\in \cInd_{K^v}^\bbG(\UOne)$ to $\rho_{\tilde g}(f)$ which is defined by $\rho_{\tilde g}(f)(g)=f(g\tilde g)$. The isomorphism $\calH_R(G(\A^{\infty v}),K^v)\isoto\End_{R[\bbG]}\bigl(\cInd_{K^v}^\bbG(\UOne)\bigr)$ sends $h\in \calH_R(G(\A^{\infty v}),K^v)$ to the endomorphism $h$ of $\cInd_{K^v}^\bbG(\UOne)$
\[
h\colon f\;\longmapsto\; h*f\,,\text{ defined by}\quad (h*f)(g)\;:=\;\sum_{\tilde g\in\bbG/K^v}h(\tilde g)\cdot f(\tilde g^{-1}g)\;=\;\int_{\bbG} h(\tilde g)\cdot f(\tilde g^{-1}g)\;d\tilde g\,.
\]
From now on we assume that $K^v=\prod_{v'\nmid \infty v}K_{v'}$ with open compact 
subgroups $K_{v'}\subset G(F_{v'})$ with $K_{v'}=G(A_{v'})$ for almost all $v'$.
Then the Hecke algebra $\calH_R(G(\A^{\infty v}),K^v)$ decomposes into local Hecke algebras
\[ \calH_R(G(\A^{\infty v}),K^v)\;=\;\bigotimes_{v'\nmid v\infty}^{}\!\! '\,  
  \calH_{R}(G(F_{v'}),K_{v'}) 
\]
which are defined analogously. Here $\otimes'$ denotes the restricted 
tensor product with respect to the unit elements 
${\bf1}_{K_{v'}}\in\calH_{R}(G(F_{v'}),K_{v'}) $ for almost all places $v'$.
\end{defn}

Let $\grn\subset A$ be a 
non-zero ideal, prime to $v$ such that $K(\grn)$ is contained in 
a conjugate of $K=K_vK^v$. When $v'\nmid \infty v \grn$, then the open compact subgroup $K_{v'}$ is maximal and hyperspecial,
and  $\calH_{R}(G(F_{v'}),K_{v'})$ is commutative
by \cite[\S\,1.5~Theorem and Remark]{Henniart-Vigneras}, or \cite[Theorem~2.6]{Herzig11}. This can also be seen in an elementary way called ``Gelfand's trick'' as follows. Let $g'\in G(F_{v'})$ be such that $g'{}^{-1}K_{v'}g'=G(A_{v'})$. Then there is an isomorphism of $R$-algebras
\begin{eqnarray*}
\calH_R(G(F_{v'}),K_{v'}) & \isoto & \calH_R\bigl(G(F_{v'}),G(A_{v'})\bigr) \\[1mm]
h \quad & \longmapsto & h\circ \Int_{g'} \qquad\text{where}\qquad h\circ \Int_{g'}\colon g\mapsto h(g'gg'{}^{-1})\,.
\end{eqnarray*}
The $R$-algebra $\calH_R\bigl(G(F_{v'}),G(A_{v'})\bigr)$ has an involution
\[
\iota\colon h \longmapsto {}^\iota h\qquad\text{where}\qquad{}^\iota h\colon g\mapsto h({}^tg)
\]
and ${}^tg\in G(F_{v'})$ denotes the transpose of $g\in G(F_{v'})$. This means that ${}^\iota(\tilde h*h)={}^\iota h*{}^\iota\tilde h$. By the elementary divisor theorem every double coset $G(A_{v'})gG(A_{v'})$ has a representative $g$ which is a diagonal matrix, and hence satisfies ${}^tg=g$. This shows that $\iota$ is the identity on $\calH_R\bigl(G(F_{v'}),G(A_{v'})\bigr)$ and proves the commutativity $\tilde h*h=h*\tilde h$ of $\calH_R\bigl(G(F_{v'}),G(A_{v'})\bigr)$.

\bigskip

Now we apply this to Drinfeld modular forms. For brevity we put $\calH_{L}^{\infty v}:=\calH_L(G(\A^{\infty v}),K^v)$ 
for any $\F_v$-algebra $L$. 
By Theorem~\ref{ThmSmoothHeckeAction}, the spaces
$M_k(r,K_v,L)^{K^v}$ and $\wt M_k(r,K_v,L)^{K^v}=\wt M_k(r,K_vK^v,L)$ are finite $L$-modules equipped with an
$\calH_L^{\infty v}$-module structure given for $f\in M_k(r,K_v,L)^{K^v}$ or $f\in\wt M_k(r,K_v,L)^{K^v}$ and $h=\sum_{i} n_i\cdot {\bf1}_{g_iK^v}\in \calH^{\infty v}_L$ with $n_i\in L$ by the rule
\begin{equation}
  \label{eq:MF.action}
  T(h)(f)\;=\;\sum_{\tilde g\in\bbG/K^v} h(\tilde g)\cdot T_{\tilde g}(f)\;=\;\sum_i n_i \cdot T_{g_i}(f)
\;=\;\int_{\bbG}h(\tilde g)\cdot T_{\tilde g}(f)\;d\tilde g
\end{equation}
where again $d\tilde g$ is the left-invariant Haar measure on $\bbG$ with $\vol(K^v)=1$. In particular, with $g_1:=\tilde gg$ and $g=\tilde g^{-1}g_1$ one computes as usual
\begin{eqnarray}
\label{eq:formula*}
T(\tilde  h)\bigl(T(h)(f)\bigr) & = &  \textstyle \int\limits_{\bbG}\tilde h(\tilde g)\cdot T_{\tilde g}\bigl(\int\limits_{\bbG} h(g)\cdot T_{g}(f)\;dg\bigr)\;d\tilde g \\
& = &  \textstyle \int\limits_{\bbG}\int\limits_{\bbG}\tilde h(\tilde g)\cdot h(g)\cdot T_{\tilde g}\circ T_{g}(f)\;dg\;d\tilde g \nonumber \\
& = & \textstyle \int\limits_{\bbG}\int\limits_{\bbG}\tilde h(\tilde g)\cdot h(\tilde g^{-1}g_1)\cdot T_{g_1}(f)\;d\tilde g\;dg_1 \nonumber \\
& = & T(\tilde h*h)(f)\,.  \nonumber
\end{eqnarray}

We now specialize to the case where $L=\ol \F_v$.
The commutativity of the Hecke algebra $\calH^{\infty v\grn}_{\ol\F_v}:=\bigotimes_{v'\nmid \infty v\grn}^{}\!\!\! '\,  \calH_{\ol\F_v}(G(F_{v'}),K_{v'})$
allows us to make the following definition. 

\begin{defn}\label{MF.7}
  Let $\grn$ and $K=K_v K^v$ be as above. 
  A Drinfeld modular form $0\ne f\in M_k(r,K_v,\ol \F_v)^{K^v}$ is said to
  be a \emph{Hecke eigenform for prime-to-$v\grn$ Hecke operators} if for
  any place $v'\nmid \infty v\grn$ and any element $h\in \calH_{v',\ol
  \F_v}:=\calH_{\ol \F_v}(G(F_{v'}),K_{v'})$, one has 
  \begin{equation}
    \label{eq:MF.13}
    T(h)(f)\;=\;a_{v'}(h) f,\quad \text{for some $a_{v'}(h)\in \ol \F_v$}. 
  \end{equation}
  Then by formula~\eqref{eq:formula*} the map $a_{v'}\colon \calH_{v',\ol \F_v}\to \ol \F_v$ is
  a homomorphism of $\ol\F_v$-algebras, and is called the {\it
  character} of $f$ at $v'$. 
  The collection $(a_{v'})_{v'\nmid \infty v\grn}$ 
  of characters $a_{v'}$, or equivalently, the character
  \[
(a_{v'})_{v'}:\calH_{\ol\F_v}^{\infty v\grn}:=\bigotimes_{v\nmid \infty v\grn}^{}\!\! ' \calH_{v',\ol \F_v}\longrightarrow  \ol \F_v\]
 is called the \emph{system of Hecke eigenvalues (or the
 Hecke eigensystem) of $f$}.    

We make the analogous definition for $f\in\wt M_k(r,K_v,\ol \F_v)^{K^v}=\wt M_k(r,K_vK^v,\ol \F_v)$.
\end{defn}

We are interested in Hecke eigensystems arising from algebraic Drinfeld
Hecke eigenforms over $\ol \F_v$ for \emph{all} weights and will determine them in Theorem~\ref{HI.6}. 

\begin{remark}
  Our discussions also cover the case where $L=\C_\infty$. 
  Namely, 
  we have the space $M_k(r, K,\C_\infty)$ of algebraic Drinfeld
  modular forms of rank $r$ and level $K$ over $\C_\infty$, and
  can consider Hecke eigenforms and Hecke eigensystems over
  $\C_\infty$. The comparison theorem for algebraic and analytic
  Drinfeld modular forms proved by Basson, Breuer and Pink
  \cite[Theorem 10.9]{BBP} (cf.~our Theorem~\ref{MF.comp})
  indicates 
  an alternative way of studying the action of Hecke operators 
  on these modular
  forms by analysis. This has been accomplished for $r=2$ by Gekeler and
  others.  

  We explain how to construct the ``reduction mod $v$'' of a 
  Hecke eigensystem arising from $M_k(r,
  K,\C_\infty)$. This gives an approach to study 
  Hecke eigensystems arising from $M_k(r,
  K,\C_\infty)$ through studying Hecke eigensystems of 
  $M_k(r,K,\ol \F_v)$. Note that such a construction is not obvious
  as the reduction modulo $v$ of a Hecke
  eigenform $f\in M_k(r,
  K,\C_\infty)$ may not be defined:   
  because $f$ may not be either defined over
  $\ol K$ or defined over the 
  integral ring $\ol A$ of $\ol K$, or even $f$ is defined over $\ol
  A$ but its reduction could be zero.   
  Suppose $f\in M_k(r,
  K,\C_\infty)$ is a prime-to-$v\grn$ eigenform and let
  $(a_{v'}): \calH_{\Z}^{\infty v \grn}\otimes_{\Z} \C_{\infty} \to 
  \C_{\infty}$ be the associated Hecke character, where 
  $\calH_\Z^{\infty v\grn}:=\calH_\Z(G(\A^{\infty v\grn}),K^{v\grn})$
  and $K^{v\grn}$ is the prime-to-$\grn$ part of $K^v$.
  Now $M_k(r,K,\C_\infty)=M_k(r,K,A_{(v)})\otimes_{A_{(v)}}
  \C_\infty$ by \cite[I$_{\rm new}$, Proposition~9.3.2]{EGA} and any 
  prime-to-$v\grn$ Hecke operator $T(h)$ for $h\in \calH_\Z^{\infty
  v\grn}$  
  leaves the $A_{(v)}$-module $M_k(r,K,A_{(v)})$ invariant, which is
  finite 
  by Theorem~\ref{ThmSmoothHeckeAction}. 
  Consider the $A_{(v)}$-subalgebra $\calT_{A_{(v)}}$ of
  $\End_{A_{(v)}}\bigl(M_k(r,K,A_{(v)})\bigr)$ generated by all $T(h)$
  for $h\in \calH_\Z^{\infty v\grn}$, or equivalently,
  $\calT_{A_{(v)}}$ is the image of $\calH_\Z^{\infty
  v\grn}\otimes_\bbZ
  A_{(v)}\to\End_{A_{(v)}}\bigl(M_k(r,K,A_{(v)})\bigr)$. Then
  $\calT_{A_{(v)}}$ is an $A_{(v)}$-algebra which is finite as an
  $A_{(v)}$-module, because the same holds for
  $\End_{A_{(v)}}\bigl(M_k(r,K,A_{(v)})\bigr)$ and $A_{(v)}$ is
  noetherian. It follows that
  $\calT_{A_{(v)}}\otimes_{A_{(v)}}\ol\F_v$ surjects onto the image of
  $\calH_\Z^{\infty v\grn}\otimes_\bbZ
  \ol\F_v\to\End_{\ol\F_v}\bigl(M_k(r,K,A_{(v)})\otimes_{A_{(v)}}\ol\F_v\bigr)$.
  Note that for $k\gg0$ we have
  $M_k(r,K,A_{(v)})\otimes_{A_{(v)}}\ol\F_v=M_k(r,K,\ol\F_v)$ and
  $M_k(r,K,A_{(v)})$ is finite projective by Corollary~\ref{MF.3}. In
  this case the kernel of $\calT_{A_{(v)}}\otimes_{A_{(v)}}\ol\F_v\to
  \End_{\ol\F_v}\bigl(M_k(r,K,\ol\F_v)\bigr)$ is a nilpotent ideal by
  \cite[Proposition~I.4.1]{Bellaiche10} and we denote the image by
  $\calT_{\ol\F_v}$.  

  For every $h\in \calH_\Z^{\infty v\grn}$ the image of $T(h)$ in
  $\calT_{A_{(v)}}$ satisfies a monic polynomial with coefficients  
  in $A_{(v)}$ by \cite[Theorem~4.3]{eisenbud}, and hence
  the eigenvalues of $T(h)$ on $M_k(r,K,\bbC_\infty)$ are all
  $v$-adically integral.  
  Let $F(f)\subset \C_\infty$ be the field generated by the
  eigenvalues of 
  all $T(h)$ on $f$. Since $\calT_{A_{(v)}}$ is finitely generated, $F(f)$ is a finite 
  field extension of $F$ and the character 
  $(a_{v'})_{v'}:\calH^{\infty v\grn}_\Z \to \C_\infty$ 
  factors through the integral closure $R_f$ of $A_v$ in $F(f)$. In other words, $f$ defines a character $\chi_f\colon\calT_{A_{(v)}}\to R_f$.
  Modulo any maximal ideal $\grm$ of $R_f$
  we obtain a Hecke eigensystem $(\bar a^\grm_{v'})_{v'}$ with
  values in $\ol \F_v$. We say the collection of characters $(\bar a^\grm_{v'})_{v'}$ for all maximal ideals $\grm$ of $R_f$ is the \emph{reduction modulo
  $v$} of $(a_{v'})_{v'}$. This collection is a subset of $\Hom_{A_{(v)}}(\calT_{A_{(v)}},\ol\F_v)=\Hom_{\ol\F_v}(\calT_{A_{(v)}}\otimes_{A_{(v)}}\ol\F_v,\ol\F_v)$. Assume now that $M_k(r,K,A_{(v)})\otimes_{A_{(v)}}\ol\F_v=M_k(r,K,\ol\F_v)$ and $M_k(r,K,A_{(v)})$ is finite projective. Then every character $(\bar a^\grm_{v'})_{v'}$ for fixed $\grm$ induces an element of $\Hom_{\ol\F_v}(\calT_{\ol\F_v},\ol\F_v)$, as the kernel of $\calT_{A_{(v)}}\otimes_{A_{(v)}}\ol\F_v\to\calT_{\ol\F_v}$ is nilpotent. Then $(\bar a^\grm_{v'})_{v'}$ is the Hecke eigensystem of a Hecke eigenform $\bar g$ in $M_k(r,K,\ol\F_v)$ by \cite[Theorem~I.5.9]{Bellaiche10}.
  But note that there is no reduction map of Hecke eigenforms in general producing $\bar g$ from $f$.
Many more such considerations can be found in \cite{Bellaiche10}, for instance on the question of lifting Hecke eigensystems from $\ol\F_v$ to finite extensions of the completion of $A_{(v)}$ in \cite[Lemma~I.7.9]{Bellaiche10}.
\end{remark}

\section{The Supersingular locus and Hecke modules}
\label{sec:SS}

Let $F$, $\infty$, $A$ be as in previous sections.
Let $v$ be a finite place and $\grp\subset A$ the corresponding prime
ideal. Denote by $\F_v:=A/\grp$ the residue field at $v$ and $\ol
\F_v$ its algebraic closure, regarded as $A$-fields. The
cardinality of $\F_v$ is denoted by $q_v$.

Let $K=K_v K^v$ where $K_v=G(A_v)$. As in Section~\ref{sec:MF.2} let $\bfM^r_{K}=\bfM_{K_vK^v}^r$ be the Drinfeld moduli scheme 
of rank $r$ over $A_{(v)}$ with level-$K^v$ structure and $\ol\bfM^r_{K}=\ol\bfM_{K_vK^v}^r$ its Satake compactification over $A_{(v)}$.
If $K$ is not fine then $\bfM^r_{K}$ is a coarse moduli scheme. Denote by 
$\scrM^r_{K}:=\bfM^r_{K} \otimes_{A_{(v)}}\F_v$ and $\ol\scrM^r_{K}:=\ol\bfM^r_{K} \otimes_{A_{(v)}}\F_v$ the special fibers of $\bfM^r_{K}$ and $\ol\bfM^r_{K}$, respectively.

\subsection{The $v$-rank stratification}
\label{sec:SS.1}

\begin{defn} \label{SS.1} 
  Let $\varphi$ be a Drinfeld $A$-module over an
  $\F_v$-field $L$. 

  (1)  The {\it $v$-rank} of $\varphi$, denoted by 
  $v\text{-rank}(\varphi)$, is the
  non-negative integer $j$ with $\varphi[\grp](\olL)\simeq
  (\grp^{-1}/A)^j$. The integer $h=r-j$ is called the {\it height} of
  $\varphi$, where $r$ is the rank of $\varphi$.

  (2) We call $\varphi$ {\it supersingular} if
      $v\text{-rank}(\varphi)=0$.   
\end{defn}

The $v$-rank $j$ of $\varphi$ satisfies $0\le j\le r-1$ and drops
under specialization as $j$ is the \'etale rank of $\varphi[\grp]$.
Let
\begin{equation}
  \label{eq:SS.1}
  (\scrM^r_K)_{\le j}(\ol \F_v):=
  \{(\varphi,\bar \eta)\in (\scrM^r_K)_{}(\ol \F_v) \, |\,
  v\text{-rank}(\varphi)\le j\}
\end{equation}
be the closed subset consisting of all Drinfeld $A$-modules 
of $v$-rank $\le j$. We regard $(\scrM^r_K)_{\le j}$ 
as a closed subscheme of
$\scrM^r_K$ with the induced reduced structure. 
One can show that each stratum $(\scrM^r_K)_{\le j}$ is stable under the
  $\Gal(\ol \F_v/\F_v)$-action. Thus, each $(\scrM^r_K)_{\le j}$ is defined
  over $\F_v$.
Put $(\scrM^r_K)_{(j)}:=(\scrM^r_K) _{\le j}\setminus(\scrM^r_K)_{\le j-1}$,
which is a reduced locally 
closed subscheme consisting of all Drinfeld $A$-modules of
$v$-rank $j$. One has the $v$-rank stratification
\begin{equation}
  \label{eq:SS.2}
  \scrM^r_K =\coprod_{0\le j\le r-1} (\scrM^r_K)_{(j)}.
\end{equation}

For each Drinfeld $A$-module $\varphi$ over $\ol \F_v$, the
associated $v$-divisible $A_v$-module $\varphi[\grp^\infty]=\varphi
[\grp^\infty]^{\rm loc}\oplus \varphi[\grp^\infty]^{\rm et}$
canonically decomposes into the local and \'etale parts. 
The \'etale part is isomorphic to $(F_v/A_v)^j$, where $j$ is the
$v$-rank of $\varphi$, while the local part
is a formal $A_v$-module of height $h$.  
By \cite[Proposition 1.17]{drinfeld:1}, any two formal
$A_v$-modules of the same height $h$ 
over $\ol \F_v$ 
are isomorphic. Thus, the associated $v$-divisible $A_v$-modules
$\varphi[\grp^\infty]$ are geometrically constant on each $v$-rank
stratum. So each $v$-rank stratum $(\scrM^r_K)_{(j)}$ is a \emph{central 
leaf} in the sense of Oort. 

\begin{thm}\label{SS.2}
  Assume that $K=K_v K^v$ is fine, where $K_v=G(A_v)$. 
  For any integer $j$ with $0\le j\le r-1$, the subscheme
  $(\scrM^r_K)_{(j)}$ is non-empty, smooth over $\F_v$, of pure dimension $j$. The
  closed scheme $(\scrM^r_K)_{\le j}$ is smooth over $\F_v$ of pure dimension $j$.
\end{thm}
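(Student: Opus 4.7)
The plan is to exhibit each $v$-rank stratum \'etale-locally as a coordinate subspace of the smooth ambient scheme $\scrM^r_K$. By Proposition~\ref{2.5}(2), $\scrM^r_K$ is smooth over $\F_v$ of pure dimension $r-1$, so the assertions of smoothness and dimension are \'etale-local at geometric points.

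Fix $a \in \grp$ with $v(a) = 1$. \'Etale-locally on $\scrM^r_K$, trivialize the line bundle underlying the universal Drinfeld module and write $\varphi_a = \sum_{i \ge 0} c_i \tau^i$ with $c_0 = \gamma(a) = 0$ (since we are over $\F_v$) and $c_{r\deg a}$ invertible. A direct analysis of the connected-\'etale sequence of $\varphi[\grp^n]$, or equivalently a Newton polygon computation on $\varphi_a$, shows that a geometric point $x \in \scrM^r_K$ has $v$-rank $\le j$ if and only if the appropriate generalized Hasse invariants $H^a_1,\dots,H^a_{r-j-1}$ (constructed in Definition~\ref{HI.2} and extracted from the $c_i$) vanish at $x$. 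In the case $A=\F_q[t]$ with $\grp=(t)$ one may simply take $H^a_i=c_i$.

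The main step is to show that $H^a_1,\dots,H^a_{r-1}$ form a regular system of \'etale-local parameters on $\scrM^r_K$, i.e.\ their differentials span the cotangent space at every geometric point. For $A=\F_q[t]$ and $K=K(t)$ this is visible from Proposition~\ref{S.10}: $\bfM^r(t)$ is realized as an open subscheme of $\bbP^{r-1}$ in which, after normalizing the leading coefficient of $\varphi_t$, the $c_i$ become standard projective coordinates. For general $A$ one reduces to this case by choosing $t \in A$ with $v(t)=1$, applying the restriction-of-scalars morphism $I_b$ of Lemma~\ref{S.7} to pull back coordinate information from the $\F_q[t]$-moduli of higher rank $r[F:F_0]$, and using the $A$-linearity constraints $\varphi_{ab}=\varphi_a\varphi_b$ to identify an independent sub-collection of the $c_i$ (namely the $H^a_i$) as local parameters.

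Granted this, $(\scrM^r_K)_{\le j}=V(H^a_1,\dots,H^a_{r-j-1})$ is \'etale-locally a coordinate linear subspace of the smooth scheme $\scrM^r_K$, hence smooth of pure codimension $r-j-1$, i.e.\ pure dimension $j$; its dense open subscheme $(\scrM^r_K)_{(j)}$ is likewise smooth of pure dimension $j$. Non-emptiness of each $(\scrM^r_K)_{(j)}$ follows by descending induction on $j$ from the strict inequality $\dim(\scrM^r_K)_{\le j-1}=j-1<j=\dim(\scrM^r_K)_{\le j}$, starting from the non-emptiness of $\scrM^r_K=(\scrM^r_K)_{\le r-1}$. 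The main obstacle is the claim that the generalized Hasse invariants form a regular system of \'etale-local parameters for general $A$: beyond the $\F_q[t]$-case this requires careful analysis of the $A$-linearity relations among the coefficients of $\varphi_a$ as $a$ varies, to extract precisely $r-1$ independent ones out of the larger collection $\{c_i\}$.
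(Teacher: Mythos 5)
The paper does not prove this theorem itself; it simply cites Boyer \cite[Theorem 10.33]{boyer:llc}, so any self-contained argument would have to carry real weight. Your proposal does not: its central step --- that $H^a_1,\dots,H^a_{r-1}$ form a regular system of \'etale-local parameters on $\scrM^r_K\otimes\ol\F_v$ --- is essentially equivalent to the theorem and is nowhere established. You acknowledge this yourself, but the sketched reduction does not close the gap. First, the base case is ill-posed: Proposition~\ref{S.10} describes $\bfM^r(t)$ as an open subscheme of $\bbP^{r-1}$ only over $A[1/t]$, so the place $v=(t)$ is not in the base and there is no special fiber there to analyze; for the setting of the theorem one must take $v\nmid\grn_K$, and then the Hasse invariants $H^a_i=\ol\varphi_{a,\,i\deg(v)}$ are complicated polynomials in the projective coordinates, not coordinates themselves, so independence of their differentials along the special fiber is a genuine computation that is not done. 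Second, the reduction from general $A$ via $I_b$ of Lemma~\ref{S.7} lands in a moduli space of rank $r[F:F_0]$, of which $\bfM^r_A(\grn)$ is a proper closed (finite) subvariety when $[F:F_0]>1$; even if the strata of the target were known to be smooth, nothing guarantees they meet this subvariety transversally, and the stratification indices do not match ($v$-rank of $\varphi|_{A_0}$ versus $v$-rank of $\varphi$). The honest route here is local: by the Serre--Tate-type theorem for Drinfeld modules the complete local ring at a point is the deformation space of the $v$-divisible $A_v$-module, and one computes directly that the locus where the \'etale rank stays $\ge j$ is formally smooth of dimension $j$; this is what Boyer does, and your proposal replaces it with an assertion.

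A secondary gap: your descending induction for non-emptiness does not work on the open moduli space. Knowing $(\scrM^r_K)_{\le j+1}\ne\emptyset$ does not force $(\scrM^r_K)_{\le j}=V(H^a_{r-j-1})\cap(\scrM^r_K)_{\le j+1}$ to be non-empty, because $\scrM^r_K$ is affine and there is no ampleness argument available there (the paper needs the Satake compactification and ampleness of $\omega_K$ for exactly this point in Theorem~\ref{HI.5}(5)). The correct entry point is the non-emptiness of the supersingular locus $(\scrM^r_K)_{\le 0}$, which the paper gets from the Honda--Tate theory of Weil numbers in Section~\ref{sec:SS.2}; since $(\scrM^r_K)_{\le 0}\subset(\scrM^r_K)_{\le j}$ for all $j$, all closed strata are non-empty, and Krull's principal ideal theorem then gives $\dim(\scrM^r_K)_{\le j}\ge j$, after which the local smoothness statement and the strict dimension drop give non-emptiness of each locally closed stratum.
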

\begin{proof}
  See \cite[Theorem 10.33]{boyer:llc}.\qed
\end{proof} 

We will see in Lemma~\ref{HI.3} that for each integer $h$ with $1\le h\le r$, the scheme $(\scrM^r_K)_{(r-h)}$ coincides with the scheme $(\scrM^r_K)^{(h)}$ from Definition~\ref{HI.4} on which the height is equal to $h$.
Then the $v$-rank stratification is
the same as the stratification by height: 
$\scrM^r_K=\coprod_{1\le h\le r} (\scrM^r_K)^{(h)}$. However, we will see 
in Section~\ref{sec:HI.2} that the height stratification
behaves better when we work with the Satake compactification $\ol
\scrM^r_K$ of $\scrM^r_K$.
Namely, on $\ol\scrM^r_K$ the $v$-rank goes down if the height increases, but also along the boundary $\ol\scrM^r_K\setminus\scrM^r_K$. So this depends on the leading coefficient of a Drinfeld module as in equation \eqref{eq:2.1} \emph{and} the lowest non-vanishing coefficient, while the height only depends on the lowest non-vanishing coefficient; see Lemma~\ref{HI.3}(2) for the precise statement.

We give another reason why the stratification by height is better behaved than the one by $v$-rank. We have $(\ol \scrM^r_K)_{(r-1)}=(\scrM^r_K)_{(r-1)}$, because $\rank(\varphi)\ge 1+v\text{-rank}(\varphi)$, and hence
the boundary $\partial \ol \scrM^r_K$ is contained in $(\ol
  \scrM^r_K)_{\le r-2}$. 
The $v$-rank stratum 
$(\ol \scrM^r_K)_{(r-2)}$ then consists of two parts:
$(\scrM^r_K)_{(r-2)}$ and $(\partial \ol\scrM^r_K)_{(r-2)}$. We will see that 
$(\scrM^r_K)_{(r-2)}$ is of dimension $r-2$. By (the proof of) 
Proposition~\ref{2.7}, the stratum $(\partial \ol\scrM^r_K)_{(r-2)}$ is of dimension $r-2$, too. Therefore, $(\scrM^r_K)_{(r-2)}$ will not be dense in $(\ol \scrM^r_K)_{(r-2)}$, while the corresponding density statement for the height stratification holds true by Theorem~\ref{HI.5}(1).

\subsection{Supersingular Drinfeld modules}
\label{sec:SS.2}

Let $\F_{v^m}\subset\ol\F_v$ be the finite extension of $\F_v$ of degree $m$.

\begin{prop}\label{SS.3}
  Let $\varphi$ be a Drinfeld $A$-module of rank $r$ 
  over one of the finite fields $\F_{v^m}$. The following statements are
  equivalent. 

  \begin{enumerate}
  \item [(a)] $\varphi$ is supersingular.
  \item [(b)] The endomorphism algebra 
  $D:=\End^0(\varphi\otimes \ol \F_v):=\End(\varphi\otimes \ol
  \F_v)\otimes_A F$ of $\varphi$ over $\ol \F_v$ is a central
  division $F$-algebra of dimension $r^2$.
  \item [(c)] Some power of the Frobenius endomorphism of $\varphi$
    lies in $A$.
  \end{enumerate}
In this case the Hasse invariants of $D$ are $\inv_v D=1/r$ and $\inv_\infty D=-1/r$.
\end{prop}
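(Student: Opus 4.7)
The plan is to establish the cyclic chain $(a)\Rightarrow(b)\Rightarrow(c)\Rightarrow(a)$, and then to read off the Hasse invariants from the local identifications produced along the way together with global reciprocity.

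For $(a)\Rightarrow(b)$, I would use that if $\varphi$ is supersingular then $\varphi[\grp^\infty]$ is purely local of height $r$ as a formal $A_v$-module. By Drinfeld's classification \cite[Proposition~1.7]{drinfeld:1}, such a formal module is unique up to isomorphism over $\ol\F_v$, and its rational endomorphism algebra is the central division $F_v$-algebra $\Delta_v$ of invariant $1/r$, of dimension $r^2$ over $F_v$. The natural map $D\otimes_F F_v\hookrightarrow\End^0_{A_v}(\varphi[\grp^\infty])=\Delta_v$ is injective (Tate's conjecture for Drinfeld modules at the characteristic place), so $[D:F]\le r^2$. The same upper bound arises from the faithful action of $D\otimes_F F_w$ on the rank-$r$ rational Tate module $V_w(\varphi)$ for any finite $w\ne v$. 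Equality forces $D\otimes_F F_v=\Delta_v$, hence $Z(D)\otimes_F F_v=Z(\Delta_v)=F_v$, so $Z(D)=F$ and $D$ is central of dimension $r^2$. For $(b)\Rightarrow(c)$, I would observe that $\End(\varphi\otimes\ol\F_v)$ is an $A$-order in $D$, hence finitely generated, so after replacing $m$ by a suitable multiple $mn$ every geometric endomorphism is defined over $\F_{v^{mn}}$. The relative Frobenius $\pi_{mn}=\pi_m^n$ then commutes with all of $\End(\varphi\otimes\F_{v^{mn}})=\End(\varphi\otimes\ol\F_v)$, so it lies in $Z(D)=F$; being integral over $A$ and $A$ integrally closed in $F$, one concludes $\pi_m^n\in A$.

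For $(c)\Rightarrow(a)$, suppose $\pi_m^n=\varphi_a$ with $a\in A\setminus\{0\}$, and set $j:=v\text{-rank}(\varphi)$ and $h:=r-j$. Since we are in $v$-characteristic, $\varphi_b$ is inseparable for every $b\in\grp$, so the formal part has positive height $h\ge 1$. I would argue by contradiction: assume $j\ge 1$. On $V_v^{\rm et}(\varphi)$ the Frobenius $\pi_m$ acts as an automorphism, with eigenvalues $\lambda\in\ol F_v^\times$ that are $v$-adic units; the relation $\lambda^n=a$ then forces $v(a)=0$. Consequently $a\in A_v^\times$ acts as an isomorphism on the nontrivial formal part of height $h\ge 1$. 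On the other hand, $\pi_m^n$ restricted to a formal $A_v$-module of positive height has strictly positive Newton slope and therefore cannot be invertible. This contradiction forces $j=0$, i.e.\ $\varphi$ is supersingular.

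Finally, for the Hasse invariants, the identification $D\otimes_F F_v=\Delta_v$ obtained in $(a)\Rightarrow(b)$ already gives $\inv_v D=1/r$. For every finite place $w\ne v$, the embedding $D\otimes_F F_w\hookrightarrow\End_{F_w}(V_w(\varphi))=\Mat_r(F_w)$ is an equality by dimension count, so $\inv_w D=0$. Global reciprocity $\sum_w\inv_w D=0$ then yields $\inv_\infty D=-1/r$. The main obstacle in implementing this plan is justifying the Tate-type identification $D\otimes_F F_v\isoto\Delta_v$ at the characteristic place; injectivity is formal from the faithful action on the $v$-divisible module, while the equality of $F_v$-dimensions (and hence surjectivity) is the content of the supersingularity hypothesis, and can be cited from the classical theory as in \cite{drinfeld:1} or \cite{laumon:1}.
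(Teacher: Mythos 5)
The paper gives no argument of its own here: its ``proof'' is the single citation to Gekeler, \emph{On finite Drinfeld modules}, Prop.~4.1, so your proposal must stand on its own. Your implications (b)$\Rightarrow$(c) and (c)$\Rightarrow$(a) are correct and are essentially the standard arguments: descending the finitely generated order $\End(\varphi\otimes\ol\F_v)$ to some $\F_{v^{mn}}$, using centrality of the Frobenius over that field and the integral closedness of $A$ in $F$ gives (c); and for (c)$\Rightarrow$(a) the clash between the topologically nilpotent action of Frobenius on the (always nontrivial, since $\partial\varphi_b=0$ for $b\in\grp$) formal part and the invertible action of the unit $a\in A_v^\times$ forced by a nontrivial \'etale part is exactly right.

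The gap is in (a)$\Rightarrow$(b). You establish the injection $D\otimes_F F_v\hookrightarrow \Delta_v$ and the upper bound $[D:F]\le r^2$ (twice over), but you never obtain the lower bound $[D:F]\ge r^2$, i.e.\ the surjectivity onto $\Delta_v$, and your closing claim that this surjectivity ``is the content of the supersingularity hypothesis'' is not correct. Supersingularity only identifies the \emph{target}, $\End^0(\varphi[\grp^\infty]\otimes\ol\F_v)\cong\Delta_v$; the surjectivity of the map from $D\otimes_F F_v$ is the Tate-type isomorphism at the place $v$, whose proof itself requires knowing that a power of the Frobenius is central --- which is essentially statement (c). (Also, injectivity is the formal, easy half, not ``Tate's conjecture''.) As written, the only implication starting from hypothesis (a) is incomplete and the cycle does not close. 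The standard repair, and the route taken in the cited reference, is to use the structure theory of $\End^0(\varphi)$ over the finite field: it is a central division algebra over the Weil number field $E=F(\pi)$ of dimension $(r/[E:F])^2$, with local invariants determined by the places of $E$ over $v$ and $\infty$ and the valuations of $\pi$; the $v$-rank of $\varphi$ is read off from the slope-zero part of $\pi$ at $v$, which yields (a)$\Leftrightarrow$(c) directly and then (b) together with $\inv_vD=1/r$, $\inv_\infty D=-1/r$ by the dimension formula and reciprocity, exactly as in your final paragraph. Alternatively, keep your cycle but replace (a)$\Rightarrow$(b) by a direct proof of (a)$\Rightarrow$(c).
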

\begin{proof}
  See \cite[Proposition~4.1]{gekeler:finite}. \qed
\end{proof}

We recall the function field analogue of the Honda-Tate theorem proved
by Drinfeld \cite{drinfeld:2}.
We also refer to \cite{jkyu:ht} for a clear exposition with detailed
proofs.  

\begin{defn}\label{SS.4}
  A {\it Weil number over
  $\F_{v^m}$ of rank $r$} is an element $\pi$ of $\ol F$ satisfying
  the following property. 
\begin{enumerate}
\item $\pi$ is integral over $A$.
\item There is only one place $w$ of $F(\pi)$ which is a zero of
  $\pi$, i.e.~with $w(\pi)>0$. This place lies over $v$.
\item There is only one place of $F(\pi)$ lying over $\infty$.
\item $|\pi|_\infty=\#\F_v^{m/r}$, where $|\ |_\infty$ is the unique
  extension to $F(\pi)$ of the normalized absolute value 
  $|\ |_\infty$ on $F$.
\item $[F(\pi):F]$ divides $r$.
\end{enumerate}
\end{defn}

Let $W^r_{v^m}$ denote the set of Galois conjugacy classes of 
Weil numbers of rank $r$ over $\F_{v^m}$. The analogous Honda-Tate
theorem \cite{drinfeld:2,jkyu:ht} states that the map sending
$\varphi$ 
to its  
Frobenius endomorphism $\pi_\varphi$ induces a bijection 
\[ \{\text{isogeny classes of Drinfeld $A$-modules of rank $r$ over
  $\F_{v^m}$} \}\isoto W^r_{v^m} \]
Note that if $\pi$ is a Weil number of rank $r$ over $\F_{v^m}$ then
it is also a Weil number of rank $nr$ over $\F_{v^{nm}}$ for any
integer $n\ge 1$; namely, one has 
$W^{nr}_{v^{nm}}=W^r_{v^m}$. 

A Weil number $\pi$ is said to be {\it supersingular} if the
corresponding isogeny class of Drinfeld modules is supersingular. 
By Proposition~\ref{SS.3}, a Weil number $\pi$ 
is supersingular if and only if $\pi^n\in A$ for some $n\ge 1$. 

Observe that if $\pi\in W^r_{v^m}$, 
then the element $N_{F(\pi)/F}(\pi)\in A$
generates the ideal $\grp^{m[F(\pi):F]/r}$; this follows from
Definition~\ref{SS.4} (4) and the product formula. 
In particular, the ideal 
$\grp^m$ must be principal, 
because $[F(\pi):F]|r$ by (5) and $\grp^m$
is a power of the principal ideal $\grp^{m[F(\pi):F]/r}$. 
Therefore, if $\pi$ is a Weil number over $\F_v$ 
then $\grp$ is necessarily principal. 
Let $m_v$ be the
order of $\grp$ in the ideal class group $\Cl(A)$.
Then any generator $P$ of $\grp^{m_v}$ is a
supersingular Weil
number of rank 1 over $\F_{v^{m_v}}$, or a supersingular 
Weil number of rank $r$ over $\F_{v^{{m_v}r}}$ for any positive integer
$r$. In particular, there always exists a supersingular Drinfeld
$A$-module over $\ol \F_v$ of rank $r$ for any $r\ge 1$.

\begin{lemma}\label{SS.5}
  Let $P$ be a generator of the principal ideal $\grp^{m_v}$. Every
  supersingular Drinfeld $A$-module $\varphi$ of rank $r$ over $\ol \F_v$ 
  admits a unique model $\varphi'$ over $\F_{v^{{m_v}r}}$,  up to
  $\F_{v^{{m_v}r}}$-isomorphism, with Frobenius endomorphism
  $\pi_{\varphi'}=P$.     
\end{lemma}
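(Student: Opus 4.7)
The plan is to exploit supersingularity to force $\varphi_P$ into the simple shape $\alpha\tau^N$ for $N:=r m_v\deg v$, and then to twist over $\ol\F_v$ to normalize the scalar $\alpha$ to $1$; this will automatically force descent to $\F_{v^{m_v r}}$ with Frobenius equal to $P$. Note that $\deg P=m_v\deg v$ (since $A/P=A/\grp^{m_v}$ has $\F_q$-dimension $m_v\cdot[\F_v:\F_q]$), so $\varphi_P\in\ol\F_v\{\tau\}$ has degree $N$ in $\tau$, while $[\F_{v^{m_v r}}:\F_q]=N$ equals the exponent of the relative Frobenius over $\F_{v^{m_v r}}$.

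The first step is to show $\varphi[P](\ol\F_v)=0$. By Definition~\ref{SS.1} supersingularity gives $\varphi[\grp](\ol\F_v)=0$. A downward induction on $k$ (if $x\in\varphi[\grp^k](\ol\F_v)$, then $\varphi_a(x)\in\varphi[\grp](\ol\F_v)=0$ for every $a\in\grp^{k-1}$, so $x\in\varphi[\grp^{k-1}](\ol\F_v)$) then yields $\varphi[\grp^{m_v}](\ol\F_v)=0$. Since $(P)=\grp^{m_v}$ this forces $\ker\varphi_P(\ol\F_v)=0$, so the only root of $\varphi_P(X)\in\ol\F_v[X]$ in the algebraically closed field $\ol\F_v$ is $0$. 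Comparing with the degree $q^N$, I conclude $\varphi_P(X)=\alpha X^{q^N}$, equivalently $\varphi_P=\alpha\tau^N$ for some $\alpha\in\ol\F_v^\times$.

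Next I would twist away $\alpha$. For $\lambda\in\ol\F_v^\times$, multiplication by $\lambda$ gives an $\O$-linear isomorphism $\lambda\colon\varphi\isoto\varphi'$ over $\ol\F_v$ with $\varphi'_a:=\lambda\varphi_a\lambda^{-1}$; in particular $\varphi'_P=\alpha\lambda^{1-q^N}\tau^N$. Choosing $\lambda$ with $\lambda^{q^N-1}=\alpha$ (possible since $\ol\F_v$ is algebraically closed) yields $\varphi'_P=\tau^N$. For any $a\in A$, writing $\varphi'_a=\sum_i b_i\tau^i$, the commutation relation $\varphi'_a\varphi'_P=\varphi'_P\varphi'_a$ forces $b_i=b_i^{q^N}$, so $b_i\in\F_{v^{m_v r}}$. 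Hence $\varphi'$ is already defined over $\F_{v^{m_v r}}$, and its $\F_{v^{m_v r}}$-relative Frobenius $\tau^N$ coincides with $\varphi'_P$, so $\pi_{\varphi'}=P$.

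Finally, for uniqueness, let $\varphi^{(1)},\varphi^{(2)}$ be two such models over $\F_{v^{m_v r}}$, both $\ol\F_v$-isomorphic to $\varphi$. By the $\O$-linearity convention adopted after Definition~\ref{2.2}, any $\ol\F_v$-isomorphism between them is multiplication by some $\mu\in\ol\F_v^\times$ with $\mu\varphi^{(1)}_a=\varphi^{(2)}_a\mu$ for all $a\in A$. Specializing to $a=P$ yields $\mu\tau^N=\tau^N\mu$, i.e.\ $\mu^{q^N}=\mu$, so $\mu\in\F_{v^{m_v r}}^\times$ and the isomorphism is already defined over $\F_{v^{m_v r}}$. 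The only substantive step is the first one, where supersingularity collapses $\varphi_P$ to a pure power of $\tau$; everything afterwards is formal manipulation in $\ol\F_v\{\tau\}$.
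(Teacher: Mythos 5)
Your proof is correct. The paper does not actually spell out a proof — it only cites Ghitza's analogous result for supersingular elliptic curves and says the argument is "similar and omitted" — and your argument (show $\varphi[\grp^{m_v}](\ol\F_v)=0$ so that $\varphi_P$ is a pure $\tau$-power, twist to normalize $\varphi_P=\tau^N$, descend via the commutation relation $\varphi_a\varphi_P=\varphi_P\varphi_a$, and get uniqueness from the same relation applied to the connecting scalar) is exactly the expected Drinfeld-module translation of that argument.
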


Lemma~\ref{SS.5} is an analogous result of \cite[Proposition
6]{ghitza:thesis} that every supersingular elliptic curve over
$\Fpbar$ admits a canonical $\F_{p^2}$-model. 
The proof is similar and omitted. 
We call $\varphi'$ the canonical model of $\varphi$ over 
$\F_{v^{{m_v}r}}$.
It defines a natural $\F_{v^{m_v r}}$-structure on the space 
$\omega(\varphi)=\Lie(\varphi)^\vee$ of invariant differential forms
of $\varphi$. 
Lemma~\ref{SS.5} shows that  
the canonical model exists
over $\F_{v^{m_v r}}$ for all $\varphi$; $\F_{v^{m_v r}}$ is the
smallest field of definition by the Honda-Tate theory. 
We will show by another method 
in the next subsection 
that $\omega(\varphi)$ can actually have a natural
$\F_{v^r}$-structure for all $\varphi$. 

\subsection{Supersingular and algebraic Drinfeld modular forms mod $v$}
\label{sec:SS.3}

Let $\Sigma(r,v)$ be the set of isomorphism classes of
supersingular Drinfeld
$A$-modules of rank $r$ over $\ol \F_v$. 
We fix a member $\varphi_0$ in $\Sigma(r,v)$. Let
$\O_D:=\End(\varphi_0)$ and $D:=\End(\varphi_0)\otimes F$. 
Then $D$ is 
the central division $F$-algebra (unique up to isomorphism)
ramified precisely at $\infty$ and $v$, with invariants 
$\inv_\infty(D)=-1/r$ and
$\inv_v(D)=1/r$, see Proposition~\ref{SS.3}, and $\O_D$ is a
maximal $A$-order in $D$; see \cite[Proposition~1.7]{drinfeld:1} and
\cite[Theorem 1]{jkyu:ht}.
Denote by $G'$ the group scheme over $A$
associated to the multiplicative group of $\O_D$: For any $A$-algebra
$R$, the group of $R$-valued points of $G'$ is 
$G'(R)=(\O_D\otimes_A R)^\times$. By \cite[Theorem 4.3]{gekeler:finite}
(cf. \cite[Corollary 3.3]{yu-yu:ssd}), there is a natural bijection 
\begin{equation}
  \label{eq:SS.3}
  \Sigma(r,v)\simeq G'(F)\backslash G'(\A^{\infty})/G'(\wh A). 
\end{equation}

Let $\scrS_{K}\subset \scrM^r_{K}$ be the supersingular locus with
$K=K_vK^v$. It is an affine scheme, finite over $\F_v$.
Let $ \scrS_{K_v}:=\projlimw{K^v} \scrS_{K_v K^v}$, 
where $K^v$ runs through open compact subgroups of $G(\A^{\infty v})$. 
The scheme $\scrS_{K_v}$ has a right continuous action of
$G(\A^{\infty v})$, and for any $K^v\subset G(\A^{\infty v})$, 
one has $\scrS_{K_v}/K^v=\scrS_{K_v K^v}$. 
Fix an isomorphism $\eta_0:(\A^{\infty v})^r \isoto
V^{(v)}(\varphi_0)$ such that $\eta_0(\wh
A^{(v)})^r=T^{(v)}(\varphi_0)$. This isomorphism induces an
identification 
\begin{equation}
  \label{eq:SS.id}
  G(\A^{\infty v})\isoto G'(\A^{\infty v}), \quad g\mapsto g'=\eta_0 g
  \eta_0^{-1}, \quad g\in G(\A^{\infty v}), 
\end{equation}
and gives a base point $(\varphi_0,\eta_0)$ in $\scrS_{K_v}$. We shall
use it to identify the open
compact subgroups of $G(\A^{\infty v})$ and $G'(\A^{\infty v})$.
Let 
\begin{equation}
  \label{eq:SS_rig}
  \scrS_{K_v}^{\rm rig}:=\left \{ (\varphi,\eta,\alpha)\, \big| \,
  (\varphi,\eta)\in \scrS_{K_v}, \text{$\alpha:\varphi\to \varphi_0$
  is a quasi-isogeny}\, \right \}.
\end{equation}
Here ``rig'' indicates that the pairs $(\varphi,\eta)$ are rigidified
by a quasi-isogeny to the base object $\varphi_0$. 
This space admits a natural left action of $G'(F)$. 
We will describe $\scrS_{K_v}^{\rm rig}$ in terms of $G'(\A^\infty)$. Since 
any two supersingular Drinfeld modules are isogenous, the natural map 
$\scrS_{K_v}^{\rm rig}\to \scrS_{K_v}$ is surjective, and it
induces an isomorphism 
$G'(F)\backslash \scrS_{K_v}^{\rm rig}\simeq \scrS_{K_v}$, because for a given Drinfeld module $\varphi$ over $\ol\F_v$ the set $\{\alpha\colon\varphi\to\varphi_0\text{ a quasi-isogeny}\}$ is a principal homogeneous space under $\End^0(\varphi_0\otimes\ol\F_v)^\times=G'(F)$.

Write $A_v=\F_v[[ z_v]]$, where $z_v$ is a fixed uniformizer of $A_v$.
(One can of course choose $z_v\in A_{(v)}$, if necessary.)
The completions of the maximal unramified extensions
of $A_v$ and $F_v$ are $\ol \F_v[[ z_v]]$
and $\ol \F_v(( z_v))$, respectively. 
Let $\sigma_v$ be the Frobenius
map on $\ol \F_v[[ z_v]]$ and $\ol \F_v((z_v))$ induced by the 
map $x \mapsto x^{\#\F_v}$ on $\ol\F_v$ and satisfying $\sigma_v(z_v)=z_v$. 

\begin{defn}\label{DefDieuMod}
A \emph{covariant Dieudonn\'e module of rank $r$} over $\ol\F_v$ is a free $\ol\F_v[[z_v]]$-module $M$ of rank $r$ together with a $\sigma_v^{-1}$-semilinear map $V\colon M\to M$ such that $z_v\cdot M\subset V(M)$. Here $\sigma_v^{-1}$-semilinear means that $V(fm)=\sigma_v^{-1}(f)\cdot V(m)$ for $f\in \ol\F_v[[z_v]]$ and $m\in M$.
\end{defn}

The covariant equi-characteristic \dieu modules 
are the twisted linear duals of
contravariant \dieu modules defined in Laumon~\cite{laumon:1}. More precisely, for a covariant \dieu module $(M,V)$ as in Definition~\ref{DefDieuMod} the pair consisting of $M^\vee:=\Hom_{\ol\F_v[[z_v]]}(M,\ol\F_v[[z_v]])$ and $V^\vee\colon M^\vee\to M^\vee$ is a contravariant \dieu module as in \cite{laumon:1}. Here $V^\vee$ is $\sigma_v$-semilinear in the sense that $V^\vee(fm^\vee)=\sigma_v(f)\cdot V^\vee(m^\vee)$. In terms of \cite{HartlKim,HartlDict,HartlPSp} the pair $(M^\vee,V^\vee)$ is a \emph{local shtuka}. The following argument can also be reformulated in terms of local shtukas.

Let $(M_0,V_0)$ be the covariant \dieu module of $\varphi_0$ and
extend $V_0$ to $N_0:=M_0[1/z_v]$. One has $M_0/V_0M_0=\Lie(\varphi_0)$ and $\End(M_0)=\O_{D_v}=G'(A_v)$.
Let 
\[ X_v:=\{\,\text{$\ol\F_v[[z_v]]$-lattices $M$ in $N_0$, such that $(M,V_0)$ is a \dieu module}\,\}. \]
and let $X^v$ be the set of pairs $(L^{(v)},\eta)$, where $L^{(v)}\subset
  V^{(v)}(\varphi_0)$ is an $\wh A^{(v)}$-lattice and 
$\eta: (\wh A^{(v)})^r\isoto
  L^{(v)}$ is an isomorphism.

\begin{lemma}\label{LemmaG'Adele}
Let $F$ be a global function field with finite constant field $\F_q$ and let $D$ be a finite dimensional division algebra over $F$ with center $F$. Let $G'$ be the algebraic group over $F$ defined on $F$-algebras $R$ by $G'(R)=(D\otimes_F R)^\times$. Let $S$ be a non-empty set of places of $F$ and let $\bbA^S$ be the prime to $S$ adele ring of $F$. Then the topological space $G'(F)\backslash G'(\bbA^S)$ is compact.
\end{lemma}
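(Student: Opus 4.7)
My plan is to reduce the compactness of $G'(F)\backslash G'(\bbA^S)$ to two classical compactness statements via a fibration over the quotient by the center $Z\cong\Gm\subset G'$. The key algebraic observation is that because $D$ is a division algebra, every $F$-split subtorus of $G'$ is contained in $Z$ (otherwise one would obtain an $F$-algebra embedding $F\oplus F\hookrightarrow D$), so $G'/Z$ is anisotropic over $F$. By the function-field version of Borel--Harish-Chandra (Harder's reduction theory), the adelic quotient $(G'/Z)(F)\backslash (G'/Z)(\bbA)$ is compact. Since Hilbert~90 gives $H^1(F,\Gm)=H^1(F_v,\Gm)=0$, the surjections $G'(F)\twoheadrightarrow (G'/Z)(F)$ and $G'(\bbA)\twoheadrightarrow (G'/Z)(\bbA)$ identify this with $G'(F)Z(\bbA)\backslash G'(\bbA)$, which is therefore also compact.

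Next I would project to the factor outside $S$. Setting $\bbA_S:=\prod_{v\in S}F_v$ so that $\bbA=\bbA_S\times\bbA^S$, one has $G'(\bbA)=G'(\bbA_S)\times G'(\bbA^S)$ and $Z(\bbA)=Z(\bbA_S)\times Z(\bbA^S)$, and the continuous projection onto the $\bbA^S$-factor yields a surjection
\[
G'(F)Z(\bbA)\backslash G'(\bbA)\twoheadrightarrow G'(F)Z(\bbA^S)\backslash G'(\bbA^S)\,,
\]
so the right-hand side is compact. The natural continuous map
\[
G'(F)\backslash G'(\bbA^S)\,\longrightarrow\,G'(F)Z(\bbA^S)\backslash G'(\bbA^S)
\]
has fibers isomorphic to $Z(\bbA^S)/\bigl(Z(\bbA^S)\cap G'(F)\bigr)$. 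Since any element of $D^\times$ that is central in $D\otimes_F F_v$ for even a single $v\notin S$ must already lie in $F^\times$, the intersection equals $Z(F)=F^\times$, and the fiber is $(\bbA^S)^\times/F^\times$.

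It thus remains to show that $(\bbA^S)^\times/F^\times$ is compact, which is exactly where the hypothesis $S\neq\emptyset$ is used. I would invoke the classical compactness of $F^\times\backslash\bbA^{\times,1}$, where $\bbA^{\times,1}\subset\bbA^\times$ denotes the group of norm-one ideles (containing $F^\times$ by the product formula). The projection $\bbA^{\times,1}\to(\bbA^S)^\times$ coming from the idele decomposition has kernel $\bbA_S^{\times,1}:=\bbA_S^\times\cap\bbA^{\times,1}$, which meets $F^\times$ trivially; using $\gcd_v\deg v=1$ (F.~K.~Schmidt's theorem), one checks that its image is an open subgroup of $(\bbA^S)^\times$ of finite index $\gcd_{v\in S}\deg v$. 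Hence $F^\times\backslash\bbA^{\times,1}/\bbA_S^{\times,1}$ is a Hausdorff continuous quotient of the compact $F^\times\backslash\bbA^{\times,1}$ by a closed subgroup, so it is compact, and $F^\times\backslash(\bbA^S)^\times$ is a finite disjoint union of translates of this compact space. The main technical point I expect to have to treat carefully is this last finite-index subtlety: for function fields the absolute value on ideles takes values in the discrete group $q^\bbZ\subset\bbR^\times_{>0}$, so the projection $\bbA^{\times,1}\to(\bbA^S)^\times$ is generally not surjective---unlike in the number-field setting---and one must work with an open subgroup of finite index throughout.
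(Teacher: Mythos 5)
Your proof is correct, but it routes through a different decomposition than the paper's. You quotient by the full center: anisotropy of $G'/Z$ (valid since $D$ is division) plus Harder's reduction theory gives compactness of $(G'/Z)(F)\backslash(G'/Z)(\bbA)\cong G'(F)Z(\bbA)\backslash G'(\bbA)$, and you then treat the central direction separately via the classical compactness of $F^\times\backslash\bbA^{\times,1}$ together with the $\gcd_{v\in S}\deg v$ finite-index computation. The paper instead never leaves $G'$: it sets $G'(\bbA)_1:=\ker\bigl(|N_{D/F}(\,.\,)|\colon G'(\bbA)\to q^\bbZ\bigr)$, notes $G'(F)\subset G'(\bbA)_1$ by the product formula, quotes Harder directly for the compactness of $G'(F)\backslash G'(\bbA)_1$ (again because the center is the maximal split $F$-torus), and uses surjectivity of the local reduced norms (Reiner) to see that $G'(\bbA)\big/\bigl(G'(\bbA)_1\cdot\prod'_{x\in S}G'(F_x)\bigr)$ embeds in $q^{\bbZ/d\bbZ}$ with $d=\gcd_{x\in S}\deg(x)$, so that $G'(F)\backslash G'(\bbA^S)$ is a finite union of cosets of a compact space. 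Both arguments rest on the same two pillars, namely reduction-theoretic compactness for the anisotropic part and discreteness of the norm image $q^\bbZ$ with the $\gcd$-of-degrees bookkeeping; but the paper's version avoids Hilbert 90, the surjectivity of $G'(\bbA)\to(G'/Z)(\bbA)$ on adelic points (which for the restricted product also needs $\Pic(\O_v)=0$ at almost all places), and the final fibration step, at the cost of invoking local surjectivity of the reduced norm, while yours trades those for the extra classical input that the norm-one idele class group is compact. One step you should make explicit: a continuous surjection with compact base and compact fibers need not have compact source, so at the end you must run the standard fundamental-set argument --- lift a compact subset of $G'(\bbA^S)$ covering $G'(F)Z(\bbA^S)\backslash G'(\bbA^S)$, multiply it by a compact subset of $(\bbA^S)^\times$ covering $(\bbA^S)^\times/F^\times$, and use that $Z(\bbA^S)$ is central and $F^\times\subset G'(F)$ to conclude that the product still surjects onto $G'(F)\backslash G'(\bbA^S)$.
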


\begin{proof}
Let $N_{D/F}\colon D\to F$ be the reduced norm of $D$ and let $G'(\bbA)_1$ be the kernel of the group homomorphism 
\[
|N_{D/F}(\,.\,)|\colon G'(\bbA) \longrightarrow q^\bbZ,\quad g=(g_x)_x \mapsto \prod_{x}|N_{D/F}(g_x)|_x
\]
where the product runs over all places $x$ of $F$, $g_x$ is the component of the adele $g$ at the place $x$, the norm $N_{D/F}$ is extended to $N_{D/F}\colon D\otimes_F F_x\to F_x$, and $|\,.\,|_x\colon F_x\to q^{\deg(x)\bbZ}$ is the normalized absolute value on $F_x$. By the product formula \cite[Chapter~II, \S\,12, Theorem]{CasselsFroehlich} the group $G'(F)$ is contained in $G'(\bbA)_1$. Since the center of $G'$ is the maximal $F$-split torus in $G'$, the quotient $G'(F)\backslash  G'(\bbA)_1$ is a compact topological space by \cite[Korollar~2.2.7]{Harder69}, see also \cite[Theorem~A.5.5(i)]{Conrad12}. Therefore also its quotient
\[
G'(F)\backslash  G'(\bbA)_1\big/\bigl(G'(\bbA)_1\cap\prod'_{x\in S}G'(F_x)\bigr)=G'(F)\backslash \bigl(G'(\bbA)_1\cdot\prod'_{x\in S}G'(F_x)\bigr)\big/\prod'_{x\in S}G'(F_x)
\]
is compact.

By \cite[(33.4) Theorem]{ReinerMO} the map $N_{D/F}\colon G'(F_x)\to F_x^\times$ is surjective for every place $x$. Thus the quotient $G'(\bbA)\big/\bigl(G'(\bbA)_1\cdot \prod'_{x\in S}G'(F_x)\bigr)\hookrightarrow q^{\bbZ/d\bbZ}$ is finite, where $d\ne0$ is the greatest common divisor of $\deg(x)$ for all $x\in S$. It follows that $G'(F)\backslash G'(\bbA^S)=G'(F)\backslash G'(\bbA)/\prod'_{x\in S}G'(F_x)$ is a finite disjoint union of cosets of the compact topological space 
\[
\textstyle G'(F)\backslash \bigl(G'(\bbA)_1\cdot\prod'_{x\in S} G'(F_x)\bigr)\big/\prod'_{x\in S}G'(F_x).
\]
This proves the lemma. \qed
\end{proof}

\begin{lemma}\label{SS.6}
  There are natural $G(\A^{\infty v})$-equivariant isomorphisms
  \begin{eqnarray}
    \xi^{\rm rig}: \scrS_{K_v}^{\rm rig} & \isoto & X_v\times X^v\;\isoto\;G'(\A^\infty)/G'(A_v)\qquad\text{and} \nonumber \\[1mm]
    \label{eq:SS.6}
    \xi: \scrS_{K_v} & \isoto & G'(F)\backslash G'(\A^\infty)/G'(A_v)
  \end{eqnarray}
  which send the
  base point $(\varphi_0,\eta_0)$ to the class of $1\in G'(\A^\infty)$. In
  particular, for any open compact subgroup $K^v\subset
  G(\A^{\infty v})\simeq G'(\A^{\infty v})$, there is an isomorphism 
  \begin{equation}
    \label{eq:SS.7}
    \xi_{K^v}:
  \scrS_{K}\isoto  G'(F)\backslash G'(\A^\infty)/G'(A_v)K^v
  \end{equation}
  which
  is compatible with the prime-to-$v$ Hecke action.
\end{lemma}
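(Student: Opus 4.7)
The plan is to construct the isomorphism $\xi^{\rm rig}$ first, deduce $\xi$ by quotienting by the left action of $G'(F)$, and finally pass to an arbitrary open compact $K^v$ by a further quotient. The key input is the equivalence of categories (due to Drinfeld) between Drinfeld $A$-modules over $\ol\F_v$ up to prime-to-$v$ isogeny and pairs consisting of (i)~a rational covariant Dieudonn\'e module at $v$, that is a finite-dimensional $\ol\F_v((z_v))$-vector space with a $\sigma_v^{-1}$-semilinear bijection $V$, together with a $V$-stable $\ol\F_v[[z_v]]$-lattice, and (ii)~the rational prime-to-$v$ Tate module together with an $\wh A^{(v)}$-lattice.

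Given $(\varphi,\eta,\alpha)\in\scrS_{K_v}^{\rm rig}$, the quasi-isogeny $\alpha\colon\varphi\to\varphi_0$ induces a $V_0$-equivariant embedding of the covariant Dieudonn\'e module $M(\varphi)\hookrightarrow N_0$ whose image $M_\alpha:=\alpha_*(M(\varphi))$ is an $\ol\F_v[[z_v]]$-lattice lying in $X_v$, and an isomorphism $\alpha_*\colon V^{(v)}(\varphi)\isoto V^{(v)}(\varphi_0)$ of rational prime-to-$v$ Tate modules. Set $L^{(v)}_\alpha:=\alpha_*(T^{(v)}(\varphi))$ and $\eta_\alpha:=\alpha_*\circ\eta\colon(\A^{\infty v})^r\isoto L^{(v)}_\alpha$. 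The assignment $(\varphi,\eta,\alpha)\mapsto(M_\alpha,(L^{(v)}_\alpha,\eta_\alpha))$ defines the map $\xi^{\rm rig}\colon\scrS_{K_v}^{\rm rig}\to X_v\times X^v$. Bijectivity follows from the above equivalence of categories: the inverse takes a pair $(M,(L^{(v)},\eta))$ to the unique (up to unique isomorphism) Drinfeld $A$-module $\varphi$ whose rational invariants embed into those of $\varphi_0$ via a quasi-isogeny $\alpha$ characterised by $\alpha_*(M(\varphi))=M$ and $\alpha_*(T^{(v)}(\varphi))=L^{(v)}$, equipped with the level structure $\alpha_*^{-1}\circ\eta$.

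Next I identify $X_v\times X^v$ with $G'(\A^\infty)/G'(A_v)$. Using the fixed basis $\eta_0$ of $V^{(v)}(\varphi_0)$, the set $X^v$ is a torsor under $G(\A^{\infty v})$, hence under $G'(\A^{\infty v})$ via~\eqref{eq:SS.id}, pointed by $\eta_0$ itself. For $X_v$ I use that $\varphi_0$ is supersingular, so the rational Dieudonn\'e module $(N_0,V_0)$ is isoclinic of slope $1/r$ and $\End(N_0,V_0)=D_v$, a central division $F_v$-algebra of invariant $1/r$; consequently every $V_0$-stable $\ol\F_v[[z_v]]$-lattice of $N_0$ is of the form $g_v\cdot M_0$ for a unique class $g_v\in G'(F_v)/G'(A_v)=D_v^\times/\O_{D_v}^\times$. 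Thus $X_v\times X^v\isoto G'(F_v)/G'(A_v)\times G'(\A^{\infty v})=G'(\A^\infty)/G'(A_v)$, and the base point $(\varphi_0,\eta_0,\id)$ is sent to $1$. Since different choices of quasi-isogeny $\alpha$ differ by left multiplication by $\End^0(\varphi_0)^\times=G'(F)$ and this precisely corresponds to the left action of $G'(F)$ on $G'(\A^\infty)/G'(A_v)$ under our identifications, passing to $G'(F)$-orbits produces $\xi$; passing further to $K^v$-orbits on the right yields $\xi_{K^v}$.

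The main technical obstacle is verifying that the forward map is actually $G(\A^{\infty v})$-equivariant (equivalently, $G'(\A^{\infty v})$-equivariant) and compatible with prime-to-$v$ Hecke correspondences. The group $G(\A^{\infty v})$ acts on $\scrS_{K_v}$ on the right via $(\varphi,\eta)\cdot h=(\varphi,\eta\circ h)$ (possibly after passing to an isogenous object so that the resulting lattice is again a $\wh A^{(v)}$-lattice), and one must check that in terms of the identification~\eqref{eq:SS.id} this translates to right multiplication by $h'=\eta_0 h\eta_0^{-1}\in G'(\A^{\infty v})$ on $G'(\A^\infty)/G'(A_v)$. Once this compatibility is verified on the level of the forward map, equivariance of $\xi$ and $\xi_{K^v}$ is automatic, and in particular $\scrS_K=\scrS_{K_v}/K^v$ is identified with the double coset space in~\eqref{eq:SS.7}; finiteness of this space, recorded for instance in~\eqref{eq:SS.3}, is consistent with the compactness statement in Lemma~\ref{LemmaG'Adele}.
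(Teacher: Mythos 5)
Your argument is correct and is essentially the paper's proof: pass to Dieudonn\'e and prime-to-$v$ Tate modules to identify $\scrS_{K_v}^{\rm rig}$ with $X_v\times X^v$, use supersingularity (any two supersingular Dieudonn\'e modules are isomorphic, \cite[Prop.~1.17]{drinfeld:1}) to get $X_v\simeq G'(F_v)/G'(A_v)$ and the torsor structure via $\eta_0$ to get $X^v\simeq G'(\A^{\infty v})$, and then quotient by $G'(F)$ on the left and $K^v$ on the right. The only (cosmetic) discrepancy is that the paper first replaces $(\varphi,\alpha)$ by a prime-to-$v$ isogenous pair so that $\eta\bigl((\wh A^{(v)})^r\bigr)=T^{(v)}(\varphi)$, without which your $\eta_\alpha$ need not carry $(\wh A^{(v)})^r$ onto $L^{(v)}_\alpha:=\alpha_*(T^{(v)}(\varphi))$; either normalize as the paper does or define $L^{(v)}_\alpha$ as the image of $(\wh A^{(v)})^r$ under $\alpha_*\circ\eta$.
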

\begin{proof} 
  The proof is similar to that of \cite{ghitza:thesis} or
  \cite{yu:thesis, yu:smf}. For a member $(\varphi,\eta,\alpha)$ in 
  $\scrS^{\rm rig}_{K_v}$, we can replace $(\varphi, \alpha)$ by a
  prime-to-$v$ quasi-isogeny so that $\eta$ induces an isomorphism 
$\eta: (\wh A^{(v)})^r \isoto T^{(v)}(\varphi)$.
Then 
  $\scrS^{\rm rig}_{K_v}$ can be also interpreted as the set of
  isomorphism classes of such triples $(\varphi,\eta,\alpha)$ where $\eta$ satisfies the integrality condition $\eta\bigl((\wh A^{(v)})^r\bigr)= T^{(v)}(\varphi)$. 
  Taking the \dieu and
  prime-to-$v$ Tate modules, we obtain an isomorphism 
  $\scrS^{\rm rig}_{K_v}\simeq X_v\times X^v$. 
  Since any two supersingular \dieu modules are isomorphic
  \cite[Proposition 1.17]{drinfeld:1}, the
  action of $G'(F_v)$ on $X_v$ is transitive and one has
  an isomorphism $G'(F_v)/G'(A_v)\isoto X_v$, $g\mapsto gM_0$. 
  For each element $g\in
  G'(\A^{\infty v})$, one associates a pair $(L^{(v)},\eta)$ in $X^v$ 
  by taking $L^{(v)}:=g\cdot T^{(v)}(\varphi_0)$ and $\eta=g \eta_0:
  (\wh A^{(v)})^r \isoto L^{(v)}$. This gives an isomorphism 
  $G'(\A^{\infty v})\simeq X^v$ and we have proven  
  $\scrS^{\rm rig}_{K_v}\simeq G'(F_v)/G'(A_v)\times
  G'(\A^{\infty v})$. Everything else follows immediately. \qed
\end{proof}

For $M\in X_v$, we define the skeleton $M^\diamond$ of $M$ by
$M^\diamond:=\{m\in M | V_0^r m=z_v m \}$. This is an 
equi-\ch \dieu module over
$\F_{v^r}$ (as in Definition~\ref{DefDieuMod} but with $\ol\F_v$ replaced by $\F_{v^r}$, which is the field extension of $\F_v$ of degree $r$) and one has $M^\diamond \otimes_{A_{v^r}} \ol\F_v[[z_v]]=M$. The construction $M\mapsto M^\diamond$ is functorial and
  it defines an $\F_{v^r}$-subspace $\omega(M)^\diamond\subset 
\omega(M):=(M/V_0M)^\vee$. The endomorphism ring $\End(M_0)=\O_{D_v}$ acts
on $M/V_0M$ and this induces an isomorphism
$\F_{D_v}:=\O_{D_v}/{\rm rad}(\O_{D_v})\simeq
\End((M/V_0M)^\diamond)=\F_{v^r}$. 
Set 
\begin{equation}
  \label{eq:SS.8}
  G(v):=\F_{D_v}^\times\simeq\F_{v^r}^\times, \quad U(v):=\ker \bigl(G'(A_v)=\O_{D_v}^\times \to
G(v)\bigr).
\end{equation} 
The above isomorphism identifies $G(v)$ with $\wt G'(\F_v)$, 
where $\wt G'$ is the maximal reductive quotient of 
$G'\otimes_A (A_v/z_v)$. 

Consider the space $X_v^\omega$ which consists of
pairs $(M,e)$ where $M\in X_v$ and $e\in \omega(M)^\diamond$ is an $\F_{v^r}$-generator. 
Fix a base point $(M_0,e_0)\in X_v^\omega$. The group $G'(F_v)$ acts
transitively on $X_v^\omega$ and one has a bijection
$G'(F_v)/U(v)\simeq X_v^\omega$. 

For any finite-dimensional vector space $W$ over $\ol \F_v$, denote by 
$C^\infty(G'(F)\backslash  G'(\A^\infty),W)$ the space of locally
constant functions $f':G'(F)\backslash  G'(\A^\infty)\to W$. We
equip it with the right regular translation of $G'(\A^\infty)$, that is $(g\cdot f')(x):=f'(xg)$ for $g\in G'(\A^\infty)$ and $x\in G'(F)\backslash  G'(\A^\infty)$. Then
$C^\infty(G'(F)\backslash  G'(\A^\infty),W)$ is an admissible smooth
representation of $G'(\A^\infty)$.   
Indeed, the quotient $G'(F)\backslash  G'(\A^\infty)$ is a compact topological
space by Lemma~\ref{LemmaG'Adele}. Thus, every vector $f'\in C^\infty(G'(F)\backslash
G'(\A^\infty),W)$ takes on only finitely many values in $W$, and hence is fixed by an open subgroup of
$G'(\A^\infty)$. Moreover, for each open compact subgroup
$K'\subset G'(\A^\infty)$, the subspace $C^\infty(G'(F)\backslash
G'(\A^\infty),W)^{K'}$ of $K'$-fixed vectors
is equal to $C^\infty(G'(F)\backslash
G'(\A^\infty)/{K'},W)$, which is finite dimensional, because the set $G'(F)\backslash  G'(\A^\infty)/K'$ is finite.

Now assume that $W$ is equipped with a finite dimensional irreducible representation $\rho\colon G'(A_v)\to \Aut(W)$ of $G'(A_v)$. Following \cite{gross:amf}, we define
\emph{the space $M_\rho^{\rm alg}(G')$ of algebraic modular forms (mod $v$)
of weight $\rho$ on $G'$} by
\begin{equation}
  \label{eq:SS.9}
  \begin{split}
  M_\rho^{\rm alg}(G';W):=\{& f'\in C^\infty(G'(F)\backslash
  G'(\A^\infty),W)\ \big| \\ 
  & f'(xk_v)=\rho(k_v^{-1}) f'(x), \forall x \in
  G'(F)\backslash G'(\A^\infty), \ k_v\in G'(A_v)\}.    
  \end{split}
\end{equation}
If ${K^{v}} \subset G'(\A^{\infty v})=G(\A^{\infty v})$ 
is an open compact subgroup, we
write 
\[
\begin{split}
M_\rho^{\rm alg}(G',&{K^{v}};W):=M_\rho^{\rm
  alg}(G';W)^{{K^v}}=\\
& \{f'\in M_\rho^{\rm alg}(G';W)\ \big|\ f'(xk^v)=f'(x), 
\forall x \in G'(F)\backslash G'(\A^\infty), \ k^v\in K^v\}
\end{split}
\]
for the subspace of \emph{algebraic modular forms with level ${K^v}$}.

Let $S_k(r,K_vK^v,\ol \F_v):=H^0(\scrS_{K}\otimes_{\F_v}\ol\F_v,
i^* \omega_{K}^{\otimes k}\otimes\ol\F_v)$ be the space of supersingular 
Drinfeld modular forms of rank $r$, weight $k$ with level $K^v$ over
$\ol \F_v$, where $i:\scrS_{K}\to \scrM^r_{K}$ is the inclusion
map. Note that $\scrS_{K_v}/K^v=\scrS_{K_v K^v}$ implies 
\begin{equation}\label{eq:SkFixPts}
S_k(r,  K_vK^v,\ol \F_v)\simeq H^0(\scrS_{K_v}\otimes_{\F_v}\ol\F_v,i^* \omega^{\otimes k}\otimes\ol\F_v)^{K^v},
\end{equation}
and $H^0(\scrS_{K_v}\otimes_{\F_v}\ol\F_v,i^* \omega^{\otimes k}\otimes\ol\F_v)=\dirlim[\wt K^v] S_k(r,  K_v\wt K^v,\ol \F_v)$.

\begin{prop}\label{SS.7}
  Let $\chi:G'(A_v)\to \ol \F_v ^\times$ be the character of the
  $1$-dimensional representation $\omega(\varphi_0)$. For any integer
  $k\ge 1$ and open compact subgroup $K^v\subset G(\A^{\infty v})$,
  there is an isomorphism $S_k(r,
  K_vK^v,\ol \F_v) \simeq M^{\rm alg}_{\chi^k}(G',K^v;\ol\F_v)$
  which is compatible with the prime-to-$v$ Hecke action. 
\end{prop}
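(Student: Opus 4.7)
The plan is to identify both sides with spaces of $\ol\F_v$-valued functions on the adelic double coset space $G'(F)\backslash G'(\A^\infty)/G'(A_v)K^v$ by exploiting Lemma~\ref{SS.6}, and to match the resulting equivariance conditions. Since $K$ is fine and $\scrS_K$ is $0$-dimensional, finite and reduced over $\F_v$, the space $S_k(r,K_vK^v,\ol\F_v)$ is just the $\ol\F_v$-module of assignments $x\mapsto s_x\in\omega(\varphi_x)^{\otimes k}$ over points $x\in\scrS_K(\ol\F_v)$. By Lemma~\ref{SS.6}, $\xi_{K^v}$ identifies this point set with $G'(F)\backslash G'(\A^\infty)/G'(A_v)K^v$, and this identification is $G(\A^{\infty v})$-equivariant.

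The crucial geometric input is a canonical trivialization of $\omega_K$ after pulling back suitably. Although the quasi-isogeny $\alpha\colon\varphi\to\varphi_0$ appearing in a point $(\varphi,\eta,\alpha)\in\scrS_{K_v}^{\rm rig}(\ol\F_v)$ need not induce an isomorphism on Lie algebras, the description of $\scrS_{K_v}^{\rm rig}$ as $X_v\times X^v=G'(F_v)/G'(A_v)\times G'(\A^{\infty v})$ in the proof of Lemma~\ref{SS.6} gives, after an auxiliary choice of identification $M_\varphi\isoto M_0$ of supersingular Dieudonn\'e modules (whose existence is \cite[Proposition~1.17]{drinfeld:1}), a canonical identification $\Lie(\varphi)=M_\varphi/V_0M_\varphi\isoto M_0/V_0M_0=\Lie(\varphi_0)$. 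Such an auxiliary choice is unique up to $\Aut(M_0,V_0)=\O_{D_v}^\times=G'(A_v)$, and by the very definition of $\chi$ preceding \eqref{eq:SS.8}, the residual action of $\O_{D_v}^\times$ on $M_0/V_0M_0$ factors through $\O_{D_v}^\times\twoheadrightarrow\F_{D_v}^\times\simeq\F_{v^r}^\times$ to give precisely $\chi$. Thus the pullback of $\omega_K|_{\scrS_K}$ to $G'(\A^\infty)$ becomes the $G'(\A^\infty)$-equivariant line bundle on $G'(\A^\infty)/G'(A_v)K^v$ associated to the character $\chi^{-1}$ of $G'(A_v)$, so $\omega_K^{\otimes k}$ corresponds to $\chi^{-k}$.

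Consequently, a section of $\omega_K^{\otimes k}$ on $\scrS_K$ is given by a function $f\colon G'(\A^\infty)\to\ol\F_v$ satisfying $f(\gamma g)=f(g)$ for $\gamma\in G'(F)$ (descent across the left $G'(F)$-quotient in Lemma~\ref{SS.6}), $f(gk^v)=f(g)$ for $k^v\in K^v$ (descent across the right $K^v$-quotient), and $f(gk_v)=\chi(k_v)^{-k}f(g)=\chi^k(k_v^{-1})f(g)$ for $k_v\in G'(A_v)$ (the equivariance from the line bundle structure). These are exactly the defining conditions of $M^{\rm alg}_{\chi^k}(G',K^v;\ol\F_v)$ in \eqref{eq:SS.9}. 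Compatibility with the prime-to-$v$ Hecke action is automatic, since both actions are implemented by right translation by $g\in G(\A^{\infty v})\simeq G'(\A^{\infty v})$ on $G'(\A^\infty)$ and $\xi_{K^v}$ is $G(\A^{\infty v})$-equivariant by Lemma~\ref{SS.6}.

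The main technical point will be step two: establishing the correct character $\chi$ (with the correct sign) by which $G'(A_v)$ acts on the fiber $\omega(\varphi_0)$. One must verify that the $\O_{D_v}$-action on $\Lie(\varphi_0)=M_0/V_0M_0$, coming from the formal $\O_{D_v}$-module structure of the supersingular $v$-divisible $A_v$-module $\varphi_0[\grp^\infty]$, factors through the residue field $\F_{D_v}\simeq\F_{v^r}$ of the maximal order $\O_{D_v}$, so that the induced character on $G'(A_v)=\O_{D_v}^\times$ is exactly the $\chi$ of \eqref{eq:SS.8}. Dualization from $\Lie$ to $\omega$ and the $k$-th tensor power then produce the character $\chi^k$ in the desired form $\chi^k(k_v^{-1})$ dictated by the convention of \eqref{eq:SS.9}.
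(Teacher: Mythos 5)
Your argument is correct and follows essentially the same route as the paper's proof: both rest on the adelic uniformization of Lemma~\ref{SS.6} together with the identification of $\omega_K^{\otimes k}$ over the supersingular locus as the equivariant line bundle attached to the character by which $G'(A_v)$ acts on $\omega(\varphi_0)^{\otimes k}$, which translates sections into functions satisfying exactly the conditions of \eqref{eq:SS.9}, with Hecke equivariance coming for free from the equivariance of $\xi_{K^v}$. The only cosmetic difference is that the paper makes the trivialization explicit by lifting to $X_v^\omega\simeq G'(F_v)/U(v)$ via the $\F_{v^r}$-skeleton $\omega(M)^\diamond$ and a choice of generator $e$, whereas you phrase it as an associated-bundle construction on $G'(F_v)/G'(A_v)$; the sign bookkeeping you flag at the end works out exactly as in the paper's computation \eqref{eq:Prop.SS.7}.
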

\begin{proof}
  By what was said before the proposition, it is equivalent to prove that there is a $G(\A^{\infty v})$-equivalent
  isomorphism $H^0(\scrS_{K_v}\otimes_{\F_v}\ol\F_v,i^* \omega^{\otimes k}\otimes\ol\F_v) 
  \simeq M^{\rm alg}_{\chi^k}(G';\ol\F_v)$. By Lemma~\ref{SS.6}, 
  the first space
  consists of all $G'(F)$-invariant locally constant sections $f$ on
  $\scrS_{K_v}^{\rm rig}= X_v\times G(\A^{\infty v})$ with $f(M,g^v)\in \omega(M)^{\otimes
  k}$. We lift each section $f$ to a function $f':X^\omega_v \times
  G(\A^{\infty v})\to \ol \F_v$ by $f'((M,e),g^v):=(e^{k})^{-1}
  f(M,g^v)$, where $e^k:=e\otimes \dots \otimes e$ ($k$ times) is an
  element in $\omega(M^\diamond)^{\otimes k}$ and it induces an
  isomorphism $e^k:\ol \F_v\isoto \omega(M)^{\otimes k}$. By
  $X_v^\omega=G'(F_v)/U(v)$, this defines
  an $G(\A^{\infty v})$-equivariant map 
\[ H^0(\scrS_{K_v}\otimes_{\F_v}\ol\F_v,i^* \omega^{\otimes k}\otimes\ol\F_v)\to C^\infty(G'(F)\backslash
  [G'(F_v)/U(v)\times G(\A^{\infty v})],\ol \F_v), \]
  which is injective, because $f$ can be recovered as 
  $f(M,g^v)=e^k\cdot f'((M,e),g^v)$. For $g_v\in
  G'(F_v)$ and $k_v\in G'(A_v)$, if $g_v(M_0,e_0)=(M,e)$ then 
$  g_v
  k_v(M_0,e_0)=(M, g \chi(k_v) e_0)=(M,\chi(k_v)g e_0)=(M,\chi(k_v)e)$.
  It is easy to see that 
  \begin{equation}
    \label{eq:Prop.SS.7}
  f'(g_v k_v, g^v)=(\chi(k_v)^k e^k)^{-1}\cdot f(M,g^v)=\chi^k(k_v)^{-1}f'(g_v, g^v).
  \end{equation}
  Therefore, we obtain
  an injection 
  $H^0(\scrS_{K_v}\otimes_{\F_v}\ol\F_v,i^* \omega^{\otimes k}\otimes\ol\F_v)\to M^{\rm
  alg}_{\chi^k}(G';\ol\F_v)$ which is  $G(\A^{\infty v})$-equivariant.
  To see that it is surjective, let $f'\in
  M_{\chi^k}^{\rm alg}(G';\ol\F_v)$, then \[ f'\in C^\infty(G'(F)\backslash
  [G'(F_v)/U(v)\times G(\A^{\infty v})], \ol \F_v)=
  C^\infty(X_v^\omega\times G(\A^{\infty v}),\ol \F_v)^{G'(F)}\] because
  $\chi^k(U(v))=\{1\}$. We define $f$ on $((M,e),g^v)$ as
  $f((M,e),g^v):=e^k (f'((M,e),g^v))\in \omega(M)^{\otimes k}$. As in
  \eqref{eq:Prop.SS.7} we see that this does not depend on the choice
  of $e$ because every other $\tilde e$ is of the form $c\cdot e$ with
  $c\in \F_{v^r}^\times=\chi(G'(A_v))$, that is, $c=\chi(k_v)$ for
  $k_v\in G'(A_v)$. So $f$ descends to a section $(f:(M,g^v)\mapsto
  f((M,e), g^v))\in H^0(\scrS_{K_v}\otimes_{\F_v}\ol\F_v, i^* \omega^{\otimes k}\otimes\ol\F_v)$. \qed 
\end{proof}

Since $\chi$ is of order $q_v^r-1=\#F_{v^r}^\times$, where $q_v:=\#\F_v$, the characters $\chi^k$ for
$k=1,\dots, q_v^r-1$ are all distinct irreducible 
representations of $G(v)$. Therefore,
\begin{equation}
  \label{eq:SS.11}
  C^{\infty}(G'(F)\backslash G'(\A^{\infty})/U(v)K^v,\ol
  \F_v)=\bigoplus_{k=1}^{q_v^r-1} M^{\rm alg}_{\chi^k} (G', K^v). 
\end{equation}
As a corollary of Proposition~\ref{SS.7}, we get a prime-to-$v$ Hecke
equivariant isomorphism 
\begin{equation}
  \label{eq:SS.12}
  \bigoplus_{k=1}^{q_v^r-1} S_k(r,K_vK^v,\ol \F_v)
  \simeq C^{\infty}(G'(F)\backslash G'(\A^{\infty})/U(v)K^v,\ol \F_v). 
\end{equation}
Let $\bf1$ be the constant function in $C^{\infty}(G'(F)\backslash
G'(\A^{\infty})/U(v)K^v,\ol \F_v)$ with value $1$ on each double coset.
This element maps under projection to an element still denoted by ${\bf1}\in
H^0(\scrS_{K}\otimes_{\F_v}\ol\F_v,i^* \omega_{K}^{\otimes q_v^r-1}\otimes\ol\F_v)$. Multiplication
by $\bf1$ gives a prime-to-$v$ Hecke equivariant isomorphism
\begin{equation}
  \label{eq:SS.13}
  {\bf1}: H^0(\scrS_{K}\otimes_{\F_v}\ol\F_v,i^* \omega_{K}^{\otimes k}\otimes\ol\F_v)\isoto
H^0(\scrS_{K}\otimes_{\F_v}\ol\F_v,i^* \omega_{K}^{\otimes k+q_v^r-1}\otimes\ol\F_v).
\end{equation}

\def\Mass{{\rm Mass}}

Recall that the \emph{mass} for $\Sigma(r,v)$ is defined by
\begin{equation}
  \label{eq:SS.14}
  \Mass(\Sigma(r,v)):=\sum_{\psi\in \Sigma(r,v)} \frac{1}{\#\Aut(\psi)}.
\end{equation}
For any open compact subgroup $K'\subset G'(A^\infty)$, the arithmetic
mass is defined by
\begin{equation}
  \label{eq:SS.arith_mass}
  \Mass(G',K'):=\sum_{i=1}^h \frac{1}{\# \Gamma_i}, \quad
  \Gamma_i:=c_i K' {c_i}^{-1}\cap G'(F), 
\end{equation}
where $c_1.\dots,c_h$ are coset representatives for the finite double coset
space $G'(F)\backslash G'(\A^\infty)/K'$. 
The mass formula (see \cite[2.5 and
5,11]{gekeler:mass} and \cite[Theorem~2.1]{yu-yu:ssd}, 
also see \cite{wei-yu:mass_division} and \cite{yu:mass_var} for
generalizations) states that
\begin{equation}
  \label{eq:SS.15}
\Mass(\Sigma(r,v))=\Mass(G',G'(\wh A))=\frac{h(A)}{q-1} \prod_{i=1}^{r-1}
\zeta_F^{\infty v}(-i),  
\end{equation}
where $h(A)$ is the class number of $A$ and
$\zeta_F^{\infty v}(s):=\prod_{w\neq \infty,v}=(1-(\#\F_w)^{-s})^{-1}$ is
the zeta function of $F$ with factors at $\infty$ and $v$ removed.

\begin{remark}\label{SS.8}
In \cite{gekeler:mass} Gekeler  
proved a recursive formula (referred as ``the transfer principle'') which computes 
explicitly the class number $h(\O_D)=\#\Sigma(v,r)$ 
for the case $F=\Fq(t)$ and $A=\Fq[t]$. 
Gekeler's transfer principle was generalized by F.-T Wei and the second author for an arbitrary hereditary $A$-order
$R$ in any central division $F$-algebra $D$ definite at $\infty$ 
(namely, $D_\infty$ is still a central division $F_\infty$-algebra 
but $D$ can be ramified at several finite places of $F$);
see \cite[Theorem 1.1]{wei-yu:classno}. Using the recursive formulas in
loc.~cit.,  one can compute the class number $h(R)$ of $R$ explicitly.  
\end{remark}

\begin{lemma}\label{SS.9}
Suppose $K=K_vK^v\subset G(\wh A)$ is fine with $K_v=G(A_v)$, then 
\begin{equation}
  \label{eq:SS.16}
  \begin{split}
  \dim_{\ol \F_v} &C^{\infty}(G'(F)\backslash
  G'(\A^{\infty})/U(v)K^v,\ol \F_v) \\
  & =[G(\wh A^{(v)}):K^v] h(A) \frac{q_v^r-1}{q-1} \prod_{i=1}^{r-1}
  \zeta_F^{ \infty v}(-i).    
  \end{split}
\end{equation}
\end{lemma}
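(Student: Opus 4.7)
The plan is to reduce the left-hand side to a counting problem and then to apply the mass formula \eqref{eq:SS.15}. Since $G'(F)\backslash G'(\A^\infty)$ is compact by Lemma~\ref{LemmaG'Adele} and $U(v)K^v$ is an open compact subgroup, the set $X:=G'(F)\backslash G'(\A^\infty)/U(v)K^v$ is finite, so the left-hand side of \eqref{eq:SS.16} equals $\#X$. The strategy is to express $\#X$ as $\Mass(G',U(v)K^v)$, which requires all stabilizers to be trivial, and then to evaluate this mass using the index $[G'(\wh A):U(v)K^v]$ together with \eqref{eq:SS.15}.

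To reduce to a subgroup of $G'(\wh A)$, I first note that by fineness there exists $g\in G(\A^{\infty v})$ with $g^{-1}K^v g\subset G(\wh A^{(v)})=G'(\wh A^{(v)})$. Since $U(v)$ commutes with $g\in G'(\A^{\infty v})$, conjugation by $g$ carries $U(v)K^v$ to $U(v)\cdot g^{-1}K^v g$ and preserves both $\#X$ and $[G(\wh A^{(v)}):K^v]$. So I may assume $K^v\subset G(\wh A^{(v)})$, in which case $U(v)K^v\subset G'(\wh A)=G'(A_v)\times G(\wh A^{(v)})$, and one immediately computes
\[
[G'(\wh A):U(v)K^v]\;=\;[G'(A_v):U(v)]\cdot[G(\wh A^{(v)}):K^v]\;=\;(q_v^r-1)\cdot[G(\wh A^{(v)}):K^v]
\]
using \eqref{eq:SS.8} and $G(v)\cong \F_{v^r}^\times$.

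The crucial (and only substantive) step will be to show that for every representative $c_i$ of $X$ the stabilizer $\Gamma_i:=c_iU(v)K^v c_i^{-1}\cap G'(F)$ is trivial. Since $U(v)\subset G'(A_v)$, it is enough to check that $c_iG'(A_v)K^v c_i^{-1}\cap G'(F)=\{1\}$. Via the bijection $\xi_{K^v}$ of Lemma~\ref{SS.6}, each such element corresponds to an automorphism of a supersingular Drinfeld $A$-module with level-$K^v$ structure in $\scrS_{K_vK^v}(\ol\F_v)$, regarded as a point of the moduli scheme $\bfM^r_{K_vK^v}$. Since $K=K_vK^v$ is fine, $\bfM^r_{K_vK^v}$ represents its moduli functor by Proposition~\ref{2.5}(2), and by Lemma~\ref{2.4} the automorphism group of every $(\varphi,\bar\eta K^v)$ is trivial. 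Hence $\Gamma_i=\{1\}$ for all $i$. I expect this to be the main obstacle in the sense that it is the only non-formal input, but it follows cleanly from results already established.

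Assuming $\Gamma_i=\{1\}$, the standard Haar measure computation (which gives $\Mass(G',K')=\vol(G'(F)\backslash G'(\A^\infty))/\vol(K')$ and hence $\Mass(G',K')=[K'':K']\cdot\Mass(G',K'')$ for $K'\subset K''$) yields
\[
\#X\;=\;\sum_{i}\frac{1}{\#\Gamma_i}\;=\;\Mass(G',U(v)K^v)\;=\;[G'(\wh A):U(v)K^v]\cdot\Mass(G',G'(\wh A)).
\]
Substituting the index computed above and the mass formula \eqref{eq:SS.15} $\Mass(G',G'(\wh A))=\Mass(\Sigma(r,v))=\tfrac{h(A)}{q-1}\prod_{i=1}^{r-1}\zeta_F^{\infty v}(-i)$ produces exactly the right-hand side of \eqref{eq:SS.16}, finishing the proof.
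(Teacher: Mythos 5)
Your proof is correct and follows essentially the same route as the paper: both arguments hinge on fineness of $K$ together with Lemma~\ref{2.4} (via the moduli interpretation from Lemma~\ref{SS.6}) to get trivial stabilizers $\Gamma_i$, then identify the coset count with $\Mass(G',U(v)K^v)$, extract the factor $[G'(A_v):U(v)]=q_v^r-1$, and invoke the mass formula \eqref{eq:SS.15}. The only cosmetic differences are that your conjugation step is unnecessary (the hypothesis $K\subset G(\wh A)$ already gives $K^v\subset G(\wh A^{(v)})$) and that the paper passes through $\#\scrS_K$ on the geometric side before multiplying by $q_v^r-1$, whereas you compute the index $[G'(\wh A):U(v)K^v]$ directly.
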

\begin{proof}
  Clearly, we have \[ \begin{split}\dim_{\ol \F_v} C^{\infty}(G'(F)\backslash
  G'(\A^{\infty})/U(v)K^v,\ol \F_v)&=\# G'(F)\backslash
  G'(\A^{\infty})/U(v)K^v\\
  & = [G'(A_v):U(v)] \cdot [G(\wh A^{(v)}): K^v] \cdot \Mass(\Sigma(r,v)).
  \end{split}
  \] 
  As $G'(A_v)/U(v)\simeq \F_{q_v^r}^\times$, we have
  $[G'(A_v):U(v)]=q_v^r-1$. Then 
\[ \dim_{\ol \F_v} C^{\infty}(G'(F)\backslash
  G'(\A^{\infty})/U(v)K^v,\ol \F_v)=[G(\wh A^{(v)}):K^v] \cdot
  (q_v^r-1)\cdot \Mass(\Sigma(r,v)) \]
  and formula \eqref{eq:SS.16} follows from 
  the mass formula for
  $\Mass(\Sigma(r,v))$~\eqref{eq:SS.15}. \qed
\end{proof}

\section{Generalized Hasse invariants and $v$-rank strata}
\label{sec:HI} 

\subsection{Coefficient modular forms}
\label{sec:HI.1}
We keep the notation $F$, $\infty$, $A$, $v$ and $\grp\subset A$ from Section~\ref{sec:SS}. As in Section~\ref{sec:MF.2} let $(\olE_K,\ol \varphi_K)$ be the universal family on 
$\ol \bfM^r_{K}$ over $A_{(v)}$, 
where $K=K_vK^v\subset G(\A^\infty)$ 
is an open compact subgroup with $K_v=G(A_v)$. For any element $a\in A$, write
\[ 
\ol \varphi_{K,a}=\sum_{i=0}^{r\deg a} \ol \varphi_{K,a,i} \cdot \tau^i. 
\]
Then each $\ol \varphi_{K,a,i}\in H^0(\ol \bfM^r_{K},{\olE_K}^{1-q^i})= H^0(\ol
\bfM^r_{K},\omega_K^{\otimes q^i-1})$ is a Drinfeld modular form of rank $r$,
and weight $q^i-1$ over $A_{(v)}$. These are called \emph{coefficient
modular forms}. Coefficient modular forms of rank $2$ were studied
by Gekeler~\cite{gekeler:coeff} and of higher rank by Basson, Breuer
and Pink~\cite{BBP}.

Suppose $K(\grn)\subset K$ for a non-zero ideal
$\grn$ of $A$. Then the moduli scheme $\ol \bfM^r_{K}$ and 
Drinfeld modular forms $\ol \varphi_{K,a,i}$ are even defined over
$A[\grn^{-1}]$ and not just over $A_{(v)}$. 

Note that $\ol \bfM^r_K$ is normal and therefore 
the notions of Cartier divisors and Weil divisors of $\ol \bfM^r_K$ 
are the same.
One can consider the (Cartier) divisor $V(\ol
\varphi_{K,a,i})$ which is defined as the zero section of $\ol
\varphi_{K,a,i}$ on $\ol \bfM^r_{K}$, or the intersection of
several such divisors.  
For example, if $a\not\in A[\grn^{-1}]^\times$.
then $V(\ol \varphi_{K,a,0})=\ol \bfM^r_{K}\otimes_{A[\grn^{-1}]} A[\grn^{-1}]/(a)$, because $\varphi_{K,a,0}=\gamma(a)$. If the
prime ideal $\grp=(a,b)$ is generated by elements $a$ and $b$, then 
the intersection 
$V(\ol \varphi_{K,a,0})\cap V(\ol \varphi_{K,b,0})$ 
is the fiber of $\ol \bfM^r_K$ over $\Spec \F_v$.  

By Theorem~\ref{S.2}, we have $\pi^*_{\wt K,K} (\ol\varphi_{K,a,i})= \ol 
\varphi_{\wt K, a,i}$ for fine open compact subgroups $\wt K\subset
K$. Thus, 
the image of $\ol \varphi_{K,a,i}$ in $M_{q^i-1}(r,K_v, A_{(v)})$ is
well-defined, see \eqref{eq:MF.9} in Definition~\ref{MF.5}. We denote it by $\ol \varphi_{a,i}$.

Let $L$ be an $A_{(v)}$-algebra. Recall from Theorem~\ref{ThmSmoothHeckeAction} that $M_k(r,K_v,L)^{K^v}$ is an $\calH^{\infty v}_L$-module, where $\calH^{\infty v}_L:=\calH_{L}(G(\A^{\infty v}),K^v)$, and if $L$ is an $\F_v$-algebra, also $\wt M_k(r,K_vK^v,L)=\wt M_k(r,K_v,L)^{K^v}$ is an $\calH^{\infty v}_L$-module.

\begin{lemma} \label{HI.1}\ 

{\rm (1)} For any $a\in A$, $0\le i\le r \deg a $ and $g\in
G(\A^{\infty v})$, one has $T_g\cdot \ol \varphi_{K^g, a,i}
=\ol \varphi_{K,a,i}$. The Drinfeld modular form 
$\ol \varphi_{a,i}$ is fixed by $G(\A^{\infty v})$.
We have 
\[ {\bf1}_{K^v g K^v}* \ol \varphi_{K,a,i}=\#(K^v g K^v/K^v) 
\cdot \ol \varphi_{K,a,i}, \]
where ${\bf1}_{K^v g K^v}\in \calH^{\infty v}_{\Z}$ is the characteristic
function of $ {K^v g K^v}$.

{\rm (2)} For every $A_{(v)}$-algebra $L$ the multiplication by $\ol \varphi_{K,a,i}$ gives rise to
a morphism of Hecke modules
\begin{equation*}
  \ol \varphi_{K,a,i}: M_k(r, K_v, L)^{K^v}\to M_{k+q^i-1}(r, K_v, L)^{K^v}.
\end{equation*}  

{\rm (3)} For every $\F_v$-algebra $L$ the multiplication by $\ol \varphi_{K,a,i}$ gives rise to
a morphism of Hecke modules
\begin{equation*}
  \ol \varphi_{K,a,i}: \wt M_k(r, K_vK^v, L)\to \wt M_{k+q^i-1}(r, K_vK^v, L).
\end{equation*}  
\end{lemma}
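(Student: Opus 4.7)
The plan is to derive everything from the functoriality diagram \eqref{eq:MF.7} of the universal family under prime-to-$v$ Hecke translations, combined with the compatibility \eqref{eq:MF.11} of the $G(\A^{\infty v})$-action with the graded ring structure.

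For part~(1), the diagram \eqref{eq:MF.7} furnishes an isomorphism of generalized Drinfeld $A$-modules $\ol I_g\colon(\olE_K,\ol\varphi_K)\isoto\ol J_g^*(\olE_{K^g},\ol\varphi_{K^g})$ covering $\ol J_g$. Taking the dual of the relative Lie algebra gives an isomorphism of invertible sheaves $\omega_K\isoto\ol J_g^*\omega_{K^g}$, which is precisely the geometric incarnation of the map $T_g\colon M_k(r,K^g,L)\isoto M_k(r,K,L)$ constructed in \eqref{eq:MF.10}. Expanding the compatibility of $\ol I_g$ with the $A$-actions as $\tau$-series and comparing coefficient by coefficient, this isomorphism identifies $\ol J_g^*\ol\varphi_{K^g,a,i}$ with $\ol\varphi_{K,a,i}$, that is, $T_g(\ol\varphi_{K^g,a,i})=\ol\varphi_{K,a,i}$. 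Passing to the direct limit \eqref{eq:MF.9}, the image $\ol\varphi_{a,i}\in M_{q^i-1}(r,K_v,L)$ is $G(\A^{\infty v})$-invariant. For the convolution formula, I decompose $K^v g K^v=\coprod_j g_j K^v$ with $N:=\#(K^v g K^v/K^v)$ cosets, and apply \eqref{eq:MF.action}:
\[
T({\bf 1}_{K^v g K^v})(\ol\varphi_{K,a,i})\;=\;\sum_j T_{g_j}(\ol\varphi_{K,a,i})\;=\;\sum_j \ol\varphi_{K,a,i}\;=\;N\cdot\ol\varphi_{K,a,i},
\]
where the middle equality uses the $G(\A^{\infty v})$-invariance of $\ol\varphi_{a,i}$ in the inductive limit.

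For parts~(2) and~(3), I use that the $G(\A^{\infty v})$-action respects the graded ring structure by \eqref{eq:MF.11} on $M(r,K_v,L)$, and by the entirely parallel argument on the graded ring $\bigoplus_k\wt M_k(r,K_v,L)$ in the $\F_v$-algebra case (obtained by applying the same reasoning on the normalized special fibers $\ol\scrM^{r,\rm nor}_K$). Combined with the Hecke-invariance of $\ol\varphi_{a,i}$ from part~(1), for any $f$ in $M_k(r,K_v,L)^{K^v}$, respectively in $\wt M_k(r,K_vK^v,L)$, and any $g\in G(\A^{\infty v})$,
\[
T_g(\ol\varphi_{a,i}\cdot f)\;=\;T_g(\ol\varphi_{a,i})\cdot T_g(f)\;=\;\ol\varphi_{a,i}\cdot T_g(f),
\]
so multiplication by $\ol\varphi_{a,i}$ commutes with the full Hecke action \eqref{eq:MF.action} and is therefore a morphism of Hecke modules of the stated weight and level.

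The main technical obstacle is the level bookkeeping in part~(1): the equality $T_{g_j}(\ol\varphi_{K,a,i})=\ol\varphi_{K,a,i}$ is not literally valid at the fixed level $K$ (since $T_{g_j}$ relates levels $K$ and $K^{g_j}$), but must be read in the inductive limit after pulling back to a common refinement such as $K\cap K^{g_j}$. The key input that allows the summation to collapse back to $N\cdot\ol\varphi_{K,a,i}$ at level $K$ is the universality of coefficient modular forms under the forgetful maps, $\pi_{\wt K,K}^*\ol\varphi_{K,a,i}=\ol\varphi_{\wt K,a,i}$, which is immediate from Theorem~\ref{S.2}(2). In part~(3), as stated, the target $M_{k+q^i-1}(r,K_vK^v,L)$ is most naturally read as $\wt M_{k+q^i-1}(r,K_vK^v,L)$, since multiplying an element of $\wt M_k$ by the pullback of $\ol\varphi_{K,a,i}$ to the normalization lands a priori in $\wt M_{k+q^i-1}$; the Hecke-equivariance holds in either interpretation.
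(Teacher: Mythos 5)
Your proof is correct and follows essentially the same route as the paper's: the invariance $T_g(\ol\varphi_{K^g,a,i})=\ol\varphi_{K,a,i}$ deduced from the functoriality diagram \eqref{eq:MF.7} and \eqref{eq:MF.10}, the coset decomposition $K^vgK^v=\coprod_j g_jK^v$ for the eigenvalue formula, and the multiplicativity $T_g(f_1\cdot f_2)=T_g(f_1)\cdot T_g(f_2)$ combined with Hecke-invariance of $\ol\varphi_{a,i}$ for parts (2) and (3). Your closing remark that the target in (3) is most naturally read as $\wt M_{k+q^i-1}(r,K_vK^v,L)$ is a fair observation; the paper's proof only says that (3) is proved in the same way as (2) and does not address this point.
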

\begin{proof}
  (1) The first statement follows from the functorial property of the
      Satake compactification; see \eqref{eq:MF.7} and
      \eqref{eq:MF.10}. 
     It follows from  $T_g\cdot \ol \varphi_{K^g, a,i}
=\ol \varphi_{K,a,i}$ that $T_g\cdot \ol \varphi_{a,i}
=\ol \varphi_{a,i}$, which proves the second statement. 
Write $K^v g K^v=\coprod_{j=1}^m 
      g_j K^v$, where $m=\#(K^v g K^v/K^v)$, then 
\[  {\bf1}_{K^v g K^v}* \ol \varphi_{K,a,i}=\sum_{j=1}^m T_{g_j}(\ol
      \varphi_{a,i})=m \cdot \ol \varphi_{K,a,i}. \]

  (2) Let $f\in M_k(r, K_v, L)^{K^v}$ and $h\in \calH^{\infty v}_{L}$. It
      suffices to check the case where $h$ is of the form ${\bf1}_{K^v g
      K^v}$, because these form an $L$-basis of 
      $\calH^{\infty v}_{L}$. Let $h={\bf1}_{K^v g K^v}$ and write $K^v g
      K^v=\coprod_{j=1}^m {g_j} K^v$, then we compute in $M_k(r,K_v,L)$
      which contains $M_k(r,K_v,L)^{K^v}$:
    \begin{equation}
      \label{eq:HI.2}
      \begin{split}
    h*(\ol \varphi_{a,i} \cdot f)&=\sum_{j=1}^m T_{g_j}(\ol
      \varphi_{a,i})\cdot T_{g_j}(f)=\sum_{j=1}^m \ol
      \varphi^{}_{a,i}\cdot T_{g_j}(f)\\
    &=\ol \varphi_{a,i}\cdot
      \sum_{j=1}^m T_{g_j}(f)=\ol \varphi_{a,i}\cdot (h*f). 
      \end{split}
    \end{equation} 

(3) is proved in the same way as (2). \qed
\end{proof}

One may ask how many Drinfeld modular forms are produced from
coefficient modular forms. Consider the modular forms over $F$
and write $\ol \varphi_{K,a,i}$ for their restriction to the generic
fiber $\ol M^r_K:=\ol\bfM^r_K\otimes_{A_{(v)}}F$. 
Let 
$M^c(r,K,F)\subset M(r,K,F)$ 
be the graded subring generated by all coefficient modular forms 
$\ol \varphi_{K,a,i}$.
As in Lemma~\ref{HI.1}, $T_g (\ol \varphi_{K,a,i})=\ol
\varphi_{K,a,i}$ for all $g\in K(1)$. Then 
\begin{equation}
  \label{eq:IH.coeff}
  M^c(r,K,F)\subset M(r,K,F)^{K(1)}=M(r,K(1),F)
\end{equation}
by Lemma~\ref{MF.35}(3).

Now suppose $A=\Fq[t]$ and $K=K(t)$. By
\cite[Theorem 7.4]{pink}, one has 
\begin{equation}
  \label{eq:HI.A0}
  M(r,K(t), F)=F \otimes_{\Fq} R_r=F[1/v;v\in V^{0}].
\end{equation} 

On the other hand, since $A$ is generated by $t$ over $\Fq$, we have
$M^c(r,K(t),F)=F[\ol\varphi_{K(t),t,1},\dots,
\ol\varphi_{K(t),t,r}]$. Note that the coefficient modular forms $\ol\varphi_{K(t),t,i}$ lie in $F[1/v;v\in V^{0}]$, because
\[
\ol\varphi_t(X) = t\cdot X\cdot \prod_{v\in V^{0}} (1-\tfrac{1}{v}\,X) = tX+\sum_{i=1}^r \ol\varphi_{K(t),t,i} X^{q^i}
\]
by \cite[(7.5)]{pink}. From this, we see that $M^c(r,K(t),F)\neq
M(r,K(t),F)$ at least when $(q,r) \neq (2,1)$, 
because $M^c(r,K(t),F)$ does not
contain all elements of degree one in $M(r,K(t),F)$.
We show that $M^c(r,K(t),F)$ contains
sufficiently many modular forms, in the sense that it generates
  a field of modular functions on $M^r_{K(t)}$ over $F$ that has the same
  transcendence degree over $F$ as the function field of
  $M^r_{K(t)}$.
By \cite[Theorem 8.1 (c) and Remark 8.3] {pink}, one has
$M^c(r,K(t),F)=M(r, K(1), F)=M(r,K(t),F)^{\GL_r(A/t)}$.
It is proven \cite[Theorem 1.7]{pink-schieder} that
$M(r,K(t),F)$ is a normal domain.
It follows that $M(r, K(t),F)$ is
the normalization of $M^c(r,K(t),F)$ in the quotient field ${\rm
  Frac}(M(r,K(t),F))$. 
Thus, $M(r,K(t),F)$ is a finite 
$M^c(r,K(t),F)$-module and hence $M^c(r,K(t),F)$ contains sufficiently many
Drinfeld modular forms. 

\subsection{Generalized Hasse invariants}
\label{sec:HI.2}

Recall that $\grp\subset A$ is the prime ideal corresponding to the
finite place $v$ and $\deg(v)=[\F_v:\F_q]$.
\begin{defn}\label{HI.2}
  For any element $a\in \grp$ and integer $0\le i\le r-1$, define the
  \emph{$i$-th $a$-Hasse invariant} on $\ol \bfM^r_{K}$ over $A_{(v)}$ by
  \begin{equation}
    \label{eq:HI.3}
    H^{a}_i\;:=\;\ol \varphi_{K,a,\,i\, v(a) \deg(v)}\;\in\;H^0(\ol
\bfM^r_{K},\omega_K^{\otimes q^{i\, v(a) \deg(v)}-1}).
  \end{equation}
\end{defn}

As in the previous sections, let $\ol \scrM^r_K:=\ol \bfM^r_{K}\otimes_{A_{(v)}} \F_v$. 
For any $1\le h\le r$ and $a\in \grp$,
let $V(H^a_1,\dots, H^a_{h-1})$ be the closed subscheme of $\ol
\scrM^r_K$ defined as the vanishing locus of $H^a_1,\cdots,H^a_{h-1}$, that is, of the sheaf of ideals $\sum_{i=1}^{h-1}H^a_i\cdot\omega_K^{\otimes 1-q^{i\, v(a) \deg(v)}}\subset\calO_{\ol \scrM^r_K}$.  
Note that $H^a_0=\gamma(a)$ is already zero in $\F_v$.

\begin{lemma}\label{LemmaHasseInv}
Let $(\olE,\ol\varphi)$ be a generalized Drinfeld $A$-module of rank $\le r$ over an $A$-scheme $S$ whose structure morphism $S\to\Spec A$ factors through $\F_v$. Let $a\in A$ with $v(a)=1$ and write $\ol\varphi_a=\sum_{i=0}^{r\cdot\deg(a)}\ol\varphi_{a,i} \tau^i$ with $\ol\varphi_{a,i}\in H^0(S,\olE^{\otimes 1-q^i})$. Then for every $i,j$ with $(i-1)\deg(v)\le j<i\deg(v)$ the coefficient $\ol\varphi_{a,j}$ lies in the subsheaf of $\olE^{\otimes 1-q^j}$ generated by $(H^a_0\cdot \olE^{\otimes 1-q^j},\ldots,H^a_{i-1}\cdot \olE^{\otimes q^{(i-1)\deg v}-q^j})$, where the $H^a_i:=\ol\varphi_{a,i\deg(v)}$ are defined as in \eqref{HI.2}.
\end{lemma}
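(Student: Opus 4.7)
The plan is to proceed by induction on $j$, exploiting that $\ol\varphi\colon A\to\End(\olE)$ is a ring homomorphism, hence the commutativity $\ol\varphi_a\circ\ol\varphi_b=\ol\varphi_b\circ\ol\varphi_a$ holds in $\End(\olE)$ for a well-chosen $b\in A$ with $v(b)=0$. Comparing coefficients of $\tau^j$ will then express $\ol\varphi_{a,j}$ in terms of the earlier coefficients $\ol\varphi_{a,k}$ with $k<j$. The cases where $j=(i-1)\deg v$ is a multiple of $\deg v$ are tautological since $\ol\varphi_{a,j}=H^a_{i-1}$ already belongs to the indicated subsheaf; in particular the base case $j=0$ reduces to $\ol\varphi_{a,0}=\gamma(a)=H^a_0$.

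For the inductive step in the remaining case $(i-1)\deg v<j<i\deg v$, I would first choose $\bar b\in\F_v$ with $\bar b^{q^j}\neq\bar b$. This is possible because $j$ is not a multiple of $\deg v$, so the proper inclusion $\F_v\cap\F_{q^j}=\F_{q^{\gcd(j,\deg v)}}\subsetneq\F_v$ leaves room in $\F_v$ outside the fixed points of $x\mapsto x^{q^j}$. Using the surjection $A\twoheadrightarrow A/\grp=\F_v$ I lift $\bar b$ to some $b\in A$; since $\bar b\ne 0$, automatically $v(b)=0$. Writing $A_k:=\ol\varphi_{a,k}$ and $B_\ell:=\ol\varphi_{b,\ell}$ and expanding $\ol\varphi_a\ol\varphi_b=\ol\varphi_b\ol\varphi_a$ locally in $\calO_S\{\tau\}$ via the rule $\tau^k f=f^{q^k}\tau^k$, the coefficient of $\tau^j$ on each side, combined with $A_0=\gamma(a)=0$ and $B_0=\bar b$, yields
\[
A_j\,(\bar b^{q^j}-\bar b)\;=\;\sum_{k=1}^{j-1}\bigl(B_k\,A_{j-k}^{q^k}-A_k\,B_{j-k}^{q^k}\bigr).
\]

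The induction hypothesis applies to each $A_k$ and $A_{j-k}$ with $1\le k,j-k\le j-1$, because $\lfloor k/\deg v\rfloor$ and $\lfloor(j-k)/\deg v\rfloor$ are both bounded by $\lfloor(j-1)/\deg v\rfloor=i-1$; hence each lies in the subsheaf generated by $H^a_0,\ldots,H^a_{i-1}$. The Frobenius identity $(x+y)^{q^k}=x^{q^k}+y^{q^k}$ in characteristic $p$, together with the fact that $(H^a_\beta)^{q^k}=H^a_\beta\cdot(H^a_\beta)^{q^k-1}$ is a multiple of $H^a_\beta$, implies that the twist $A_{j-k}^{q^k}$ remains in the same subsheaf. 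Multiplication by the sections $B_k$ or $B_{j-k}^{q^k}$ preserves membership, so the right-hand side of the displayed identity lies in the subsheaf generated by $H^a_0,\ldots,H^a_{i-1}$. Since $\bar b^{q^j}-\bar b$ is a nonzero element of $\F_v$, hence a unit in the $\F_v$-algebra $\calO_S$, dividing through yields $\ol\varphi_{a,j}$ in the same subsheaf and completes the induction.

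The crucial point is the observation that a nonzero element of $\F_v$ is automatically a unit in the $\F_v$-algebra $\calO_S$, which is what justifies the division step and explains why working over an $\F_v$-scheme (rather than merely an $A$-scheme with $A$-characteristic $v$ at every point) is essential. The remaining technicality to verify is the tensor-power bookkeeping: the ``subsheaf generated by the $H^a_\beta$'' inside $\olE^{\otimes(1-q^j)}$ means the sub-$\calO_S$-module of expressions $\sum_\beta g_\beta H^a_\beta$ with $g_\beta$ a section of $\olE^{\otimes(q^{\beta\deg v}-q^j)}$, and all Frobenius twists and products appearing in the identity above must be checked to respect these exponents; this reduces, upon local trivialization of $\olE$, to the computation in $\calO_S\{\tau\}$ described above.
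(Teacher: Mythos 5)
Your proof is correct and rests on the same key idea as the paper's: commute $\ol\varphi_a$ with $\ol\varphi_b$ for a suitable $b\in A$ with $v(b)=0$, and exploit that $\bar b^{q^j}=\bar b$ for all $\bar b\in\F_v$ only when $\deg(v)\mid j$. The paper merely organizes the computation differently --- it fixes a single $b$ whose reduction generates $\F_v^\times$, works modulo the ideal $(H^a_0,\dots,H^a_{i-1})$, and considers the \emph{minimal} index $n$ with $\ol\varphi_{a,n}\ne 0$ there, so that the cross terms you control via the induction hypothesis and the Frobenius-stability of the ideal vanish automatically and only the two-term relation $(\gamma(\bar b)^{q^n}-\gamma(\bar b))\,\ol\varphi_{a,n}=0$ remains.
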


\begin{proof}
The usual proof also works for generalized Drinfeld modules. First, the statement is local on $S$, so we may assume that $S=\Spec R$ is affine and that there is an isomorphism $\olE\simeq\calO_{\Spec R}$. We use it to view all $\ol\varphi_{a,i}$, and in particular the $H^a_i=\ol \varphi_{K,a,\,i\deg(v)}$ as elements of $R$. The statement is equivalent to showing that for any $j<i\deg(v)$ we have $\ol\varphi_{a,j}=0$ in $\olR_i:=R/(H^a_0,\ldots H^a_{i-1})$. Let $b\in A$ such that the image $\bar b$ of $b$ in $\F_v$ generates the multiplicative group $\F_v^\times$. Then $\bar b^n=1$ in $\F_v$ if and only if $(q^{\deg(v)}-1)|n$. Let $n=\min\{j\colon \ol\varphi_{a,j}\ne0 \text{ in }\olR_i\}$. From 
\begin{eqnarray*}
\ol\varphi_{ab} \enspace =\enspace \ol\varphi_a\ol\varphi_b & = & (\ol\varphi_{a,n}\tau^n+\ldots)(\gamma(\bar b)\tau^0+\ldots) \enspace = \enspace \ol\varphi_{a,n}\cdot\gamma(\bar b)^{q^n}\tau^n+\ldots \\[2mm]
\ol\varphi_{ba} \enspace =\enspace \ol\varphi_b\ol\varphi_a & = & (\gamma(\bar b)\tau^0+\ldots)(\ol\varphi_{a,n}\tau^n+\ldots) \enspace = \enspace \gamma(\bar b)\cdot\ol\varphi_{a,n}\tau^n+\ldots
\end{eqnarray*}
we deduce $(\gamma(\bar b)^{q^n}-\gamma(\bar b))\cdot \ol\varphi_{a,n}=0$ in $\olR_i$. Now write $n=k\deg(v)+m$ with $k\in\bbZ$ and $0\le m<\deg(v)$ and use $\bar b^{q^{\deg(v)}}=\bar b$. If $m\ne0$ then $\bar b^{q^n}-\bar b=\bar b^{(q^{\deg(v)})^k q^m}-\bar b=\bar b^{q^m}-\bar b\in\F_v^\times$, whence $\gamma(\bar b)^{q^n}-\gamma(\bar b)\in R^\times$ and $\ol\varphi_{a,n}=0$. Since $\ol\varphi_{a,k\deg(v)}=H^a_k=0$ in $\olR_i$ for $0\le k<i$, it follows that $n\ge i\deg(v)$ and the lemma is proved.
\qed
\end{proof}

\begin{lemma}\label{HI.3}
  Suppose $v(a)=1$. 

{\rm (1)} For any integer $1\le h\le r$, the closed
  subscheme $V(H^a_1,\dots, H^a_{h-1})$ of $\ol \scrM^r_{K}$ is
  independent from the choice of $a$ (satisfying $v(a)=1$). 

{\rm (2)} For any point $x$ in $\ol \scrM^r_{K}$, the
Drinfeld $A$-module $\ol \varphi_{K,x}$ over the point $x$ 
has height $\ge h$ if and only
if $x\in V(H^a_1,\dots, H^a_{h-1})$.
\end{lemma}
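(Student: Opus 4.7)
The plan is to prove (2) directly from Lemma~\ref{LemmaHasseInv} applied fiberwise, and to deduce (1) by induction on $h$ at the level of ideals in $\calO_{\ol\scrM^r_K}$. The key ingredient for the induction is a vanishing estimate for the coefficients $\ol\varphi_{b,j}$ with $b\in\grp^2$ on the subscheme produced by the previous inductive step.

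For part (2), I would fix a point $x\in\ol\scrM^r_K$ and pass to the residue field $\kappa(x)$, on which $\ol\varphi_{K,x}$ is a Drinfeld $A$-module of some rank $r_x\le r$. By the standard description of the height in terms of the $\tau$-expansion, $\ol\varphi_{K,x}$ has height $\ge h$ if and only if $\ol\varphi_{a,j}(x)=0$ for all $j<h\deg(v)$. Applying Lemma~\ref{LemmaHasseInv} in $\kappa(x)$, this vanishing is equivalent to $H^a_0(x)=\cdots=H^a_{h-1}(x)=0$; since $H^a_0=\gamma(a)$ vanishes identically on $\ol\scrM^r_K$, this is precisely $x\in V(H^a_1,\ldots,H^a_{h-1})$.

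For (1), I would proceed by induction on $h\ge 1$, showing that the ideal $(H^a_1,\ldots,H^a_{h-1})\subset\calO_{\ol\scrM^r_K}$ is independent of the choice of $a\in\grp$ with $v(a)=1$; the base case $h=1$ is vacuous. For the step from $h$ to $h+1$, assume the statement for $h$, so that $Y_{h-1}:=V(H^{a_0}_1,\ldots,H^{a_0}_{h-1})$ is intrinsic for any fixed $a_0$ with $v(a_0)=1$. Given any other $a$ with $v(a)=1$, lifting $\ol{a/a_0}\in\F_v^\times$ to $u\in A$ yields $v(u)=0$ and $b:=a-ua_0\in\grp^2$. Expanding $\ol\varphi_a=\ol\varphi_u\ol\varphi_{a_0}+\ol\varphi_b$ and invoking Lemma~\ref{LemmaHasseInv} to kill $\ol\varphi_{a_0,l}|_{Y_{h-1}}$ for $l<h\deg(v)$ isolates
\[
H^a_h|_{Y_{h-1}}\;=\;\gamma(u)\,H^{a_0}_h|_{Y_{h-1}}\;+\;\ol\varphi_{b,h\deg(v)}|_{Y_{h-1}}.
\]

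The main obstacle is proving $\ol\varphi_{b,j}|_{Y_{h-1}}=0$ for every $b\in\grp^2$ and every $j<2h\deg(v)$. My plan here is to use the Dedekind-domain identity $\grp^2=\sum_{c,d\in S}Acd$, where $S:=\{c\in\grp:v(c)=1\}$: this follows from $\grp=\sum_{c\in S}Ac$ (any $d\in\grp$ with $v(d)\ge 2$ equals $c+(d-c)$ with $c\in S$ and $v(d-c)=1$) by squaring. Writing $b=\sum_i r_ic_id_i$ with $r_i\in A$ and $c_i,d_i\in S$, the multiplicativity of $\ol\varphi$ gives
\[
\ol\varphi_{r_ic_id_i,j}\;=\;\sum_{k+l+m=j}\ol\varphi_{r_i,k}\,\ol\varphi_{c_i,l}^{\,q^k}\,\ol\varphi_{d_i,m}^{\,q^{k+l}}.
\]
By the inductive hypothesis applied to $c_i$ and $d_i$, $Y_{h-1}=V(H^{c_i}_1,\ldots,H^{c_i}_{h-1})=V(H^{d_i}_1,\ldots,H^{d_i}_{h-1})$, whence Lemma~\ref{LemmaHasseInv} forces $\ol\varphi_{c_i,l}|_{Y_{h-1}}=\ol\varphi_{d_i,m}|_{Y_{h-1}}=0$ for $l,m<h\deg(v)$. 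Any summand with $l+m<2h\deg(v)$ has $l<h\deg(v)$ or $m<h\deg(v)$ and so vanishes on $Y_{h-1}$. Specializing to $j=h\deg(v)$ yields $H^a_h\equiv\gamma(u)H^{a_0}_h$ modulo $(H^{a_0}_1,\ldots,H^{a_0}_{h-1})$; since $\gamma(u)$ is a unit, this gives $(H^a_1,\ldots,H^a_h)=(H^{a_0}_1,\ldots,H^{a_0}_h)$, establishing the statement for $h+1$.
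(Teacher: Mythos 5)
Your proof is correct, and part (2) runs along essentially the same lines as the paper's: the height of a Drinfeld module over an $\F_v$-field is read off from the index of the lowest non-vanishing coefficient of $\varphi_a$ for $a$ with $v(a)=1$, and Lemma~\ref{LemmaHasseInv} (in one direction; the other is trivial) identifies the vanishing of all coefficients below $\tau^{h\deg(v)}$ with that of $H^a_0,\dots,H^a_{h-1}$. For part (1), however, you take a genuinely different route. The paper argues \emph{multiplicatively}: since $a/\wt a\in A_{(v)}^\times$ one writes $a/\wt a=c/\wt c$ with $c,\wt c\in A\setminus\grp$, and the single identity $\wt c\,a=c\,\wt a$ in $A$, expanded via $\ol\varphi_{\wt c a}=\ol\varphi_{\wt c}\,\ol\varphi_a$ and $\ol\varphi_{c\wt a}=\ol\varphi_c\,\ol\varphi_{\wt a}$ and reduced modulo the inductively identified ideal, gives $\gamma(\wt c)H^a_i\equiv\gamma(c)H^{\wt a}_i$ directly, so that no element of $\grp^2$ ever enters. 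You argue \emph{additively}, writing $a=ua_0+b$ with $u\in A\setminus\grp$ and $b\in\grp^2$, which forces you to prove the auxiliary vanishing $\ol\varphi_{b,j}|_{Y_{h-1}}=0$ for all $b\in\grp^2$ and $j<2h\deg(v)$. Your treatment of this via $\grp^2=\sum A\,cd$ with $v(c)=v(d)=1$, the multiplicativity of $\ol\varphi$, and a double application of Lemma~\ref{LemmaHasseInv} is valid (the set $S$ is non-empty because $\grp\neq\grp^2$, and the identities $\grp=\sum_{c\in S}Ac$ and $b=a-ua_0\in\grp^2$ do hold in the Dedekind domain $A$). The price of your route is this extra sublemma; what it buys is a slightly finer, quantitative statement about how deeply the coefficients of $\ol\varphi_b$ vanish on $Y_{h-1}$ when $b$ lies in a higher power of $\grp$, which the paper's cross-multiplication trick does not produce.
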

\begin{proof}
  Statement (2) follows immediately from the
  definition of the height of a Drinfeld module $\varphi$, 
  as the exponent of the leading term of
  $\ol \varphi_a=\ol\varphi_{a,h} \tau^h+\cdots$ in the associated 
  formal $A$-module $\ol \varphi$ when
  $a$ is a uniformizer; see \cite[(1.1), p.~529]{strauch}. 

(1) Suppose $\wt a\in A$ is another element
  with $v(\wt a)=1$. Recall that for the universal generalized Drinfeld module $(\olE,\ol\varphi)$ on $M^r_K$ we write $\omega_K:=\olE^{\otimes -1}$. On $\ol \scrM^r_K$, where $H^a_0=0$, we have  
\[ \ol \varphi_a:=\ol \varphi_{K,a}=H^a_1 \tau^{\deg v}+\dots+ H^a_2
  \tau^{2\deg v}+\cdots,\]   
such that for every $j\in\{(i-1)\deg(v),\ldots,i\deg(v)-1\}$ the 
coefficient $\ol\varphi_{K,a,j}$, which is a section of $\omega_K^{\otimes q^j-1}$, 
lies in the submodule generated by
$(H^a_0\cdot\omega_K^{\otimes q^j-1},\ldots,H^a_{i-1}\cdot\omega_K^{\otimes q^j-q^{(i-1)\deg v}})$
by Lemma~\ref{LemmaHasseInv}.
Then $a/\wt a\in A_{(v)}^\times$ and $a/\wt a=c/\wt c$ with $c,\wt c\in
A$ and $v(c)=v(\wt c)=0$. Write $\ol\varphi_{\wt c}=\gamma(\wt c)\tau^0+\ldots$ and likewise for $\ol\varphi_c$. We have 
\begin{alignat*}{2}
  \ol \varphi_{\wt c a} & \;=\;\ol \varphi_{\wt c}\ol \varphi_{a} && \;=\;\gamma(\wt c) H^a_1 \cdot\tau^{\deg v}+\cdots\\   
  \ol \varphi_{c \wt a} & \;=\;\ol \varphi_{c} \ol \varphi_{\wt a} && \;=\;\gamma(c) H^{\wt a}_1\cdot \tau^{\deg v}+\cdots.
\end{alignat*}
From $\wt c a=c \wt a$ and $\gamma(c),\gamma(\wt c)\in \F_v^\times$,
  we get $\gamma(\wt c) H^a_1=\gamma(c) H^{\wt 
  a}_1$ and $V(H^{a}_1)=V(H^{\wt a}_1)$. We now proceed by
  induction. For $1\le i \le h-1$, we have
\begin{equation}
  \label{eq:HI.ac2}
  \begin{split}
  \ol \varphi_{\wt c a} \mod (H^a_0,\dots, H^a_{i-1})&=\gamma(\wt c) 
  H^a_i \cdot\tau^{i \deg v}+\cdots \\   
  \ol \varphi_{c \wt a} \mod (H^{\wt a}_0,\dots, H^{\wt a}_{i-1}) 
  &=\gamma(c) H^{\wt a}_i\cdot \tau^{i \deg v}+\cdots.\\   
  \end{split}
\end{equation}
By $\ol \varphi_{\wt c a}=\ol \varphi_{c \wt a}$ and the induction hypothesis $(H^a_1,\dots, H^a_{i-1})=(H^{\wt a}_1,\dots,
H^{\wt a}_{i-1})$, 
we obtain the equality $(H^a_1,\dots, H^a_{i})=(H^{\wt a}_1,\dots,
H^{\wt a}_{i})$. Therefore, $(\ol \scrM^r_K)^{\ge h}:=V(H^a_1,\dots,
H^a_{h-1})$ is independent of $a$. \qed
\end{proof}

\begin{defn}\label{HI.4}
  For $1\le h \le r$, we define $(\ol \scrM^r_{K})^{\ge h}$ (resp.~ $( \scrM^r_{K})^{\ge h}$) as the closed
  subscheme of $\ol \scrM^r_K$ (resp.~$\scrM^r_K$) defined by the Hasse invariants 
  $H^a_{1},\dots, H^a_{h-1}$ for any $a\in \grp$ with $v(a)=1$. Let 
  $(\ol \scrM^r_{K})^{(h)}:=(\ol \scrM^r_{K})^{\ge h}\setminus(\ol \scrM^r_{K})^{\ge
  h+1}$ and $(\scrM^r_{K})^{(h)}:=(\scrM^r_{K})^{\ge h}\setminus(\scrM^r_{K})^{\ge
  h+1}$ be the locally closed subschemes. 
\end{defn}

In particular, $(\scrM^r_{K})^{(h)}$ equals the $v$-rank stratum $(\scrM^r_{K})_{(r-h)}$ from Section~\ref{sec:SS.1} and $(\ol\scrM^r_K)^{\ge 1}
      =\ol \scrM^r_{K}=\ol\bfM^r_K \otimes_{A[\grn_K^{-1}]} \F_v$.

\begin{thm}\label{HI.5}
  Let $h$ be an integer with $1\le h\le r$ and let $a\in\grp$ with $v(a)=1$.

{\rm (1)} The subschemes 
  $(\ol \scrM^r_{K})^{\ge h}$ and $(\ol \scrM^r_{K})^{(h)}$ are  
  of pure dimension $r-h$ and $(\scrM^r_{K})^{(h)}$ is
   Zariski dense in $(\ol \scrM^r_{K})^{\ge h}$, in $(\scrM^r_{K})^{\ge h}$ and in $(\ol \scrM^r_{K})^{(h)}$.

{\rm (2)} The subschemes $(\ol \scrM^r_{K})^{(h)}$ and
$(\scrM^r_{K})^{(h)}$ are affine.

{\rm (3)} If $\ol \bfM^r_K$ is Cohen-Macaulay then $(H^a_0,\ldots,H^a_{r-1})$ is a regular sequence on $\ol\bfM^r_K$.

{\rm (4)} If $\ol \bfM^r_K$ is Cohen-Macaulay then for every $1\le h\le r$ the closed subscheme $X_h:=V(H^a_1,\ldots,H^a_h)$ of $\ol\bfM^r_K$ is flat over $A_{(v)}$.

{\rm (5)} For every $h< r$, every (geometric) irreducible component of $(\ol \scrM^r_K)^{\ge h}$ contains a (geometric) irreducible component of $(\ol \scrM^r_K)^{\ge h+1}$. Likewise, every (geometric) irreducible component of $(\scrM^r_K)^{\ge h}$ contains a (geometric) irreducible component of $(\scrM^r_K)^{\ge h+1}$.

{\rm (6)} If $\ol \bfM^r_K$ is Cohen-Macaulay, then so is each subscheme 
 $(\ol \scrM^r_{K})^{\ge h}$.

 {\rm (7)} If $\ol \bfM^r_K$ is Cohen-Macaulay and $h< r-1$, then the natural maps $\pi_0((\ol \scrM^r_K)^{\ge h+1})\to\pi_0((\ol \scrM^r_K)^{\ge h})$ and $\pi_0((\ol \scrM^r_K)^{\ge h+1}\otimes_{\F_v} \ol \F_v)\to\pi_0((\ol \scrM^r_K)^{\ge h}\otimes_{\F_v} \ol \F_v)$ of (geometric) connected components are bijective.

{\rm (8)} The moduli space $\ol\scrM^r_{K}=(\ol\scrM^r_{K})^{\ge 1}$ has
$\bigl|(\A^\infty)^\times/(F^\times\cdot \det K)\bigr|$ geometric
connected components 
and $\bigl|(\A^\infty)^\times/(F^\times\cdot \det
K)\bigr|/f_v$ connected components, 
where $f_v$ is the order of the Frobenius element 
$(\grp, F_{\det
  K}/F)\in \Gal(F_{\det K}/F)$ of the class field $F_{\det K}$ of $F$
  corresponding to the open subgroup 
  $F^\times \det K \subset (\A^\infty)^\times$ by class field theory.
If $K=K(\grn)$ for a nonzero proper ideal $\grn\subset A$, then
$\bigl|(\A^\infty)^\times/(F^\times\cdot \det
K)\bigr|/f_v=h(A) \cdot|(A/\grn)^\times| /\bigl((q-1)f_1f_2\bigr)$, where $h(A)$ is the class number of $A$, where $f_1$ is the smallest
positive integer such that $\grp^{f_1}=(b)$ is a principal ideal and
$f_2$ is the order of the image of $b$ in
$(A/\grn)^\times/{\Fq}^\times$. 

{\rm (9)} If $\ol \bfM^r_K$ is Cohen-Macaulay then statement (8) holds also true for $(\ol\scrM^r_{K})^{\ge h}$ for all $h<r$ (and not just $h=1$).
\end{thm}

\begin{remark}
  (1) The closed stratum $(\ol \scrM^r_K)^{\ge r}$ is the
zero dimensional supersingular locus and it has many connected 
components (points) as given by the mass formula \eqref{eq:SS.15}; see Section~\ref{sec:SS.3}.

  (2)  When $A=\Fq[t]$ and $K=K(t)$, Pink and Schieder \cite[Theorem~1.11]{pink-schieder} showed that $\ol \bfM^r_K$ is Cohen-Macaulay; cf. our Proposition~\ref{S.10}. We expect this is also true for arbitrary $\ol \bfM^r_K$; see Conjecture~\ref{ConjCM} and Remark~\ref{RemCM}.
\end{remark}

\begin{proof}[of Theorem~\ref{HI.5}]
  (1) We first show that $(\ol \scrM^r_{K})^{\ge h}$ is of pure dimension
      $r-h$. When $h=1$, consider for a moment the scheme $\ol\bfM^r_K$ over $A[\grn_K^{-1}]$ from Theorem~\ref{S.2} and let $\Spec \wt A:=\Spec A[\grn_K^{-1}a^{-1}]\cup\{\grp\}$. Then the projective variety $(\ol\scrM^r_K)^{\ge 1}
      =\ol \scrM^r_{K}=\ol\bfM^r_K \otimes_{A[\grn_K^{-1}]} \F_v$ is
      of pure codimension $1$ in $\ol\bfM^r_K\otimes_{A[\grn_K^{-1}]}\wt A$ by 
      \cite[IV$_4$, Corollaire~21.12.7]{EGA}, because the inclusion of its 
      complement $\ol\bfM^r_K\otimes_{A[\grn_K^{-1}]}\wt A[1/a]\hookrightarrow \ol\bfM^r_K\otimes_{A[\grn_K^{-1}]}\wt A$ is an affine morphism. Since $\ol\bfM^r_K\otimes_{A[\grn_K^{-1}]}\wt A$ is irreducible of dimension $r$ we conclude that $\ol \scrM^r_{K}$ is pure of dimension $r-1$; use \cite[Corollary~13.4]{eisenbud}.
      Since the subvariety $(\ol \scrM^r_{K})^{\ge h}$ is cut out
      by $h-1$ equations from $\ol \scrM^r_{K}$, every irreducible component has
      dimension $\ge r-h$ by \cite[Theorem~I.7.2]{hartshorne}.  
      As in the proof of Proposition~\ref{2.7} we stratify the scheme $\ol \scrM^r_{K}=\coprod_{1\le r'\le r}
      S_{r'}$ by ranks $r'$, that is, $S_{r'}$ is the locally closed reduced
      subscheme consisting of all points where the universal generalized Drinfeld
      $A$-module has rank $r'$, and hence is a (genuine) Drinfeld module of rank $r'$. 
      By adding a level-$\grn$ structure on $S_{r'}$ for each $r'$, 
      there exist a finite \'etale cover $\wt S_{r'}$ of $S_{r'}$ and 
      a morphism $\wt S_{r'}\to \scrM^{r'}_{K'(\grn)}$ for 
      $K'(\grn)=\ker\bigl(\GL_{r'}(\wh A)\to\GL_{r'}(A/\grn)\bigr)$ 
      induced by the universal property of the moduli scheme 
      $\scrM^{r'}_{K'(\grn)}$.
      The latter morphism is quasi-finite, because $\ol\varphi$ is
      weakly separating on $\ol\scrM^r_K$. Since the stratum
      $(\scrM^{r'}_{K'(\grn)})^{\ge h}$ is of pure dimension $r'-h$ by
      Theorem~\ref{SS.2},
      the stratum $S^{\ge h}_{r'}= (\ol \scrM^r_{K})^{\ge h}\cap S_{r'}$
      has dimension $\le r'-h\le r-h$. Therefore, $\dim 
      (\ol \scrM^r_{K})^{\ge h}=r-h$ and every irreducible component has the same
      dimension. 
      Moreover, the complement of $(\scrM^r_K)^{(h)}$ in $(\ol\scrM^r_K)^{\ge h}$ equals $(\scrM^r_K)^{\ge h+1}\cup\coprod_{1\le r'<r}S^{\ge h}_{r'}$ and is of dimension $<r-h$ by the above. So every irreducible component of $(\ol\scrM^r_K)^{\ge h}$ meets $(\scrM^r_K)^{(h)}$ and the Zariski-density is proved.

  (2) 
  Since the stratum $(\ol \scrM^r_{K})^{(h)}$ 
  is the complement of
      an effective 
      ample divisor defined by $H^a_h=0$ in the projective scheme 
    $V(H^a_1,\dots,
      H^a_{h-1})$, it is affine. Here we use that on the latter scheme
    $\omega_K$ is ample by \cite[II, Proposition~4.6.13 (i bis)]{EGA}. 
      Since $\scrM^r_K$ is affine and $\ol\scrM^r_K$ is separated, the 
    intersection $(\scrM^r_K)^{(h)}=\scrM^r_K \cap (\ol \scrM^r_{K})^{(h)}$ is also affine. 

(3) For any point $x\in (\ol \scrM^r_K)^{\ge h}$, one has $\calO_{(\ol
    \scrM^r_K)^{\ge h}, x}=\calO_{\ol \bfM^r_K,
    x}/(H^a_0,\dots, H^a_h)$. It follows from (1) that for any 
     $0\le i\le r-2$, one has 
\[ \dim \calO_{\ol \bfM^r_K,
    x}/(H^a_0,\dots, H^a_i)=\dim \calO_{\ol \bfM^r_K,
    x}/(H^a_0,\dots, H^a_{i+1})+1. \] 
    Thus by \cite[Theorem~II.8.21A]{hartshorne},  $H^a_0,\dots, H^a_{r-1}$
    form a regular sequence in the local ring $\calO_{\ol \bfM^r_K,x}$.

(4) By \cite[Theorem~18.17(a)]{eisenbud} we must show that $\depth(H^a_0\cdot\O_{X_h,x}, \O_{X_,x}) = \dim A_{(v)} = 1$ for every point $x \in V(H_0^a,\ldots,H_h^a)$. By (3) and \cite[Corollary~17.2]{eisenbud} also the sequence $H_1^a,\ldots,H_h^a,H^a_0$ is a regular sequence in $\O_{\ol\bfM^r_K,x}$. It follows that $H^a_0$ is a non-zero-divisor in $\O_{X_h,x}$ and $\depth(H^a_0\cdot\O_{X_h,x}, \O_{X_,x})= 1$ as desired.

(5) Let $X\subset (\ol \scrM^r_K)^{\ge h}\otimes_{\F_v} \ol \F_v$ be a geometric irreducible component with reduced subscheme structure. Then $\omega_K|_X$ is ample on $X$ by \cite[II, Proposition~4.6.13 (i bis)]{EGA}. If $V_X(H^a_h)=X\cap (\ol \scrM^r_K)^{\ge h+1}\otimes_{\F_v} \ol \F_v=\varnothing$, then $H^a_h$ induces an isomorphism $\calO_X\isoto (\omega_K|_X)^{\otimes q^{h\deg(v)}-1}$. Then $\calO_X$ is ample by \cite[II, Proposition~4.5.6(i)]{EGA} and $X$ is quasi-affine by \cite[II, Proposition~5.1.2]{EGA}. Since $X$ is a projective $\ol\F_v$-scheme it is finite over $\ol\F_v$ by \cite[Corollary~13.82]{GW}. This contradicts that its dimension is $r-h\ge1$. So $X\cap (\ol \scrM^r_K)^{\ge h+1}\otimes_{\F_v} \ol \F_v \ne\varnothing$. Since this is cut out from $X$ by one equation $H^a_h=0$, its dimension equals $\dim(X)-1=r-h-1=\dim(\ol \scrM^r_K)^{\ge h+1}$ by (1) and \cite[Corollary~13.11]{eisenbud}. 
It follows that an irreducible component $X'$ of $(\ol \scrM^r_K)^{\ge h+1}\otimes_{\F_v} \ol \F_v$ is contained in $X$. Since $(\scrM^r_K)^{\ge h+1}\otimes_{\F_v} \ol \F_v$ is dense in $(\ol \scrM^r_K)^{\ge h+1}\otimes_{\F_v} \ol \F_v$ by (1), it follows that the generic point of $X'$ lies in $(\scrM^r_K)^{\ge h+1}\otimes_{\F_v} \ol \F_v\subset X\cap\scrM^r_K\otimes_{\F_v} \ol \F_v$. This proves the statement for $(\scrM^r_K)^{\ge h}$.

(6) Let $x\in(\ol\scrM^r_K)^{\ge h}$. Since $H^a_0,\dots, H^a_{h-1}$ is a regular sequence in $\O_{\ol\bfM^r_K,x}$ by (3) the assertion follows from \cite[Proposition~18.13]{eisenbud}

(7) The maps are surjective by (5). The injectivity follows from Lemma~\ref{LemmaYConnected} below and (6), because $(\ol \scrM^r_K)^{\ge h+1}$ is the subscheme of $(\ol \scrM^r_K)^{\ge h}$ cut out by the generalized Hasse invariant $H^a_h$ and $H^a_h$ is a global section of an ample invertible sheaf. 

(8) Let $A_v$ be the completion of $A_{(v)}$ and let $F_v$ be its fraction field. Let $F'_v$ be a finite extension of the $v$-adic completion of the compositum $\ol\F_v F_v$ such that every connected component of $\ol{\bfM}^r_{K}\otimes_{A_{(v)}} \ol F_v$ is defined over $F_v'$. Since ${\bfM}^r_{K}\subset \ol{\bfM}^r_{K}$ is fiber-wise open and dense and $\ol{\bfM}^r_{K}\otimes_{A_{(v)}} \ol F$ is normal, we get the right equality in the following chain of sets of connected components:
\[
\pi_0(\ol{\bfM}^r_{K}\otimes_{A_{(v)}} \ol \F_v)\simeq \pi_0(\ol{\bfM}^r_{K}\otimes_{A_{(v)}} F'_v)\simeq \pi_0(\ol{\bfM}^r_{K}\otimes_{A_{(v)}} \ol F)=\pi_0(\bfM^r_K\otimes_{A_{(v)}} \ol F).\]
In this chain the first bijection comes from Lemma~\ref{Lemma6.9} below, and the second bijection from \cite[IV$_2$, Proposition~4.5.1]{EGA}.  By Proposition~\ref{Prop2.4}, the set on the right is isomorphic to $(\A^\infty)^\times/(F^\times \det K)$.
Thus, $\ol\scrM^r_{K}\otimes_{\F_v} \ol
\F_v=\ol{\bfM}^r_{K}\otimes_{A_{(v)}} \ol \F_v$ has
$\bigl|(\A^\infty)^\times/(F^\times \det K)\bigr|$ connected
components. 

The same argument also shows that 
the connected components of $\ol\scrM^r_{K}$ are in bijection with
those of the generic fiber $\ol{\bfM}^r_{K}\otimes_{A_{(v)}} F_v$,
which are also the $\Gal(\ol F_v/F_v)$-orbits of $\pi_0(M^r_K
\otimes_F \ol F)$, where we fix an embedding $\Gal(\ol F_v/F_v)\embed
\Gal(\ol F/F)$. Choose a nonzero ideal $\grn\subset A$ such that $K(\grn)\subset K$. The Weil pairing map $M^r_{K(\grn)}\to M^1(\grn)$
induces a $\Gal(\ol F/F)$-equivariant bijection $\pi_0(M^r_{K(\grn)}\otimes_F \ol F)\isoto M^1(\grn)(\ol F)$ (cf.~\eqref{eq:pi0}) and this induces an equivariant bijection $\pi_0(M^r_{K}\otimes_F \ol F)\isoto M^1_{\det K}(\ol F)$. The explicit reciprocity law (\cite[Section~8]{hayes79}, \cite[Section 8]{drinfeld:1}) 
shows that the action of $\Gal(\ol F/F)$ on $M^1(\grn)(\ol F)$ factors through $\Gal(F_\grn/F)$, where $F_\grn$ is the ray class field with modulus $\grn,$  which makes $M^1(\grn)(\ol F)=M^1(\grn)(F_\grn)\simeq (\A^\infty)^\times/(F^\times (1+\grn \wh A)^\times)$ a principal homogeneous space under $\Gal(F_\grn/F)$.
Moreover, the action of the Frobenius element $(\grp,F_\grn/F)\in \Gal(F_\grn/F)$ on $M^1(\grn)(F_\grn)$ is the same as that of the ideal class $[\grp]$ in the ray class group $\Pic_\grn(A):=\calI(\grn)/\calP_\grn\simeq (\A^\infty)^\times/(F^\times (1+\grn \wh A)^\times)$, where $\calI(\grn)$ is the prime-to-$\grn$ ideal group of $F$ and $\calP_\grn$ is the $\grn$-principal ideal subgroup. In particular, as an $F$-scheme, $M^1(\grn)\simeq \Spec F_\grn$. Then $M^1(\grn)\otimes_F F_v=\Spec (F_\grn\otimes_F F_v)$ has $[F_\grn:F]/f_v$ closed points. By Proposition~\ref{Prop2.4}, we have $[F_\grn:F]=\bigl |(\A^\infty)^\times/(F^\times (1+\grn \wh A)^\times) \bigr |=h(A)\cdot|(A/\grn)^\times|/(q-1)$. The number $f_v$ is equal to the order of the class $[\grp]\in \Pic_\grn(A)$. It can be computed by the exact sequences~\eqref{Eq1Pic_n} and \eqref{Eq2Pic_n} which yield
\[ 1 \longrightarrow (A/\grn)^\times/\Fq^{\times} \longrightarrow \Pic_\grn(A) \longrightarrow \Pic(A) \longrightarrow 1. \]  
Namely, $f_v=f'_1 f_2'$, where $f_1'$ is the smallest positive integer such that $[\grp]^{f'_1}$ lies in the subgroup $(A/\grn)^\times/\Fq^{\times}$ and $f_2'$ is the order of $[\grp]^{f_1'}$ in the subgroup $(A/\grn)^\times/\Fq^{\times}$.  Clearly $f_1'=f_1$ and $f_2'=f_2$, and hence $f_v=f_1 f_2$. This proves the last statement.
Using the $\Gal(\ol F/F)$-equivariant surjective map $M^1(\grn)(\ol F) \to M^1_{\det K}(\ol F)\simeq  (\A^\infty)^\times/(F^\times \det K)$, we obtain $M^1_{\det K}=\Spec F_{\det K}$. It then follows that
$\pi_0(M^1_{\det K}\otimes_F F_v)=\Spec (F_{\det K} \otimes_F F_v)$ has
$[F_{\det K}:F]/f_v$ elements, where $f_v$ is the residue class degree of $v$ in $F_{\det K}$, which is the same as the order of the Frobenius element $(\grp,F_{\det K}/F)$. This proves the second statement and (8).

(9) This follows immediately from (7) and (8).
  \qed
\end{proof}

\begin{lemma}\label{LemmaYConnected}
 If $X$ is a connected projective Cohen-Macaulay scheme over a field $k$ of pure dimension $\ge 2$, and $Y$ is a closed subset which
    is the support of an effective ample divisor, then $Y$ is
    connected. 
\end{lemma}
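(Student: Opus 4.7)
The plan is to deduce the connectedness of $Y$ from that of $X$ by reducing to a cohomology computation via Serre duality and Serre vanishing. Let $D$ be the effective ample Cartier divisor with $\mathrm{Supp}(D)=Y$, and set $\calL:=\O_X(D)$. For every $n\ge 1$ one has the short exact sequence
\[
0\longrightarrow \calL^{-n}\longrightarrow \O_X\longrightarrow \O_{nD}\longrightarrow 0
\]
on $X$. Since $|nD|=Y$ as topological spaces, and $Y$ is projective over $k$, the scheme $Y$ is connected if and only if the finite $k$-algebra $H^0(nD,\O_{nD})$ has no nontrivial idempotents.

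The main step is to show that the restriction map $H^0(X,\O_X)\to H^0(nD,\O_{nD})$ is an isomorphism for $n$ sufficiently large. Since $X$ is Cohen-Macaulay projective over $k$ of pure dimension $d\ge 2$, it admits a dualizing sheaf $\omega_X$ and Serre duality for locally free sheaves gives
\[
H^i(X,\calL^{-n})\;\cong\;H^{d-i}(X,\omega_X\otimes\calL^n)^\vee
\]
for every $i$ and $n$. Because $\calL$ is ample and both $d$ and $d-1$ are $\ge 1$, Serre's vanishing theorem kills $H^{d-1}$ and $H^d$ of $\omega_X\otimes\calL^n$ for $n\gg 0$. Hence $H^0(X,\calL^{-n})=H^1(X,\calL^{-n})=0$, and the long exact cohomology sequence of the displayed short exact sequence produces the desired isomorphism $H^0(X,\O_X)\isoto H^0(nD,\O_{nD})$.

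To finish, since $X$ is connected and projective over $k$, Stein factorization shows that $H^0(X,\O_X)$ is a finite local $k$-algebra, hence has no nontrivial idempotents. Transporting this along the isomorphism above yields the same property for $H^0(nD,\O_{nD})$, so $nD$, and therefore its support $Y$, is connected.

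The only nontrivial technical ingredient is Serre duality in this generality, which is standard for projective Cohen-Macaulay schemes over a field (see e.g.\ \cite[Ch.\ III, \S\,7]{hartshorne} together with the fact that the dualizing complex of a Cohen-Macaulay scheme of pure dimension $d$ is concentrated in cohomological degree $-d$). The hypothesis $\dim X\ge 2$ enters precisely at the vanishing step, guaranteeing $d-1\ge 1$ so that the cohomology group $H^1(X,\calL^{-n})$ can be made to vanish; in dimension one, $Y$ would be zero-dimensional and the statement would fail in general.
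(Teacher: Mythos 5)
Your proof is correct and follows essentially the same route as the paper: twist the structure sequence of the thickenings $nD$, kill $H^1(X,\calL^{-n})$ for $n\gg0$ using Serre duality plus Serre vanishing on the Cohen-Macaulay equidimensional $X$ (this is exactly \cite[Chap.~III, Theorem~7.6(b)]{hartshorne}, which the paper invokes, noting that the algebraically closed hypothesis there is harmless by flat base change), and transport the locality of $H^0(X,\O_X)$ to $H^0(nD,\O_{nD})$. The only cosmetic difference is that you upgrade the restriction map to an isomorphism by also killing $H^0(X,\calL^{-n})$, whereas the paper only needs its surjectivity, since a nonzero quotient of a finite local $k$-algebra is again local.
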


\begin{proof}
Since the support does not change when we replace an effective divisor by a power of it, we may assume that $Y$ is the support of a very ample divisor $D$. Let $\calO(1)$ be the corresponding very ample invertible sheaf. For each $q>0$, let $Y_q$ be the closed subscheme supported on $Y$ corresponding to the divisor $qD$. Then we have an exact sequence
\[ 0\to \calO_X(-q) \to \calO_X \to \calO_{Y_q} \to 0. \]
Taking cohomology we have an exact sequence
\[ H^0(X,\calO_X) \xrightarrow{\alpha} H^0(Y, \calO_{Y_d}) \to H^1(X, \calO_X(-q)). \]
As $X$ is Cohen-Macaulay and equi-dimensional
$H^i(X,\calO_X(-q))=0$ for $i<\dim X$ and $q \gg 0$, by \cite[Chap. III, Theorem~7.6(b)]{hartshorne}. Note that the assumption of loc.\ cit.\ that $k$ is algebraically closed is not needed, because cohomology commutes with the flat base change from $k$ to an algebraic closure by \cite[I$_{\rm new}$, Proposition~9.3.2]{EGA}. Thus, for $q\gg 0$, we have $H^1(X, \calO_X(-q))=0$ and the map $\alpha$ is surjective. 
But $H^0(X,\calO_X)$ is a finite local $k$-algebra as $X$ is connected, and $H^0(Y, \calO_{Y_q})$ contains $k$, so we conclude that  $H^0(Y, \calO_{Y_d})$ is also a finite local $k$-algebra. Therefore, $Y$ is connected.
\qed
\end{proof}

\begin{lemma}\label{Lemma6.9}
Let $X$ be a scheme over a ring $R$.
\begin{enumerate}
\item \label{Lemma6.9_A}
If $R$ is an integral domain with fraction field $K$ and $X$ is normal and flat over $R$, then there is a canonical bijection between the connected components of $X$ and of $X\times_R K$, which is given by sending $Y$ to $Y\times_R K$.
\item \label{Lemma6.9_B}
If $R$ is noetherian, henselian and local with residue field $k$, and $X$ is proper over $R$, then there is a canonical bijection between the connected components of $X$ and of $X\times_R k$.
\end{enumerate}
\end{lemma}

\begin{proof}
(\ref{Lemma6.9_A})
Let $Y_K$ be a connected component of $X_K:=X\times_R K$, and let $e\in\Gamma(X_K,\O_{X_K})$ be the element which is identically $1$ on $Y_K$ and identically $0$ outside $Y_K$. For every point $x\in X$ the elements of $R\setminus\{0\}$ are non-zero-divisors in $\O_{X,x}$ by the flatness of $X$ over $R$. So $e$, which lies in $\O_{X,x}\otimes_R K=(R\setminus\{0\})^{-1}\O_{X,x}$, lies in the total ring of fractions of $\O_{X,x}$. Since $e$ is an idempotent, that is $e^2-e=0$, and $\O_{X,x}$ is normal, we obtain $e\in\O_{X,x}$, and hence $e\in \Gamma(X,\O_X)$. We claim that the open and closed subset $Y$ of $X$ on which $e$ is invertible is a connected component. Indeed, if $Y$ was the disjoint union of two non-empty open sets $U_1$ and $U_2$, their intersection with $Y_K$ would cover $Y_K$ which is connected. So one of them, say $U_1$ has empty intersection with $Y_K$, that is $U_1\otimes_R K=\varnothing$. If $x\in U_1$ then $(0)=\O_{X,x}\otimes_RK=(R\setminus\{0\})^{-1}\O_{X,x}$, and hence there is an element of $R\setminus\{0\}$ which is a zero-divisor on $\O_{X,x}$ in contradiction to the flatness of $X$ over $R$. The argument also shows that every connected component of $X$ meets $X_K$. This establishes the bijection (\ref{Lemma6.9_A}).

\medskip\noindent
(\ref{Lemma6.9_B}) follows from the lifting of idempotents in form of \cite[IV$_4$, Proposition~18.5.19]{EGA}. \qed
\end{proof}

\begin{prop}\label{PropIgusa}
Recall that $(\scrM^r_{K})^{(h)}$ equals the $v$-rank stratum $(\scrM^{r}_{K})_{(r-h)}$ from Section~\ref{sec:SS.1}. Let $h<r$ and let $(E,\varphi, \bar \eta)$ be the universal family over $(\scrM^r_{K})^{(h)}$. For every $m\ge 0$, let
\[
\Ig^{(h)}_{m}\;:=\;\Isom_{(\scrM^r_{K})^{(h)}}((\grp^{-m}/A)^{r-h}, \varphi[\grp^m]^{\rm et})
\]
be the Igusa cover of level $m$ of $(\scrM^r_{K})^{(h)}$, where $\varphi[\grp^m]^{\rm et}$ is the \'etale part of $\varphi[\grp^m]$. Then the natural map $\pi:\Ig^{(h)}_{m}\to (\scrM^r_{K})^{(h)}$ induces bijections $\pi_0(\Ig^{(h)}_{m})\simeq \pi_0((\scrM^r_{K})^{(h)})$ and $\pi_0(\Ig^{(h)}_{m}\otimes_{\F_v} \ol \F_v)\simeq \pi_0((\scrM^r_{K})^{(h)}\otimes_{\F_v} \ol \F_v)$.
\end{prop}

\begin{proof}
Let $\F=\F_v$ or $\F=\ol \F_v$. It suffices to show that for every connected component $S\in \pi_0((\scrM^r_{K})^{(h)}\otimes_{\F_v} \F)$, the cover $\pi^{-1}(S)$ over $S$ is connected. Let $\bar s$ be a geometric point of $S$. The action of the fundamental group $\pi_1(S,\bar s)$ on the fiber $\pi^{-1}(\bar s)$ gives a global monodromy $\rho_S: \pi_1(S,\bar s)\to \GL_{r-h}(A/\grp^m)$. By Theorem~\ref{HI.5}(5), every connected component $S$ contains in its closure $\olS\subset\scrM^r_K\otimes_{\F_v} \F$ a supersingular point $x\in \olS$. By the analog of the Serre-Tate theorem for Drinfeld $A$-modules, the completed local ring $\wh \calO_{\olS,x}$ is the universal deformation ring of the one-dimensional formal $A_v$-module attached to the supersingular Drinfeld module $(E_x, \varphi_x)$ over the point $x$. Let $\Spec \wh \calO_{\olS,x}^{(h)}$ be the stratum in $\Spec \wh \calO_{\olS,x}$ where the height equals $h$, and hence the $v$-rank equals $r-h$. Then $\Spec \wh \calO_{\olS,x}^{(h)}$ is irreducible; see for example \cite{strauch}. Let $s'$ and $\bar s'$ be the generic point and geometric generic point of $\Spec \wh \calO_{\olS,x}^{(h)}$, and denote their residue fields by $k(s')$ and $k(\bar s')$, respectively. Then $s'$ and $\bar s'$ map into $S$. We may change the initial geometric base point $\bar s$ of $S$ and assume that $\bar s=\bar s'$. Then the action of the Galois group $\Gal(k(\bar s')/k(s'))$ on the fiber $\pi^{-1}(\bar s')$ gives a local monodromy $\rho_x:\Gal(k(\bar s')/k(s'))\to \GL_{r-h}(A/\grp^m)$ and it factors through the global monodromy $\rho_S$:
\[ \rho_x: \Gal(k(\bar s')/k(s'))\longrightarrow \pi_1(S,\bar s') \longrightarrow \GL_{r-h}(A/\grp^m). \]
By \cite[Theorem 2.1]{strauch}, the local monodromy $\rho_x$ is surjective. It follows that the global monodromy $\rho_S$ is surjective and that $\pi^{-1}(S)$ is connected. \qed
\end{proof}

\begin{remark}
Very recently Fukaya, Kato and Sharifi \cite{fukaya-kato-sharifi} have
constructed toroidal (smooth) compactifications of the Drinfeld moduli
scheme $\bfM_K^r$ over $A$ for $A=\Fq[t]$ and $K=K(\grn)$ with a
nonzero ideal $\grn\subset A$. This leads to the following results.  
\end{remark}

\begin{prop}\label{CompSpFiber}
Let $A=\Fq[t]$, $K=K(\grn)$ with a nonzero ideal $\grn\subset A$, $\grp\nmid \grn$ a prime ideal of $A$ with corresponding place $v$. 
  
{\rm (1)} The moduli space $\scrM^r_{K}=\bfM^r_{K}\otimes_{A_{(v)}}\otimes \F_v$ has $\bigl|(A/\grn)^\times/\F_q^\times\bigr|$ geometric connected components.

{\rm (2)} The ordinary locus $\scrM^{r, \rm ord}_{K}:=(\scrM^{r}_{K})^{(1)}$ of $\scrM^{r}_{K}$ and its Igusa cover $\Ig^{(1)}_{m}$ have $\bigl|(A/\grn)^\times/\F_q^\times\bigr|$ geometric connected components.
\end{prop}

\begin{proof}
(1) Let $A_v$ be the completion of $A_{(v)}$ and let $F_v$ be its fraction field. By \cite{fukaya-kato-sharifi}, there is a proper smooth compactification $\ol{\bfM}^r_{K, \Sigma}$ of ${\bfM}^r_{K}$ over $A_{(v)}$. Let $F'_v$ be a finite extension of the $v$-adic completion of the compositum $\ol\F_v F_v$ such that every connected component of $\ol{\bfM}^r_{K, \Sigma}\otimes_{A_{(v)}} \ol F_v$ is defined over $F_v'$. By Lemma~\ref{Lemma6.9} there is the left bijection in $\pi_0(\ol{\bfM}^r_{K, \Sigma}\otimes_{A_{(v)}} \ol \F_v)\simeq \pi_0(\ol{\bfM}^r_{K, \Sigma}\otimes_{A_{(v)}} F'_v)\simeq \pi_0(\ol{\bfM}^r_{K, \Sigma}\otimes_{A_{(v)}} \ol F)$, and the right bijection is \cite[IV$_2$, Proposition~4.5.1]{EGA}. Since ${\bfM}^r_{K}\subset \ol{\bfM}^r_{K, \Sigma}$ is fiber-wise open and dense and $\ol{\bfM}^r_{K, \Sigma}$ is smooth over $A_{(v)}$, we get bijections:
\[
\pi_0(\scrM^r_{K})=\pi_0(\ol{\bfM}^r_{K, \Sigma}\otimes_{A_{(v)}} \ol \F_v)\simeq \pi_0(\ol{\bfM}^r_{K, \Sigma}\otimes_{A_{(v)}} \ol F)=\pi_0(\bfM^r_K\otimes_{A_{(v)}} \ol F).
\]
By Proposition~\ref{Prop2.4}, the latter set is isomorphic to $(\A^\infty)^\times/(F^\times (1+\grn \wh A))$, which is isomorphic to $(A/\grn)^\times/\Fq^\times$ because $A=\Fq[t]$ is a principal ideal domain. 
Thus, $\scrM^r_{K}$ has $|(A/\grn)^\times/\F_q^\times|$ connected components.

(2) follows from (1) and Proposition~\ref{PropIgusa}. \qed
\end{proof}

\subsection{Hecke eigensystems of Drinfeld modular forms modulo $v$}
\label{sec:HI.3}

In this final section we determine the Hecke eigensystems arising from $M_k(r, K_v, \ol \F_v)^{K^v}$, see Definition~\ref{MF.7}. Recall the group $G'$ over $F$ from Section~\ref{sec:SS.3} and the isomorphism $G(\A^{\infty v})\simeq G'(\A^{\infty v})$ from \eqref{eq:SS.id}. Let 
\[
\calA(G',\ol\F_v)\;:=\;\bigl\{\,f:G'(F)\backslash G'(\A)/G'(F_\infty)\longrightarrow \ol\F_v\quad\text{locally constant functions}\,\bigr\}
\]
and recall the group $U(v)$ from \eqref{eq:SS.8}.
For brevity we write 
  $\calH^{\infty v\grn}_{\ol \F_v}=\calH_{\ol \F_v}(G(\A^{\infty
  v\grn}),K^{v\grn})\simeq\calH_{\ol \F_v}(G'(\A^{\infty
  v\grn}),K^{v\grn})$ 
  for the prime-to-$v\grn$ spherical Hecke algebra over $\ol \F_v$. 
  We consider the smooth admissible $G(\A^{\infty v})$-module $M_k(r,K_v,\ol\F_v)$ from Theorem~\ref{ThmSmoothHeckeAction}.

\begin{thm}\label{HI.6}
Let $K_v=K_v(1)=G(A_v)$ and let $K^v\subset G(\A^{\infty v})$ be an open 
compact subgroup. Let $\grn\subset A$ be a 
non-zero ideal, prime to $v$ such that $K(\grn)$ is contained in 
a conjugate of $K_vK^v$. 
Consider the sets of prime-to-$v\grn$ Hecke eigensystems
$\calH^{\infty v\grn}_{\ol\F_v}\to\ol\F_v$ arising from
  \begin{enumerate}
  \item algebraic Drinfeld
  modular forms in $M_k(r, K_v, \ol \F_v)^{K^v}$ for all $k\ge 0$, and 
  \item  elements of $\calA(G',\ol \F_v)^{U(v)K^v}$, respectively. 
  \end{enumerate}
If Conjecture~\ref{ConjCM} holds for $K=K_v\wt K^v$  for a cofinal system of compact open subgroups $\wt K^v\subset G(\A^{\infty v})$, then the two sets of Hecke eigensystems are equal. In
  particular, there are then only finitely many Hecke eigensystems of
  algebraic Drinfeld modular forms over $\ol \F_v$ of a fixed level
  and all weights. 
\end{thm}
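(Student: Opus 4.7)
The plan is to prove the three sets coincide via the cyclic chain $(1)\subseteq(2)\subseteq(3)\subseteq(1)$. The final finiteness assertion will then follow from the finite-dimensionality of $\calA(G',\ol\F_v)^{U(v)K^v}$.

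The inclusion $(1)\subseteq(2)$ is immediate from the Hecke-equivariant injection $M_k(r,K_vK^v,\ol\F_v)\hookrightarrow \wt M_k(r,K_vK^v,\ol\F_v)$ of Proposition~\ref{Prop4.5}. For the reverse inclusion $(2)\subseteq(1)$ (which is not needed for the cyclic argument but is worth recording), one invokes Lemma~\ref{HI.1}(3): multiplication by the coefficient form $\ol\varphi_{K,a,i}$ defines a Hecke-equivariant map $\wt M_k(r,K_vK^v,\ol\F_v)\to M_{k+q^i-1}(r,K_vK^v,\ol\F_v)$. Choosing $a\in A$ with $v(a)=0$ and $i=r\deg a$, the leading coefficient $\ol\varphi_{K,a,r\deg a}$ is nowhere zero on the rank-$r$ locus $\scrM^r_K$, whose complement in $\ol\scrM^r_K$ has dimension $\le r-2$ by Proposition~\ref{2.7}; hence it is a non-zero-divisor on the reduced equidimensional $\ol\scrM^{r,\rm nor}_K$. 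Multiplication by it is therefore injective and carries Hecke eigenforms in $\wt M_k$ to Hecke eigenforms in $M_{k+q^{r\deg a}-1}(r,K_vK^v,\ol\F_v)\subseteq M_{k+q^{r\deg a}-1}(r,K_v,\ol\F_v)^{K^v}$ with the same eigensystem.

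For $(3)\subseteq(1)$, let $\chi\in(3)$. By \eqref{eq:SS.12} and Proposition~\ref{SS.7}, $\chi$ arises from some $g\in S_k(r,K_vK^v,\ol\F_v)$ for $1\le k\le q_v^r-1$. Multiplying by the Hecke-invariant element ${\bf1}\in S_{q_v^r-1}$ of \eqref{eq:SS.13} shifts the weight by $q_v^r-1$ while preserving $\chi$, so $\chi$ is realized on $S_{k'}$ for arbitrarily large $k'$. For $k'\gg 0$, Serre vanishing applied to the ample sheaf $\omega_K$ on $\ol\scrM^r_{K_vK^v}$ (see Theorem~\ref{S.2}) gives $H^1\bigl(\ol\scrM^r_{K_vK^v},\calI_{\scrS_{K_vK^v}}\otimes\omega_K^{\otimes k'}\bigr)=0$, so the restriction $M_{k'}(r,K_vK^v,\ol\F_v)\twoheadrightarrow S_{k'}(r,K_vK^v,\ol\F_v)$ is a Hecke-equivariant surjection. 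The image of $\calH^{\infty v\grn}_{\ol\F_v}$ in $\End(S_{k'})$ is then a quotient of its image in $\End(M_{k'})$, hence every character of the former lifts to a character of the latter; since $\ol\F_v$ is algebraically closed, each such character is realized as a genuine eigensystem, placing $\chi$ in (1).

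The substantive step $(2)\subseteq(3)$ proceeds by descent along the height stratification on the normalization. I claim inductively that for each $1\le h\le r-1$, every Hecke eigensystem appearing on some $H^0\bigl((\ol\scrM^{r,\rm nor}_K)^{\ge h},\omega_K^{\otimes k}\bigr)$ also appears on some $H^0\bigl((\ol\scrM^{r,\rm nor}_K)^{\ge h+1},\omega_K^{\otimes k'}\bigr)$, where $(\ol\scrM^{r,\rm nor}_K)^{\ge h}$ denotes the preimage of $(\ol\scrM^r_K)^{\ge h}$ in the normalization with its reduced induced structure. The base case $h=1$ applies to any $f\in\wt M_k(r,K_vK^v,\ol\F_v)$. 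In the inductive step, given a $\chi$-eigenform $f$ at level $h$, if its restriction to level $h+1$ is nonzero we are done. Otherwise $f$ vanishes on $V(H^a_h)\cap(\ol\scrM^{r,\rm nor}_K)^{\ge h}=(\ol\scrM^{r,\rm nor}_K)^{\ge h+1}$; since $H^a_h$ is nonzero on each irreducible component of $(\ol\scrM^{r,\rm nor}_K)^{\ge h}$ by the dimension inequality in Theorem~\ref{HI.5}(1), it is a non-zero-divisor there, and we write $f=H^a_h\cdot g$ uniquely. Because $H^a_h$ is Hecke invariant by Lemma~\ref{HI.1}(1), $g$ is a $\chi$-eigenform of weight $k-(q^{h\deg v}-1)$. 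Iteration strictly decreases weight, and by Lemma~\ref{Lemma_DualOfAmple} applied to the reduced projective $(\ol\scrM^{r,\rm nor}_K)^{\ge h}$ of pure dimension $r-h\ge 1$ with ample $\omega_K$, sections vanish in negative weight, so the procedure terminates with an eigenform restricting nontrivially to level $h+1$. Iterating from $h=1$ to $r-1$ yields a $\chi$-eigenform on $(\ol\scrM^{r,\rm nor}_K)^{\ge r}=\scrS_{K_vK^v}$, which by \eqref{eq:SS.12} and Proposition~\ref{SS.7} places $\chi$ in (3).

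The main obstacle is the geometric bookkeeping in the induction: one must verify that the stratum $(\ol\scrM^{r,\rm nor}_K)^{\ge h}$ is reduced of pure dimension $r-h$ so that both Lemma~\ref{Lemma_DualOfAmple} and the division by $H^a_h$ are valid on the normalization. Generic reducedness and the pure dimension follow from Theorem~\ref{HI.5}(1) together with the smoothness of the dense open stratum $(\scrM^r_K)^{(h)}$ given by Theorem~\ref{SS.2}, and the non-zero-divisor property of $H^a_h$ is guaranteed by Theorem~\ref{HI.5}(3) combined with the dimension drop from level $h$ to $h+1$. Once these properties are secured, the descent is formal, and the finiteness of the eigensystem set in (1) is inherited from the finite-dimensionality of $\calA(G',\ol\F_v)^{U(v)K^v}$ via the established equality $(1)=(3)$.
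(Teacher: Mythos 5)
Your overall architecture coincides with the paper's: the cycle $(1)\subseteq(2)$ via the injection of Proposition~\ref{Prop4.5}, $(2)\subseteq(3)$ by descending through the height strata on the normalization while dividing out powers of $H^a_h$, and $(3)\subseteq(1)$ via the periodicity \eqref{eq:SS.13} together with Serre vanishing and the generalized-eigenspace decomposition for the commutative Hecke algebra. Those last two steps are essentially the paper's claims (a) and (b), and are fine.

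There is, however, a genuine gap in your termination argument for the descent $(2)\subseteq(3)$. You terminate the division process by asserting that sections of $\omega_K^{\otimes k}$ on $(\ol\scrM^{r,\rm nor}_K)^{\ge h}$ vanish for $k<0$ by Lemma~\ref{Lemma_DualOfAmple}. That lemma requires the scheme to be \emph{reduced}, and reducedness of the intermediate strata $V(H^a_1,\ldots,H^a_{h-1})$ for $1<h<r$ is not known (it would follow from Cohen--Macaulayness of $\ol\bfM^r_K$ via Theorem~\ref{HI.5}(6) plus generic reducedness, but Cohen--Macaulayness is only known for $A=\F_q[t]$, $K=K(t)$; the paper explicitly records that it does not know whether $\wt V(h,k)=0$ for $k<0$ and $1<h<r$ for exactly this reason). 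Your attempted repair --- replacing the stratum by its reduced induced structure --- breaks the other half of the step: if $V(H^a_h)$ is non-reduced, then vanishing of $f$ on $V(H^a_h)_{\rm red}$ does not imply $f=H^a_h\cdot g$, so the factorization you need is no longer available; and the claim that $H^a_h$ is a non-zero-divisor because it is nonzero on each irreducible component likewise presupposes reducedness (one needs nonvanishing at all associated points, including embedded ones). The paper circumvents this entirely differently: it shows $f$ cannot be divisible by arbitrarily high powers of $H^a_h$ using the Krull intersection theorem (infinite divisibility would force $f$ to vanish on a neighborhood of $V(H^a_h)$), combined with Theorem~\ref{HI.5}(5) (every irreducible component of the stratum meets $V(H^a_h)$) and the openness of the nonvanishing locus of a section of a line bundle, to conclude $f=0$ on each connected component --- a contradiction. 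You should replace your negative-weight termination argument by this Krull-intersection/connectedness argument, or else restrict to situations where the reducedness of all strata is actually established.
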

\begin{proof}
Let $K=K_v\wt K^v$ belong to the cofinal system for which $\ol \bfM^r_K$ is Cohen-Macaulay. Consider the inclusion maps $i_h:(\ol
  \scrM^{r}_{K,\ol\F_v})^{\ge h}:=V(H_1^a,\ldots,H_{h-1}^a)\longrightarrow \ol \scrM^{r}_{K,\ol\F_v}:=\ol \scrM^{r}_{K}\otimes_{\F_v}\ol\F_v$.
  Since the Hasse invariant $H^a_h$ is a non-zero divisor on $(\ol\scrM^{r}_{K,\ol\F_v})^{\ge h}$ by Theorem~\ref{HI.5}(3), the multiplication by $H^a_{h}$ gives a short
    exact sequence of coherent sheaves on $\ol \scrM^{r}_{K,\ol\F_v}$.
  \begin{equation*}
    \label{eq:HI.4}
    0\longrightarrow i_{h,*}\,i_h^* \omega_K^{\otimes (k-q^{h\deg(v)}+1)}\otimes\ol\F_v\xrightarrow{\,H^a_h\,} i_{h,*}\,i_h^*
    \omega_K^{\otimes k}\otimes\ol\F_v\longrightarrow i_{h+1,*}\,i_{h+1}^*
    \omega_K^{\otimes k}\otimes\ol\F_v\longrightarrow 0. 
  \end{equation*}
This gives an exact sequence of global sections
\begin{eqnarray}
\nonumber
    0 \;\longrightarrow\; H^0\bigl((\ol \scrM^{r}_{K,\ol\F_v})^{\ge h},
          i_h^* \omega_K^{\otimes (k-q^{h\deg(v)}+1)}\otimes\ol\F_v\bigr)
\;\xrightarrow{\enspace H^a_{h}\;}\;
  H^0\bigl((\ol\scrM^{r}_{K,\ol\F_v})^{\ge h},i_h^* \omega_K^{\otimes k}\otimes\ol\F_v\bigr) \\
  \label{eq:HI.5-0}
\;  \xrightarrow{\quad\bfr\enspace} \;
   H^0\bigl((\ol \scrM^{r}_{K,\ol\F_v})^{\ge h+1},i_{h+1}^* \omega_K^{\otimes k}\otimes\ol\F_v \bigr),
\end{eqnarray}
where $\bfr$ is the restriction map onto $(\ol \scrM^{r}_{K,\ol\F_v})^{\ge h+1}$.
For $1\le h\le r$ and all $k\in\bbZ$ we define 
\begin{eqnarray*}
V(h,k) & := & \Bigl(\dirlim[\wt K^v]H^0\bigl((\ol\scrM^{r}_{K_v\wt K^v,\ol\F_v})^{\ge h},i_h^* \omega_{K_v\wt K^v}^{\otimes k}\otimes\ol\F_v\bigr) \Bigr)^{K^v} ,
\end{eqnarray*}
where $\wt K^v$ runs through the cofinal system for which $\ol \bfM^r_{K_v\wt K^v}$ is Cohen-Macaulay, and where $K^v$ is the subgroup which was fixed in the theorem. Note that $V(1,k)=(0)$ when $k<0$ by Remark~\ref{Remk<0}, but we do not know whether $V(h,k)=(0)$ for $k<0$ and $1<h<r$, because $(\ol\scrM^{r}_{K_v\wt K^v,\ol\F_v})^{\ge h}$ might not be reduced and then Lemma~\ref{Lemma_DualOfAmple} cannot be applied. For $h=r$ we have $V(r,k)\simeq V(r, k+q^{r\deg v}-1)$ by the periodicity property \eqref{eq:SS.13} for all $k\in\bbZ$.

Since taking the inductive limit is an exact functor and taking $K^v$-invariants is left exact with $H^a_h$ fixed under $K^v$ by Lemma~\ref{HI.1}(1), sequence \eqref{eq:HI.5-0} yields an exact sequence of $\calH^{\infty v\grn}_{\ol\F_v}$-modules
\begin{eqnarray}
  \label{eq:HI.5}
  \begin{CD}
    0 @>>> V(h,k-q^{h\deg(v)}+1) @>H^a_{h}>> 
   V(h,k) @>{\bfr}>> V(h+1,k)\,.
  \end{CD}
\end{eqnarray}
For all $1\le h \le r$ and $k\in\bbZ$ let $H(h,k)\subset \Hom_{\ol\F_v}(\calH^{\infty v\grn}_{\ol \F_v}  , \ol \F_v)$ be  
  the subsets of all prime-to-$v\grn$ Hecke eigensystems arising from 
  the Hecke module $V(h,k)$.

  When $h=1$, $(\ol \scrM^{r}_{K,\ol\F_v})^{\ge h}=\ol \scrM^{r}_{K,\ol\F_v}$ and the union 
  $\bigcup_{k\ge 0} H(1,k)$ is the set of all prime-to-$v\grn$ Hecke
  eigensystems arising from the Hecke modules $V(1,k)=M_k(r,K_v, \ol
  \F_v)^{K^v}$ for all $k\ge 0$. Moreover, when $k<0$ then $V(1,k)=(0)$  by Remark~\ref{Remk<0}, and 
  hence $H(1,k)=\varnothing$.

  On the other hand, when $h=r$, $(\ol \scrM^{r}_{K,\ol\F_v})^{\ge r}$ equals the supersingular set $\scrS_{K,\ol\F_v}:=\scrS_K\otimes_{\F_v}\ol\F_v$ from Section~\ref{sec:SS.3}, which is contained in $\scrM^r_K$.
  Therefore, $V(r,k)= S_k(r,K_v K^v, \ol\F_v)$ by \eqref{eq:SkFixPts}, and $H(r,k)$ equals the set 
  of Hecke eigensystems of the supersingular 
  Hecke modules $S_k(r,K_v K^v, \ol
  \F_v)$ for all $k\in\Z$, which we studied in Proposition~\ref{SS.7}. 
  To prove the theorem we next show

\def\labelenumi{\theenumi} 
\def\theenumi{(\alph{enumi})} 
  \begin{enumerate}
  \item\label{HI.6_A} for any integer $j$, one has $\bigcup_{k\le j}
   H(h,k)\subset \bigcup_{k\le j}  
   H(h+1,k)$ for all $1 \le h \le r-1$,
  \item\label{HI.6_B} there is a positive integer $k_0$ such that 
  $H(r,k)\subset H(1,k)$ for all $k\ge k_0$,
  \end{enumerate}

  \medskip\noindent
\ref{HI.6_A} Let $0\ne f\in V(h,k)$ be a prime-to-$v\grn$ Hecke
  eigenform defined on $(\ol\scrM^r_{K_v\wt K^v,\ol\F_v})^{\ge h}$ for some $\wt K^v$. If $\bfr(f)\neq 0$ in sequence \eqref{eq:HI.5}, that is in sequence~\eqref{eq:HI.5-0}, then $\bfr(f)$ gives rise to the same Hecke eigensystem as $f$. Otherwise, $f$ is divisible by $H^a_h$.

We show that $f$ cannot be arbitrarily often divisible by $H^a_h$. Namely, let $x$ be a point in $V(H^a_h)\subset(\ol\scrM^r_{K_v\wt K^v,\ol\F_v})^{\ge h}$. The Krull intersection theorem \cite[Corollary~5.4]{eisenbud} for an affine open neighborhood of $x$ on which $\omega_K$ is trivial shows that $f$ can only be arbitrarily often divisible by $H^a_h$, if $f$ is zero in an open neighborhood $U$ of $V(H^a_h)$. This neighborhood $U$ intersects each irreducible component of $(\ol\scrM^r_{K_v\wt K^v,\ol\F_v})^{\ge h}$ non trivially by Theorem~\ref{HI.5}(5). So the complement of $U$ has codimension at least $1$. Now consider an arbitrary point $x\in (\ol\scrM^r_{K_v\wt K^v,\ol\F_v})^{\ge h}$ and write $R$ for the local ring of $(\ol\scrM^r_{K_v\wt K^v,\ol\F_v})^{\ge h}$ at $x$. Consider the ideal $I=\{a\in R\colon af=0\}\subset R$. The vanishing locus of $I$ is contained in the complement of $U$ in $\Spec R$, and hence has codimension at least $1$. Since $R$ is Cohen-Macaulay, the depth of $I$ is at least $1$ by \cite[Theorem~18.7]{eisenbud}. This means that $I$ contains a nonzerodivisor $a$ of $R$. Then $af=0$ implies $f=0$ in $R$. Since this holds at every point $x$ of $(\ol\scrM^r_{K_v\wt K^v,\ol\F_v})^{\ge h}$ we conclude that $f=0$ on all of $(\ol\scrM^r_{K_v\wt K^v,\ol\F_v})^{\ge h}$, which is a contradiction.

Therefore, $f$ cannot be arbitrarily often divisible by $H^a_h$ and there is an integer $s$ and
  an element $0\ne f'\in V(h,k-s(q^{h\deg(v)}-1))$ such that
  $(H^a_{h})^s \cdot f'=f$ and $\bfr(f')\neq 0$. Since the
  multiplication by $H^a_h$ is Hecke equivariant 
  by Lemma~\ref{HI.1}(2), the form $f'$ and hence
  $\bfr(f')$ give rise to the same Hecke eigensystem as $f$. This
  proves \ref{HI.6_A}, as the eigenform $\bfr(f')$ lies in $V(h+1,k')$ for $k':=k-s(q^{h\deg(v)}-1)\le k$.

\medskip\noindent
\ref{HI.6_B} Let $K=K_vK^v$. Since $\omega_K$ is ample, \cite[Proposition~III.5.3]{hartshorne} yields a positive integer $k_0$ such
that for any integer $k\ge k_0$ and any $1\le h<r$ we have $H^1\bigl((\ol \scrM^{r}_{K,\ol\F_v})^{\ge h},
          i_h^* \omega_K^{\otimes (k-q^{h\deg(v)}+1)}\otimes\ol\F_v\bigr)=0$. Therefore, the restriction maps $\bfr$ in sequence~\eqref{eq:HI.5-0} are surjective for every $1\le h<r$, and hence their composition
\begin{equation}
  \bfr: H^0(\ol \scrM^{r}_{K,\ol\F_v},\omega_K^{\otimes k}\otimes\ol\F_v)\to H^0(\scrS_{K,\ol\F_v},
  i_r^*\omega_K^{\otimes k}\otimes\ol\F_v)  =V(r,k)
\end{equation}
is likewise surjective. Since $H^0(\ol \scrM^{r}_{K,\ol\F_v},\omega_K^{\otimes k}\otimes\ol\F_v)\subset V(1,k)$ by Lemma~\ref{MF.35}(2), it follows that the map of $\calH^{\infty v\grn}_{\ol \F_v}$-modules $\bfr\colon V(1,k)\to V(r,k)$ is surjective for every $k\ge k_0$.
Since $\calH^{\infty v\grn}_{\ol \F_v}$ is commutative, both $\calH^{\infty v\grn}_{\ol \F_v}$-modules decompose as the direct sums of their
  common generalized $\calH^{\infty v\grn}_{\ol \F_v}$-eigenspaces 
  $V(1,k)=\bigoplus_\chi V(1,k)_\chi$ and $V(r,k)=\bigoplus_\chi V(r,k)_\chi$,
  respectively. Moreover, $\bfr(V(1,k)_\chi)=V(r,k)_{\chi}$.
  In particular, if $\chi=(a_{v'})_{v'}$ is the Hecke eigensystem of an eigenform
  $f\in V(r,k)$, then $V(1,k)_\chi\neq 0$ and there is an
  eigenform $f\in V(1,k)_\chi$ with Hecke eigensystem
  $\chi$. This proves \ref{HI.6_B}.

It follows from the periodicity property \eqref{eq:SS.13}: 
$H(r,k)=H(r, k+q^{r\deg v}-1)$ that 
\begin{equation}
  \label{eq:HI.7}
 \bigcup_{k\ge k_0} H(r,k)=\bigcup_{k\in\Z} H(r,k)=\bigcup_{1\le k\le q^{r\deg v}-1} H(r,k). 
\end{equation}
Combining \ref{HI.6_A} and \ref{HI.6_B}, we prove 
that $\bigcup_{k\ge 0} H(1,k)$ is the same
set of prime-to-$v\grn$ Hecke eigensystems as that 
arising from $S_k(r, K_vK^v,\ol \F_v)$ 
for $k=1,\dots, q^{r\deg v}-1$. The theorem then follows from
\eqref{eq:SS.12}. Note that the vector space $\calA(G',\ol
\F_v)^{U(v)K^v}$ has finite dimension given by \eqref{eq:SS.16} in Lemma~\ref{SS.9}, so we have 
the finiteness of the Hecke eigensystems. \qed
\end{proof}

\begin{cor}
Assume that Conjecture~\ref{ConjCM} holds for $K=K_v\wt K^v$  for a cofinal system of compact open subgroups $\wt K^v\subset G(\A^{\infty v})$.  Let $\grn$ be a non-zero ideal of $A$ with $v\nmid \grn$, and
  $N(r,\grn,v)$ the number of prime-to-$\grn v$ Hecke
  eigensystems arising from $M_k(r, K_v,\ol \F_v)^{K(\grn)^v}$ for all $k\ge
  1$. Then with $q_v=q^{\deg v}$ we have
  \begin{equation}
    \label{eq:HI.12}
    \begin{split}
    N(r,\grn, v)& \le \dim \calA(G',\ol \F_v)^{U(v)K(\grn)^v} \\
    &=\#\GL_r(A/\grn)\cdot h(A) \frac{q_v^r-1}{q-1} 
    \prod_{i=1}^{r-1} \zeta_F^{\infty v}(-i).       
    \end{split}
  \end{equation}
\end{cor}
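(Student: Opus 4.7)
The plan is to reduce the statement to an application of Theorem~\ref{HI.6} combined with the explicit dimension formula in Lemma~\ref{SS.9}.

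First I would observe that by Theorem~\ref{HI.6}, with $K^v:=K(\grn)^v$, the set of prime-to-$v\grn$ Hecke eigensystems arising from $M_k(r,K_v,\ol\F_v)^{K^v}$ for all $k\ge0$ coincides with the set of such eigensystems arising from the finite-dimensional $\calH_{\ol\F_v}^{\infty v\grn}$-module $\calA(G',\ol\F_v)^{U(v)K^v}$. Since $\calH_{\ol\F_v}^{\infty v\grn}$ is commutative, distinct eigensystems correspond to distinct maximal ideals in the image of $\calH_{\ol\F_v}^{\infty v\grn}$ inside $\End_{\ol\F_v}(\calA(G',\ol\F_v)^{U(v)K^v})$; this image is a commutative finite-dimensional $\ol\F_v$-algebra acting faithfully on a space of dimension $d:=\dim_{\ol\F_v}\calA(G',\ol\F_v)^{U(v)K^v}$, so it has at most $d$ maximal ideals. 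Hence $N(r,\grn,v)\le d$.

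Next I would identify $\calA(G',\ol\F_v)^{U(v)K^v}$ with the space $C^{\infty}(G'(F)\backslash G'(\A^\infty)/U(v)K^v,\ol\F_v)$ appearing in Lemma~\ref{SS.9}. This uses the decomposition $\A=\A^\infty\times F_\infty$ so that $G'(F)\backslash G'(\A)/G'(F_\infty)=G'(F)\backslash G'(\A^\infty)$, together with the observation that taking $U(v)K^v$-invariants is the same as descending to the double coset quotient. Lemma~\ref{SS.9} then gives
\[
d\;=\;[G(\wh A^{(v)}):K^v]\cdot h(A)\cdot\frac{q_v^r-1}{q-1}\cdot\prod_{i=1}^{r-1}\zeta_F^{\infty v}(-i).
\]

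Finally, since $v\nmid\grn$, the kernel $K(\grn)^v$ of the surjection $\GL_r(\wh A^{(v)})\twoheadrightarrow\GL_r(A/\grn)$ has index exactly $\#\GL_r(A/\grn)$ in $G(\wh A^{(v)})=\GL_r(\wh A^{(v)})$. Substituting this into the formula above yields the required bound. There is no genuine obstacle here: all work is done by Theorem~\ref{HI.6} and Lemma~\ref{SS.9}, and the corollary merely packages them together with the elementary bound on the number of characters of a commutative algebra acting on a finite-dimensional space.
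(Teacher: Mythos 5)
Your proof is correct and follows the same route as the paper, which simply cites Theorem~\ref{HI.6} together with the dimension formula \eqref{eq:SS.16}; you have merely filled in the (correct) linear-algebra observation that a commutative algebra acting faithfully on a $d$-dimensional $\ol\F_v$-vector space has at most $d$ maximal ideals, hence at most $d$ distinct eigensystems, and the elementary index computation $[G(\wh A^{(v)}):K(\grn)^v]=\#\GL_r(A/\grn)$, both of which the paper leaves implicit.
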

\begin{proof}
  This follows from Theorem~\ref{HI.6} and the dimension formula
  \eqref{eq:SS.16}. \qed
\end{proof}

Put $\zeta_A(s):=\zeta_F^\infty(s)=\zeta_F(s) (1-q_\infty^{-s})$ and 
\[ c(r,A,\grn):=\#\GL_r(A/\grn)\cdot \frac{h(A)}{q-1}
    \prod_{i=1}^{r-1} |\zeta_{A}(-i)|. \]
Then $|\zeta_{F}^{\infty v}(-i)|=|\zeta_A(-i)|(q_v^i-1)$ and 
$N(r,\grn,v)\le c(r,A,\grn) \prod_{i=1}^r (q_v^i-1)$. Thus, we
obtain the asymptotic behavior for $N(r,\grn, v)$ when $v$ varies:
\begin{equation}
  \label{eq:HI.8}
  N(r,\grn,v)=O(q_v^{r(r+1)/2}) \quad \text{as $q_v\to +\infty$.}
\end{equation}

\begin{remark}\label{HI.8}
  Theorem~\ref{HI.6} is the function field analogue of a theorem of 
  Serre \cite{serre:quat} which describes elliptic modular forms 
  modulo $p$ by quaternion algebras. In \cite{ghitza:thesis} Ghitza
  generalized Serre's theorem to Siegel modular forms (mod $p$).  
  Ghitza followed Serre's idea by restricting 
  modular forms (mod $p$) to the superspecial locus, but he also 
  gives an argument which applies the Kodaira-Spencer map, 
  due to the lack of generalized Hasse invariants.
  Instead of using the Kodaira-Spencer map argument, we use
  generalized Hasse invariants; this is more direct and 
  also close to Serre's original
  proof. Our proof also shows that the prime-to-$v\grn$ 
  Hecke eigensystems arising from
  every intermediate stratum are the same. 
We remark that the construction of generalized Hasse invariants for
Shimura varieties of PEL-type is
known recently due to the work of Boxer \cite{boxer} 
and of Goldring and Koskivirta \cite{goldring-koskivirta:19}. 
\end{remark}

\section*{Acknowledgements}
The article was prepared during the second named
author's visits at M\"unster University. He wishes to thank the
institution for warm hospitality, generous support and 
excellent working conditions. 
Our intelligent debt to Richard Pink is obvious.
It is a great pleasure to thank Richard Pink and Simon Schieder 
for their inspiring articles \cite{pink, pink-schieder} 
on which the present article is partially based. 
We also thank Jo\~{a}o Louren\c{c}o, Richard Pink, David Rydh and
Zhiwei Yun for helpful discussions. We are grateful to the referee for
careful reading and helpful comments which eliminate a few
mistakes and inaccuracies of this paper.      

Both authors acknowledge the support of the DAAD-MoST-PPP ``G-Shtukas:
Geometry and Arithmetic''.  
Furthermore, both authors are grateful for the support of the DFG
(German Research Foundation) in form of Project-ID 427320536 -- SFB
1442 and Germany's Excellence Strategy EXC 2044--390685587
``Mathematics M\"unster: Dynamics--Geometry--Structure''. The second
named author was partially support by the MoST grants
107-2115-M-001-001-MY2 and 109-2115-M-001-002-MY3.


\begin{thebibliography}{AMRT75}

\def\jams{{\it J. Amer. Math. Soc.}} %ok
\def\invent{{\it Invent. Math.}} %ok
\def\ann{{\it Ann. Math.}} %ok
\def\ihes{{\it Inst. Hautes \'Etudes Sci. Publ. Math.}} %ok


% \def\ecole{{\it Ann. Sci. \'Ecole Norm. Sup.}}
% \def\ecole4{{\it Ann. Sci. \'Ecole Norm. Sup. (4)}} 
\def\mathann{{\it Math. Ann.}} %ok
\def\duke{{\it Duke Math. J.}} %ok
\def\jag{{\it J. Algebraic Geom.}} %ok
% \def\advmath{{\it Adv. Math.}}
\def\compos{{\it Compositio Math.}} %ok
\def\ajm{{\it Amer. J. Math.}} 
\def\crelle{{\it J. Reine Angew. Math.}}
\def\plms{{\it Proc. London Math. Soc.}}
\def\jussieu{{\it J. Inst. Math. Jussieu}} 
\def\grenoble{{\it Ann. Inst. Fourier (Grenoble)}}
\def\imrn{{\it Int. Math. Res. Not.}}
\def\tams{{\it Trans. Amer. Math. Sci.}}
\def\mrl{{\it Math. Res. Lett.}}
\def\cras{{\it C. R. Acad. Sci. Paris S\'er. I Math.}} %ok
\def\mathz{{\it Math. Z.}} %ok
\def\cmh{{\it Comment. Math. Helv.}}
\def\docmath{{\it Doc. Math. }}
\def\asian{{\it Asian J. Math.}}
\def\acta{{\it Acta Math.}}
\def\indiana{{\it Indiana Univ. Math. J.}}
\def\memoir{{\it Mem. Amer. Math. Soc.}}


\def\acad{{\it Proc. Nat. Acad. Sci. USA}}
% \def\aa{{\it Acta Arith.}}
% \def\ast{{\it Ast\'eriques}}



\def\jlms{{\it J. London Math. Soc.}}
\def\blms{{\it Bull. London Math. Soc.}}
\def\manmath{{\it Manuscripta Math.}} %ok
\def\jnt{{\it J. Number Theory}} %ok 
\def\ijm{{\it Israel J. Math.}}
\def\ja{{\it J. Algebra}} %ok
\def\pams{{\it Proc. Amer. Math. Sci.}}
\def\smfmemoir{{\it Bull. Soc. Math. France, Memoire}}
\def\bsmf{{\it Bull. Soc. Math. France}}
\def\sb{{\it S\'em. Bourbaki Exp.}}
\def\jpaa{{\it J. Pure Appl. Algebra}}
\def\jems{{\it J. Eur. Math. Soc. (JEMS)}}
\def\jtokyo{{\it J. Fac. Sci. Univ. Tokyo}}
\def\cjm{{\it Canad. J. Math.}}
\def\jaums{{\it J. Australian Math. Soc.}}
\def\pspm{{\it Proc. Symp. Pure. Math.}}
\def\ast{{\it Ast\'eriques}}
\def\pamq{{\it Pure Appl. Math. Q.}}
\def\nagoya{{\it Nagoya Math. J.}}
\def\forum{{\it Forum Math. }}
\def\tjm{{\it Taiwanese J. Math.}}
\def\rt{{\it Represent. Theory}}
\def\bordeaux{{\it J. Th\'eor. Nombres Bordeaux}}
\def\ijnt{{\it Int. J. Number Theory}}
\def\jmsj{{\it J. Math. Soc. Japan}}
\def\rims{{\it Publ. Res. Inst. Math. Sci.}}
\def\ca{{\it Comm. Algebra}}
\def\osaka{{\it Osaka J. Math.}}
\def\bams{{\it Bull. Amer. Math. Soc.}}
\def\aa{{\it Acta Arith.}}
\def\bimas{{\it Bull. Inst. Math. Acad. Sin. (N.S.)}}
\def\jrms{{\it J. Ramanujan Math. Soc.}}


\def\tp{{to appear}}

\newcommand{\princeton}[1]{Ann. Math. Studies #1, Princeton
  Univ. Press}

\newcommand{\LNM}[1]{Lecture Notes in Math., vol. #1, Springer-Verlag}

\bibitem[EGA]{EGA} A.~Grothendieck: \emph{{\'E}lements de G{\'e}o\-m{\'e}trie Alg{\'e}\-brique}, Publ.\ Math.\ IHES~{\bfseries 4}, {\bfseries 8}, {\bfseries 11}, {\bfseries 17}, {\bfseries 20}, {\bfseries 24}, {\bfseries 28}, {\bfseries 32}, Bures-Sur-Yvette, 1960--1967; see also Grundlehren~{\bfseries 166}, Springer-Verlag, Berlin etc.\ 1971; also available at \href{http://www.numdam.org/search/%22Grothendieck,%20Alexander%22-c/}{http:/\!/www.numdam.org/search/``Grothendieck, Alexander"-c/}.

\bibitem[SGA 1]{SGA1} A.~Grothendieck: \emph{SGA 1: Rev{\^e}tements {\'e}tales et groupe fondamental}, LNM~{\bfseries 224}, Springer-Verlag, Berlin-Heidelberg 1971; also available as \href{http://arxiv.org/abs/math/0206203}{arXiv:math/0206203}. 

\bibitem[SGA 3]{SGA3} M.~Demazure, A.~Grothendieck: \emph{SGA 3: Sch\'emas en Groupes I, II, III}, LNM~{\bfseries 151}, {\bfseries 152}, {\bfseries 153}, Springer-Verlag, Berlin etc.\ 1970; also available at \href{http://library.msri.org/books/sga/sga/pdf/}{http:/\hspace{-0.2em}/library.msri.org/books/sga/} or reedited on \href{http://webusers.imj-prg.fr/~patrick.polo/SGA3/}{http:/\hspace{-0.2em}/webusers.imj-prg.fr/$\sim$patrick.polo/SGA3/}.

\bibitem[AlKl80]{AltmanKleiman} A.~Altman and S.~Kleiman, \emph{Compactifying the Picard scheme}, Adv.\ in Math.\ {\bfseries 35} (1980), no.\ 1, 50--112; available at \href{https://core.ac.uk/download/pdf/82272407.pdf}{https:/\!/core.ac.uk/download/pdf/82272407.pdf}.  

\bibitem[AnZh95]{AZ:MFHO} A.N.~Andrianov. V.G.~Zhuravlev: \emph{Modular forms and Hecke operators}. Translated by Neal Koblitz. Translations of Mathematical Monographs~{\bfseries 145}. American Mathematical Society, Providence, RI, 1995.

\bibitem[AMRT75]{AMRT} A.~Ash, D.~Mumford, M.~Rapoport and Y.~Tai,      Smooth compactification of locally symmetric varieties. Lie Groups:
History, Frontiers and Applications, Vol. IV. Math. Sci. Press,
Brookline, Mass., 1975. 335 pp. 

\bibitem[BBP24]{BBP} D. Basson, F. Breuer and R. Pink, 
Drinfeld modular forms of arbitrary rank,
\emph{Memoirs of the AMS}, to appear.

\bibitem[Bel10]{Bellaiche10} J.~Bella\"iche, \emph{Eigenvarieties,
    families of Galois representations, $p$-adic $L$-functions}, Notes
  from a Course at Brandeis university given in Fall 2010,
  \href{http://people.brandeis.edu/~jbellaic/preprint/coursebook.pdf}{http:/\!/people.brandeis.edu/$\sim$jbellaic/preprint/coursebook.pdf}. 

\bibitem[BLR90]{bosch-luetkebohmert-raynaud} S. Bosch, W. L\"utkebohmert,
  and M. Raynaud, {\it N\'eron models}. 
  Ergebnisse der Mathematik und ihrer Grenzgebiete (3), 21. 
  Springer-Verlag, Berlin, 1990. x+325 pp. 

\bibitem[Box15]{boxer} G.~Boxer: \emph{Torsion in the Coherent Cohomology of Shimura Varieties and Galois Representations}, preprint on \href{http://arxiv.org/abs/1507.05922}{arXiv:math/1507.05922}.

\bibitem[Boy99]{boyer:llc} P.~Boyer: \emph{Mauvaise r\'eduction des vari\'et\'es de Drinfeld et correspondance de Langlands locale}, Invent.\ Math.~{\bfseries 138} (1999), no.~3, 573--629; also available at \href{http://www.math.univ-paris13.fr/~boyer/recherche/invent-these.pdf}{http:/\hspace{-0.2em}/www.math.univ-paris13.fr/$\sim$boyer/}.

\bibitem[Bre12]{Breuer} F.~Breuer: \emph{Special subvarieties of Drinfeld modular varieties}, J.~reine angew.\ Math.~{\bfseries 668} (2012), 35--57; also available as  \href{http://arxiv.org/abs/math/0503452}{arXiv:math.NT/0503452}.

\bibitem[BrKu05]{BrionKumar} M.~Brion and S.~Kumar: \emph{Frobenius splitting methods in geometry and representation theory}, Progress in Mathematics, 231. Birkh\"auser Boston, Inc., Boston, MA, 2005.

\bibitem[BuHe06]{BH} C. J. Bushnell and G.  Henniart, {\it The local Langlands
  conjecture for GL(2)}. Grundlehren der Mathematischen Wissenschaften,
  %[Fundamental Principles of Mathematical Sciences],
  335. Springer-Verlag, Berlin, 2006. 347 pp. 
% Colin J Bushnell and  Guy  Henniart

\bibitem[Cas67]{CasselsFroehlich} W.S.~Cassels and A.~Fr\"ohlich,
  \emph{Algebraic Number Theory}, Proceedings of an instructional
  conference organized by the London Mathematical Society, edited by
  W.S.~Cassels and A.~Fr\"ohlich, Academic Press, London, Thompson Book
  Co., Inc., Washington, D.C., 1967. 

\bibitem[Cha90]{chai:amchb} C.-L.~Chai: \emph{Arithmetic minimal compactification of the Hilbert-Blumenthal moduli spaces}, Annals of Math.~{\bfseries 131} (1990), 541--554. 

\bibitem[Con12]{Conrad12} B.~Conrad: \emph{Finiteness theorems for algebraic groups over function fields}, Compos.\ Math.~{\bfseries 148} (2012), no.~2, 555--639; also available at \href{http://math.stanford.edu/~conrad/papers/cosetfinite.pdf}{http:/\!/math.stanford.edu/$\sim$conrad/}.

\bibitem[DeHu87]{Deligne-Husemoeller} P.~Deligne, D.~Husem{\"o}ller: \emph{Survey of Drinfeld modules}, in Current trends in arithmetical algebraic geometry (Arcata, Calif., 1985), Contemp.\ Math.~{\bfseries 67}, Amer.\ Math.\ Soc., Providence 1987; pp.~25--91; available at \href{https://publications.ias.edu/sites/default/files/Number59.pdf}{https://publications.ias.edu/sites/default/files/Number59.pdf}.

\bibitem[DeMu69]{DM} P.~Deligne, D.~Mumford: \emph{The Irreducibility of the Space of Curves of Given Genus}, Publ.\ Math.\ IHES {\bfseries 36} (1969), 75--110, available at \href{http://www.numdam.org/item?id=PMIHES_1969__36__75_0}{http:/\hspace{-0.2em}/www.numdam.org/item?id=PMIHES\_1969\_\_36\_\_75\_0}.
 
\bibitem[DeRa71]{Deligne-Rapoport} P.~Deligne and M.~Rapoport, \emph{Les sch\'emas de modules de courbes elliptiques}, in Modular functions of one variable, II (Proc.\ Internat.\ Summer School, Univ.\ Antwerp, Antwerp, 1972), pp.~143--316, LNM {\bfseries 349}, Springer, Berlin-New York, 1973.

\bibitem[DeSe74]{deligne-serre} P.~Deligne and J.-P.~Serre: \emph{Formes modulaires de poids $1$}. Ann.\ Sci.\ \'Ecole Norm.\ Sup.\ (4)~{\bfseries 7} (1974), 507--530.

\bibitem[Dri76]{drinfeld:1} V. Drinfeld, Elliptic modules, (English
    translation) {\it Math. USSR-Sb.}~{\bf 23} (1976), 561--592. 

\bibitem[Dri77]{drinfeld:2} V. Drinfeld, Elliptic modules. II. (English
    Translation)
    {\it Math. USSR-Sb.}~{\bf 31} (1977), no. 2, 159--170,

\bibitem[Eis95]{eisenbud} D. Eisenbud, {\it Commutative algebra.
With a view toward algebraic geometry}. 
Graduate Texts in Mathematics, 150. 
Springer-Verlag, New York, 1995. % xvi+785 pp.
% David Eisenbud

\bibitem[EnHo08]{enescu-hochster} F.~Enescu, M.~Hochster: \emph{The Frobenius structure of local cohomology}, Algebra Number Theory {\bfseries 2} (2008), no.~7, 721--754; also available as  \href{http://arxiv.org/abs/0708.0553}{arXiv:math.AC/0708.0553}.

\bibitem[FaCh90]{faltings-chai} G. Faltings and C.-L.~Chai,
  {\it Degeneration of abelian varieties}. Ergebnisse der Mathematik 
  und ihrer Grenzgebiete (3), 22. Springer-Verlag, Berlin,
  1990. xii+316 pp. 

\bibitem[FKS24]{fukaya-kato-sharifi} T.~Fukaya, K.~Kato, R.~Sharifi: \emph{Toroidal compactifications of the moduli spaces of Drinfeld modules}, preprint, 2024 on \href{http://arxiv.org/abs/math/2008.13376v2}{arXiv:math/2008.13376v2}.

\bibitem[Gek87]{gekeler:satake} E.-U.~Gekeler, 
Satake compactification of Drinfeld modular
schemes. Proceedings of the conference on $p$-adic analysis (Houthalen,
 1987), 71--81, Vrije Univ. Brussel, Brussels, 1986.

\bibitem[Gek88]{gekeler:coeff}
E.-U.~Gekeler,  On the coefficients of Drinfeld modular forms. 
{\it Invent. Math.}~{\bf 93} (1988), no. 3, 667--700.

\bibitem[Gek91]{gekeler:finite} E.-U. Gekeler, On finite Drinfeld
  modules. \ja~{\bf 141} (1991), 187--203.  

\bibitem[Gek92]{gekeler:mass} E.-U. Gekeler, On the arithmetic of some
  division algebras. \cmh~{\bf 67} (1992), 316--333.  

\bibitem[Gek17]{gekeler:I} E.-U. Gekeler, On Drinfeld modular forms of
  higher rank, J. Th\'eor. Nombres Bordeaux 29 (2017), no. 3, 875--902.

\bibitem[Gek18]{gekeler:III} E.-U. Gekeler, On Drinfeld modular forms of
  higher rank III: The analogue of the $k/12$-formula, 
  J. Number Theory 192 (2018), 293--306.
  
\bibitem[Gek21]{gekeler:V} E.-U. Gekeler, On Drinfeld modular forms of higher
  rank V: The behavior of distinguished forms on the fundamental
  domain, J. Number Theory 222 (2021), 75--114.
  
\bibitem[Gek22a]{gekeler:II} E.-U. Gekeler, On Drinfeld modular forms
of higher rank II, J. Number Theory 232 (2022), 4--32.

\bibitem[Gek22b]{gekeler:IV} E.-U. Gekeler, On Drinfeld modular forms of higher rank IV: Modular forms with level,
J. Number Theory 232 (2022), 33--74.

\bibitem[Gek23]{gekeler:VI} E.-U. Gekeler, On Drinfeld modular forms of
  higher rank VI: The simplicial complex associated with a coefficient
  form, J. Number Theory 252 (2023), 326--378.

\bibitem[Gek24]{gekeler:VII} E.-U. Gekeler, On Drinfeld modular forms
  of higher rank VII: Expansions at the boundary, arXiv:2311.02131. 

\bibitem[Ghi04]{ghitza:thesis} A.~Ghitza, Hecke eigenvalues of Siegel
  modular forms (mod p) and of algebraic modular forms. 
  \jnt~{\bf 106} (2004), no. 2, 345--384. 

\bibitem[GoKo19]{goldring-koskivirta:19} W.~Goldring and J.-S.~Koskivirta: \emph{Strata, Hasse invariants, Hecke algebras and Galois representations}, Inventiones Math.~{\bfseries 217}, (2019), no.~3, 887--984; also available as \href{http://arxiv.org/abs/math/1507.05032}{arXiv:math/1507.05032}.
% Wushi Goldring and Jean-Stefan Koskivirta

\bibitem[G{\"o}We10]{GW} U.~G\"ortz and T.~Wedhorn, \emph{Algebraic
  Geometry, Part I: Schemes}, Vieweg + Teubner, Wiesbaden, 2010.

\bibitem[Gos96]{goss} D. Goss, {\it Basic structures of function field
    arithmetic}. Ergebnisse der Mathematik und ihrer Grenzgebiete (3),
    35. Springer-Verlag, Berlin, 1996.  
% David Goss

\bibitem[GrHa20]{Greve+Hartl} S.~Greve, U.~Hartl: \emph{$p$-adic Drinfeld modular forms of higher rank}, Preprint. 

\bibitem[Gro96]{gross:satake} B. H.~Gross,
On the Satake isomorphism. Galois representations in arithmetic
algebraic geometry (Durham, 1996), 223--237, 
London Math. Soc. Lecture Note Ser., 254, 
Cambridge Univ. Press, Cambridge, 1998. 

\bibitem[Gro98]{gross:MFmodp} 
 B. H.~Gross,  Modular forms (mod p) and Galois
 representations. \imrn~{\bf 1998}, no. 16, 865--875.
% Benedict H.~Gross

\bibitem[Gro99]{gross:amf} B. H. Gross, Algebraic modular forms. \ijm~{\bf
    113} (1999), 61--93.

\bibitem[H{\"a}b21]{haberli} S. H\"aberli, Satake
    compactification of analytic Drinfeld modular varieties,
  {\it J.~Number Theory}~{\bfseries 219} (2021), 1--92.  

\bibitem[Har69]{Harder69} G.~Harder, Minkowskische
    Reduktionstheorie \"uber Funktionenk\"orpern, {\it Invent.\
    Math.}~{\bfseries 7} (1969), 33--54; available at 
  \href{https://eudml.org/doc/141952}{https:/\!/eudml.org/doc/141952}.  

\bibitem[Har77]{hartshorne} R. Hartshorne,  {\it Algebraic geometry.} 
Graduate Texts in Mathematics, No. 52. Springer-Verlag, 
New York-Heidelberg, 1977.

% Robin Hartshorne

\bibitem[Har09]{HartlDict} U.~Hartl, A Dictionary between
    Fontaine-Theory and its Analogue in Equal Characteristic, {\it J.\
  Number Theory}~{\bfseries 129} (2009), 1734--1757; also available as
  \href{http://arxiv.org/abs/math/0607182}{arXiv:math.NT/0607182}. 

\bibitem[Har11]{HartlPSp} U.~Hartl, Period Spaces for Hodge Structures
  in Equal Characteristic, {\it Annals of Math.}~{\bfseries 173}, n.~3
  (2011), 1241--1358; also available as
  \href{http://arxiv.org/abs/math/0511686}{arXiv:math.NT/0511686}. 

\bibitem[Har19]{hartl:isog} U.~Hartl, Isogenies of abelian Anderson
  $A$-modules and $A$-motives, {\it Ann.\ Sc.\ Norm.\ Super.\ Pisa Cl.
Sci.(5)},~Vol.{\bfseries XIX} (2019), 1429--1470; also available as
\href{http://arxiv.org/abs/1706.06807}{arXiv:math/1706.06807}. 

\bibitem[HH06]{HartlHendler}U. Hartl, M. Hendler: \emph{ Change of Coefficients for Drinfeld Modules, Shtuka, and Abelian Sheaves}, Preprint 2006 on \href{https://arxiv.org/abs/math/0608256}{arxiv:math/0608256}.

\bibitem[HaKi19]{HartlKim} U.~Hartl and W.~Kim, \emph{Local Shtukas,
    Hodge-Pink Structures and Galois Representations}, Proceedings of
  the conference on ``$t$-motives: Hodge structures, transcendence and
  other motivic aspects'', BIRS, Banff, Canada 2009, eds.\ G.~B\"ockle,
  D.~Goss, U.~Hartl, M.~Papanikolas, EMS 2019; also available as
  \href{http://arxiv.org/abs/1512.05893}{arXiv:math/1512.05893}. 

\bibitem[Has04]{hashimoto:geom_quot2004}  Mitsuyasu Hashimoto, ``Geometric
  quotients are algebraic schemes'' based on Fogarty's idea. 
{\it J. Math. Kyoto Univ.}~{\bf 43} (2004), no.~4, 807--814.

% Mitsuyasu Hashimoto

\bibitem[Hat21]{hattori:dual} S.~Hattori: \emph{Duality of Drinfeld modules and $\wp$-adic properties of Drinfeld modular forms}, J.~Lond.\ Math.\ Soc.~(2) {\bfseries 103} (2021), no.~1, 35--70; also available as \href{https://arxiv.org/abs/1706.07645v2}{arxiv:1706.07645}.

\bibitem[Hat21]{hattori:comp} S.~Hattori, On the
    compactification of the Drinfeld modular curve of level
    $\Gamma_1^\Delta(n)$, {\it J.~Number Theory}~{\bfseries 232} (2022),
  75--100; also available at
  \href{https://www.comm.tcu.ac.jp/shinh/DMC/DMC2-r1.pdf}{https:/\!/www.comm.tcu.ac.jp/shinh/}.  

% Shin Hattori

\bibitem[HV15]{Henniart-Vigneras} G.~Henniart and M.-F.~Vign\'eras, 
  A Satake isomorphism for representations modulo $p$ of
    reductive groups over local fields, {\it J.~Reine Angew.\
  Math.}~{\bfseries 701} (2015), 33--75; also available at 
  \href{https://webusers.imj-prg.fr/~marie-france.vigneras/satake_isomorphism-1.pdf}{https:/\!/webusers.imj-prg.fr/$\sim$marie-france.vigneras/satake\_isomorphism-1.pdf}.  

\bibitem[Hay79]{hayes79} D.R.~Hayes, Explicit class field theory in
  global function fields. In  
{\it Studies in Algebra and Number Theory}, pp.~173--217, Adv.\ in
  Math.\ Suppl.\ Stud.~6, Academic Press, New York-London, 1979. 

\bibitem[Her11]{Herzig11} F.~Herzig, A Satake isomorphism in
  characteristic $p$, {\it Compos.\ Math.}~{\bfseries 147} (2011), no.~1,
  263--283; also available as
\href{http://arxiv.org/abs/0910.4570}{arXiv:math/0910.4570}.  

\bibitem[HMS14]{HMS14} J.~Horiuchi, L.~Miller and K.~Shimomoto,
  Deformation of $F$-injectivity and local cohomology, with an
  appendix by K.~Schwede and A.~Singh, {\it Indiana Univ.\ Math.\
  J.}~{\bfseries 63} (2014), no.~4, 1139--1157; also available as
  \href{http://arxiv.org/abs/1210.3589}{arXiv:math/1210.3589}. 

\bibitem[Kap88]{kapranov} M. M. Kapranov, 
Cuspidal divisors on the modular varieties of elliptic
modules. {\it Izv. Akad. Nauk SSSR Ser. Mat.} 51 (1987), no. 3,
568--583, 688; English translation in {\it Math. USSR-Izv.} 30 (1988),
no. 3, 533--547.  

\bibitem[Kat73]{katz:p-adic350} N. Katz, $p$-adic properties of modular
  schemes and modular forms. \LNM{350} (1973), 70--190.

\bibitem[Knu71]{knutson} D. Knutson,  {\it Algebraic spaces.}
  \LNM{203},  Springer-Verlag, Berlin-New York, 1971. 261 pp. 

\bibitem[Leh09]{lehmkuhl}
 T.~Lehmkuhl,  Compactification of the Drinfeld modular
 surfaces. \memoir~{\bf 197} (2009), no. 921, 94 pp. 

% Thomas Lehmkuhl

\bibitem[Lau96]{laumon:1} G.~Laumon, {\it Cohomology of Drinfeld modular
  varieties. Part I. Geometry, counting of points and local harmonic
  analysis}. Cambridge Studies in Advanced Mathematics, 41. Cambridge
  University Press, Cambridge, 1996. xiv+344 pp. 

\bibitem[Mat89]{matsumura:crt89}
 H.~Matsumura, {\it Commutative ring theory}. 
  Cambridge Studies in Advanced Mathematics,
 8. Cambridge University Press, Cambridge, 1989, 320 pp.  

\bibitem[MeRa85]{MethaRamanathan} V.B.~Mehta and A.~Ramanathan, Frobenius
  splitting and cohomology vanishing for Schubert varieties. {\it Ann.\ of
  Math.~(2)}~{\bfseries 122} (1985), no.~1, 27--40.  

\bibitem[Mum99]{Mumford} D.~Mumford: \emph{The Red book of Varieties
  and Schemes}, second edition, Lecture Notes in Math.~{\bfseries
  1358}, Springer-Verlag 1999. % no electronic version 

\bibitem[Mum08]{MumfordAbVar} D. Mumford, {\it Abelian varieties.}
  With appendices by C. P. Ramanujam and Yuri Manin. Corrected reprint
  of the second (1974) edition. Tata Institute of Fundamental Research
  Studies in Mathematics, 5. Published for the Tata Institute of
  Fundamental Research, Bombay; by Hindustan Book Agency, New Delhi,
  2008.  

% David

\bibitem[Nag59]{nagata:hilbert14} M. Nagata,  
On the 14-th problem of Hilbert. {\it Amer. J. Math.} 81 (1959), 766--772.

% Masayoshi Nagata

\bibitem[NiRo]{nicole-rosso} M.-H.~Nicole and  G.~Rosso, 
Familles de formes modulaires de Drinfeld pour le groupe general
  lineaire, {\it Trans. Amer. Math. Soc.}~{\bf~374} (2021), no.~6, 4227--4266.

% B\"ockle in  Arithmetic Geometry of Global function fields.  

\bibitem[Pin94]{pink:1994}
 R. Pink,  On compactification of Drinfeld moduli schemes. {\it Moduli
 spaces, Galois representations and L-functions} (Kyoto,
 1993, 1994). Surikaisekikenkyusho Kokyuroku No.~884 (1994),
 178--183. 

\bibitem[Pin13]{pink}  R. Pink, Compactification of Drinfeld modular
  varieties and Drinfeld modular forms of arbitrary
  rank. \manmath~{\bf 140} (2013), no. 3-4, 333--361.

\bibitem[Pin23]{PinkDetMorph} R.~Pink, The Determinant Morphism for
  Moduli Spaces of $A$-Reciprocal Maps, forthcomming. 

\bibitem[PiSc14]{pink-schieder} R.~Pink and S.~Schieder,
 Compactification of a Drinfeld period 
 domain over a finite field. \jag~{\bf 23} (2014), no. 2,
 201--243. 
%Richard Pink and Simon Schieder

\bibitem[Rap78]{rapoport:thesis} M. Rapoport, Compactifications de l'espaces
  de modules de Hilbert-Blumenthal. \compos~{\bf 36} (1978), 255--335.
 
\bibitem[Rei03]{ReinerMO} I.~Reiner: \emph{Maximal orders}, corrected
  reprint of the 1975 original, with a foreword by M.J.~Taylor, London
  Mathematical Society Monographs. New Series, 28. The Clarendon
  Press, Oxford University Press, Oxford, 2003. 

\bibitem[Ryd13]{rydh:jag2013} D. Rydh,  Existence and properties of geometric
  quotients. \jag~{\bf 22} (2013), no. 4, 629--669.   

\bibitem[Ser75]{serre:modl} J.-P. Serre, 
Valeurs propres des operateurs de Hecke modulo l. 
Journ\'ees Arithm\'etiques de Bordeaux (Conf., Univ. Bordeaux, 1974), 
pp. 109--117. Asterisque, Nos. 24-25, Soc. Math. France, Paris, 1975. 


\bibitem[Ser96]{serre:quat} J.-P. Serre, Two letters on quaternions and
   modular forms (mod $p$). \ijm~{\bf 95} (1996), 281--299.


\bibitem[Stack]{stacks} The {Stack Project Authors}, {Stacks Project},
 http:/\!/\!stacks.math.columbia.edu, (2017).


\bibitem[Str10]{strauch} M. Strauch,  
Galois actions on torsion points of one-dimensional formal
modules. \jnt~{\bf 130} (2010), no. 3, 528--533. 
% Matthias  Strauch

\bibitem[Tay89]{taylor:hmf89}
R.~Taylor,  On Galois representations associated to Hilbert
modular forms. \invent~{\bf 98} (1989), no. 2, 265--280. 

\bibitem[Tay91]{taylor:smf91}
R.~Taylor, Galois representations associated to Siegel modular forms
of low weight. \duke~{\bf 63} (1991), no. 2, 281--332.

\bibitem[Tay95]{taylor:hmfII95}
R.~Taylor,  On Galois representations associated to Hilbert modular
forms. II. {\it Elliptic curves, modular forms, and Fermat's last
  theorem} (Hong Kong, 1993), 185--191, Int. Press, Cambridge, MA, 1995. 

\bibitem[vdH06]{vdHeiden} G.-J. van der Heiden,  Drinfeld modular curve
   and Weil pairing. \ja~{\bf 299} (2006), no. 1, 374--418.  
%Gert-Jan van der Heiden

\bibitem[Wed00]{wedhorn:congruence_crelle2000}  T. Wedhorn,  Congruence
  relations on some Shimura varieties. \crelle~{\bf 524}
  (2000), 43--71. 
%Torsten Wedhorn

\bibitem[WeYu12]{wei-yu:mass_division} F.-T. Wei and C.-F.~Yu, Mass formula
   of division algebras over global function fields. \jnt~{\bf 132} (2012), 1170--1184; also available as \href{http://arxiv.org/abs/1102.5465}{arXiv:math.NT/1102.5465}.
 

\bibitem[WeYu15]{wei-yu:classno} F.-T. Wei and C.-F. Yu, Class numbers of
  definite central simple algebras over global function
  fields. {\it Int. Math. Res. Not.} IMRN 2015, no. 11, 3525--3575; also available as \href{http://arxiv.org/abs/1208.5612}{arXiv:math.NT/1208.5612}.

\bibitem[Wil88]{wiles:invent88} A.~Wiles, On ordinary $\lambda$-adic
  representations  
  associated to modular forms. \invent~{\bf 94} (1988), no. 3,
  529--573.  

\bibitem[Yu03]{yu:thesis} C.-F.~Yu, On the supersingular locus in
  Hilbert-Blumenthal 4-folds. \jag~{\bf 12} (2003), 653--698.


\bibitem[YuYu04]{yu-yu:ssd} C.-F.~Yu and J.~Yu, Mass formula for supersingular 
  Drinfeld modules. \cras~{\bf 338} (2004) 905--908.

\bibitem[Yu10]{yu:smf} C.-F. Yu, Simple mass formulas on Shimura varieties
  of PEL-type. \forum~{\bf 22} (2010), no. 3, 565--582; also available as \href{http://arxiv.org/abs/math.NT/0603451}{arXiv:math.NT/0603451}.

\bibitem[Yu15]{yu:mass_var}
C.-F. Yu, Variants of mass formulas for definite division 
  algebras. \ja~{\bf 422} (2015), 166--186.

\bibitem[Yu95]{jkyu:ht} J.-K. Yu,  Isogenies of Drinfeld modules over
  finite fields. \jnt~{\bf 54} (1995), no. 1, 161--171.  

% Jiu-Kang Yu

\end{thebibliography}
\end{document}